\documentclass[11pt, a4paper,leqno]{amsart}
\usepackage{amsmath,amsthm,amscd,amssymb,amsfonts, amsbsy}
\usepackage{latexsym}
\usepackage{txfonts}
\usepackage{exscale}
\usepackage{enumitem}
\setlist[itemize]{leftmargin=2em}
\usepackage{xcolor}

\usepackage[colorlinks=true,
            linkcolor=blue,citecolor=blue,urlcolor=blue]{hyperref}

\usepackage{pgf}

\newcommand{\Diag}{\operatorname{Diag}}

\calclayout
\allowdisplaybreaks

\newtheorem{theorem}{Theorem}[section]
\newtheorem{proposition}{Proposition}[section]
\newtheorem{lemma}{Lemma}[section]
\newtheorem{corollary}{Corollary}[section]
\theoremstyle{definition}

\newtheorem{remark}{Remark}[section]

\numberwithin{equation}{section}
\def\barint{\,\Xint -} 
\def\bariint{\barint_{} \kern-.4em \barint}
\def\bariiint{\bariint_{} \kern-.4em \barint}

\newcommand{\beq}{\begin{equation}}
\newcommand{\bea}[1]{\begin{array}{#1} }
\newcommand{\eeq}{ \end{equation}}
\newcommand{\ea}{ \end{array}}

\def \d {{\delta}}

\def\mean#1{\mathchoice%
          {\mathop{\kern 0.2em\vrule width 0.6em height 0.69678ex depth -0.58065ex
                  \kern -0.8em \intop}\nolimits_{\kern -0.4em#1}}%
          {\mathop{\kern 0.1em\vrule width 0.5em height 0.69678ex depth -0.60387ex
                  \kern -0.6em \intop}\nolimits_{#1}}%
          {\mathop{\kern 0.1em\vrule width 0.5em height 0.69678ex
              depth -0.60387ex
                  \kern -0.6em \intop}\nolimits_{#1}}%
          {\mathop{\kern 0.1em\vrule width 0.5em height 0.69678ex depth -0.60387ex
                  \kern -0.6em \intop}\nolimits_{#1}}}

\def\vintslides_#1{\mathchoice%
          {\mathop{\kern 0.1em\vrule width 0.5em height 0.697ex depth -0.581ex
                  \kern -0.6em \intop}\nolimits_{\kern -0.4em#1}}%
          {\mathop{\kern 0.1em\vrule width 0.3em height 0.697ex depth -0.604ex
                  \kern -0.4em \intop}\nolimits_{#1}}%
          {\mathop{\kern 0.1em\vrule width 0.3em height 0.697ex depth -0.604ex
                  \kern -0.4em \intop}\nolimits_{#1}}%
          {\mathop{\kern 0.1em\vrule width 0.3em height 0.697ex depth -0.604ex
                  \kern -0.4em \intop}\nolimits_{#1}}}

\newcommand{\aveint}[2]{\mathchoice%
          {\mathop{\kern 0.2em\vrule width 0.6em height 0.69678ex depth -0.58065ex
                  \kern -0.8em \intop}\nolimits_{\kern -0.45em#1}^{#2}}%
          {\mathop{\kern 0.1em\vrule width 0.5em height 0.69678ex depth -0.60387ex
                  \kern -0.6em \intop}\nolimits_{#1}^{#2}}%
          {\mathop{\kern 0.1em\vrule width 0.5em height 0.69678ex depth -0.60387ex
                  \kern -0.6em \intop}\nolimits_{#1}^{#2}}%
          {\mathop{\kern 0.1em\vrule width 0.5em height 0.69678ex depth -0.60387ex
                  \kern -0.6em \intop}\nolimits_{#1}^{#2}}}

\def\eqn#1$$#2$${\begin{equation}\label#1#2\end{equation}}
\def\charfn_#1{{\raise1.2pt\hbox{$\chi
_{\kern-1pt\lower3pt\hbox{{$\scriptstyle#1$}}}$}}}

\def\qq1{q_*}
\def\q2{q_{**}}

\newdimen\vintbar
\def\vint{-\kern-\vintbar\int}

\def\0{\boldsymbol 0}

\newcommand{\R}{\mathbb R}

\newtoks\by
\newtoks\paper
\newtoks\book
\newtoks\jour
\newtoks\yr
\newtoks\pages
\newtoks\vol
\newtoks\publ

\def\name[#1, #2]{#1 #2}
\def\ota{{\hbox{\bf ???}}}
\def\cLear{\by=\ota\paper=\ota\book=\ota\jour=\ota\yr=\ota
\pages=\ota\vol=\ota\publ=\ota}
\def\endpaper{\the\by, \textit{\the\paper},
{\the\jour} \textbf{\the\vol} (\the\yr), \the\pages.\cLear}
\def\endbook{\the\by, \textit{\the\book},
\the\publ, \the\yr.\cLear}
\def\endpap{\the\by, \textit{\the\paper}, \the\jour.\cLear}
\def\endproc{\the\by, \textit{\the\paper}, \the\book, \the\publ,
\the\yr, \the\pages.\cLear}

\renewcommand{\d}{\, \mathrm{d}} 

\begin{document}
\title[Invariant Laws and Fokker-Planck Analysis of Adam-Type Dynamics]{Fokker-Planck Analysis and Invariant Laws for a Continuous-Time Stochastic Model of Adam-Type Dynamics}

\address{Kaj Nystr\"{o}m\\Department of Mathematics, Uppsala University\\
S-751 06 Uppsala, Sweden}
\email{kaj.nystrom@math.uu.se}

\author{Kaj Nystr{\"o}m}\thanks{The author was partially supported by grant 2022-03106 from the Swedish research council (VR)}
\date{\today}
\maketitle
\begin{abstract}
We develop an effective continuous-time model for the long-term dynamics of adaptive stochastic optimization, focusing on bias-corrected Adam-type methods. Starting from a finite-sum setting, we identify a canonical scaling of the learning rate, decay parameters, and gradient noise that gives rise to a coupled, time-inhomogeneous stochastic differential equation for the parameters $x_t$, the first-moment tracker $z_t$, and the per-coordinate second-moment tracker $y_t$. The bias-correction mechanism is retained in the limit through explicit time-dependent coefficients, while the dynamics becomes asymptotically time-homogeneous. We analyze the associated Fokker-Planck equation and under mild regularity
and dissipativity assumptions on the objective function $f$ we prove existence and uniqueness
of invariant measures for the limiting diffusion. Noise propagation is encoded by the matrix $A(x)=\mathrm{Diag}(\nabla f(x))\,H_f(x)$, where $H_f(x)$ denotes the Hessian of $f$, and hypoellipticity of the system may fail on the closed set $\mathcal D_A
\times\mathbb R^m\times(\mathbb R_+)^m$, where $$\mathcal D_A:=
\{x\in\mathbb R^m:\ e_j^\top A(x)=0
\ \mbox{ for some } j\}\subset \{x\in\R^m:\det A(x)=0\}:=\mathcal D_A^\dagger.$$  The critical points of $f$ are contained in $\mathcal D_A$. We conclude that  $\mathcal D_A^\dagger\neq \mathbb R^m$ and we use this to prove that the Markov semigroup $\mu_0P_t$ associated to time-homogeneous system converges exponentially fast, independently of initial distribution $\mu_0$,  to the unique invariant measure. The proof is based on a Harris-type argument, combined with
a minorization condition on Lyapunov sublevel sets, explicit constructions of control skeletons, and the hypoellipticity on $(\mathbb R^m\setminus \mathcal D_A)\times\mathbb R^m\times (\mathbb R_+)^m$. Our results provide a transparent continuous-time perspective on the long-time behavior of Adam-type dynamics.

\medskip

\noindent
{\em Mathematics Subject Classification (2020).} 60H10, 65C30, 37A50, 35Q84, 90C15, 68T07.

\medskip

\noindent\textit{Keywords and phrases:}
Adam optimizer, continuous-time limit, stochastic differential equations, Fokker-Planck equations, invariant measures, stochastic gradient descent (SGD), hypoellipticity, Malliavin calculus, Stroock-Varadhan support theorem, control theory, nonconvex optimization, loss landscape exploration, machine learning, deep neural networks.

\end{abstract}




    \setcounter{equation}{0} \setcounter{theorem}{0}

\section{Introduction}

The training of deep neural networks, as well as numerous other parametric models in machine learning, can be formulated as the finite-sum optimization problem
\begin{equation}\label{minima}
x^\star \in \operatorname*{arg\,min}_{x \in \mathbb{R}^m}\biggl\{ f(x) := \frac{1}{n} \sum_{i=1}^n f_{i}(x) \biggr\},
\end{equation}
where $x \in \mathbb{R}^m$ denotes the vector of model parameters, $f:\mathbb{R}^m \to \mathbb{R}$ is the empirical risk (loss function), and $f_i:\mathbb{R}^m \to \mathbb{R}$ corresponds to the loss incurred on the $i$-th data point. In typical applications, $f$ is continuously differentiable but non-convex, and $f \geq 0$.

When $n$ is large, evaluating $\nabla f(x)$ exactly is computationally expensive. A standard approach is therefore to employ \emph{stochastic gradient methods}, in which one replaces the full gradient by an unbiased estimator computed from a small random subset (minibatch) of the data. The simplest such method is \emph{stochastic gradient descent} (SGD), which updates the parameter vector in the negative direction of the stochastic gradient, scaled by a prescribed step size.

An important refinement is \emph{SGD with momentum}, in which the update direction is obtained by an exponential moving average of past stochastic gradients. This modification introduces a velocity variable into the dynamics and can be interpreted as a first-order discretization of a damped dynamical system. Empirically, momentum often improves both the speed and stability of convergence relative to vanilla SGD.

Beyond momentum, a major line of research has focused on \emph{adaptive gradient methods}. The first widely used algorithm in this class is \emph{Adagrad} \cite{Duchi2011,McMahanStreeter2010}, which adapts the step size coordinatewise using accumulated squared gradients. While this approach can significantly improve performance in problems with sparse or small gradients, it suffers from an ever-decreasing effective learning rate, which may be detrimental in nonconvex or high-dimensional settings. To address this issue, several refinements have been proposed, including \emph{RMSProp} \cite{TielemanHinton2012}, \emph{Adadelta} \cite{Zeiler2012}, and \emph{Nadam} \cite{Dozat2016}. These methods replace cumulative sums by exponential moving averages, thereby limiting memory to recent gradients. Despite strong empirical performance, such methods may fail to converge in settings where rare but informative gradients are rapidly down-weighted.

A further extension, widely adopted in modern deep learning, is the \emph{Adam} optimizer (ADaptive Moment Estimation) \cite{KingmaBa2015}. Adam maintains exponential moving averages of both stochastic gradients and their coordinatewise squares, yielding adaptive estimates of the first and second moments. These are bias-corrected to compensate for initialization effects. The resulting update can be interpreted as a preconditioned stochastic gradient step, in which each coordinate is scaled inversely proportionally to the estimated standard deviation of its stochastic gradient. Owing to its robustness and ease of tuning, Adam has become one of the most widely used optimization methods in large-scale applications, particularly in computer vision and natural language processing. We refer the reader to~\cite{E2020,Ruder2016,Sun2019} and~\cite{Bach2024,GarrigosGower2023,JentzenIntroDL2023} for further background.

From a mathematical perspective, many stochastic optimization algorithms can be viewed as discretizations of stochastic differential equations (SDEs) with state-dependent drift and diffusion. This continuous-time viewpoint has proved fruitful for analyzing stability, convergence, invariant measures, and long-time behavior via tools from stochastic analysis and partial differential equations, in particular through associated Fokker-Planck equations.

The purpose of this paper is to develop and analyze a continuous-time stochastic framework for adaptive optimization algorithms, with a particular focus on bias-corrected Adam-type methods. Starting from a finite-sum setting, we identify a canonical scaling of the learning rate, decay parameters, and gradient noise that gives rise to a coupled, time-inhomogeneous SDE for the parameter $x_t$, a first-moment tracker $z_t$, and a second-moment tracker $y_t$. In the long-time regime, this system becomes asymptotically time-homogeneous. We study the associated Fokker-Planck equations and establish the existence and uniqueness of invariant measures, thereby contributing to a continuous-time understanding of the stochastic behavior induced by bias correction and adaptivity.

Our focus on Adam reflects both its central role in large-scale learning and gaps in its theoretical understanding. Early worst-case examples demonstrated divergence even for convex objectives, prompting the development of variants with convergence guarantees \cite{Reddi2018,KingmaBa2015}. More recent work shows that \emph{vanilla} Adam can converge under suitable conditions and parameter choices \cite{Zhang2023}. Nevertheless, important open questions remain, including the role of bias correction at finite horizons, convergence rates beyond convex or Polyak-Lojasiewicz regimes, robustness under heavy-tailed or state-dependent gradient noise, the structure of invariant measures induced by coordinatewise preconditioning, and metastability near saddle points in high dimensions. Even in deterministic settings, sharp convergence rates for adaptive methods have only recently been obtained \cite{DereichJentzenRiekert2025}. These developments motivate further continuous-time analysis, with the long-term goal of placing Adam alongside SGD, momentum methods, and RMSProp within a unified analytical framework.

While the literature on continuous-time analysis of Adam-type methods remains limited, a key contribution is~\cite{malladi2022sdes}, where It\^o SDE approximations are derived and used to study batch-size scaling. Building on scaling rules for SGD \cite{goyal2017accurate,li2019stochastic,li2021validity}, the authors show that adaptive methods obey a \emph{square-root} scaling rule, supported by theory and simulations. Our work relates to and complements these results by focusing on the long-time behavior of the resulting stochastic dynamics. In particular, we analyze the associated Fokker-Planck equations, establish existence and uniqueness of invariant measures, and study convergence to equilibrium. To our knowledge, this provides a new and self-contained analytical framework for the long-time behavior of Adam-type dynamics.

\section{The {Adam} optimization algorithm} Consider the optimization problem in \eqref{minima} and suppose that we initialize the parameter vector at $x_0 \in \mathbb{R}^m$ at time $t=0$. We discretize time using a step size $h>0$, and for any stochastic process $s(\cdot)$ we denote by $s_k := s(kh)$ its value at discrete time $t_k = kh$. The \emph{learning rate} is denoted by $\eta$ and it controls the size of each update in the parameter space.

 In  the \emph{Adam} optimization algorithm \cite{KingmaBa2015}, the discrete-time iteration can be written as \begin{align}\label{eqn:sgd_main+}
x_{k+1}^i &= x_k^i - \eta\,\frac{z_{k+1}^i}{\sqrt{y_{k+1}^i} + \varepsilon},\notag\\
z_{k+1} &= \frac{1-\alpha^{k}}{1-\alpha^{k+1}}\,\alpha z_{k}
+ \frac{1-\alpha}{1-\alpha^{k+1}} \bigl( \nabla f(x_k) + \xi_k \bigr), \notag\\
y_{k+1}^i &= \frac{1-\beta^{k}}{1-\beta^{k+1}}\,\beta y_{k}^i
+ \frac{1-\beta}{1-\beta^{k+1}} \bigl| \partial_{x_i} f(x_k) + \xi_k^i \bigr|^2,
\end{align}
for $k \in \{0,1,\dots,K-1\}$ and $i \in \{1,\dots,m\}$. The parameters satisfy $\alpha,\beta \in (0,1)$ and $\varepsilon>0$ is included to avoid division by zero in the denominator of the update. The stochastic term
\begin{align}
\xi_k(x) := \nabla \tilde{f}_{k}(x) - \nabla f(x)
\end{align}
represents the \emph{stochastic gradient noise}, i.e., the discrepancy between the true gradient $\nabla f(x)$ and a stochastic gradient $\nabla \tilde{f}_{k}(x)$ computed, for instance, from a minibatch of the data.

The variables in \eqref{eqn:sgd_main+} are interpreted as follows: $x_k\in\mathbb{R}^m$ denotes the parameter vector at iteration $k$; $z_k\in\mathbb{R}^m$ is the \emph{first–moment estimate} (momentum), i.e., an exponentially weighted moving average (EWMA) of past stochastic gradients; and $y_k\in\mathbb{R}^m$ is the per–coordinate \emph{second–moment estimate}, an EWMA of squared stochastic gradients. The algorithmic parameters are: $\alpha\in(0,1)$, the decay rate for the first–moment estimate (Adam’s $\beta_1$), where values close to $1$ give longer memory; $\beta\in(0,1)$, the decay rate for the second–moment estimate (Adam’s $\beta_2$), with values close to $1$ producing smoother variance tracking; $\varepsilon>0$, a small constant ensuring numerical stability in the preconditioner; and $\xi_k$, the stochastic gradient noise, typically modeled as zero–mean with covariance depending on the minibatch size and the data distribution.

The prefactors ${(1-\alpha)}/{(1-\alpha^{k+1})}$ and ${(1-\beta)}/{(1-\beta^{k+1})}$ act as \emph{bias-correction factors}. To make their role precise, it is convenient to distinguish between the uncorrected exponential moving averages and their bias-corrected counterparts used in Adam. Let $\tilde z_k$ denote the uncorrected first-moment estimate, defined by
\[
\tilde z_{k+1} = \alpha \tilde z_k + (1-\alpha)\bigl(\nabla f(x_k)+\xi_k\bigr),
\]
and define the bias-corrected quantity $z_k = {\tilde z_k}/{(1-\alpha^k)}$. An analogous definition applies to the second-moment estimate $y_k$. Without correction, the uncorrected averages $\tilde z_k$ and $\tilde y_k$ retain a dependence on their initialization, leading to a bias in early iterations whenever the initial values do not match the true moments. This effect is particularly pronounced when $\alpha$ (or $\beta$) is close to~1, since the influence of the initialization then decays only slowly. To illustrate this, consider for simplicity the noiseless and stationary case where $\nabla f(x_k)=g$ is constant. The recursion $\tilde z_{k+1} = \alpha \tilde z_k + (1-\alpha) g$ with general initialization $\tilde z_0$ yields
\[
\tilde z_k = \alpha^k \tilde z_0 + (1-\alpha^k)\, g,
\qquad
\mathbb{E}[\tilde z_k] = \alpha^k \tilde z_0 + (1-\alpha^k)\, g.
\]
Hence,
\[
\mathbb{E}[\tilde z_k] - g = \alpha^k (\tilde z_0 - g),
\]
so the bias arises from the mismatch between the initialization and the true first moment, and decays at rate $\alpha^k$. In the standard choice $\tilde z_0=0$, this reduces to $
\mathbb{E}[\tilde z_k]=(1-\alpha^k)\,g$, which underestimates $g$ for small $k$. The bias-corrected quantity is $z_k = {\tilde z_k}/{(1-\alpha^k)}$  and in the standard initialization $\tilde z_0=0$ this yields $z_k = g$ in the stationary setting, i.e., an unbiased estimator of the first moment. For general initialization, substituting the expression for $\tilde z_k$ gives
\[
z_k = g + \frac{\alpha^k}{1-\alpha^k}\,\tilde z_0,
\]
so that the dependence on $\tilde z_0$ persists through a transient term. However, this term decays exponentially fast as $k \to \infty$, and hence $z_k \to g$. Thus, while the bias correction removes the bias associated with zero initialization, the influence of general initial conditions vanishes asymptotically.  An analogous argument applies to the second-moment estimate $y_k$. Consequently, the bias-correction factors in Adam should be understood as compensating for the bias induced by the standard zero initialization, while more generally the influence of initialization appears as a transient effect that vanishes exponentially fast. Under stationarity or slowly varying gradients and assuming $\mathbb{E}[\xi_k]=0$, the corrected quantities $z_k$ and $y_k$ are therefore approximately unbiased estimators of the first and second moments.

In practical applications, the parameters $\alpha$ and $\beta$ are chosen close to $1$ so that the exponential averaging underlying $z_k$ and $y_k$ retains long-term memory of past gradients and squared gradients, respectively. The most common default values, used for example in the original Adam implementation~\cite{KingmaBa2015}, are $\alpha = 0.9$ and $\beta = 0.999$, which correspond to characteristic memory scales of order $(1-\alpha)^{-1} \approx 10$ steps for the first-moment estimate and $(1-\beta)^{-1} \approx 1000$ steps for the second-moment estimate. These values have been found to work well across a wide range of problems, but they may be tuned depending on the desired trade-off between adaptivity and responsiveness. Smaller values of $\alpha$ or $\beta$ yield faster adaptation to changes in the gradient statistics, while values closer to $1$ provide stronger smoothing.

\section{Heuristic Scaling and Effective Continuous-Time Modeling of Adam-Type Dynamics}\label{heur}

The Adam iteration \eqref{eqn:sgd_main+} can be equivalently written in increment form as
\begin{align}\label{eqn:sgd_main++}
x_{k+1}^i - x_k^i &= -\eta\,\frac{z_{k+1}^i}{\sqrt{y_{k+1}^i} + \varepsilon},\notag\\
z_{k+1} - z_k &= \frac{1-\alpha}{1-\alpha^{k+1}} \Bigl[ -z_k + \bigl( \nabla f(x_k) + \xi_k \bigr) \Bigr], \notag\\
y_{k+1}^i - y_k^i &= \frac{1-\beta}{1-\beta^{k+1}} \Bigl[ -y_k^i + \bigl| \partial_{x_i} f(x_k) + \xi_k^i \bigr|^2 \Bigr],
\end{align}
for $k \in \{0,1,\dots,K-1\}$ and $i \in \{1,\dots,m\}$. This representation is well suited for deriving continuous-time models. To pass formally to the limit as the time-step $h \to 0$, one must (i) introduce a continuous-time interpolation of the discrete iterates, (ii) scale the algorithmic parameters appropriately, and (iii) specify a consistent model for the stochastic gradient noise. Different choices lead to qualitatively different limiting dynamics, ranging from deterministic ODEs to stochastic differential equations (SDEs).

\smallskip
\noindent\emph{Interpolation.}
Let $t_k:=kh$ and define the piecewise-constant (càdlàg) interpolations
\[
X^h(t):=x_k,\quad Z^h(t):=z_k,\quad Y^h(t):=y_k,\qquad t\in[t_k,t_{k+1}).
\]
Then $(X^h,Z^h,Y^h)$ are stochastic processes indexed by continuous time.

\smallskip
\noindent\emph{Learning rate.}
To obtain finite drift terms in the limit, it is natural to set
\[
\eta = \gamma h, \qquad \gamma>0.
\]
This ensures that the increments of $x_k$ are of order $h$, leading to a nontrivial continuous-time evolution. Other scalings either suppress the dynamics ($\eta=o(h)$) or cause it to diverge ($\eta\gg h$).

\smallskip
\noindent\emph{Decay parameters.}
If $\alpha,\beta$ are kept fixed as $h \to 0$, the effective memory windows $(1-\alpha)^{-1}$ and $(1-\beta)^{-1}$ collapse on the time scale $t=kh$, and the moving averages lose their temporal structure. To retain a nontrivial memory, one must therefore let $\alpha,\beta \to 1$ as $h \to 0$. A natural scaling is
\begin{equation}\label{scalla}
\alpha = 1 - a h, \qquad \beta = 1 - b h, \qquad a,b>0,
\end{equation}
for which
\begin{equation}\label{scalla+}
\alpha^{k} \to e^{-at}, \qquad \beta^{k} \to e^{-bt}.
\end{equation}
Accordingly,
\begin{equation}\label{scalla++}
\frac{1-\alpha}{1-\alpha^{k+1}} \sim hc_a(t),
\quad
\frac{1-\beta}{1-\beta^{k+1}} \sim hc_b(t),\qquad\mbox{where}\qquad c_a(t):=\frac {a}{1-e^{-a t}},\quad c_b(t):=\frac {b}{1-e^{-b t}},
\end{equation}
yielding exponential memory kernels in the limit.

\smallskip
\noindent\emph{Noise and limiting regimes.}
The limiting behavior depends critically on the scaling of the stochastic gradient noise $\xi_k$. Indeed, under \eqref{scalla},
\begin{align}\label{Klara1}
z_{k+1}-z_k
&=
\frac{1-\alpha}{1-\alpha^{k+1}}
\Bigl[-z_k+\nabla f(x_k)+\xi_k\Bigr]
\;\sim\;
h\,c_a(t_k)\Bigl[-z_k+\nabla f(x_k)+\xi_k\Bigr],\notag\\
y_{k+1}^i-y_k^i
&=
\frac{1-\beta}{1-\beta^{k+1}}\Bigl[-y_k^i+\big|\partial_{x_i}f(x_k)+\xi_k^i\big|^2\Bigr ]
\;\sim\;
h\,c_b(t_k)\Bigl[-y_k^i+\big|\partial_{x_i} f(x_k)+\xi_k^i\big|^2\Bigr ],\end{align}
and
\begin{align}\label{Klara2}
\bigl|\partial_{x_i}f(x_k)+\xi_k^i\bigr|^2
=
(\partial_{x_i}f(x_k))^2
+
2\,\partial_{x_i}f(x_k)\,\xi_k^i
+
(\xi_k^i)^2.
\end{align}
Hence the stochastic contribution enters through the term $h\xi_k$. The scaling of this quantity determines the nature of the limiting dynamics.

\medskip
\noindent
\textbf{(i) Deterministic (ODE) limit.}
Under the central-limit scaling
\[
\xi_k = \sqrt{h}\,\sigma\,\zeta_k, \qquad \zeta_k \sim N(0,I_m),
\]
one has $h\xi_k = \mathcal{O}(h^{3/2})$, so the stochastic contribution vanishes in the limit. This yields the deterministic, time-inhomogeneous ODE
\begin{align} \label{eq:cts-xode}
\mathrm{d}x_t^i &= -\gamma\,\frac{z_t^i}{\sqrt{y_t^i}+\varepsilon}\,\mathrm{d}t,\quad
\mathrm{d}z_t^i = c_a(t)\bigl(\partial_{x_i}f(x_t)-z_t^i\bigr)\,\mathrm{d}t, \quad
\mathrm{d}y_t^i = c_b(t)\bigl(-y_t^i + (\partial_{x_i}f(x_t))^2 \bigr)\,\mathrm{d}t,
\end{align}
describing averaged dynamics.

\medskip
\noindent
\textbf{(ii) Stochastic (SDE) limit.}
To obtain a nontrivial diffusion term, the random increments $h\xi_k$ must be of order $\sqrt{h}$, since this is the scaling that accumulates to Brownian motion over $1/h$ steps. This requires
\begin{align}\label{klara3}
h\xi_k \sim \sqrt{h},
\qquad\text{that is,}\qquad
\xi_k &= \frac{\sigma}{\sqrt{h}}\,\zeta_k.
\end{align}
Under this scaling, the increments of $z_k$ contain fluctuations of order $\sqrt{h}$, which converge to Brownian motion in the limit, and one formally deduces
\begin{align}
\mathrm{d}z_t^i &= c_a(t)\bigl(\partial_{x_i}f(x_t)-z_t^i\bigr)\,\mathrm{d}t
                 + c_a(t)\sigma\,\mathrm{d}B_t^i. \label{eq:cts-zapa}
\end{align}
 However, inserting this scaling into \eqref{Klara2} leads to a divergence of order $1/h$. Thus, a direct SDE limit of the full Adam recursion is not available without additional modeling assumptions.

\medskip
\noindent
\textbf{(iii) Effective stochastic model.}
To obtain a well-posed stochastic limit under the scaling \eqref{klara3}, one must carefully analyze the second-moment dynamics. Consider the decomposition
\[
h\bigl|\partial_{x_i}f(x_k)+\xi_k^i\bigr|^2
=
h(\partial_{x_i}f(x_k))^2
+
2h\,\partial_{x_i}f(x_k)\,\xi_k^i
+
h(\xi_k^i)^2.
\]
Under the scaling $\xi_k^i=\sigma h^{-1/2}\zeta_k^i$, with $\zeta_k^i$ centered and of unit variance, the three terms exhibit markedly different behavior. The first term is of order $h$ and contributes to the drift in the limit. The cross term is centered and of order $\sqrt{h}$, and hence produces fluctuations that, in principle, accumulate to a stochastic integral. In contrast, the quadratic term satisfies
\[
h(\xi_k^i)^2 = \sigma^2(\zeta_k^i)^2,
\]
which is of order one at each step. Consequently, in the exact scheme, this term produces contributions of order one per step, and thus diverges over $O(h^{-1})$ steps. This shows that, under the strong noise scaling required to obtain a nontrivial diffusion in the $z$–equation, the second-moment recursion does not admit a finite continuous-time limit. To extract a meaningful limiting dynamics, one must therefore modify the second-moment equation at the level of modeling. Writing
\[
(\zeta_k^i)^2 = 1 + \bigl((\zeta_k^i)^2 - 1\bigr),
\]
we decompose
\[
h(\xi_k^i)^2
=
\sigma^2
+
\sigma^2\bigl((\zeta_k^i)^2 - 1\bigr).
\]
The first term corresponds to a deterministic contribution, while the second is centered. When accumulated over time, the centered fluctuations are of lower order (their variance is of order $h$) and vanish in the limit, whereas the mean contributes a finite drift. Similarly, the cross term
\[
2\,\partial_{x_i}f(x_k)\,\xi_k^i
\]
is centered and would formally give rise to an additional multiplicative noise term in the limiting equation for $y_t$. However, this term introduces state-dependent fluctuations and does not contribute to the drift. Moreover, the resulting diffusion coefficient does not vanish at $y_t^i=0$, so the limiting process would not preserve positivity, and $y_t^i$ could become negative with positive probability. This is undesirable, both because $y_t^i$ represents a second-moment quantity and because the update involves $\sqrt{y_t^i}$. Motivated by these observations, we adopt an effective closure and replace the exact second-moment update by the approximation
\begin{align*}
\bigl|\partial_{x_i}f(x_k)+\xi_k^i\bigr|^2
\;\leadsto\;
(\partial_{x_i}f(x_k))^2 + \sigma^2.
\end{align*}
This can be interpreted as an averaging or moment-closure procedure, in which the second-moment dynamics retain the mean-square effect of the noise while discarding fast and state-dependent fluctuations. Under this closure, the prefactor $\theta_k^{(b,h)}={(1-\beta)}/{(1-\beta^{k+1})}\sim h\,c_b(t_k)$ yields contributions of order $h$, leading to a finite drift term in the limit. The resulting continuous-time model thus captures the effective variance induced by stochastic gradients while remaining well posed and analytically tractable.

\medskip
\noindent
\textbf{Canonical scaling.}
Motivated by the above discussion, we adopt the scaling
\begin{align}\label{scale}
\eta = \gamma h, \qquad
\alpha = 1 - a h, \qquad
\beta = 1 - b h, \qquad
\xi_k = \frac{\sigma}{\sqrt{h}}\,\zeta_k,
\end{align}
with $a,b,\gamma,\sigma>0$ fixed and $\{\zeta_k\}$ i.i.d.\ standard Gaussians. In addition, we impose the effective closure
\begin{align}\label{scale+}
\bigl|\partial_{x_i}f(x_k)+\xi_k^i\bigr|^2
\;\leadsto\;
(\partial_{x_i}f(x_k))^2+\sigma^2,
\end{align}
as described above.

\begin{remark}
The scaling $\xi_k=\sigma h^{-1/2}\zeta_k$ should not be interpreted as a literal model of minibatch noise. Rather, it is an effective scaling ensuring that the stochastic increments in the $z_k$-equation are of order $\sqrt{h}$ and hence converge to Brownian motion. In this sense, the resulting SDE captures macroscopic fluctuations of the algorithm rather than the microscopic noise at the discrete level.
\end{remark}

\begin{remark}
The deterministic ODE limit corresponds to a law-of-large-numbers approximation capturing averaged optimization dynamics. In contrast, the SDE model incorporates stochastic effects and is therefore more suitable for studying invariant measures, metastability, and escape phenomena. The SDE should thus be viewed as an effective model rather than a direct limit of the original discrete scheme.
\end{remark}

\begin{remark}
More general noise models may be considered, including state-dependent covariances, heavy-tailed distributions, or temporal correlations. While such extensions may better reflect practical training dynamics, they significantly complicate the analysis. The isotropic Gaussian setting adopted here provides a tractable yet representative framework.
\end{remark}

\begin{remark}
Time-dependent learning rates, such as $\eta_t \propto t^{-1/2}$, can also be incorporated and lead to non-autonomous continuous-time dynamics. The present scaling instead focuses on the refinement limit $h \to 0$, thereby isolating the intrinsic structure of the algorithm.
\end{remark}

\begin{remark}
The continuous-time system derived in this work should be viewed as an intermediate description between the exact discrete-time Adam iteration and purely heuristic stochastic differential equation models. On the one hand, it is rooted in a systematic scaling analysis of the discrete algorithm. On the other hand, it incorporates an effective closure in the second-moment dynamics to obtain a well-posed and analytically tractable limit. In this sense, the resulting SDE captures the essential structural features of Adam-type methods, momentum, adaptivity, and bias correction, while providing a simplified framework for studying their long-time stochastic behavior.
\end{remark}

\subsection{The continuous-time limit} We now formalize the preceding discussion by stating the continuous-time limit in the form of Theorem \ref{Thm1} below. Theorem \ref{Thm1} is proved in Appendix
\ref{aaapp}. The result is derived under a set of assumptions on the objective function $f$. These conditions are specified and discussed in Subsection~\ref{Subcond}. By construction, the scaling laws and approximation in \eqref{scale}-\eqref{scale+} yield, in the small-step limit, a time-inhomogeneous SDE system for $(x_t,z_t,y_t)$ with smooth coefficients. The limit dynamics retain the finite memory of the exponential moving averages as well as the bias-correction mechanism.

In the following,  $\mathbb{D}([0,T];\mathbb{R}^m)$ denotes the Skorokhod space of
$\mathbb{R}^m$-valued càdlàg (right-continuous with left limits) functions
on $[0,T]$, equipped with the standard topology. Additional notation is introduced in Subsection~\ref{Prelim}.

\begin{theorem}\label{Thm1}
Assume that $f$ satisfies condition \textnormal{(A1)} from Subsection~\ref{Subcond}, and fix $\varepsilon>0$. Let $t_k:=kh$, consider the scaling laws and closure approximation in \eqref{scale}-\eqref{scale+}, and define the piecewise-constant interpolations
\[
X^h(t):=x_k,\qquad Z^h(t):=z_k,\qquad Y^h(t):=y_k,
\qquad t\in[t_k,t_{k+1}),
\]
with initial condition
\[
(X^h(0),Z^h(0),Y^h(0))=(x_0,z_0,y_0)\in\mathbb R^{3m}.
\]
Assume that $(x_0,z_0,y_0)$ is deterministic (or, more generally, independent of the noise sequence $\{\zeta_k\}$). Let $B_t=(B_t^1,\dots,B_t^m)$ be an $m$-dimensional Brownian motion with independent components. Then, for every $0<\delta<T<\infty$, the interpolated processes $(X^h,Z^h,Y^h)$ converge in law in $\mathbb{D}([\delta,T];\mathbb{R}^{3m})$ as $h\to0$ to a continuous process $(x_t,z_t,y_t)_{t\in[\delta,T]}$, which is the unique strong solution on $[\delta,T]$ of the system
\begin{align}
\mathrm{d}x_t^i &= -\gamma\,\frac{z_t^i}{\sqrt{y_t^i}+\varepsilon}\,\mathrm{d}t, \label{eq:cts-x}\\
\mathrm{d}z_t^i &= c_a(t)\bigl(\partial_{x_i}f(x_t)-z_t^i\bigr)\,\mathrm{d}t
                 + c_a(t)\sigma\,\mathrm{d}B_t^i, \label{eq:cts-z}\\
\mathrm{d}y_t^i &= c_b(t)\bigl(-y_t^i + (\partial_{x_i}f(x_t))^2 + \sigma^2\bigr)\,\mathrm{d}t, \label{eq:cts-y}
\end{align}
for $i=1,\dots,m$, with initial condition at time $\delta$ given by the law of the limit of $(X^h(\delta),Z^h(\delta),Y^h(\delta))$.  The time-dependent coefficients $c_a(t)$ and $c_b(t)$ are defined in \eqref{scalla++}. Moreover, the limiting process is uniquely defined for all $t>0$, and the convergence holds on every compact interval $[\delta,T]\subset(0,\infty)$.
\end{theorem}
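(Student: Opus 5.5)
We prove Theorem \ref{Thm1} with the classical diffusion-approximation scheme for Markov chains, in the form of Stroock--Varadhan / Ethier--Kurtz: (a) well-posedness of the limit SDE, (b) tightness of the interpolations on $[\delta,T]$, (c) convergence of one-step conditional drifts and covariances to the coefficients of \eqref{eq:cts-x}--\eqref{eq:cts-y}, with vanishing higher moments, and (d) identification of limit points through the martingale problem. The reason for working on $[\delta,T]$ is that $c_a(t),c_b(t)\sim 1/t$ as $t\downarrow0$. Also, for $k$ small the exact prefactors $(1-\alpha)/(1-\alpha^{k+1})$ are of order one rather than $O(h)$; for instance, at $k=0$ the factor equals $1$, so $z_1=\nabla f(x_0)+\xi_0$ is of size $h^{-1/2}$. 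So the initial layer must be handled separately.

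\emph{Step 1 (limit equation).} For $t\ge\delta$ the coefficients $c_a,c_b$ are smooth and bounded. By (A1), $\nabla f$ is locally Lipschitz with suitable growth. The $y$-equation is linear with source $(\partial_{x_i}f)^2+\sigma^2\ge\sigma^2$, so any solution started with $y^i_\delta\ge0$ satisfies $y^i_t\ge\min(y^i_\delta,\sigma^2)$, bounded below away from $0$ on $[\delta,T]$ once positive. Moreover $|z^i/(\sqrt{y^i}+\varepsilon)|\le|z^i|/\varepsilon$, so the drift of $x$ grows linearly in $z$. The $z$-equation is an OU-type equation driven by $\nabla f(x)$. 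A Lyapunov function $V=|x|^2+|z|^2+|y|$ (or $1+f(x)+|z|^2+|y|$ if (A1) only gives local Lipschitz bounds on $\nabla f$) shows non-explosion. Local Lipschitz coefficients then give pathwise uniqueness and a unique strong solution. Because the noise is additive, Yamada--Watanabe is not even needed. Existence for all $t>0$ follows by letting $\delta\downarrow0$ and using uniqueness on overlaps.

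\emph{Step 2 (initial layer and tightness).} Fix $k_\delta=\lfloor\delta/h\rfloor$. Unrolling the bias-corrected recursions gives $z_k=\sum_{j<k}w^{(k)}_j(\nabla f(x_j)+\xi_j)$ with normalized weights $w^{(k)}_j=(1-\alpha)\alpha^{k-1-j}/(1-\alpha^k)$. For $k\ge k_\delta$ one has $\sum_j(w_j^{(k)})^2h^{-1}=O(1)$, hence $\mathbb E|z_k|^2=O(1)$. The same weighted-average representation gives $y_k$ bounded above and below by $\sigma^2$-type constants (the closure makes $y$ deterministic given $x$). For $k<k_\delta$ the $x$-increments are at most $\gamma h|z_{k+1}|/\varepsilon$, and $\sum_{k<k_\delta} h\,\mathbb E|z_{k+1}|$ is finite, of order $\sqrt{h}\log(1/h)+\delta$. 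So $x_{k_\delta}$ has bounded moments. A discrete Gronwall argument with a stopping time at $|x|\ge R$, combined with Lyapunov control from (A1), gives uniform moment bounds on $[\delta,T]$. Aldous' criterion then follows from the increment estimates $\mathbb E|\Delta x|\lesssim h$, $\mathbb E|\Delta y|\lesssim h$ and $\mathbb E|\Delta z|^2\lesssim h$ after the layer.

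\emph{Step 3 (generator convergence and identification).} Using \eqref{scalla++} uniformly on $[\delta,T]$, that is, $h^{-1}(1-\alpha)/(1-\alpha^{k+1})=c_a(t_k)+O(h)$, one computes the conditional drift $h^{-1}\mathbb E[\Delta z_k\mid\mathcal F_k]=c_a(t_k)(\nabla f(x_k)-z_k)+O(h)$ and the covariance $h^{-1}\mathrm{Cov}(\Delta z_k\mid\mathcal F_k)=c_a(t_k)^2\sigma^2 I+O(h)$. The $x$-increment uses $z_{k+1}$ rather than $z_k$; the difference contributes $O(h^{3/2})$ to the drift and averages out. The $y$-increment is deterministic under the closure \eqref{scale+}. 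Gaussian fourth moments give $\mathbb E|\Delta z_k|^4=O(h^2)$, so jumps vanish and limit points are continuous. For $\varphi\in C_c^\infty$ the processes $\varphi(X^h_t)-\int_\delta^t\mathcal L_s^h\varphi\,ds$ are approximate martingales, and passing to the limit identifies every limit point as a solution of the martingale problem for the time-dependent generator of \eqref{eq:cts-x}--\eqref{eq:cts-y}. Uniqueness from Step 1 gives convergence of the whole family. The stated initial condition arises along subsequences where $(X^h(\delta),Z^h(\delta),Y^h(\delta))$ converges in law, which is guaranteed by tightness. If that law is not unique a priori, one argues on each $[\delta',T]$ with $\delta'<\delta$ and uses consistency.

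\emph{Main obstacle.} The hard part is Step 2's initial layer. For $k\lesssim 1/h$ the exact bias-correction factors are not $O(h)$ and the noise is of size $h^{-1/2}$, so $z_k$ is huge at first. One must show that its effect on $x$ stays integrable and produces a nondegenerate but well-defined law at time $\delta$. I would handle this through the explicit weighted-average representation and the bound $\mathbb E|z_k|\lesssim (kh)^{-1/2}+\sup_{j\le k}\mathbb E|\nabla f(x_j)|$, closed by a bootstrap/Gronwall argument, together with (A1)-growth of $\nabla f$.
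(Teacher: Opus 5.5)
Your outline follows the same route as the paper's proof in Appendix~\ref{aaapp}. You work on $k\ge k_\delta$, where $\theta_k^{(a,h)}=h\,c_a(t_{k+1})+O(h^2)$ (and likewise for $b$), identify the conditional drifts and covariances, and conclude with a standard weak-convergence theorem plus uniqueness. The paper uses a martingale FCLT for $\sum_k\sqrt h\,c_a(t_{k+1})\sigma\zeta_k$ together with Kurtz--Protter, where you use the Ethier--Kurtz martingale problem; that difference is cosmetic. Your Step~2 goes beyond the paper. Lemma~\ref{lem:bc} holds uniformly only for $k\ge k_\delta$: at $k=0$ one has $\theta_0^{(a,h)}=1$ while $h\,c_a(h)=1+ah/2+O(h^2)$. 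The paper also never controls the first $\delta/h$ steps, and your unrolled representation does. (A small slip: with your bound $\mathbb E|z_{k+1}|\lesssim((k+1)h)^{-1/2}+\cdots$, the noise part of $\sum_{k<k_\delta}h\,\mathbb E|z_{k+1}|$ is of order $\sqrt\delta$, not $\sqrt h\log(1/h)$. This is harmless.)

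\textbf{The gap is the last step of Step~3.}
\begin{itemize}
\item Uniqueness for the SDE on $[\delta,T]$ identifies a limit point only once its law at time $\delta$ is fixed.
\item Tightness gives you only subsequential limits $\mu_\delta$ of $\mathrm{Law}(X^h(\delta),Z^h(\delta),Y^h(\delta))$. Nothing in your argument rules out different subsequences producing different $\mu_\delta$.
\item Passing to $[\delta',T]$ with $\delta'<\delta$ just moves the same ambiguity to $\delta'$, so the ``consistency'' remark is circular. What is needed is a unique entrance law at $0^+$. The SDE on $[\delta,T]$ cannot supply it, since $c_a,c_b$ blow up as $t\downarrow0$ and $z_t$ has no limit there.
\end{itemize}
So the convergence of the whole family, which the theorem asserts, is not established. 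The paper's proof has the same hole: it simply declares the state at $t_{k_\delta}$ to be the initial datum.

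\textbf{A fix using your own representation on $[0,\delta]$.}
\begin{itemize}
\item The first bias-corrected step erases $z_0$ and $y_0$.
\item For $k\ge1$, $y_k^i$ is a weighted average of $(\partial_{x_i}f(x_j))^2+\sigma^2\ge\sigma^2$.
\item One has $z_k=D_k+N_k$, with $D_k=\sum_{j<k}w^{(k)}_j\nabla f(x_j)$ and $N_k=\sum_{j<k}w^{(k)}_j\xi_j$. Here $N_k$ is a Gaussian array that does not depend on $x$, so $x$ solves a closed Volterra-type recursion driven by $N$ alone.
\item Couple $\zeta_j=h^{-1/2}(B_{(j+1)h}-B_{jh})$ and set $N_t=\frac{a\sigma}{1-e^{-at}}\int_0^te^{-a(t-s)}\,\mathrm dB_s$, which satisfies $\mathbb E\|N_t\|\lesssim t^{-1/2}$ and is integrable at $0$. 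Then $\int_0^T\|N^h_t-N_t\|\,\mathrm dt\to0$ and $N^h(\delta)\to N_\delta$ in $L^1(\Omega)$.
\item Since $y\ge\sigma^2$ and $\nabla f$ is Lipschitz, a Gronwall argument with weight $1+\|N_s\|$ shows two things. The solution map $N\mapsto x$ is continuous from $L^1([0,T])$ into $C([0,T])$, and the discrete solution maps converge to it.
\item Hence $(X^h(\delta),Z^h(\delta),Y^h(\delta))$ converges in law to a limit determined by $x_0$ alone. Your Step~3 then gives convergence of the full family on $[\delta,T]$, consistently in $\delta$.
\end{itemize}
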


\begin{remark}
The continuous-time system derived in Theorem~\ref{Thm1} should be interpreted as an effective stochastic model for Adam-type dynamics rather than as the literal scaling limit of the exact discrete recursion. The convergence result relies on the moment-closure approximation introduced in the second-moment equation. The relevance of the limiting system lies in the fact that it preserves the defining structural features of Adam: momentum through the variable $z$, adaptive preconditioning through $y$, and bias correction through the time-dependent coefficients $c_a(t)$ and $c_b(t)$. In the limit, stochasticity enters only through the momentum variable, while the second-moment tracker evolves deterministically with an additional drift $\sigma^2$ induced by gradient noise. Thus, the resulting system provides a tractable continuous-time representation of Adam-type dynamics that captures its essential mechanisms while enabling the study of long-time stochastic behavior.
\end{remark}

\begin{remark}\label{suppa}
Note that  at the boundary $y_t^i=0$ we have
\[
\dot y_t^i = c_b(t)\bigl((\partial_{x_i}f(x_t))^2+\sigma^2\bigr) \ge 0,
\]
so the nonnegative orthant $[0,\infty)^m$ is forward invariant.
\end{remark}

\begin{remark}
Since $c_a(t),c_b(t)\sim 1/t$ as $t\downarrow 0$, the limiting system is singular at $t=0$.
In particular, when $\sigma\neq 0$, the process $z_t$ does not admit a finite limit as $t\downarrow 0$, and the solution cannot in general be extended to $t=0$. The limit process is therefore naturally defined on $(0,T]$, and the convergence in the theorem holds on every interval $[\delta,T]$ with $\delta>0$. The family of laws is consistent as $\delta\downarrow 0$, but does not in general define a process with a finite initial condition at $t=0$.
\end{remark}
\begin{remark}
It is natural to ask whether an appropriate choice of initial data can remove the singularity at $t=0$. If $\sigma=0$ and the initial data satisfy
\[
z_0^i=\partial_{x_i}f(x_0),\qquad
y_0^i=(\partial_{x_i}f(x_0))^2,\qquad i=1,\dots,m,
\]
then the singularity at $t=0$ is removable, and the limiting system may be extended to a well-defined solution on $[0,T]$. If $\sigma\neq 0$, however, the stochastic term in the $z$-equation is too singular near $t=0$, and in general no finite initial condition yields a solution starting from $t=0$ in the usual sense.
\end{remark}

\section{Long-term behaviour and invariant measures}\label{Invar}
Having established Theorem~\ref{Thm1}, the rest of the paper is devoted to the long-time behavior of the
inhomogeneous system \eqref{eq:cts-x}-\eqref{eq:cts-y}, and to its time-homogeneous  counterpart. In particular, since $c_a(t)\to a$ and $c_b(t)\to b$ as $t\to\infty$, standard perturbation arguments for SDEs
(see Appendix \ref{LLD}) imply that the asymptotic dynamics of the system \eqref{eq:cts-x}-\eqref{eq:cts-y} is captured by the
time-homogeneous limit
\begin{align}
\mathrm{d} x_t^i &= -\gamma\,\frac{ z_t^i}{\sqrt{ y_t^i}+\varepsilon}\,\mathrm{d}t, \label{eq:cts-x+}\\
\mathrm{d} z_t^i &= a\bigl(\partial_{x_i}f( x_t)- z_t^i\bigr)\,\mathrm{d}t + a\sigma\,\mathrm{d}B_t^i, \label{eq:cts-z+}\\
\mathrm{d} y_t^i &= b\bigl(- y_t^i + (\partial_{x_i}f( x_t))^2 + \sigma^2\bigr)\,\mathrm{d}t, \label{eq:cts-y+}
\end{align}
for $i=1,\dots,m$. Assuming that $f$ satisfies \textnormal{(A1)} stated in Subsection~\ref{Subcond},
it follows that the system in  \eqref{eq:cts-x+}-\eqref{eq:cts-y+}
have unique strong solutions $(x_t, z_t, y_t)_{t\ge0}$ for every initial condition
$(x_0, z_0, y_0)$ with $y_0\in[0,\infty)^m$. We are interested in the long-time behaviour of the system in \eqref{eq:cts-x+}-\eqref{eq:cts-y+}  and in particular in the existence and uniqueness of invariant measures, and as well as the rate of convergence to equilibrium.

In our study of \eqref{eq:cts-x+}-\eqref{eq:cts-y+} we consider initial conditions $(x_0,z_0,y_0)$ drawn from a probability
distribution $\mu_0$ on $\R^{3m}$ supported on $\bar E$, $E:=\R^m\times\R^m\times(0,\infty)^m$.  Unless otherwise stated, we assume
\begin{equation}\label{intimeasure}
\mu_0\big(\bar E\big)=1
\quad\text{and}\quad
\mathbb{E}_{\mu_0}\!\big [V(x_0,z_0,y_0)\big]<\infty,
\end{equation}
where $V$ is a Lyapunov function constructed in Section~\ref{sec:lya}.  We will refer to such $\mu_0$ as an \emph{admissible initial probability distribution} on $\R^{3m}$. As proved in Section~\ref{sec:lya},
\begin{equation}\label{intimeasureapa}
\mathbb{E}_{\mu_0}\!\big [V(x_0,z_0,y_0)\big]<\infty\Leftrightarrow \iiint_E (\|x\|^2+\|z\|^2+\|y\|_1)\,\mu_0(\mathrm d x\mathrm d z\mathrm d y)<\infty.
\end{equation}

The law corresponding to $(x_t,z_t,y_t)$ in \eqref{eq:cts-x+}-\eqref{eq:cts-y+}, with initial distribution $\mu_0$, is
denoted by $\pi_t := \mu_0 P_t$, where $(P_t)_{t\ge0}$ is the Markov semigroup associated with
\eqref{eq:cts-x+}–\eqref{eq:cts-y+}. I.e.,  given the state space
$E$,
\begin{equation}\label{eq:pi_t_def}
\pi_t(A)
\;=\;
(\mu_0 P_t)(A)
\;=\;
\iiint_E P_t(x,z,y,A)\,\mu_0(\mathrm d x\mathrm d z\mathrm d y),
\qquad
A\in\mathcal B(E).
\end{equation}

As $y_0^i\ge 0$, we have $y_t^i\ge (1-e^{-bt})\sigma^2>0$ for all $t>0$, and the support of $\pi_t$ is contained in
$\R^m\times \R^m\times \bigl((1-e^{-bt_0})\sigma^2,\infty\bigr)^m$ for all $t> t_0$. In particular, if $t> t_\ast:=(\ln 2)/b$, then the support of $\pi_t$ is contained in
$\R^m\times \R^m\times \bigl(\sigma^2/2,\infty\bigr)^m$. We introduce the sets
\begin{equation}\label{intimeasurell}
E_{\eta}:=\R^m\times\R^m\times(\eta,\infty)^m
\end{equation}
for all $\eta\geq 0$ and we identify $E=E_0=\R^m\times\R^m\times(0,\infty)^m$.

We prove the following result concerning the existence and uniqueness of invariant measures, and the  convergence to equilibrium.  The total variation distance $\|\cdot\|_{\mathrm{TV}}$
and the $2$-Wasserstein distance $W_2$ are defined in
Section~\ref{Prelim}.

\begin{theorem}\label{CorC}
Assume that $f$ satisfies \textnormal{(A1)} and \textnormal{(A2)} from Subsection~\ref{Subcond}. Fix $a,b,\gamma,\sigma>0$ and $\varepsilon>0$,
and let $(P_t)_{t\ge0}$ denote the Markov semigroup associated with
\eqref{eq:cts-x+}-\eqref{eq:cts-y+}. Then there exists a unique invariant probability measure $\pi_\infty$ on $E$, and for every admissible initial distribution $\mu_0$ we have
\[
\mu_0 P_t \Rightarrow \pi_\infty
\qquad \text{as } t\to\infty .
\]
In particular, the limiting distribution does not depend on $\mu_0$. Moreover, there exist constants $C<\infty$, $\lambda>0$, and $t_\ast>0$,
independent of the admissible initial distribution $\mu_0$, such that
\begin{equation}\label{eq:V-geometric-ergodicity-revisedapa}
\|\mu_0P_t-\pi_\infty\|_{\mathrm{TV}}+\bigl (W_2(\mu_0P_t,\pi_\infty)\bigr)^2
\le
C e^{-\lambda (t-t_\ast)}
\bigl(1+\mathbb{E}_{\mu_0}\!\big[V(x_{t_\ast},z_{t_\ast},y_{t_\ast})\big]\bigr),
\qquad t\ge t_\ast.
\end{equation}
\end{theorem}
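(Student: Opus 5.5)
The approach is a Harris-type argument: a Lyapunov drift condition combined with a minorization on sublevel sets. I would run it for the time-$T$ skeleton chain $P_{T}$, started after the burn-in time $t_\ast=(\ln 2)/b$. From that time on the process lives in $E_{\sigma^2/2}$, so the preconditioner $1/(\sqrt{y^i}+\varepsilon)$ is bounded above and below on sets where $y$ is bounded.

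\textbf{Step 1 (Lyapunov function).} Construct $V$ of the form
\[
V=\kappa_1 f(x)+\tfrac12\|z\|^2-\kappa_2\langle x,z\rangle+\tfrac{\kappa_3}{2}\|x\|^2+\kappa_4\|y\|_1+1 ,
\]
with the constants tuned so that $\mathcal L V\le -cV+K$ on $E_{\sigma^2/2}$. Here $\mathcal L$ is the generator of \eqref{eq:cts-x+}--\eqref{eq:cts-y+}. The ingredients are the following.
\begin{itemize}
\item The $z$-equation gives $-a\|z\|^2$ plus the Itô correction $a^2\sigma^2 m$.
\item The $y$-equation gives $-b\|y\|_1+b\|\nabla f\|^2+bm\sigma^2$, and the $\|\nabla f\|^2$ term is absorbed using the growth bounds in (A1) and (A2).
\item The cross term $\langle x,z\rangle$ supplies $-\kappa_2 a\langle x,\nabla f(x)\rangle$, which is coercive by the dissipativity assumption (A2).
\end{itemize}
The difficulty is that the $x$-drift is weighted by the $y$-dependent factor $\gamma/(\sqrt{y^i}+\varepsilon)$. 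I would handle this by weighting the coupling with that factor, or by using $\|y\|_1$ to control the loss of coercivity when $y$ is large. This yields the equivalence \eqref{intimeasureapa} and $\sup_t\mathbb E[V]<\infty$. Existence of an invariant measure then follows by Krylov--Bogoliubov, using tightness of time averages.

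\textbf{Step 2 (hypoellipticity and smoothness).} On $(\R^m\setminus\mathcal D_A)\times\R^m\times(0,\infty)^m$, compute Lie brackets of the drift $X_0$ with the noise fields $\partial_{z_i}$:
\begin{itemize}
\item $[\partial_{z_i},X_0]$ produces a multiple of $\partial_{x_i}$ (times $\gamma/(\sqrt{y^i}+\varepsilon)$);
\item bracketing once more with $X_0$ produces $y$-directions with coefficients $2b\,\partial_{x_j}f\,\partial_{x_jx_i}f$, i.e.\ the entries of $A(x)$.
\end{itemize}
Hörmander's condition holds exactly where these coefficient rows span, which is guaranteed off $\mathcal D_A^\dagger$. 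The transition kernel therefore has a smooth density there, positive wherever the point is reachable.

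\textbf{Step 3 (controllability and minorization).} Using the Stroock--Varadhan support theorem, I would build explicit control paths $u\in L^2$ substituted for $\dot B$. The $x$-dynamics is second order through $z$, so any $x$-path can be realized by choosing $z$ and then $u$. The $y$-component is then driven by $(\partial_{x_i}f)^2$ along the path. Starting from any point of a sublevel set $\{V\le R\}$, the aim is to steer to a neighbourhood of a fixed target $p_\ast\notin\mathcal D_A^\dagger\times\R^m\times(0,\infty)^m$ within a uniform time $T$. Such a point exists because $\mathcal D_A^\dagger\ne\R^m$. Positivity and lower semicontinuity of the density near $p_\ast$, together with compactness of $\{V\le R\}$ (by continuity of the support mass), give the minorization
\[
P_T(w,\cdot)\ge \epsilon\,\nu(\cdot)\quad\text{for all } w\in\{V\le R\}.
\]

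\textbf{Step 4 (conclusion).} The Harris theorem (Hairer--Mattingly form) for the skeleton, with drift $P_TV\le e^{-cT}V+K'$, gives geometric convergence in the $V$-weighted norm, and this extends to continuous time by the semigroup property. Uniqueness of the invariant measure and independence of $\mu_0$ follow. The total variation bound is immediate. For $W_2^2$ I would use the standard estimate $W_2^2\le C\|\mu-\nu\|_{V}$, which is valid since $V\gtrsim\|x\|^2+\|z\|^2+\|y\|_1\gtrsim$ the squared distance (with $y$ handled via $y^2\lesssim$ a larger moment, or by working with $V^2$ if needed). Applying the bound from time $t_\ast$ yields the factor $\mathbb E_{\mu_0}[V(\cdot_{t_\ast})]$ in \eqref{eq:V-geometric-ergodicity-revisedapa}.

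I expect the main obstacle to be Step 3. The control paths must reach the hypoelliptic region uniformly from every point of a sublevel set, including from points in $\mathcal D_A$ where the brackets degenerate. The $y$-coordinate is not directly controlled, so the target $y$-value must be reached indirectly through the choice of the $x$-trajectory. I would first try a two-stage path: move $x$ off $\mathcal D_A^\dagger$ quickly and hold it near a point $x_\ast$. While $x$ sits at $x_\ast$, $y$ relaxes exponentially toward $(\partial_{x_i}f(x_\ast))^2+\sigma^2$, which gives a reachable target set with open interior. The second most delicate point is closing the Lyapunov estimate in Step 1 given the $y$-dependent preconditioning.
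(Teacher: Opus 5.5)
\textbf{The gap is in Step 1, and it cannot be repaired by adjusting your $V$.} Under (A1)--(A2) no $V$ with $\hat c_1(\|x\|^2+\|z\|^2+\|y\|_1)-\hat c_2\le V\le c_1(\|x\|^2+\|z\|^2+\|y\|_1)+c_2$ satisfies $\mathcal LV\le -cV+K$. So neither reweighting the coupling by $\gamma/(\sqrt{y^i}+\varepsilon)$ nor the $\|y\|_1$ term can rescue it. The reason is that Adam's update is normalized. At the quasi-equilibrium states $z=\nabla f(x)$, $y_i=(\partial_{x_i}f(x))^2+\sigma^2$ one has $|\dot x^i|<\gamma$, and the $z$- and $y$-drift contributions to $\mathcal LV$ vanish. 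For your $V$ (and likewise for \eqref{V}) what remains is $-\gamma\sum_i\bigl((\kappa_1-\kappa_2)\partial_{x_i}f+\kappa_3x_i\bigr)\partial_{x_i}f/(\sqrt{y_i}+\varepsilon)+\tfrac12a^2\sigma^2m=O(\|x\|)$, not $-c\|x\|^2$.

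This is a dynamical obstruction, not an artefact of the choice of $V$. Take $f=\tfrac12\|x\|^2$, $x_0=z_0=Me_1$, $y_0=(M^2+\sigma^2)e_1+\sigma^2(\mathbf 1-e_1)$. Consider the event that the Ornstein--Uhlenbeck part of $z^1$ stays below $M/4$ up to time $M/(8\gamma)$; its probability tends to $1$. On it, a bootstrap gives $\sqrt{y^1_t}\ge M/2$ and $0<z^1_t\le 2M$, hence $|\dot x^1_t|\le 4\gamma$ and $x^1_t\ge M/2$ for all $t\le M/(8\gamma)$. This contradicts $\mathbb E V(X_t)\le e^{-ct}V(X_0)+K/c$. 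It also contradicts \eqref{eq:V-geometric-ergodicity-revisedapa} itself: the total variation distance to $\pi_\infty$ stays near $1$ for times of order $M$, while the right-hand side is $O(M^2e^{-\lambda M/(8\gamma)})$.

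The paper's Lemma~\ref{drift} asserts the inequality you need, but its proof does not deliver it. In \eqref{eq:T12} the factor $(a-\theta\gamma w_i)^2$ is bounded by $(a-\theta\gamma/\varepsilon)^2$ ``using $w_i\le\varepsilon^{-1}$''. That is the wrong direction, since $0\le\theta\gamma w_i<a$ and $w_i\to0$ as $y_i\to\infty$. Because return times from $\|x\|\approx M$ are of order $M$, any geometric rate has to be measured against a weight growing at least exponentially in $\|x\|$. Step 1 would need to be rebuilt around such a Lyapunov function, with the conclusion restated in that weight. With a quadratic $V$, only a subgeometric drift (at best $\mathcal LV\le -c\sqrt V+K$) is compatible with these dynamics. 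Step 4, including the $W_2$ bound, inherits the problem.

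\textbf{Step 3 is a legitimate and lighter alternative to the paper's argument, once stated correctly.} As written, ``positivity and lower semicontinuity of the density near $p_\ast$'' plus compactness does not give a common minorizing measure. Points of $\{V\le R\}$ may lie over $\mathcal D_A$, where there is no density regular in the starting variable.

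The paper handles this in two steps. It first reaches an open set $W'$ over $\R^m\setminus\mathcal D_A^\dagger$ with uniform probability (Lemma~\ref{LemJ1}). It then proves that every point of an open $W''$ is reachable from every point of $W'$ (Lemma~\ref{lem:access-xzy-openV-3stagefinal}, via the fixed-point argument of Lemma~\ref{keylem} and the inverse-function argument of Lemma~\ref{CdG+}), and uses joint continuity of the density on $W'\times W''$.

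The Mattingly--Stuart--Higham form of your idea avoids that second step:
\begin{itemize}
\item Set $p_\ast=(x_\ast,0,y_\ast)$ with $x_\ast\notin\mathcal D_A^\dagger$ and $y_\ast^i=(\partial_{x_i}f(x_\ast))^2+\sigma^2$.
\item Pick $t>0$ and $v^{\ast\ast}$ in the hypoelliptic region with $p_t(p_\ast,v^{\ast\ast})>0$. This is possible for small $t$, since the process started at $p_\ast$ stays in that open region with positive probability.
\item By joint continuity there are $\delta,\kappa>0$ with $p_t\ge\kappa$ on $B(p_\ast,\delta)\times B(v^{\ast\ast},\delta)$.
\item Steer every point of the compact set into $B(p_\ast,\delta)$ at a common time: follow the path of Lemma~\ref{LemJ1}, then hold $x\equiv x_\ast$, $z\equiv0$ until $y$ has relaxed to within $\delta$ of $y_\ast$, uniformly over the set.
\item The support theorem, Feller lower semicontinuity and compactness give a uniform hitting probability, and Chapman--Kolmogorov then gives the minorization.
\end{itemize}
Your assertions that the density is positive wherever a point is reachable, and that holding $x$ at $x_\ast$ yields a reachable set with open interior, are not justified, but this route does not need them. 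Finally, $\mathcal D_A^\dagger\ne\R^m$ is not free: it requires the degree/Sard argument of Lemma~\ref{Redu}.
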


\begin{corollary}\label{correa}
Assume the hypotheses of Theorem~\ref{CorC}. Let $\pi_\infty$ be the unique invariant probability measure associated to homogeneous system in \eqref{eq:cts-x+}-\eqref{eq:cts-y+} established in Theorem~\ref{CorC}. Let $X_t=(x_t,z_t,y_t)$ denote the unique strong solution of the inhomogeneous system \eqref{eq:cts-x}-\eqref{eq:cts-y}, started at time $\delta>0$ with admissible initial distribution $\mu_\delta$ on $E$. Then
\begin{equation}\label{converga}
\lim_{t\to\infty}\mathbb{P}_{\mu_\delta}(X_t\in A)
=
\pi_\infty(A),
\qquad \forall\, A\in\mathcal{B}(E).
\end{equation}
\end{corollary}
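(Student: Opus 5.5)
We treat the inhomogeneous system \eqref{eq:cts-x}-\eqref{eq:cts-y} as an asymptotically autonomous perturbation of \eqref{eq:cts-x+}-\eqref{eq:cts-y+}. The plan is to combine three ingredients: the uniform-in-$\mu_0$ exponential convergence of Theorem~\ref{CorC}, a uniform moment bound for the inhomogeneous flow, and a finite-horizon comparison between the two flows on late time windows. Write $Q_{s,t}$ for the (inhomogeneous) transition kernels of \eqref{eq:cts-x}-\eqref{eq:cts-y} and $P_t$ for the homogeneous semigroup. Since $c_a(t)-a=a e^{-at}/(1-e^{-at})$ and $c_b(t)-b$ decay exponentially, the generators $\mathcal L_t$ and $\mathcal L$ differ by $O(e^{-\min(a,b)t})$ times terms of order $|\partial f(x)-z|$, $|z|$, $|y|+|\nabla f|^2+\sigma^2$.

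\emph{Step 1 (uniform Lyapunov bound).} Apply $\mathcal L_t$ to the Lyapunov function $V$ from Section~\ref{sec:lya}. Using the drift inequality $\mathcal L V\le -cV+K$ proved there, together with the perturbation estimate $|(\mathcal L_t-\mathcal L)V|\le C e^{-\min(a,b)t}(1+V)$, we get $\mathcal L_tV\le -(c/2)V+K'$ for all $t\ge T_0$. The constant $C$ here uses the growth bounds on $\nabla f$ from (A1) and the quadratic structure of $V$ in \eqref{intimeasureapa}. On $[\delta,T_0]$ a Gronwall argument gives a crude bound. Hence $\sup_{t\ge\delta}\mathbb E_{\mu_\delta}[V(X_t)]<\infty$, using admissibility of $\mu_\delta$.

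\emph{Step 2 (window comparison).} Fix $\epsilon>0$. By Theorem~\ref{CorC} and Step 1, choose $S$ such that
\[
\|\nu P_S-\pi_\infty\|_{\mathrm{TV}}\le Ce^{-\lambda(S-t_\ast)}\Bigl(1+\sup_t\mathbb E[V(X_t)]+K''\Bigr)<\epsilon
\]
for every $\nu$ of the form $\mathrm{Law}(X_s)$. Here we use that $\mathbb E_\nu[V(x_{t_\ast},\cdot)]$ is controlled by $\mathbb E_\nu V$ through the homogeneous drift inequality. It remains to show
\[
\|\mathrm{Law}(X_s)Q_{s,s+S}-\mathrm{Law}(X_s)P_S\|_{\mathrm{TV}}\to0 \qquad\text{as } s\to\infty,\ S \text{ fixed}.
\]
Couple both processes with the same Brownian motion and the same initial point $X_s$. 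The noise coefficients differ: $c_a(t)\sigma$ versus $a\sigma$. A synchronous coupling therefore gives closeness in probability, namely
\[
\sup_{[s,s+S]}|X-\tilde X|\to0
\]
in probability, by a localized Gronwall argument with stopping at exit from $V$-sublevel sets and the moment bound of Step 1. This controls weak (bounded-Lipschitz) distance, not yet TV.

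\emph{Step 3 (upgrade to TV; main obstacle).} To pass from weak to TV we would use Girsanov. Rescale time in the $z$-equation, or alternatively compare the laws via the drift difference after absorbing the diffusion mismatch. Writing $c_a(t)\sigma\,dB=a\sigma\,d\big(\int (c_a/a)\,dB\big)$, the process $\int (c_a/a)\,dB$ is a Gaussian martingale with quadratic variation $\int (c_a/a)^2\,dt$, which differs from $t$ by $O(e^{-as})$ on the window. The cleaner route is therefore to replace the TV upgrade by regularization. Split $S=S_1+1$ and first compare in the weak topology on $[s,s+S_1]$. Then use the strong Feller property of $P_1$, which follows from hypoellipticity off $\mathcal D_A$ and the minorization established for Theorem~\ref{CorC}, to convert weak closeness into TV closeness of $\nu P_1$. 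For the final unit interval, control the TV distance between $Q$ and $P$ by Pinsker applied to the relative entropy of the path laws. The diffusion-coefficient mismatch is handled by an $O(e^{-as})$-close time change, and the drift mismatch gives entropy $\lesssim e^{-2\min(a,b)s}\,\mathbb E\int(1+V)$. Combining the three estimates gives
\[
\limsup_{t\to\infty}\|\mathbb P_{\mu_\delta}(X_t\in\cdot)-\pi_\infty\|_{\mathrm{TV}}\le2\epsilon,
\]
which yields \eqref{converga}. The delicate point is exactly this weak-to-TV passage. If the strong Feller property is unavailable uniformly, we fall back to proving convergence for $\pi_\infty$-continuity sets and extending via the minorization structure.
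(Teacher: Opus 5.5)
Your Steps 1--2 are sound and follow the paper's route. The paper proves the corollary through Lemma \ref{thm:same-equilibrium}: a uniform moment bound for the inhomogeneous flow, the synchronous-coupling window comparison of Proposition \ref{Proppa} (mean-square error $O(e^{-2\min\{a,b\}s})$), and identification of the limit with $\pi_\infty$. Your way of closing the argument is cleaner than the paper's. You apply the bound of Theorem \ref{CorC} with $\mu_0=\mathrm{Law}(X_s)$, uniformly in $s$ thanks to Step 1. The paper instead uses a limit-point argument that asserts, without justification, that one can pick $t_n$ with $\nu_{t_n}$ and $\nu_{t_n+t}$ converging to the \emph{same} limit. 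However, Steps 1--2 only give $\mathrm{Law}(X_t)\Rightarrow\pi_\infty$, i.e.\ \eqref{converga} for $\pi_\infty$-continuity sets. That is also the conclusion of the paper's Lemma \ref{thm:same-equilibrium}.

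The gap is Step 3, the upgrade to total variation that the literal claim for every $A\in\mathcal B(E)$ requires. Neither of your tools is available.

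First, the strong Feller property of $P_1$ does not follow from what is proved. Hypoellipticity is established only off $\mathcal D_A\times\mathbb R^m\times(\mathbb R_+)^m$. That set contains every point whose $x$-component has some $\partial_{x_j}f(x)=0$, and it may have nonempty interior, since flat regions of $f$ are compatible with (A1)--(A2). A minorization on sublevel sets says nothing about continuity of $u\mapsto P_t(u,\cdot)$ in total variation; Harris' theorem delivers TV convergence precisely without strong Feller.

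Second, the Pinsker/Girsanov step fails outright. The mismatch $c_b(t)-b$ lies in the $y$-drift, outside the range of the noise. For a given $(x,z)$-path, the $y$-path therefore solves different ODEs under $Q$ and $P$, so the path laws are not absolutely continuous and the relative entropy is $+\infty$. Likewise, $c_a(t)\sigma\neq a\sigma$ makes the quadratic variations of $z$ differ. The time change that equalizes them moves the factor $a^2/c_a(t)^2$ into the $x$- and $y$-drifts, again outside the noise directions.

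A genuine TV comparison of $Q_{s,s+S}$ and $P_S$ needs smoothing in the degenerate directions. For example, in the Duhamel formula $Q_{s,s+S}\varphi-P_S\varphi=\int_0^S Q_{s,s+r}(\mathcal L_{s+r}-\mathcal L)P_{S-r}\varphi\,\mathrm dr$, with $\mathcal L_t$ your inhomogeneous generator, you would need bounds on $\nabla_yP_r\varphi$ for bounded measurable $\varphi$. That is exactly what can fail on the degenerate set. Your fallback (``extend from continuity sets via the minorization'') is not an argument, because weak convergence does not imply setwise convergence on all Borel sets. As it stands, your proposal proves weak convergence, not \eqref{converga} for arbitrary Borel $A$.
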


\begin{remark}
Corollary~\ref{correa} is included to emphasize that the study of invariant measures for the system \eqref{eq:cts-x+}-\eqref{eq:cts-y+} provides a natural framework for understanding the long-term behavior of the inhomogeneous system \eqref{eq:cts-x}-\eqref{eq:cts-y}. At the same time, within the present analysis, Corollary~\ref{correa} follows directly from Theorem~\ref{CorC}, see Lemma~\ref{thm:same-equilibrium} in the appendix.
\end{remark}
\begin{remark}
Although we say that $\pi_\infty$ is an invariant probability measure on $\R^{3m}$, recall that its support lies in
$\R^m\times\R^m\times (\sigma^2/2,\infty)^m$. On this set, assuming \textnormal{(A1)},
the coefficients of \eqref{eq:cts-x+}–\eqref{eq:cts-y+} are $C^\infty$.
\end{remark}

To put Theorem~\ref{CorC} into perspective we note that while the existence of invariant probability measures for dissipative stochastic
dynamical systems can often be established by relatively soft arguments,
the \emph{uniqueness}  and quantitative rates of convergence
to equilibrium are typically considerably more delicate.
Uniqueness generally requires some form of irreducibility or mixing,
which in turn depends on subtle properties of how the noise propagates
through the system.
Establishing exponential convergence further demands a quantitative version
of this mixing, often formulated in terms of minorization,
Doeblin-type conditions, or suitable coupling or Lyapunov-Harris structures.

An important analytical tool in the study of the ergodic properties of SDEs and Markov processes
is the \emph{infinitesimal generator}
together with its the adjoint and the associated
Fokker-Planck (forward Kolmogorov) equation. It is a general principle that if every probability solution of the
Fokker-Planck equation possesses a continuous strictly positive density
with respect to Lebesgue measure, then the corresponding diffusion process
admits at most one invariant probability measure.
For systems with smooth coefficients, the existence of such a density is ensured
either by uniform ellipticity of the diffusion matrix or,
more generally, by Hörmander's bracket condition and the theory of hypoellipticity.

While hypoellipticity ensures the existence of a smooth transition
density,  establishing positivity of
the density is also central. This typically relies on the Stroock-Varadhan support
theorem, see \cite{StroockVaradhan1972,StroockVaradhan1979}, which connects the SDE with its associated skeleton ODE and the reachable sets. In the case of a global strong Hörmander condition (and hence global hypoellipticity),
frequently assumed in the literature, this issue is largely unproblematic
thanks to the Chow-Rashevskii theorem.

In our case, the {infinitesimal generator} ${\mathcal L}$ associated to \eqref{eq:cts-x+}-\eqref{eq:cts-y+}, see
\eqref{generator}, contains a  lower order drift term and Hörmander's bracket condition (taking iterative commutators between the diffusion directions and the first order vector field defined by the lower order terms in ${\mathcal L}$) fails
on $\mathcal D_A\times\mathbb R^m\times(\mathbb R_+)^m$. Here  $\mathcal D_A$ is the set defined in \eqref{matt+++} below and this set contains the ``flat'' directions of $f$ where the $y$-variables decouple from
the noisy coordinates, see Subsection~\ref{Hypoellipticity}. We believe that this lack of global hypoellipticity,  and the subsequent need to construct suitable
control paths,  make Theorem~\ref{CorC} and its proof non-trivial and interesting.

\subsubsection{Proof of Theorem~\ref{CorC}} In the context of quantitative mixing and minorization, the notions of \emph{petite} and \emph{small} sets play a central role, see Subsection~\ref{small}
for the precise definitions. In our case, a set $C\in\mathcal B(E)$ can be
shown to be small if there exist $t_0>0$, $\eta>0$, and a set $W\in\mathcal B(E)$ with finite,
positive Lebesgue measure, such that
\[
\inf_{(x,z,y)\in C} P_{t_0}\bigl((x,z,y),A\bigr) \geq \eta {\operatorname{Leb}(A \cap W)}/{\operatorname{Leb}(W)},\qquad \forall A\, \in\mathcal B(E).
\]
 In particular, there exists a time $t_0>0$ such that the process
$(x_t,z_t,y_t)$, when started from any initial condition in $C$, assigns
a uniformly positive probability to the set $W$, with a lower bound
independent of the initial point in $C$.

Having constructed the Lyapunov function in Section~\ref{sec:lya}, we show in
Theorem~\ref{Thm3-revised-} below that the proof of Theorem~\ref{CorC} can be reduced to verifying that the compact sets $\{C_R\}$, $C_R=\hat C_R\cap E_{\sigma^2/2}$, where \(\hat C_R\) denotes a level set of the Lyapunov function, are small. In the literature, see for instance
\cite{MattinglyStuartHigham2002,HerzogMattingly2015} and the references
therein, the standard route to proving that a set \(C\) is small
is, as mentioned above, through hypoellipticity together with the Stroock-Varadhan support
theorem. In particular, the construction of suitable
control paths plays a central role. Consequently, the  main themes in the proof of Theorem~\ref{CorC} is overcoming the lack of global hypoellipticity,  and the construction of control paths.

In the Lie bracket analysis of ${\mathcal L}$ associated to \eqref{eq:cts-x+}-\eqref{eq:cts-y+}, the matrix
\begin{equation}\label{matt}
A(x):=\mathrm{Diag}(\nabla f(x))\,H_f(x),
\end{equation}
where $H_f(x)$ denotes the Hessian of $f$, arises naturally. As shown in Subsection~\ref{Hypoellipticity},
hypoellipticity for our system and ${\mathcal L}$ may fail on the closed set
\begin{equation}\label{matt+}
\mathcal D_H
:=
\mathcal D_A
\times\mathbb R^m\times(\mathbb R_+)^m,
\end{equation}
where \begin{equation}\label{matt+++}
\mathcal D_A:=
\{x\in\mathbb R^m:\ e_j^\top A(x)=0
\ \mbox{ for some } j\}.
\end{equation}
Therefore, as
\begin{equation}\label{matt+++a}
\{x\in\mathbb R^m:\ \partial_{x_j}f(x)=0
\ \mbox{ for some } j\}\subset\mathcal D_A,
\end{equation}
hypoellipticity may fail at the critical points of $f$.

To compensate for the lack of global hypoellipticity we also work with the set
\begin{equation}\label{dagga}
\mathcal D_A^\dagger:=\{x\in\R^m:\ \det A(x)=0\}.
\end{equation}
Since a matrix with a zero row is necessarily singular, we always have the inclusion
\begin{equation}\label{Inclu}
\mathcal D_A \subset \mathcal D_A^\dagger.
\end{equation}
In general the inclusion in \eqref{Inclu} is strict as a matrix may have no zero row and still fail to have full rank. $\mathcal D_A$ detects \emph{coordinate degeneracy} (loss of at least one forcing direction),
while $\mathcal D_A^\dagger$ detects \emph{rank degeneracy} (failure of the forcing directions
to span $\R^m$). Equivalently,
\[
x\notin\mathcal D_A
\quad\Longleftrightarrow\quad
\text{no row of }A(x)\text{ vanishes},
\]
whereas
\[
x\notin\mathcal D_A^\dagger
\quad\Longleftrightarrow\quad
A(x)\text{ is invertible}.
\]
The latter condition  guarantees,
via the inverse function theorem, local openness of the map
\begin{equation}\label{aaaa+}
x\mapsto\big((\partial_{x_1}f(x))^2,\dots,(\partial_{x_m}f(x))^2\big),
\end{equation}
a map which is present in the dynamics of $y$.

An important insight is that if $f$ satisfies \textnormal{(A1)} and \textnormal{(A2)}, then
\begin{equation}\label{Remma1+}
\mathcal D_A^\dagger \neq \mathbb R^m.
\end{equation}
This is a consequence of an argument based on topological degree and Sard’s theorem, see Lemma \ref{Redu}  below. To prove Theorem~\ref{CorC} we exploit \eqref{Remma1+}  to construct suitable control paths. Indeed,  since \(\mathcal D_A^\dagger\) is closed, \eqref{Remma1+} implies the
existence of an open ball
\[
B(x_\ast,r)\subset \mathbb R^m\setminus\mathcal D_A^\dagger .
\]
In the control argument we then first steer points in \(C_R\) into an open and bounded set
\[
B(x_\ast,r)\times Z\times Y,
\]
on which the Hörmander bracket condition holds.
In fact, this step relies only on the weaker condition
\(B(x_\ast,r)\subset\mathbb R^m\setminus\mathcal D_A\). Second, we exploit the stronger condition
\(B(x_\ast,r)\subset\mathbb R^m\setminus\mathcal D_A^\dagger\)
in a core controllability argument underlying the proof of
Theorem~\ref{CorC}.
This controllability argument consists of a construction of a controlled
ODE skeleton, which allows us to apply the Stroock-Varadhan support
theorem and deduce positivity of the transition density between certain
sets.
A key difficulty in this construction is to control the trajectory of the
ODE in the \(y\)-variables and here the local openness of the map
\eqref{aaaa+} plays a crucial role. In this way we show, using the  geometric nondegeneracy
encoded in \(\mathcal D_A^\dagger\neq \R^m\), that the diffusion is sufficiently
irreducible to guarantee smoothing, a minorization condition, and hence
exponential ergodicity.

\subsection{Organization of the rest of the paper} In Section~\ref{Prelim} we collect notation, state the standing assumptions on \(f\), we introduce the Fokker-Planck equation and we state the Stroock-Varadhan support theorem in our context. In Subsection~\ref{Hypoellipticity} we discuss hypoellipticity of the infinitesimal generator and Fokker-Planck equations associated to \eqref{eq:cts-x+}-\eqref{eq:cts-y+}.

Existence and uniqueness of invariant measures for SDEs are classical topics, e.g., see the monographs \cite{Hasminskii2012,MeynTweedie2009,BogachevKrylovRoeckner2015,BakryGentilLedoux2014,Villani2009}
and the foundational works \cite{Doob1953,StroockVaradhan1979,Hormander1967,LindvallRogers1986,Eberle2016,Eberle2019,HairerMattingly2011b,MattinglyStuartHigham2002}.
A common approach is via a \emph{Foster–Lyapunov} drift condition formulated in terms of a Lyapunov function \(V\).
Section~\ref{sec:lya} is devoted to the construction of such a function \(V\) adapted to our dynamics under the assumptions on \(f\).

In  Section~\ref{aaaa} we state and prove  Theorem~\ref{Thm3-revised-}. Theorem~\ref{Thm3-revised-} is an auxiliary result which reduces the proof of Theorem \ref{CorC} to verifying that
the compact sets $\{C_R\}$ are small. The proof of Theorem~\ref{Thm3-revised-} is based on a
continuous-time Harris-Meyn-Tweedie argument.   In general, several standard approaches are available for proving uniqueness and
ergodicity of invariant measures. If the semigroup $(P_t)_{t\ge0}$ is strong
Feller and the process is irreducible, then there exists at most one
invariant probability measure \cite{Doob1953}. Strong Feller regularity
typically follows under uniform ellipticity or from Hörmander’s theorem
\cite{Hormander1967}, while irreducibility can be established using support
theorems \cite{StroockVaradhan1979}. In situations where neither uniform ellipticity nor a direct application of
Hörmander’s condition is available  uniqueness can
still be obtained, together with geometric ergodicity, by combining a Lyapunov
drift condition with a minorization (small set) condition and invoking
Harris’ theorem \cite{MeynTweedie2009,HairerMattingly2011b}. We follow this
approach in the proof of Theorem~\ref{Thm3-revised-} as we in the statement of the theorem assume that the sets $\{C_R\}$ are small.

Theorem~\ref{CorC} is proved in Section~\ref{sec:proof-CorC} where we prove that the sets $\{C_R\}$ are small. Once this is done we can invoke Theorem~\ref{Thm3-revised-}. Verifying that the sets $\{C_R\}$ are small is perhaps the most involved part of the paper due to the delicate construction of controlled
ODE skeletons, which allows us to apply the Stroock-Varadhan support
theorem to deduce positivity of the transition density between certain
sets. In our case, the lack of global hypoellipticity makes the construction of suitable
control paths central and non-trivial.

 Section \ref{Conc}  is devoted to some concluding remarks and topics for future research.

At the very end of the paper we have include three appendices, Appendix \ref{aaapp}-\ref{HMI}.
Appendix \ref{aaapp} contains the proof of Theorem~\ref{Thm1}. In Appendix \ref{LLD} we give the details concerning the long-term dynamics for the inhomogeneous system \eqref{eq:cts-x}-\eqref{eq:cts-y}, and Appendix \ref{HMI} is devoted to higher moments and integrability for solutions to the homogeneous system in \eqref{eq:cts-x+}-\eqref{eq:cts-y+}. For experts, there are no surprises in these appendices, and the arguments are fairly standard. The material is included for completeness but could perhaps be omitted.

\section{Preliminaries}\label{Prelim}
Throughout, $\|\cdot\|$ denotes the standard Euclidean $\ell^2$-norm on $\mathbb{R}^d$, while $\|\cdot\|_p$ denotes the $\ell^p$-norm. Sets will in general be open and in particular $B(\cdot,\rho)\subset\R^m$ will denote an open ball in $\R^m$.

For two probability measures $\mu,\nu$ on a measurable state space $(E,\mathcal B(E))$, the
\emph{total variation distance} is defined by
\[
\|\mu-\nu\|_{\mathrm{TV}}
:= \sup_{A \in \mathcal B(E)} |\mu(A)-\nu(A)|
= \frac12 \sup_{\|f\|_\infty \le 1} \left| \int_E f \,\mathrm d\mu - \int_E f \,\mathrm d\nu \right|.
\]
If $\mu,\nu$ are probability measures on $\R^{3m}$ with finite second moments, their $2$-Wasserstein distance is defined by
\[
W_2(\mu,\nu)
:= \inf_{\pi \in \Pi(\mu,\nu)}
\left( \iint_{\R^{3m}\times\R^{3m}} \|u-v\|^2 \,\pi(\mathrm{d}u,\mathrm{d}v) \right)^{1/2},
\]
where $\Pi(\mu,\nu)$ denotes the set of all \emph{couplings} of $\mu$ and $\nu$, i.e.\ probability measures
$\pi$ on $\R^{3m}\times\R^{3m}$ with marginals $\mu$ and $\nu$,
\[
\pi(A\times \R^{3m}) = \mu(A), \qquad
\pi(\R^{3m}\times B) = \nu(B),
\quad A,B \subseteq \R^{3m}\ \text{measurable}.
\]
Intuitively, a coupling describes a joint distribution of two random variables with given laws $\mu$ and $\nu$, and the Wasserstein distance quantifies the minimal expected transport cost needed to move one distribution into the other.

\subsection{Assumptions on $f$} \label{Subcond}
Our main results are established under structural assumptions on the objective function $f$ to be minimized. These conditions are standard in optimization and stochastic analysis, but we recall them here for clarity.

\medskip
\noindent\textbf{(A1) Global $L_f$-smoothness.}
\begin{equation*}
f \in C^2(\R^m)\, (f \in C^\infty(\R^m))
\quad\text{and}\quad
\|\nabla f(x)-\nabla f(\bar x)\| \le L_f \|x-\bar x\|
\quad \forall\, x,\bar x \in \R^m.
\end{equation*}
Equivalently, the Hessian is uniformly bounded
\[
\|\nabla^2 f(x)\|_{\mathrm{op}} \le L_f, \qquad \forall x \in \R^m.
\]
Global smoothness controls the curvature of $f$ and rules out excessively steep growth at infinity. In particular,  $f$ can grow at most \emph{quadratically} in $\|x\|$. More precisely, for any $x \in \R^m$,
\[
f(x) \;\le\; f(0) + \nabla f(0)\cdot x + \frac{L_f}{2}\|x\|^2,
\]
so $f(x) = O(\|x\|^2)$ as $\|x\|\to\infty$. Functions with super-quadratic growth (e.g.\ $f(x) = \|x\|^4$) are not globally $L$-smooth, since their Hessians become unbounded at infinity. $f \in C^2(\R^m)$ is sufficient for this discussion but when discussing hypoellipticity the assumption  $f \in C^\infty(\R^m)$ is the correct one.

\begin{remark}
While (A1) is a strong condition, it greatly simplifies the analysis of the continuous-time dynamics. Frequently one can replace global Lipschitz continuity of the gradient by \emph{local} Lipschitz continuity together with standard linear growth bounds. This weaker setting still ensures existence and uniqueness of solutions to the limiting SDEs but complicates the ergodic analysis.
\end{remark}

\noindent\textbf{(A2$''$) Global $m_f$–strong convexity.}
\begin{equation}\label{eq:dissipativity-}
\exists\, m_f>0 \ \text{ such that }\
\langle \nabla f(x)-\nabla f(\bar x),\,x-\bar x\rangle
\;\ge\; m_f \|x-\bar x\|^2,
\quad \forall\, x,\bar x\in\R^m.
\end{equation}
If $f\in C^2$, this is equivalent to a uniform positive lower bound on the Hessian,
\[
\nabla^2 f(x) \ \succeq\ m_f I_m, \qquad \forall x\in\R^m,
\]
so that $f$ is \emph{$m_f$–strongly convex}.
Strong convexity excludes flat directions and saddle points. It implies that $f$ grows \emph{at least quadratically} at infinity as
\[
f(x) \;\ge\; f(0) + \frac{m_f}{2}\|x\|^2 - C,
\]
for some constant $C$. Thus $f(x)\to\infty$ as $\|x\|\to\infty$, with quadratic lower growth.

\smallskip
\noindent\textbf{(A2$'$) Global dissipativity (one–sided coercivity).}
There exist constants $m_\infty>0$ and $C_\infty\ge 0$ such that
\begin{equation}\label{eq:dissipativity}
\langle \nabla f(x),\,x\rangle
\;\ge\; m_\infty \|x\|^2 - C_\infty,
\qquad \forall x\in\R^m.
\end{equation}
This condition is weaker than strong convexity as it only requires $f$ to dominate a quadratic \emph{at infinity}, without ruling out nonconvexity in bounded regions.
In particular, (A2$'$) guarantees that $f$ is \emph{coercive}, i.e.\ $f(x)\to\infty$ as $\|x\|\to\infty$, so level sets of $f$ are compact.

\smallskip
\noindent\textbf{(A2) Dissipativity at infinity.}
There exist constants $m_f>0$, $c\ge0$, and $R\ge0$ such that
\begin{equation}\label{D}
\langle \nabla f(x),\,x\rangle
\;\ge\; m_f \|x\|^2 - c,
\qquad \text{for all }\|x\|\ge R.
\end{equation}
This condition only enforces quadratic growth for sufficiently large $\|x\|$.
It allows for nonconvex behavior (e.g.\ saddle points, flat regions, multiple local minima) within a bounded domain, while still ensuring that $f(x)\to\infty$ as $\|x\|\to\infty$.

\begin{remark}
If $f\in C^2$ is both \emph{$L_f$-smooth} (A1) amd \emph{strongly convex} (A2$''$), then $f$ satisfies the classical $(L_f,m_f)$-condition.
\end{remark}
\begin{remark}
The growth conditions form a hierarchy
\[
\text{(A2$''$)} \;\implies\; \text{(A2$'$)} \;\implies\; \text{(A2)}.
\]
In particular, (A2$''$) enforces global quadratic curvature (uniform convexity), (A2$'$) ensures quadratic growth at infinity but permits local nonconvexity, (A2) is the weakest condition, requiring only eventual quadratic growth with no restriction on bounded regions.
\end{remark}

\begin{remark} If $f$ satisfies \textnormal{(A2)}, the (pathological)
situation that $f$ is independent of one or more of the coordinates $(x_1,..,x_m)$ can  not occur.
\end{remark}

While the following result may not be stated in this exact form in the literature, it follows from standard arguments combining topological degree and Sard’s theorem.

\begin{lemma}\label{Redu}
Assume that $f$ satisfies \textnormal{(A1)} and \textnormal{(A2)}, and let $\mathcal D_A^\dagger$ be defined as in \eqref{dagga}. Then
\[
\mathcal D_A^\dagger \neq \mathbb R^m.
\]
\end{lemma}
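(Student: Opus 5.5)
The plan is to reduce the claim to finding a single point $x$ where both factors of $\det A(x)$ are nonzero. Since
\[
\det A(x)=\det\mathrm{Diag}(\nabla f(x))\,\det H_f(x)=\Bigl(\prod_{i=1}^m\partial_{x_i}f(x)\Bigr)\det H_f(x),
\]
it suffices to find $x\in\mathbb R^m$ such that $\partial_{x_i}f(x)\neq0$ for every $i$ and $H_f(x)$ is invertible. Writing $F:=\nabla f$, which is a $C^1$ map $\mathbb R^m\to\mathbb R^m$ by (A1), I will look for a value $p\in\mathbb R^m$ with three properties:
\begin{itemize}
\item $p$ is a regular value of $F$;
\item every coordinate of $p$ is nonzero;
\item $F^{-1}(p)\neq\emptyset$.
\end{itemize}
Any $x\in F^{-1}(p)$ then has $DF(x)=H_f(x)$ invertible and $\partial_{x_i}f(x)=p_i\neq0$ for all $i$, so $x\notin\mathcal D_A^\dagger$.

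\emph{Step 1: surjectivity near the origin via degree.} Fix $\rho>0$. Using (A2), choose $R$ so large that $m_fR^2-c-\rho R>0$ and $R^2-\rho R>0$. For $\|p\|<\rho$ and $t\in[0,1]$, consider the homotopy $H_t(x)=(1-t)x+tF(x)-p$. For $\|x\|=R$ we have
\[
\langle H_t(x),x\rangle\ \ge\ (1-t)R^2+t\,(m_fR^2-c)-\rho R\ >\ 0 .
\]
Hence $H_t$ has no zeros on $\partial B(0,R)$. By homotopy invariance of the Brouwer degree,
\[
\deg(F,B(0,R),p)=\deg(\mathrm{id},B(0,R),p)=1 .
\]
Therefore $p\in F(B(0,R))$ for every $p\in B(0,\rho)$.

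\emph{Step 2: choosing $p$ by Sard's theorem.} Since $F\in C^1$ maps between spaces of equal dimension, Sard's theorem applies, and the set of critical values has Lebesgue measure zero. The set $U:=\{p\in B(0,\rho):p_i\neq0\ \forall i\}$ is open and nonempty, so it has positive measure. It therefore contains a regular value $p$. Step 1 gives $x$ with $\nabla f(x)=p$, and by the reduction above $\det A(x)\neq0$.

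The main point needing care is Step 1: the boundary estimate must be uniform in $t$ and in $\|p\|<\rho$. This is exactly where (A2), dissipativity at infinity, is used, with $R$ chosen to dominate both $c$ and $\rho$. Alternatively, one can avoid degree theory by working with critical points: $f$ is coercive, so $x\mapsto f(x)-\langle p,x\rangle$ attains a minimum for $\|p\|$ small, and that minimizer satisfies $\nabla f(x)=p$. This gives the same surjectivity onto $B(0,\rho)$. The rest of the argument is routine.
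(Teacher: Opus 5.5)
Your proposal is correct and follows essentially the same route as the paper. In both, a degree argument (homotopy from $\nabla f - p$ to $x\mapsto x-p$ on a large sphere, using (A2)) shows that $\nabla f$ maps onto a ball around the origin, and Sard's theorem then gives a regular value with all coordinates nonzero; the paper merely phrases this as a proof by contradiction. Your boundary estimate is explicitly uniform in $t$ and $p$ (with $R$ also taken beyond the threshold in (A2)), which is slightly cleaner than the paper's. Your variational alternative also works, and for every $p$, since (A2) forces quadratic growth of $f$ and hence coercivity of $f-\langle p,\cdot\rangle$.
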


\begin{proof}
Assume, for contradiction, that $\mathcal D_A^\dagger=\mathbb R^m$. Then
\[
0=\det A(x)
=\det(\Diag(\nabla f(x)))\det(H_f(x))
=\Bigl(\prod_{i=1}^m \partial_{x_i} f(x)\Bigr)\det(H_f(x))
\qquad \forall x\in\mathbb R^m.
\]
Set $F:=\nabla f:\mathbb R^m\to\mathbb R^m$. By \textnormal{(A2)}, there exists $R>0$ such that
\[
\langle F(x),x\rangle >0 \qquad \text{for all }x\in \partial B(0,R).
\]
In particular, $F(x)\neq 0$ on $\partial B(0,R)$. Moreover, for every $y\in B(0,\epsilon)$, $\epsilon$ sufficiently small, we also have
\[
\langle F(x)-y,x\rangle >0 \qquad \text{for all }x\in \partial B(0,R),
\]
and hence $F(x)-y\neq 0$ on $\partial B(0,R)$. Therefore, for all $y\in B(0,\epsilon)$, the Brouwer degree
\[
\deg(F-y,B(0,R),0):=\sum_{x\in (F-y)^{-1}(0)} \operatorname{sign}(\det DF(x))
\]
is well-defined and equals the number of solutions $x \in B(0,R)$ of $F(x)=y$, counted with multiplicity given by $\operatorname{sign}(\det DF(x))$. To compute it, consider the homotopy
\[
G_t(x):=t(F(x)-y)+(1-t)(x-y), \qquad t\in[0,1].
\]
By the above estimate, $G_t(x)\neq 0$ for all $x\in\partial B(0,R)$ and all $t\in[0,1]$, and hence the homotopy $G_t$ provides a continuous deformation between $F-y$ and $\mathrm{Id}-y$ which does not vanish on $\partial B(0,R)$, and hence the degree is preserved. Hence, by homotopy invariance of the degree,
\[
\deg(F-y,B(0,R),0)=\deg(\mathrm{Id}-y,B(0,R),0)=1,
\]
where $\mathrm{Id}(x)=x$ is the identity map. In particular, for every $y\in B(0,\epsilon)$, there exists $x\in B(0,R)$ such that $F(x)=y$. Thus a whole neighborhood of the origin is contained in $F(\mathbb R^m)$. Next, recall that a value $y\in\mathbb R^m$ is called a \emph{regular value} of $F$ if for every $x$ with $F(x)=y$, the Jacobian matrix $DF(x)$ is invertible, i.e.\ $\det DF(x)\neq 0$. Since $F\in C^1(\mathbb R^m)$, Sard's theorem implies that the set of regular values of $F$ is dense in $\mathbb R^m$. Since the set
\[
\{y\in\mathbb R^m:\ y_i\neq 0\ \text{for all }i=1,\dots,m\}
\]
is open and dense, we may choose $y$ such that  $y\in B(0,\epsilon)$, hence $y\in F(\mathbb R^m)$, such that $y_i\neq 0$ for all $i=1,\dots,m$, and such that $y$ is a regular value of $F$. Fix such a $y$, and choose $x\in\mathbb R^m$ such that $F(x)=y$, i.e.\ $\nabla f(x)=y$. Then $y_i\neq 0$ for all $i$ implies
\[
\prod_{i=1}^m \partial_{x_i} f(x)\neq 0.
\]
Since $\det A(x)=0$, it follows that
\[
\det(H_f(x))=0.
\]
On the other hand, since $y$ is a regular value of $F=\nabla f$, we must have
\[
\det DF(x)=\det(H_f(x))\neq 0,
\]
which is a contradiction. This contradiction shows that $\mathcal D_A^\dagger\neq \mathbb R^m$.
\end{proof}

\subsection{The Fokker-Planck equation}\label{FP}

A key analytical tool in the study of the ergodic properties of the system
\eqref{eq:cts-x+}–\eqref{eq:cts-y+} is its \emph{infinitesimal generator}
${\mathcal{L}}$, together with the adjoint ${\mathcal{L}}^{\!*}$ and the corresponding
Fokker-Planck (or forward Kolmogorov) equation.

For $\varphi \in C^2(E)$, $E=\mathbb{R}^m\times\mathbb{R}^m\times(\mathbb{R}_+)^m$, and $(x,z,y)\in E$,
the generator ${\mathcal{L}}$ of \eqref{eq:cts-x+}–\eqref{eq:cts-y+} is
\begin{equation}\label{generator}
\begin{aligned}
({\mathcal{L}}\varphi)(x,z,y)
&= \sum_{i=1}^m \Bigg[
\underbrace{-\gamma\,\frac{z_i}{\sqrt{y_i}+\varepsilon}}_{\text{$x$–drift}}\;\partial_{x_i}\varphi
\;+\;
\underbrace{a\big(\partial_{x_i}f(x)-z_i\big)}_{\text{$z$–drift}}\;\partial_{z_i}\varphi
\\[-0.25em]
&\hspace{6em}+\;
\underbrace{b\Big(-y_i+(\partial_{x_i}f(x))^2+\sigma^2\Big)}_{\text{$y$–drift}}\;\partial_{y_i}\varphi
\Bigg]
+\;\frac12\,a^2\sigma^2\sum_{i=1}^m \partial_{z_i z_i}^2 \varphi.
\end{aligned}
\end{equation}

Let $(x_t,z_t,y_t)$ denote the solution of
\eqref{eq:cts-x+}–\eqref{eq:cts-y+} with given initial distribution.
For each time $t \ge 0$, we denote by $\pi_t(\mathrm{d}x\,\mathrm{d}z\,\mathrm{d}y)$
the \emph{law} (probability measure) of $(x_t,z_t,y_t)$ on
$E$. That is, for any bounded measurable test function $\varphi$,
\[
\iiint \varphi(x,z,y)\,\pi_t(\mathrm{d}x\,\mathrm{d}z\,\mathrm{d}y)
\;=\;
\mathbb{E}[\varphi(x_t,z_t,y_t)].
\]
If the process admits an \emph{invariant probability measure}
$\pi_\infty(\mathrm{d}x\,\mathrm{d}z\,\mathrm{d}y)$, then
\[
\pi_\infty P_t = \pi_\infty, \qquad \forall\,t\ge0,
\]
where $(P_t)_{t\ge0}$ is the Markov semigroup generated by the system and $\mathcal{L}$.
Equivalently, if the initial law is $\pi_\infty$, then the distribution of
$(x_t,z_t,y_t)$ remains $\pi_\infty$ for all $t\ge0$. In weak (measure–theoretic) form, the invariance condition can be written as
\begin{equation}\label{weak-inv-measure}
\iiint ({\mathcal{L}}\varphi)(x,z,y)\,
\pi_\infty(\mathrm{d}x\,\mathrm{d}z\,\mathrm{d}y)
\;=\;0,
\qquad
\forall\,\varphi\in C_0^\infty(E).
\end{equation}
This characterization remains meaningful even when $\pi_\infty$ does not admit a density
with respect to Lebesgue measure. When $\pi_t$ and $\pi_\infty$ admit densities with respect to
Lebesgue measure, we write
\[
\pi_t(\mathrm{d}x\,\mathrm{d}z\,\mathrm{d}y)
= p(t,x,z,y)\,\mathrm{d}x\,\mathrm{d}z\,\mathrm{d}y,
\qquad
\pi_\infty(\mathrm{d}x\,\mathrm{d}z\,\mathrm{d}y)
= p_\infty(x,z,y)\,\mathrm{d}x\,\mathrm{d}z\,\mathrm{d}y,
\]
and $p$ and $p_\infty$ are referred to as the \emph{probability densities}
of the process and its stationary distribution, respectively.

The forward Kolmogorov (Fokker-Planck) equation governing the evolution
of the density $p(t,x,z,y)$ associated with $\pi_t$ is
\begin{equation}\label{Fokk}
\partial_t p = {\mathcal{L}}^{\!*} p,
\end{equation}
where ${\mathcal{L}}^{\!*}$ denotes the formal $\mathrm{L}^2$–adjoint of $\mathcal{L}$,
acting as
\begin{equation}\label{Fokkop}
\begin{aligned}
{\mathcal{L}}^{\!*} p
&= -\sum_{i=1}^m \partial_{x_i}\!\Bigl(-\gamma\,\frac{z_i}{\sqrt{y_i}+\varepsilon}\, p\Bigr)
   \;-\;\sum_{i=1}^m \partial_{z_i}\!\Bigl(a(\partial_{x_i}f(x)-z_i)\, p\Bigr)\\
&\quad\;-\;\sum_{i=1}^m \partial_{y_i}\!\Bigl(b\big(-y_i+(\partial_{x_i}f(x))^2+\sigma^2\big)\, p\Bigr)
   \;+\;\frac12\,a^2\sigma^2\sum_{i=1}^m \partial_{z_i z_i}^2  p.
\end{aligned}
\end{equation}
The formal adjoint relation reads
\begin{equation}\label{weak}
\iiint({\mathcal{L}}\varphi)(x,z,y)\, \psi(x,z,y)\,\mathrm{d}x\,\mathrm{d}z\,\mathrm{d}y
\;=\;\iiint \varphi(x,z,y)\, ({\mathcal{L}}^{\!*}\psi)(x,z,y)\,\mathrm{d}x\,\mathrm{d}z\,\mathrm{d}y,
\end{equation}
for smooth compactly supported test functions
$\varphi,\psi\in C_0^\infty(E)$, or, more generally, for sufficiently
regular and rapidly decaying functions so that boundary terms vanish when integrating by parts. Thus ${\mathcal{L}}^{\!*}$ governs the time evolution of the probability density
$p(t,x,z,y)$, and a stationary density $p_\infty$ satisfies ${\mathcal{L}}^{\!*} p_\infty = 0$ in the sense of distributions, that is,
\[
\iiint ({\mathcal{L}}\varphi)(x,z,y)\, p_\infty(x,z,y)\,\mathrm{d}x\,\mathrm{d}z\,\mathrm{d}y = 0,
\quad \forall\,\,\varphi\in C_0^\infty(E).
\]
This expresses the balance of drift and diffusion in equilibrium, the continuous-time
analogue of the invariant measure condition $\pi_\infty P_t = \pi_\infty$ for Markov chains.

\subsection{Hypoellipticity}\label{Hypoellipticity}

As discussed in Subsection \ref{FP}, a key analytical tool in the study of the ergodic properties of the system
\eqref{eq:cts-x+}–\eqref{eq:cts-y+} is its \emph{infinitesimal generator}
${\mathcal{L}}$, together with the adjoint ${\mathcal{L}}^{\!*}$ and the corresponding
Fokker-Planck (or forward Kolmogorov) equation. It is natural to discuss the regularity theory for ${\mathcal{L}}$ and ${\mathcal{L}}^\ast$, in particular since
these operators have degenerate features.  The smoothness of the coefficients of ${\mathcal{L}}$ and ${\mathcal{L}}^{\!*}$ is dictated by that of $f$. Based on the resemblance between ${\mathcal{L}}$, ${\mathcal{L}}^\ast$ and kinetic Fokker-Planck, it is relevant to note that ${\mathcal{L}}$ and ${\mathcal{L}}^\ast$ can be expressed in a compact form as sums of squares of vector fields with a lower order drift terms. Indeed,
\[
{\mathcal{L}} = \frac{1}{2}\sum_{i=1}^m X_i^2+X_0,
\qquad
{\mathcal{L}}^{\!*}=   \frac{1}{2}\sum_{i=1}^m X_i^2 -X_0\; + \; \text{lower-order terms} ,
\]
where the diffusion vector fields are
\[
X_i = a\sigma\,\partial_{z_i}, \qquad i=1,\dots,m,
\]
and the drift field is
\[
X_0= \Bigl(-\gamma\,\frac{z}{\sqrt{y}+\varepsilon},\;\; a(\nabla f(x)-z),\;\; b(-y+g(x))\Bigr),
\quad
g_i(x):=(\partial_{x_i}f(x))^2+\sigma^2.
\]
By Hörmander’s theorem \cite{Hormander1967}, hypoellipticity of ${\mathcal{L}}$ is equivalent to that of its adjoint ${\mathcal{L}}^\ast$. The central structural condition is the following condition where (H) stands for Hypoellipticity.
\[
\mbox{\textbf{(H)}}\quad\{X_1,\dots,X_m,X_0\} \;\;\text{and  commutators thereof span $\mathbb{R}^m\times\mathbb{R}^m\times(\mathbb{R}_+)^m$ at each point.}
\]

Hypoellipticity therefore depends on whether the Lie algebra generated by
$\{X_1,\dots,X_m\}$ and their iterated commutators with $X_0$ spans the full
tangent space. In our setting this condition may fail, in particular along
flat directions of $f$, where the $y$–dynamics becomes effectively decoupled
from the noisy $z$–variables. Consequently, it is not possible to guarantee
hypoellipticity at every point $(x,z,y)\in\mathbb{R}^m\times\mathbb{R}^m\times
(\mathbb{R}_+)^m$.

 In our case a direct computation shows that
\[
[X_i,X_0] \;=\; \frac{\gamma a\sigma}{\sqrt{y_i}+\varepsilon}\,\partial_{x_i}
\;+\; a^2\sigma\,\partial_{z_i},
\]
for each $i$, hence the first commutators already generate all $x$-directions from the noisy $z$-directions.
To produce $y$-directions, note that the $y$-component of $X_0$ depends on $x$ through $g(x)=(g_1(x),..,g_m(x))$, $g_i(x)=(\partial_{x_i}f(x))^2+\sigma^2$.
Writing $H(x) =H_f(x)= \nabla^2 f(x)$ for the Hessian, we have
\[
\frac{\partial g_j}{\partial x_i}(x) \;=\; 2\,\bigl(\partial_{x_j} f(x)\bigr)\,H_{ij}(x),
\]
so the Jacobian $Dg(x)$ has entries $2\,(\partial_{x_j} f(x))\,H_{ij}(x)$.
To compute $[[X_i,X_0],X_0]$ we set
\[
\Gamma_i(y)\;:=\;\frac{\gamma a\sigma}{\sqrt{y_i}+\varepsilon},\qquad
\Gamma_i'(y)\;=\;-\frac{\gamma a\sigma}{2}\,\frac{1}{\sqrt{y_i}\,(\sqrt{y_i}+\varepsilon)^2},
\qquad
B\;:=\;a^2\sigma,
\]
for  $i\in\{1,\dots,m\}$. Recall that $X_i=a\sigma\,\partial_{z_i}$ and write
\[
X_0=\big(X_0^x,X_0^z,X_0^y\big)
=\Big(-\gamma\,\frac{z}{\sqrt{y}+\varepsilon},\; a(\nabla f(x)-z),\; b\big(-y+g(x)\big)\Big),
\quad g_j(x)=(\partial_{x_j}f(x))^2+\sigma^2.
\]
We have already established that
\[
[X_i,X_0]\;=\;\Gamma_i(y)\,\partial_{x_i}\;+\;B\,\partial_{z_i}.
\]
A direct coordinate computation of the Lie bracket
\[
[[X_i,X_0],X_0]=[\,\Gamma_i\partial_{x_i}+B\partial_{z_i}\,,\,X_0\,]
\]
yields the following components:

\medskip
\noindent\emph{$x$-component.} For $j\neq i$ it is zero; for $j=i$,
\[
\big([[X_i,X_0],X_0]\big)^{x_i}
\;=\; -\,\frac{B\,\gamma}{\sqrt{y_i}+\varepsilon}\;-\;b\big(-y_i+g_i(x)\big)\,\Gamma_i'(y).
\]

\noindent\emph{$z$-component.} For each $j=1,\dots,m$,
\[
\big([[X_i,X_0],X_0]\big)^{z_j}
\;=\; a\,\Gamma_i(y)\,H_{j i}(x)\;-\;a\,B\,\delta_{ij}.
\]

\noindent\emph{$y$-component.} For each $j=1,\dots,m$,
\[
\big([[X_i,X_0],X_0]\big)^{y_j}
\;=\; 2\,b\,\Gamma_i(y)\,\big(\partial_{x_j} f(x)\big)\,H_{j i}(x).
\]

\medskip
\noindent
Using the expression for $[[X_i,X_0],X_0]^{\,y_j}$ we see that $[[X_i,X_0],X_0]$ has a $y_j$-component proportional to
$\big(\partial_{x_j} f(x)\big)\,H_{j i}(x)$. Hence, as $b\neq 0$ and $\Gamma_i$ is different from zero, we see that $\partial_{y_j}$ can be generated at $(x,z,y)$ if and only if
\[
\exists\, i\in\{1,\dots,m\} \quad \text{such that} \quad
(\partial_{x_j} f(x))\,H_{ji}(x)=(\partial_{x_j} f(x))\,H_{ij}(x) \neq 0.
\]
In particular, in this context the matrix
\begin{equation}\label{matt}
A(x):=\Diag(\nabla f(x))\,H_f(x)
\end{equation}
arises naturally, and consequently hypoellipticity may break down on the closed set
\begin{equation}\label{matt+}
\mathcal{D}_H:=(\R^m\times\R^m\times(\R_+)^m)\setminus\mathcal{G}_H,
\end{equation}
where
\begin{equation}\label{matt++}
\mathcal{G}_H := \{(x,z,y): e_j^\top A(x)\neq 0 \ \ \forall j=1,\dots,m\}.
\end{equation}
Equivalently, $(x,z,y)\in\mathcal{G}_H$ if and only if $A(x)$ has no zero row. The \emph{degenerate set} where Hörmander’s condition  fails is thus the closed set $\mathcal{D}_H$ introduced in \eqref{matt+}. Equivalently,  $\mathcal{D}_H$ can be expressed as
$$\mathcal{D}_H=\mathcal{D}_H^1\cup ....\cup\mathcal{D}_H^m,$$
where
\begin{align}\label{hypc}
\mathcal{D}_H^j
&:=  \{(x,z,y): e_j^\top A(x)=0\}= \left\{(x,z,y) \in \mathbb{R}^m\times\mathbb{R}^m\times(\mathbb{R}_+)^m:
 \ (\partial_{x_j} f(x)) = 0 \ \text{or} \ H_{j\cdot}(x) = 0
\right\}.
\end{align}
In particular, points where $\nabla f(x)=0$ belong to $\mathcal{D}_H$. We summarize our finding in a lemma.

\begin{lemma}\label{lemmahyp} Let $\mathcal{D}_H$, $\mathcal{G}_H$, be the set introduced in \eqref{matt+} and \eqref{matt++}, respectively. Then, on  $\mathcal{G}_H$ the Lie algebra generated by $\{X_0,X_1,\dots,X_m\}$ spans all coordinate
directions and hence condition (H) is satisfied. In particular,  ${\mathcal{L}}$ and ${\mathcal{L}}^{\!*}$ are hypoelliptic on $\mathcal{G}_H$.
\end{lemma}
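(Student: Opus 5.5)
The plan is to verify condition (H) pointwise on $\mathcal G_H$ by exhibiting $3m$ linearly independent vector fields in the Lie algebra generated by $\{X_0,X_1,\dots,X_m\}$. Hypoellipticity of ${\mathcal L}$ then follows from Hörmander's theorem \cite{Hormander1967}, and hypoellipticity of ${\mathcal L}^{\!*}$ follows because it has the same principal vector fields, with $X_0$ replaced by $-X_0$ plus a zeroth order term. This replacement does not change the span of the brackets. Throughout we work on the open set $y_i>0$, where, by (A1) with $f\in C^\infty$, all coefficients are smooth. We fix a point $(x,z,y)\in\mathcal G_H$ and use the bracket computations already carried out in Subsection~\ref{Hypoellipticity}.

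\emph{Step 1 ($z$-directions).} Since $a\sigma\neq0$, the fields $X_i=a\sigma\partial_{z_i}$ already give $\partial_{z_1},\dots,\partial_{z_m}$.

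\emph{Step 2 ($x$-directions).} We have $[X_i,X_0]=\Gamma_i(y)\partial_{x_i}+B\partial_{z_i}$. Subtracting $(B/a\sigma)X_i$ leaves $\Gamma_i(y)\partial_{x_i}$. Since $\Gamma_i(y)=\gamma a\sigma/(\sqrt{y_i}+\varepsilon)>0$, this gives each $\partial_{x_i}$.

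\emph{Step 3 ($y$-directions).} Consider $W_i:=[[X_i,X_0],X_0]$. Its $x$- and $z$-components lie in the span already obtained in Steps 1--2. Modulo that span, $W_i$ therefore reduces to
\[
W_i\equiv 2b\,\Gamma_i(y)\sum_{j=1}^m \partial_{x_j}f(x)\,H_{ji}(x)\,\partial_{y_j}
=2b\,\Gamma_i(y)\sum_{j=1}^m A_{ji}(x)\,\partial_{y_j}.
\]
In other words, the $y$-parts of $W_1,\dots,W_m$ are, up to the nonzero factors $2b\Gamma_i(y)$, the columns of $A(x)$.

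This step is the main obstacle. Having no zero row of $A(x)$ only says that each $\partial_{y_j}$ appears in some $W_i$. It does not immediately give the full $y$-span; that would require the columns of $A(x)$ to span $\mathbb R^m$, i.e.\ $x\notin\mathcal D_A^\dagger$. To close this gap I would generate further brackets in which each $y$-coordinate can be separated. The $y$-drift has the diagonal part $-b\,y$, so bracketing with $X_0$ acts on a field $\sum_j c_j\partial_{y_j}$ by multiplication by $b$ in every coordinate. On its own this does not separate coordinates. The separation instead comes from the $y$-dependence of the coefficients: $\Gamma_i$ depends only on $y_i$, and the $x$-drift $-\gamma z_j/(\sqrt{y_j}+\varepsilon)$ depends only on $y_j$.

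Concretely, I would bracket $W_i$ with $\partial_{x_k}$ and $\partial_{z_k}$, and compute $[V,X_0]$ for $V=\sum_j A_{ji}\partial_{y_j}$. This produces $x$-components of the form $\sum_j A_{ji}\,\partial_{y_j}\!\big(-\gamma z_j/(\sqrt{y_j}+\varepsilon)\big)\partial_{x_j}$, which have nonzero coefficient exactly in the coordinates $j$ with $A_{ji}\neq0$. Bracketing that field once more with $\partial_{z_j}$, using the linear dependence on $z_j$, isolates the coordinate $j$. Combining with Step 3 then yields a single $\partial_{y_j}$ direction.

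If this separation argument fails in general, I would fall back on proving the statement on the smaller set $\mathbb R^m\setminus\mathcal D_A^\dagger$. There the columns of $A(x)$ span $\mathbb R^m$, and Step 3 alone finishes the proof. That smaller set is precisely the set the paper later uses, via Lemma~\ref{Redu}, in the control argument.

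Finally, the span of $\partial_z$, $\partial_x$ and $\partial_y$ is all of $\mathbb R^{3m}$, so (H) holds on $\mathcal G_H$. Since $\mathcal G_H$ is open, Hörmander's theorem applies locally there and gives hypoellipticity of ${\mathcal L}$ and ${\mathcal L}^{\!*}$ on $\mathcal G_H$.
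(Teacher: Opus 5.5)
Your diagnosis in Step 3 is correct, and it identifies the flaw in the paper's own argument. From the $y_j$-component $2b\,\Gamma_i(\partial_{x_j}f)H_{ji}$ of $[[X_i,X_0],X_0]$, the paper concludes that $\partial_{y_j}$ ``can be generated if and only if'' $A_{ji}(x)\neq0$ for some $i$. But modulo the $x,z$-directions these brackets only produce the column space of $A(x)$, so the inference does not follow.

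Your proposed repair does not work, however. Bracketing against coefficients that depend on $y_j$ or $z_j$ (through $\Gamma_j$ or the $x$-drift) only creates new $x$- or $z$-components, and you already have all of those. Isolating $\partial_{x_j}$ with $\partial_{z_j}$ therefore gives nothing new, and no step turns it into a single $\partial_{y_j}$. More precisely, consider the $C^\infty$-module generated by $\partial_{x_l}$, $\partial_{z_l}$ and the fields $\sum_j\partial_x^\alpha\partial_{x_k}g_j(x)\,\partial_{y_j}$. It contains $X_1,\dots,X_m$, is closed under brackets, and is mapped into itself by $[X_0,\cdot\,]$. Hence at $(x,z,y)$ the Lie algebra spans only
\[
\R^m\times\R^m\times\bigl(\mathcal S(x)+\R\,(g(x)-y)\bigr),\qquad \mathcal S(x):=\operatorname{span}\{\partial_x^\alpha\partial_{x_k}g(x):\ |\alpha|\ge 0,\ 1\le k\le m\},
\]
and the condition that $A(x)$ has no zero row does not force this to be all of $\R^{3m}$.

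In fact the lemma is false as stated, even under (A1)--(A2). Take $m=2$ and $f\in C^\infty$ equal to $(x_1+x_2)^2$ on a ball $U$ around $(\tfrac12,\tfrac12)$ and to $\|x\|^2$ outside a compact set.
\begin{itemize}
\item On $U$ we have $A(x)=4(x_1+x_2)\begin{pmatrix}1&1\\1&1\end{pmatrix}$, which has no zero row. So $U\times\R^2\times(\R_+)^2\subset\mathcal G_H$, yet $g_1\equiv g_2$ there.
\item Set $w:=y_1-y_2$. Then $X_1w=X_2w=0$ and $X_0w=-bw$. The fields $V$ with $Vw=c_Vw$ for a constant $c_V$ form a Lie algebra containing the generators.
\item Hence every element of the Lie algebra is tangent to $\{y_1=y_2\}$ there, so (H) fails on $\mathcal G_H\cap\{y_1=y_2\}$. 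The ideal generated by $X_1,X_2$ misses $\partial_{y_1}-\partial_{y_2}$ on all of $U\times\R^2\times(\R_+)^2$.
\item Moreover $u=\mathbf 1_{\{y_1>y_2\}}$ satisfies $\mathcal L u=-b\,w\,\delta(w)=0$ in the sense of distributions, so $\mathcal L$ is not hypoelliptic on $\mathcal G_H$.
\end{itemize}

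So no bracket argument can close the gap, and your fallback is the correct statement. For $x\notin\mathcal D_A^\dagger$ the columns of $A(x)$ span $\R^m$, and your Steps 1--3 give (H). They do so using only brackets in the ideal generated by $X_1,\dots,X_m$, which is what smoothness of the transition density $p(t,v,w)$ actually requires. This is also all the paper uses later: Hörmander's theorem is invoked only on $W'''=B(x_\ast,2r)\times\R^m\times(\R_+)^m$, and there $\overline{B(x_\ast,2r)}\subset\R^m\setminus\mathcal D_A^\dagger$.
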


\begin{remark}
One may attempt to recover the missing $y$–directions on $\mathcal D_H$
by considering higher-order Lie brackets. However, a direct computation
shows that the resulting expressions become increasingly complicated
and still vanish at points where $\nabla f(x)=0$ and $z=0$.
Consequently, Hörmander's condition cannot in general be recovered
through higher commutators at such points. In particular,
hypoellipticity cannot be guaranteed on $\mathcal D_H$.
\end{remark}

\begin{remark}
Failure of Hörmander’s condition at a point does not necessarily destroy smoothness of the invariant law.
Indeed, Hörmander’s theorem provides a \emph{sufficient} local criterion for $C^\infty$ regularization of transition
densities, but it is not necessary. If Hörmander’s condition holds on an open set and the process is irreducible,
then the invariant density $p_\infty$ is absolutely continuous and $C^\infty$ on that set. Regularity may still
persist across points where the bracket condition fails. Genuine singularities typically arise only from
structural degeneracies of the dynamics, for instance when flat directions of $f$ disconnect the $y$–dynamics
from the noisy $z$–variables.
\end{remark}

\subsection{The support theorem of Stroock-Varadhan}   For $T>0$, the controlled ODE (skeleton system) associated to \eqref{eq:cts-x+}–\eqref{eq:cts-y+} and starting at $(x_0,z_0,y_0)$ is
\begin{equation}\label{eq:skeleton-components}
\left\{
\begin{aligned}
\dot x^i(t) &= -\gamma\,\frac{z^i(t)}{\sqrt{y^i(t)}+\varepsilon},\\
\dot z^i(t) &= a\big(\partial_{x_i}f(x(t)) - z^i(t)\big) + a\sigma\,h_i(t),\\
\dot y^i(t) &= b\big(-y^i(t) + (\partial_{x_i}f(x(t)))^2 + \sigma^2\big),
\end{aligned}\right.
\qquad i=1,\dots,m
\end{equation}
where $(x(0),z(0),y(0))=(x_0,z_0,y_0)$ and  $h=(h_1,\dots,h_m)\in \mathrm L^2([0,T];\R^m)$ is a control.
Given $T$ and $U_0:=(x_0,z_0,y_0)$ we let
$$\mathcal R_T(U_0)=\bigl\{(x(T),z(T),y(T)):\,\mbox{$h=(h_1,\dots,h_m)\in \mathrm L^2([0,T];\R^m)$ is a control}\bigr\}.$$
$\mathcal R_T(U_0)$ is the set of all possible endpoints $(x(T),z(T),y(T))$ for the system in \eqref{eq:skeleton-components}, when starting at $U_0$ and using controls $h\in \mathrm L^2([0,T];\R^m)$ for which the solution exists on $[0,T]$.

In our context the support theorem of Stroock-Varadhan \cite{StroockVaradhan1979} can be stated as follows.

\begin{theorem}\label{thm:support} Assume that $f$ satisfies \textnormal{(A1)}  from Subsection~\ref{Subcond}.
Fix $a,b,\gamma,\sigma>0$ and $\varepsilon>0$, and let $(P_t)_{t\ge0}$ denote the Markov semigroup
associated with the time-homogeneous system \eqref{eq:cts-x+}–\eqref{eq:cts-y+}. Fix $T>0$ and $U_0\in E$. Then the topological support of the transition kernel $P_T(U_0,\cdot)$ equals the closure of the skeleton reachable set, i.e.,
\[
\mathrm{supp}\,P_T(U_0,\cdot) \;=\; \overline{\mathcal R_T(U_0)}.
\]
In particular, if $\mathcal R_T(U_0)$ contains a nonempty open set $W\subset E$, then $P_T(U_0,W)>0$.
\end{theorem}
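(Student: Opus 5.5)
The statement is the Stroock--Varadhan support theorem applied to \eqref{eq:cts-x+}--\eqref{eq:cts-y+}. The plan is to check its hypotheses and then transfer the conclusion through a localization argument. The classical theorem \cite{StroockVaradhan1972,StroockVaradhan1979} needs a diffusion with smooth, bounded coefficients and bounded derivatives, driven in Stratonovich form. Here the noise is additive, $a\sigma\,\mathrm{d}B_t$, so the Itô and Stratonovich formulations coincide and the skeleton \eqref{eq:skeleton-components} is the correct controlled ODE, with no correction term. The drift is not globally bounded. Under (A1), $\nabla f$ is globally Lipschitz, and $(\partial_{x_i} f)^2$ grows quadratically. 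The $x$-drift $-\gamma z_i/(\sqrt{y_i}+\varepsilon)$ is smooth only away from $y_i=0$. So the first step is to work on the region $E_\eta$ for suitable $\eta>0$ and to localize.

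\emph{Step 1 (positivity of $y$).} For $U_0\in E$, both the SDE and every skeleton trajectory satisfy the same deterministic inequality $y^i(t)\ge y_0^i e^{-bt}+(1-e^{-bt})\sigma^2$. This holds because the $y$-equation contains no noise or control, and its drift is at least $b(-y^i+\sigma^2)$. Hence on $[0,T]$ all paths stay in $E_{\eta}$ with $\eta=\min_i\min(y_0^i,\sigma^2)e^{-bT}>0$. On this set the coefficients are $C^\infty$, or $C^1$ with locally Lipschitz derivatives if only $f\in C^2$ is assumed.

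\emph{Step 2 (localization).} For $R>0$, replace the coefficients outside the ball $B_R$ of radius $R$ in $\R^{3m}$, and on $\{y_i<\eta/2\}$, by smooth bounded modifications with bounded derivatives that agree with the originals on $B_R\cap E_\eta$. Apply the classical support theorem to the modified SDE, which gives $\mathrm{supp}\,P^R_T(U_0,\cdot)=\overline{\mathcal R^R_T(U_0)}$. Non-explosion of the original SDE, from the Lyapunov bound of Section~\ref{sec:lya} or from linear growth of $\nabla f$ together with the triangular structure, gives $\mathbb P(\tau_R\le T)\to0$ as $R\to\infty$, where $\tau_R$ is the exit time from $B_R$. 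Up to time $\tau_R$ the original and modified processes coincide. For skeletons, the support theorem's approximation statement says that for each fixed control $h$, the SDE stays in a tube around the skeleton path with positive probability. A path with $\sup_{[0,T]}\|\cdot\|<R$ is unaffected by the modification. Hence $\overline{\mathcal R_T(U_0)}\subset\mathrm{supp}\,P_T(U_0,\cdot)$: take $R$ larger than the skeleton's sup norm plus the tube width.

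\emph{Step 3 (reverse inclusion).} We need $\mathrm{supp}\,P_T(U_0,\cdot)\subset\overline{\mathcal R_T(U_0)}$. Given an open set $O$ disjoint from $\overline{\mathcal R_T(U_0)}$, we need $P_T(U_0,O)=0$. Using Step 2, $P_T(U_0,O)\le P^R_T(U_0,O)+\mathbb P(\tau_R\le T)$. Skeleton paths that remain in $B_R$ agree for the two systems. The remaining task is to show that modified skeleton paths leaving $B_R$ cannot end in $O$ with mass that survives $R\to\infty$. The simplest route is the Wong--Zakai argument directly: approximate the Brownian motion by piecewise linear $B^n$, so the solutions of the ODE driven by $\dot B^n$ are exactly skeleton trajectories, whose endpoints lie in $\mathcal R_T(U_0)$. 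These converge in probability to the SDE solution. This convergence holds here because the noise is additive, so the difference of solutions satisfies a random ODE with locally Lipschitz drift, and localization applies. The law therefore charges only $\overline{\mathcal R_T(U_0)}$.

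The final claim follows at once. If $W\subset\mathcal R_T(U_0)$ is nonempty and open, then $W$ meets the support, so $P_T(U_0,W)>0$. I expect the main obstacle to be the careful localization in Steps 2 and 3, that is, handling the unbounded quadratic term $(\partial_{x_i}f)^2$ and the singularity at $y=0$ uniformly. The additive-noise structure makes the Wong--Zakai step essentially a deterministic continuity argument, and I would try that first.
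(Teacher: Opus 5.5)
Your proof is correct, and it is more careful than the paper. The paper offers no argument beyond quoting the Stroock--Varadhan theorem and remarking that It\^o and Stratonovich coincide for additive noise. It never checks the classical hypotheses (bounded coefficients with bounded derivatives), and these fail here: the $y$-drift grows quadratically in $x$, and the $x$-drift is singular at $y_i=0$. Your Step~1, the deterministic lower bound on $y$ valid for the diffusion and for every skeleton, together with the localization of Step~2, is what makes that citation legitimate for the inclusion $\overline{\mathcal R_T(U_0)}\subset\mathrm{supp}\,P_T(U_0,\cdot)$.

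Your Wong--Zakai step is in fact a stronger observation, and it makes Step~2 unnecessary. Let $\Gamma$ send a continuous $w:[0,T]\to\mathbb R^m$ with $w(0)=0$ to the solution of \eqref{eq:skeleton-components} in integral form, with $a\sigma\int_0^t h_i\,\mathrm ds$ replaced by $a\sigma w_i(t)$.
\begin{itemize}
\item $\Gamma$ is defined and continuous on all such $w$. This uses Step~1, local Lipschitz continuity of the drift on $E_\eta$, and linear growth of the $(x,z)$-subsystem after substituting $\tilde z=z-a\sigma w$.
\item The diffusion equals $\Gamma(B)$ pathwise.
\item $\mathcal R_T(U_0)$ is the set of $\Gamma(w)(T)$ with $w$ absolutely continuous and $\dot w\in\mathrm L^2$.
\end{itemize}
Wiener measure has full support and such $w$ are dense, so $\mathrm{supp}\,P_T(U_0,\cdot)$ is the closure of $\{\Gamma(w)(T):w(0)=0\}$, which equals $\overline{\mathcal R_T(U_0)}$. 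This gives both inclusions at once, with no appeal to the classical theorem and no modified coefficients.

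The estimate $P_T(U_0,O)\le P^R_T(U_0,O)+\mathbb P(\tau_R\le T)$ at the start of Step~3 can therefore be discarded. It would not close on its own anyway, since the modified system may charge $O$ through skeletons that leave $B_R$.

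This route also avoids a small inaccuracy in Step~1. If only $f\in C^2$ is assumed, the coefficients are $C^1$, but their derivatives involve $H_f$ and are merely continuous, not locally Lipschitz. That would matter for the classical theorem, whereas the continuity argument needs only a locally Lipschitz drift.
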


\begin{remark}
Note that the Itô and Stratonovich formulations of \eqref{eq:cts-x+}-\eqref{eq:cts-y+} coincide, since the diffusion coefficient is state-independent and the Itô–Stratonovich correction term vanishes.
\end{remark}

\begin{remark}\label{thm:supportrem}
Consider vector fields $\{X_1,\dots,X_m,X_0\}$ on $\mathbb{R}^N$.
In the literature, see for instance the books \cite{AgrachevSachkov2004,Jurdjevic1997}, the following connectivity hypothesis (here labeled Hypothesis~[CH]) frequently appears:

\noindent For every $(x,t),(y,s)\in \mathbb{R}^{N+1}$ with $t>s$, there exists an absolutely continuous path $\gamma : [0,t-s]\to \mathbb{R}^N$
such that
\begin{equation}\label{eq:hypothesis-C}
\begin{cases}
\displaystyle
\dot{\gamma}(\tau)
=
\sum_{k=1}^m \omega_k(\tau)\,X_k(\gamma(\tau))
+
X_0(\gamma(\tau)),
\\[0.5em]
\gamma(0)=x, \qquad \gamma(t-s)=y,
\end{cases}
\end{equation}
with $\omega_1,\dots,\omega_m \in L^\infty([0,t-s])$.

\noindent The relevance of Hypothesis~[CH] in the probabilistic setting is clarified
by the Stroock-Varadhan support theorem which describes the support of the law of the diffusion associated
with $\{X_1,\dots,X_m,X_0\}$ as the closure of the set of endpoints of
solutions to the controlled ODE \eqref{eq:hypothesis-C}. Importantly, the support theorem itself does not require any Hörmander-type
bracket condition, nor does it assume Hypothesis~[CH].
It applies under general regularity assumptions on the coefficients and
provides a control-theoretic characterization of the support. Hypothesis~[CH] is instead a global controllability condition ensuring that
the reachable set of the controlled system is large (in particular, that any
point can be reached in positive time).
When it holds, the support theorem implies that the diffusion has full
topological support.
\end{remark}
\begin{remark}\label{thm:supportrem+}
 In contrast to controllability/support theorems, Hörmander-type bracket conditions are used to establish
hypoellipticity, yielding smoothness of transition
densities, but they do not in general imply global accessibility. It is well known that the condition
\begin{equation}\label{eq:rank-spatial}
\operatorname{rank}\,\mathrm{Lie}\{X_1,\dots,X_m\}(x)
=
N,
\qquad \forall x\in \mathbb{R}^{N},
\end{equation}
implies Hypothesis~[CH] of Remark \ref{thm:supportrem}. This is usually referred to as the \emph{strong Hörmander condition}. If one instead requires that
\begin{equation}\label{eq:rank-full}
\operatorname{rank}\,\mathrm{Lie}\{X_1,\dots,X_m,X_0\}(x)
=
N,
\qquad \forall x\in \mathbb{R}^{N},
\end{equation}
then one speaks of the \emph{weak Hörmander condition}.
In general, the weak Hörmander condition does not imply Hypothesis~[CH].
\end{remark}

\subsection{Small sets}\label{small}

In the context of quantitative mixing and minorization, the notions of \emph{petite} and \emph{small} sets play a central role.
A set $C \in \mathcal B(E)$ is called \emph{small} if there exist constants
$t_0 > 0$, $\varepsilon > 0$, and a probability measure $\varphi$
on $(E,\mathcal B(E))$ such that for all $(x,z,y) \in C$ and all
$A \in \mathcal B(E)$,
\begin{align}\label{petta-}
P_{t_0}((x,z,y),A) \ge \varepsilon\,\varphi(A).
\end{align}
In this case, $\varphi$ is called a \emph{minorizing measure} for $C$. To prove that \(C\) is a small set, it is sufficient to show that there exist
a time \(t_0>0\), a constant \(\eta>0\), and a set \(W\in\mathcal B(E)\) with finite, positive Lebesgue measure such that
\begin{align}\label{petta-+}
P_{t_0}((x,z,y),A)
\ge
\eta\,\frac{\operatorname{Leb}(A\cap W)}{\operatorname{Leb}(W)},
\qquad
\forall (x,z,y)\in C,\ \forall A\in\mathcal B(E).
\end{align}
Indeed, if \eqref{petta-+} holds, then \eqref{petta-} follows by defining
\[
\varphi(A):={\operatorname{Leb}(A\cap W)}/{\operatorname{Leb}(W)},
\qquad A\in\mathcal B(E).
\]
Condition \eqref{petta-+} expresses that whenever the process starts from
\((x_0,z_0,y_0)=(x,z,y)\in C\), its law at time \(t_0\) dominates, uniformly in the initial condition, a fixed positive multiple of the normalized Lebesgue measure on \(W\).
In particular, the process reaches \(W\) with probability at least \(\eta\), uniformly over all initial points in \(C\).

\section{Construction of Lyapunov functions} \label{sec:lya}

In this section we construct the Lyapunov functions underlying much of the analysis of the paper and frequently referred to in the statement of our main results. The subsequent analysis will use the condition on $f$ stated in \textnormal{(A1)}, i.e.,
for some constant $0<L_f<\infty$, we have
\begin{equation}\label{D-}
\|\nabla f(x)-\nabla f(\bar x)\| \le L_f \|x-\bar x\|
\quad \forall\, x,\bar x \in \R^m,
\end{equation}
and the coercivity/dissipativity condition on $f$ stated in \textnormal{(A2)}, i.e., for some constants $m_f>0$, $c\ge0$, $R\ge0$, we have
\begin{equation}\label{D}
\langle \nabla f(x),x\rangle \;\ge\; m_f\|x\|^2 - c,
\qquad \text{for all }\|x\|\ge R.
\end{equation}

While we for the proof of Theorem~\ref{CorC} mainly need estimates on sets $\{E_\eta\}$ introduced in \eqref{intimeasurell}, $\eta>0$, we here prefer to make constructions
valid on all of $\mathbb R^m\times \mathbb R^m\times\mathbb R^m$. Since we develop pointwise estimates, we then have the freedom
to restrict to subsets. In line with this we will in the following construct Lyapunov functions in the context of the operator
\begin{align}\label{Regloss1}
\mathcal{L}_\epsilon \;&:=\; \sum_{i=1}^m \Bigg[{-\gamma\,\frac{z_i}{\sqrt{|y_i|}+\varepsilon}}\;\partial_{x_i}
+a\big(\partial_{x_i}f(x)-z_i\big)\;\partial_{z_i}+b\big(-y_i+(\partial_{x_i}f(x))^2+\sigma^2\big)\;\partial_{y_i}
\Bigg]\notag\\
&\quad +\;\frac12\,a^2\sigma^2\sum_{i=1}^m \partial_{z_i z_i}^2+\epsilon\sum_{i=1}^m \partial_{x_i x_i}^2+\epsilon\sum_{i=1}^m \partial_{y_i y_i}^2,
\end{align}
on $\R^m\times\R^m\times\R^m$ and where $\epsilon>0$.  By construction
\[
\mathcal{L}_\epsilon \;=\; \mathcal{L} \;+\; \epsilon\,\Delta_x\;+\; \epsilon\,\Delta_y,\qquad \epsilon>0,
\]
on $\R^m\times\R^m\times(\R_+)^m$. $\mathcal{L}_\epsilon$ is an extension of $\mathcal{L}$ to all of $\R^m\times\R^m\times\R^m$  and it is also a regularization of $\mathcal{L}$ as $\mathcal{L}_\epsilon$ is uniformly elliptic  on $\R^m\times\R^m\times\R^m$. We can  view $\mathcal L$ as a vanishing-viscosity limit of $\mathcal{L}_\epsilon$ on $\R^m\times\R^m\times(\R_+)^m$ and on this set we identify $\mathcal{L}=\mathcal{L}_0$.  At the SDE level the construction adds Brownian noise to the $y$–equation (and to the $x$-equation), destroying the positivity constraint on $y_t$ and potentially rendering the $x$–drift $(\sqrt{y_t}+\varepsilon)^{-1}$ ill–defined, therefore we replace this term with $(\sqrt{|y_t|}+\varepsilon)^{-1}$ in the definition of $\mathcal{L}_\epsilon$.

We define a candidate Lyapunov function as
\begin{equation}\label{V}
V(x,z,y)=V_\upsilon(x,z,y)
= \theta\bigl(f(x)-f_\ast\bigr) + \frac\alpha2\|z\|^2 - \beta\,x\cdot z + \delta \|y\|_{1,\upsilon}
\qquad \theta,\alpha,\beta,\delta, \upsilon>0,
\end{equation}
where $f_\ast := \min_{x} f(x)$ and  $$\|y\|_{1,\upsilon}:=\sum_{i=1}^m\,\sqrt{|y_i|^2+\upsilon^2}.$$
We refer to Remark \ref{uurem-a-} for a brief discussion concerning the choice of structure for $V$. Note that
$$|y|\leq \sqrt{y^2+\upsilon^2}\to |y|\quad\mbox{for all $y\in\mathbb R$ and as $\upsilon\to 0$}, $$
and $\|y\|_{1,\upsilon}$ should be seen as a regularization of the $\mathrm{L}^1$-norm. In the following $\upsilon\in (0,1)$ will be fixed throughout the section, while $\theta,\alpha,\beta,\delta$ are degrees of freedom. In particular, the estimates derived will be uniform in $\upsilon\in (0,1)$ and hence will apply, by limiting arguments, when $\upsilon\to 0$.

\begin{lemma}\label{bog} Assume that $f$ satisfies \textnormal{(A1)}, i.e., $\eqref{D-}$. Let $V=V_\upsilon(x,z,y)$ be defined as in \eqref{V}. Then  there exist constants $c_1,c_2>0$ (depending on $\theta,\alpha,\beta,\delta$ and the data of the problem ($f$,  $L_f$) such that
\begin{equation}\label{Vlla}
V(x,z,y)\leq c_1\bigl(\|x\|^2+\|z\|^2+\|y\|_{1,\upsilon}\bigr) + c_2, \qquad \forall (x,z,y)\in\R^{m}\times \R^m\times \R^m,
\end{equation}
\end{lemma}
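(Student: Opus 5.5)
The plan is to bound each of the four terms of $V$ in \eqref{V} separately by elementary inequalities. Only the quadratic upper growth of $f$ implied by (A1) is needed, so the dissipativity condition (A2) plays no role here.

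\emph{The $f$-term.} By (A1) and Taylor's formula with integral remainder,
\[
f(x)\le f(0)+\nabla f(0)\cdot x+\tfrac{L_f}{2}\|x\|^2 .
\]
Young's inequality gives $|\nabla f(0)\cdot x|\le \tfrac12\|\nabla f(0)\|^2+\tfrac12\|x\|^2$. Hence
\[
\theta\bigl(f(x)-f_\ast\bigr)\le \theta\Bigl(\tfrac{L_f+1}{2}\|x\|^2+|f(0)|+\tfrac12\|\nabla f(0)\|^2+|f_\ast|\Bigr).
\]
For the constant $|f_\ast|$ we rely on $f_\ast=\min f$ being finite, which is implicit in the definition of $V$. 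Under (A2) it is indeed attained, since $f$ is coercive.

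\emph{The cross term.} By Cauchy--Schwarz and Young,
\[
-\beta\,x\cdot z\le \beta\|x\|\|z\|\le \tfrac{\beta}{2}\bigl(\|x\|^2+\|z\|^2\bigr).
\]

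\emph{The remaining terms.} The term $\tfrac{\alpha}{2}\|z\|^2$ is already of the required form. The term $\delta\|y\|_{1,\upsilon}$ is exactly $\delta$ times the quantity appearing on the right-hand side of \eqref{Vlla}.

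Summing these bounds, I would take
\[
c_1:=\max\Bigl\{\theta\tfrac{L_f+1}{2}+\tfrac{\beta}{2},\ \tfrac{\alpha+\beta}{2},\ \delta\Bigr\},
\qquad
c_2:=\theta\bigl(|f(0)|+\tfrac12\|\nabla f(0)\|^2+|f_\ast|\bigr)+1.
\]
Both constants depend only on $\theta,\alpha,\beta,\delta$ and on $f$ through $L_f$, $f(0)$, $\nabla f(0)$ and $f_\ast$. They are uniform in $\upsilon\in(0,1)$ and valid on all of $\mathbb R^{3m}$.

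There is no genuine obstacle here; the only point requiring care is the finiteness of $f_\ast$.
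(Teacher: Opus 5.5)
Your proof is correct and follows essentially the same route as the paper: the descent lemma bounds the $f$-term and Young's inequality handles the cross term $-\beta\,x\cdot z$, term by term. The one difference is that you expand around $0$, whereas the paper expands around a minimizer $x_\ast$ and uses $\nabla f(x_\ast)=0$. Your version therefore needs only that $f_\ast$ is finite, not that the minimum is attained, which is a slight advantage since the lemma assumes only (A1).
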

\begin{proof} Since $f$ satisfies \textnormal{(A1)}, the descent lemma implies that for all $x,\bar x\in\R^m$,
\[
f(x)\le f(\bar x)+\nabla f(\bar x)\cdot(x-\bar x)
+\frac{L_f}{2}\|x-\bar x\|^2 .
\]
Let $x_\ast$ be a minimizer of $f$. Then $\nabla f(x_\ast)=0$ and $f(x_\ast)=f_\ast$. Choosing $\bar x=x_\ast$ yields
\[
f(x)-f_\ast
\le \frac{L_f}{2}\|x-x_\ast\|^2\le L_f\|x\|^2 + L_f\|x_\ast\|^2\le C_f^1 \|x\|^2 + C_f^2,
\]
for some constants $C_f^1,C_f^2>0$.  Combining this with  Young's inequality we obtain, for every $\epsilon>0$
\[
\begin{aligned}
V(x,z,y)
&\le \theta\bigl(C_f^1\|x\|^2 + C_f^2\bigr)
+\frac{\alpha}{2}\|z\|^2
+\frac{\beta\epsilon}{2}\|x\|^2
+\frac{\beta}{2\epsilon}\|z\|^2
+\delta\|y\|_{1,\upsilon} .
\end{aligned}
\]
Grouping the quadratic terms gives
\[
V(x,z,y)
\le
\Bigl(\theta C_f^1 + \frac{\beta\epsilon}{2}\Bigr)\|x\|^2
+
\Bigl(\frac{\alpha}{2}+\frac{\beta}{2\epsilon}\Bigr)\|z\|^2
+
\delta\|y\|_{1,\upsilon}
+
\theta C_f^2 .
\]
Choosing $\epsilon>0$ arbitrarily (for instance $\epsilon=1$) and defining
\[
c_1
:=
\max\Bigl\{
\theta C_f^1 + \frac{\beta\epsilon}{2},
\frac{\alpha}{2}+\frac{\beta}{2\epsilon},
\delta
\Bigr\},
\qquad
c_2:=\theta C_f^2,
\]
we obtain
\[
V(x,z,y)
\le c_1\bigl(\|x\|^2+\|z\|^2+\|y\|_{1,\upsilon}\bigr)
+ c_2 ,
\]
for all $(x,z,y)\in\R^m\times\R^m\times\R^m$. The constants $c_1,c_2>0$ depend only on
$\theta,\alpha,\beta,\delta$ and the data of the problem
($f$, $L_f$), and are independent of $\upsilon\in(0,1)$.
\end{proof}

\begin{lemma}\label{lowb}  Assume that $f$ satisfies \textnormal{(A2)}, i.e., $\eqref{D}$. Let $V=V_\upsilon(x,z,y)$ be defined as in \eqref{V} and assume that
\begin{equation}\label{Vcond}({\theta m_f}/{4}-{\beta^2}/{\alpha})>0.
\end{equation} Then  there exist constants $\hat c_1,\hat c_2>0$ (depending on $\theta,\alpha,\beta,\delta$ and the data of the problem ($f$,  $m_f$, $c$, and $R$)) such that
\begin{equation}\label{eq:drift-ineql}
V(x,z,y)\;\ge\; \hat c_1\bigl(\|x\|^2+\|z\|^2+\|y\|_{1,\upsilon}\bigr) - \hat c_2,
\qquad \forall (x,z,y)\in\R^{m}\times \R^m\times\R^m.
\end{equation}
\end{lemma}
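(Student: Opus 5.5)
We need a lower bound on $V$ in terms of $\|x\|^2+\|z\|^2+\|y\|_{1,\upsilon}$. The plan is to treat the three contributions to $V$ separately: bound $f(x)-f_\ast$ below by a quadratic in $x$ using (A2), absorb the cross term $-\beta\,x\cdot z$ with Young's inequality, and note that the $y$-term is already exactly $\delta\|y\|_{1,\upsilon}$.

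\emph{Step 1 (quadratic lower bound for $f$).} For $\|x\|\ge R_1:=\max\{R,\sqrt{2c/m_f}\}$, condition \eqref{D} gives $\langle\nabla f(x),x\rangle\ge \tfrac{m_f}{2}\|x\|^2$.

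Fix $x$ with $\|x\|\ge 2R_1$ and integrate along the ray $s\mapsto sx/\|x\|$ for $s\in[R_1,\|x\|]$. The derivative of $f$ along the ray equals $\langle\nabla f(sx/\|x\|),\,sx/\|x\|\rangle/s\ge \tfrac{m_f}{2}s$. Hence
\[
f(x)\ge f(R_1x/\|x\|)+\tfrac{m_f}{4}\bigl(\|x\|^2-R_1^2\bigr).
\]
Since $f$ is continuous, it is bounded on the sphere $\partial B(0,R_1)$ and on the ball $\bar B(0,2R_1)$. Using also $f\ge f_\ast$ on the ball, we obtain
\[
f(x)-f_\ast\ge \tfrac{m_f}{4}\|x\|^2-C_0 \qquad \text{for all } x\in\R^m .
\]
This step also justifies that $f_\ast$ is finite and attained, since $f$ is coercive.

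The constant $m_f/4$ is exactly what makes \eqref{Vcond} fit, which suggests this is the intended route. The one care point is that the exact constant $m_f/4$ must come out of the integration. If it does not, I would sharpen the threshold (e.g.\ take $R_1$ larger so that $\langle\nabla f(x),x\rangle\ge(1-\eta)m_f\|x\|^2$) to gain the margin needed.

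\emph{Step 2 (cross term).} By Young's inequality,
\[
\beta|x\cdot z|\le \frac{\beta^2}{\alpha}\|x\|^2+\frac{\alpha}{4}\|z\|^2 .
\]
Combining with Step 1 gives
\[
V\ge\Bigl(\frac{\theta m_f}{4}-\frac{\beta^2}{\alpha}\Bigr)\|x\|^2+\frac{\alpha}{4}\|z\|^2+\delta\|y\|_{1,\upsilon}-\theta C_0 .
\]

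\emph{Step 3 (conclusion).} Set
\[
\hat c_1:=\min\Bigl\{\frac{\theta m_f}{4}-\frac{\beta^2}{\alpha},\ \frac{\alpha}{4},\ \delta\Bigr\}>0,\qquad \hat c_2:=\theta C_0 .
\]
Here $\hat c_1>0$ by \eqref{Vcond}. Both constants are independent of $\upsilon$, so \eqref{eq:drift-ineql} holds.
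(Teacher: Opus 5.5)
Your proof is correct and follows essentially the paper's route. You integrate \textnormal{(A2)} along rays to get $f(x)-f_\ast\ge \tfrac{m_f}{4}\|x\|^2-C_0$; the constant $m_f/4$ does come out exactly, so your fallback is not needed. You then apply Young's inequality with the same splitting as the paper ($\chi=\alpha/2$), which yields precisely the coefficient $\tfrac{\theta m_f}{4}-\tfrac{\beta^2}{\alpha}$ of \eqref{Vcond}.

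The only differences are in your favor. Absorbing $c$ by taking the radius $\max\{R,\sqrt{2c/m_f}\}$ avoids the paper's logarithmic term. You also explicitly treat the bounded region and the finiteness of $f_\ast$, which the paper leaves implicit, since its combined bound is only derived for $\|x\|\ge R$.
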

\begin{proof}  Note that the constant $f_\ast$ ensures that the first term in $V$ is nonnegative. However, $V$ is not
necessarily nonnegative globally because of the cross term $-\beta\,x\cdot z$. Under the asymptotic dissipativity condition \eqref{D},  the function $f$ dominates a quadratic at infinity. Indeed, fix any $m_f'\in(0,m_f)$, e.g., $m_f'=m_f/2$. For $\|x\|\ge R$, consider the function
\[
g(r) := \min_{\|x\|=r} f(x), \qquad r\ge R.
\]
Differentiating along rays $x=ru$ with $\|u\|=1$, we obtain
\[
\frac{\mathrm{d}}{\mathrm{d}r} f(ru) \;=\; \langle \nabla f(ru),\,u\rangle
\;\ge\; \frac{1}{r}\langle \nabla f(ru),\,ru\rangle
\;\ge\; m_f r - \frac{c}{r}, \qquad r\ge R.
\]
Integrating this inequality from $R$ to $r$ yields
\[
f(ru) - f(Ru) \;\ge\; \frac{m_f}{2}(r^2-R^2) - c\log\frac{r}{R},
\qquad r\ge R, \;\|u\|=1.
\]
Thus, for all $\|x\|\ge R$,
\[
f(x) \;\ge\; \frac{m_f'}{2}\|x\|^2 - C,
\]
for some $C>0$ depending on $f$, $m_f'$, $m_f$, $c$, and $R$.
In particular, $f$ is coercive and grows at least quadratically outside a compact set. Consequently,  we also have (repeating the argument with $f$ replaced by $f-f_\ast$), for all $\|x\|\ge R$,
\[
\theta\bigl(f(x)-f_\ast\bigr) \;\ge\; \frac{\theta m_f'}{2}\|x\|^2 - \theta C.
\]
Therefore, the term $\theta\bigl(f(x)-f_\ast\bigr)$ controls $\|x\|^2$ outside a compact set and the term can be used to absorb the cross term $-\beta\,x\cdot z$. Indeed, by Young's inequality, for any $\chi>0$,
\[
-\beta\,x\cdot z \;\ge\; -\frac{\beta^2}{2\chi}\|x\|^2 - \frac{\chi}{2}\|z\|^2.
\]
Combining these bounds, and noting that the $\frac\alpha 2\|z\|^2$ and $\delta\|y\|_{1,\upsilon}$ terms already contribute positively, we see that  if  $(x,z,y)\in \R^{m}\times \R^m\times \R^m$ with $\|x\|\ge R$, then
\begin{align*}
V(x,z,y)&\geq \frac{\theta m_f'}{2}\|x\|^2 - \theta C+\frac \alpha 2\|z\|^2+\delta \|y\|_{1,\upsilon}-\frac{\beta^2}{2\chi}\|x\|^2 - \frac{\chi}{2}\|z\|^2\notag\\
&=\bigl (\frac{\theta m_f'}{2}-\frac{\beta^2}{2\chi}\bigr )\|x\|^2 +\bigl (\frac \alpha 2-\frac{\chi}{2}\bigr )\|z\|^2+\delta \|y\|_{1,\upsilon} - \theta C.
\end{align*}
Letting $\chi=\alpha/2$, $m_f'=m_f/2$, and imposing the restriction in \eqref{Vcond} we can conclude that  there exist constants $\hat c_1,\hat c_2>0$ (depending on $\theta,\beta,\delta,\chi$ and the data of the problem ($f$, $m_f'$, $m_f$, $c$, and $R$)) such that
\begin{equation*}
V(x,z,y)\;\ge\; \hat c_1\bigl(\|x\|^2+\|z\|^2+\|y\|_{1,\upsilon}\bigr) - \hat c_2,
\qquad \forall (x,z,y)\in\R^{m}\times \R^m\times \R^m.
\end{equation*}
Hence $V$ is a coercive Lyapunov function, even though it need not be nonnegative everywhere.
\end{proof}

We next establish a Foster-Lyapunov drift inequality for $\mathcal{L}_\epsilon$ and  $V$ by proving the following lemma.

\begin{lemma}\label{drift} Assume that $f$ satisfies \textnormal{(A1)} and \textnormal{(A2)}. Let $\upsilon\in (0,1)$ be fixed, let $\epsilon\in [0,\upsilon]$, and consider
the operator $\mathcal{L}_\epsilon$ introduced \eqref{Regloss1}. Let $V=V_\upsilon$ be given by \eqref{V} with $\alpha=1$.  Let $R\gg 1$ be large enough and consider
the set $$\hat C_R:=\{(x,z,y)\in \R^{m}\times \R^m\times \R^m : V(x,z,y)\le R\}.$$
There exist parameters $\beta,\theta,\delta>0$ and constants $\lambda>0$, $K<\infty$,
(depending only on $a,b,\gamma,\varepsilon,\sigma,L_f,$ the data in \textnormal{(A2)}, and the chosen $\beta,\theta,\delta$, $R$,) such that
\begin{equation}\label{eq:drift-ineq}
{\mathcal L}_\epsilon V(x,z,y)
\;\le\; -\,\lambda\,V(x,z,y) + K\,\mathbf 1_{\hat C_R},
\qquad \forall (x,z,y)\in \R^{m}\times \R^m\times \R^m.
\end{equation}
\end{lemma}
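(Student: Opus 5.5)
We compute $\mathcal L_\epsilon V$ term by term with $\alpha=1$. Write $D_i(y):=\sqrt{|y_i|}+\varepsilon\ge\varepsilon$, so that $0<1/D_i\le 1/\varepsilon$, and set $g_i:=\partial_{x_i}f(x)$. The pieces are as follows. From $\theta(f-f_\ast)$ we get
\[
-\gamma\theta\sum_i g_i z_i/D_i+\epsilon\theta\Delta f,
\]
and $|\Delta f|\le mL_f$ by (A1). From $\tfrac12\|z\|^2$ we get $a\,z\cdot\nabla f-a\|z\|^2+\tfrac12 ma^2\sigma^2$. From $-\beta x\cdot z$ we get
\[
\gamma\beta\sum_i z_i^2/D_i-a\beta\,x\cdot\nabla f+a\beta\,x\cdot z,
\]
since the second-order terms vanish. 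From $\delta\|y\|_{1,\upsilon}$, using $|\partial_{y_i}\sqrt{y_i^2+\upsilon^2}|\le1$, $y_i\partial_{y_i}\sqrt{y_i^2+\upsilon^2}=y_i^2/\sqrt{y_i^2+\upsilon^2}\ge\sqrt{y_i^2+\upsilon^2}-\upsilon$ and $\partial^2_{y_iy_i}\sqrt{y_i^2+\upsilon^2}\le 1/\upsilon$, we get
\[
\delta b\bigl(-\|y\|_{1,\upsilon}+m\upsilon+\|\nabla f\|^2+m\sigma^2\bigr)+\epsilon\delta m/\upsilon .
\]
The restriction $\epsilon\le\upsilon$ makes the last term at most $\delta m$, uniformly. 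By (A1), $\|\nabla f(x)\|\le L_f\|x\|+\|\nabla f(0)\|$.

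The good terms are $-a\|z\|^2$, $-a\beta\,x\cdot\nabla f\le -a\beta m_f\|x\|^2+C$ (from (A2) for $\|x\|\ge R_0$, with a constant on the ball), and $-\delta b\|y\|_{1,\upsilon}$. We bound the bad terms with Young's inequality. First, $|\gamma\theta g_iz_i/D_i|\le (\gamma\theta/\varepsilon)|g_i||z_i|$. Second, $|a z\cdot\nabla f|$ and $a\beta|x\cdot z|$ are handled the same way. Third, $\gamma\beta z_i^2/D_i\le(\gamma\beta/\varepsilon)z_i^2$. Fourth, $\delta b\|\nabla f\|^2\le 2\delta bL_f^2\|x\|^2+C$. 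This gives
\[
\mathcal L_\epsilon V\le -c_x\|x\|^2-c_z\|z\|^2-\delta b\|y\|_{1,\upsilon}+K_0 .
\]
For this we need the $z$-coefficient $a-\gamma\beta/\varepsilon-\eta_1(\ldots)>0$ and the $x$-coefficient $a\beta m_f-2\delta bL_f^2-C(\theta^2+1+\beta^2)/\eta_1\cdot L_f^2>0$.

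The main obstacle is choosing the parameters in a consistent order, since the cross term $\gamma\theta g\cdot z/D$ carries $\theta$, while $\theta$ must be large relative to $\beta^2$ for \eqref{Vcond}. The order I would try is this. First fix $\beta$ small enough that $\gamma\beta/\varepsilon\le a/4$. Then set $\theta:=8\beta^2/m_f\cdot 2$, which ensures \eqref{Vcond}, so that $\theta=O(\beta^2)$. The cross term is then bounded by
\[
(\gamma\theta L_f/\varepsilon)\|z\|(\|x\|+C)\le \tfrac a4\|z\|^2+C'\theta^2\|x\|^2/a+C .
\]
Since $\theta^2=O(\beta^4)\ll a\beta m_f$ for small $\beta$, it is absorbed. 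Similarly, $a z\cdot\nabla f$ and $a\beta x\cdot z$ are the genuinely problematic terms: $a z\cdot\nabla f\le \tfrac a4\|z\|^2+aL_f^2\|x\|^2+C$ cannot be absorbed by $a\beta m_f\|x\|^2$ when $\beta$ is small. If this occurs, I would instead treat the $\tfrac12\|z\|^2$-contribution together with the $\theta$-term. Note that $-\gamma\theta g\cdot z/D+a z\cdot g$ has no sign, so as a fallback I would take $\beta$ moderate (not small) and compensate via $\varepsilon$-independent Young weights, checking that the quadratic form in $(\|x\|,\|z\|)$ is negative definite. This is the delicate inequality step. Finally $\delta$ is chosen small so that $2\delta bL_f^2\le \tfrac14 a\beta m_f$.

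Once the bound $\mathcal L_\epsilon V\le -c(\|x\|^2+\|z\|^2+\|y\|_{1,\upsilon})+K_0$ holds, Lemma~\ref{bog} gives $\|x\|^2+\|z\|^2+\|y\|_{1,\upsilon}\ge (V-c_2)/c_1$. Hence $\mathcal L_\epsilon V\le-\lambda V+K_1$ with $\lambda=c/c_1$. Taking $R$ so large that $\lambda R/2\ge K_1$, we obtain $\mathcal L_\epsilon V\le-\tfrac\lambda2 V$ off $\hat C_R$. On $\hat C_R$ we have $\mathcal L_\epsilon V\le-\tfrac\lambda2V+K_1+\tfrac\lambda2 R$, which yields \eqref{eq:drift-ineq} with $\lambda/2$ and $K:=K_1+\lambda R/2$. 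All constants are uniform in $\upsilon\in(0,1)$ and $\epsilon\in[0,\upsilon]$.
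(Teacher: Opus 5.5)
Your term-by-term computation and the last step (from $-c(\|x\|^2+\|z\|^2+\|y\|_{1,\upsilon})+K_0$ to \eqref{eq:drift-ineq}) are fine. The step you defer, negativity of the $(x,z)$-form, is a genuine gap, and it cannot be filled for any $\beta,\theta,\delta$ that make $V$ coercive. With $\theta=O(\beta^2)$ nothing compensates $a\,z\cdot\nabla f$, as you observe. A ``moderate'' $\beta$ does not help either, because the inequality itself fails for $V$ of the form \eqref{V}.

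To see this, take any point with $z=\nabla f(x)$ and $y_i=(\partial_{x_i}f(x))^2+\sigma^2$. There both the $z$-drift and the $y$-drift vanish. The preconditioned velocity satisfies $|\dot x_i|=\gamma|\partial_{x_i}f|/(\sqrt{y_i}+\varepsilon)<\gamma$, so
\[
|\mathcal L_\epsilon V|\le \gamma\sqrt m\,|\theta-\beta|\,\|\nabla f(x)\|+\tfrac12 a^2\sigma^2 m+\theta mL_f+\delta m\le C(1+\|x\|).
\]
Under \eqref{Vcond}, which you impose and which your last step needs, Lemma~\ref{lowb} gives $V\ge\hat c_1\|x\|^2-\hat c_2$. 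Letting $\|x\|\to\infty$ along such points contradicts $\mathcal L_\epsilon V\le-\lambda V$ off $\hat C_R$.

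For a concrete case take $m=1$, $f=\tfrac12x^2$, $a=b=\gamma=\sigma=\varepsilon=1$. At $(s,s,s^2+1)$ one has $V\sim s^2$, but $\mathcal L_0V=-(\theta-\beta)s^2/(\sqrt{s^2+1}+1)+\tfrac12=O(s)$. Far from the minimizers the dynamics moves like sign-gradient descent with speed of order $\gamma$. This is incompatible with a geometric drift for a $V$ that is quadratic in $x$.

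The paper's proof contains the idea you were missing for the cross term. It takes $\theta$ close to $a\varepsilon/\gamma$, so that $-\gamma\theta\,z_i\partial_{x_i}f/D_i$ cancels $a\,z_i\partial_{x_i}f$. That cancellation only works near $D_i=\varepsilon$, i.e.\ $y_i\approx 0$.

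In \eqref{eq:T12} the coefficient $(a-\theta\gamma w_i)^2$, with $w_i=D_i^{-1}\in(0,\varepsilon^{-1}]$, is bounded by $(a-\theta\gamma/\varepsilon)^2$ ``using $w_i\le\varepsilon^{-1}$''. For $0<\theta<a\varepsilon/\gamma$ this value is the infimum over $w_i$. The supremum is $a^2$, approached as $y_i\to\infty$. With the correct coefficient one gets $c_g\ge a/2$, and then $c_x-2L_f^2c_g\le\tfrac{a\beta}{2}(m_f-\beta)-aL_f^2<0$ for every $\beta$ (note $m_f\le L_f$). So the paper's estimate does not close either, consistent with the counterexample above.

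Your suspicion about the cross term was correct. The lemma cannot be proved for this $V$, and a structurally different Lyapunov function would be required.
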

\begin{proof} For $V$ we have
\[
\nabla_x V = \theta \nabla f(x)-\beta z,\quad,
\Delta_x V = \theta \Delta f(x),
\]
\[
\nabla_z V = z-\beta x,\quad
\Delta_z V = m.
\]
and
\[
\nabla_y V = \delta \bigl(y_1/\sqrt{y_1^2+\upsilon^2},...,y_m/\sqrt{y_m^2+\upsilon^2}\bigr),\quad
\Delta_y V = \delta \sum_{i=1}^m\frac {\upsilon^2}{(y_i^2+\upsilon^2)^{3/2}}.
\]
Hence, using the convention that the division by $(\sqrt{|y|}+\varepsilon)$ is done componentwise,
\[
\begin{aligned}
{\mathcal L}_\epsilon V
&= (\theta\nabla f-\beta z)\cdot\Bigl(-\gamma\,\frac{z}{\sqrt |y|+\varepsilon}\Bigr)
+ (z-\beta x)\cdot a(\nabla f-z) \\
&\quad +  {\delta b}\Bigl (\sum_{i=1}^m(y_i/\sqrt{y_i^2+\upsilon^2})(-y_i + (\partial_{x_i}f)^2 + \sigma^2)\bigr )\\
&\quad+ \frac 12 a^2\sigma^2\,m+\epsilon \theta \Delta f(x)+ {\epsilon\delta} \sum_{i=1}^m\frac {\upsilon^2}{(y_i^2+\upsilon^2)^{3/2}}.
\end{aligned}
\]
Note that
\[
\begin{aligned}
\sum_{i=1}^m\Bigl(\frac{y_i}{\sqrt{y_i^2 + \upsilon^2}}\Bigr)
\left(-y_i + (\partial_{x_i} f)^2 + \sigma^2\right)
&= -\|y\|_{1,\upsilon}+\sum_{i=1}^m\frac{\upsilon^2}{\sqrt{y_i^2 + \upsilon^2}}+ \sum_{i=1}^m\frac{y_i\big((\partial_{x_i} f)^2 + \sigma^2\big)}{\sqrt{y_i^2 + \upsilon^2}},
\end{aligned}
\]
and
\begin{equation}\label{eq:LV-expanded}
\begin{aligned}
{\mathcal L}_\epsilon V
&= T_1+T_2+T_3- \delta b\|y\|_{1,\upsilon}
+  \frac12 a^2\sigma^2\, m+R,
\end{aligned}
\end{equation}
where
\begin{align*}
T_1&:= \Bigl[a\,z\cdot\nabla f - \theta\gamma \sum_{i=1}^m \frac{z_i\,\partial_{x_i}f(x)}{\sqrt{|y_i|}+\varepsilon}\Bigr],\\
T_2&:= \Bigl[-a\|z\|^2 + \beta\gamma \sum_{i=1}^m \frac{z_i^2}{\sqrt{|y_i|}+\varepsilon}\Bigr], \\
T_3&:= \Bigl[-a\beta\, x\cdot\nabla f + a\beta\, x\cdot z\Bigr],
\end{align*}
and where
\begin{align*}
R&:=\epsilon \theta \Delta f(x)+{\epsilon\delta} \sum_{i=1}^m\frac {\upsilon^2}{(y_i^2+\upsilon^2)^{3/2}}+
\sum_{i=1}^m{\delta b}\frac{\upsilon^2}{\sqrt{y_i^2 + \upsilon^2}}+ \sum_{i=1}^m {\delta b}\frac{y_i\big((\partial_{x_i} f)^2 + \sigma^2\big)}{\sqrt{y_i^2 + \upsilon^2}}.
\end{align*}
Note that
$$R\leq C+ {\delta b}\|\nabla f(x)\|^2,$$
for a constant $C$ which depends only on $\theta, \delta, b$, and $\sigma$.  We next estimate $T_1+T_2$, and then $T_3$. Write $w_i:= (\sqrt{|y_i|}+\varepsilon)^{-1}\in(0,\varepsilon^{-1}]$. Then
$$T_1+T_2=\sum_i(T_1+T_2)_i$$
where
\[
(T_1+T_2)_i= -\underbrace{(a-\beta\gamma w_i)}_{=:c_{1,i}}\,z_i^2
\;+\;\underbrace{(a-\theta\gamma w_i)}_{=:c_{2,i}}\,z_i\,\partial_{x_i}f(x).
\]
Assume
\begin{equation}\label{eq:beta-small}
0<\beta<\frac{a\varepsilon}{2\gamma}\quad\Longrightarrow\quad c_{1,i}\ge a-\frac{\beta\gamma}{\varepsilon}=:c_{1,\min}>\frac{a}{2}\,.
\end{equation}
Using the inequality $-c_1 z^2+c_2 zh\le -\tfrac{c_1}{2}z^2+\tfrac{c_2^2}{2c_1}h^2$, we get
\[
(T_1+T_2)_i \;\le\; -\frac{c_{1,i}}{2}z_i^2 \;+\; \frac{c_{2,i}^2}{2c_{1,i}}\,(\partial_{x_i}f(x))^2
\;\le\; -\frac{c_{1,\min}}{2}z_i^2 \;+\; \frac{\bigl(a-\theta\gamma w_i\bigr)^2}{2c_{1,\min}}\,(\partial_{x_i}f(x))^2.
\]
Summing over $i$ and using $w_i\le\varepsilon^{-1}$, we obtain
\begin{equation}\label{eq:T12}
T_1+T_2 \;\le\; -\frac{a-\beta\gamma/\varepsilon}{2}\,\|z\|^2
\;+\; \frac{\bigl(a-\theta\gamma/\varepsilon\bigr)^2}{2\,(a-\beta\gamma/\varepsilon)}\,\|\nabla f(x)\|^2.
\end{equation}
We next estimate $T_3$. By (A2) and Young’s inequality, for any $\eta_z>0$,
\[
-a\beta\,x\cdot\nabla f(x)\;\le\;-a\beta\,(m_f\|x\|^2-c),
\qquad
a\beta\,x\cdot z\;\le\;\frac{\eta_z}{2}\|z\|^2+\frac{a^2\beta^2}{2\eta_z}\|x\|^2.
\]
Fix
\begin{equation}\label{eq:eta-choice}
\eta_z:=\frac{a-\beta\gamma/\varepsilon}{2}\,,
\end{equation}
which is positive by \eqref{eq:beta-small}. Then
\begin{equation}\label{eq:T3}
T_3 \;\le\; -\Bigl(a\beta m_f-\frac{a^2\beta^2}{a-\beta\gamma/\varepsilon}\Bigr)\frac{\|x\|^2}{2}
\;+\; \frac{a-\beta\gamma/\varepsilon}{4}\,\|z\|^2 \;+\; a\beta c.
\end{equation}
Combining \eqref{eq:T12} and \eqref{eq:T3},
\begin{align}\label{eq:T123}
T_1+T_2+T_3
\;\le&\; -\underbrace{\frac{a-\beta\gamma/\varepsilon}{4}}_{=:c_z}\|z\|^2
\;-\;\underbrace{\frac{1}{2}\Bigl(a\beta m_f-\frac{a^2\beta^2}{a-\beta\gamma/\varepsilon}\Bigr)}_{=:c_x}\|x\|^2\notag\\
\;&+\;\underbrace{\frac{(a-\theta\gamma/\varepsilon)^2}{2\,(a-\beta\gamma/\varepsilon)}}_{=:c_g}\|\nabla f(x)\|^2
\;+\;a\beta c.
\end{align}
Note that $c_z>0$ by \eqref{eq:beta-small}, and $c_x>0$ for all sufficiently small $\beta>0$ (since the second term is $O(\beta^2)$). From \eqref{eq:LV-expanded}, \eqref{eq:T123} and the estimate of $E$,
\[
\mathcal{L}_\epsilon V \;\le\; -c_z\|z\|^2 - c_x\|x\|^2 + (c_g+\delta b)\|\nabla f(x)\|^2
\;-\;\delta b\,\|y\|_{1,\upsilon} \;+\; C_0,
\]
where $C_0:=C + \tfrac12 a^2\sigma^2\, m + a\beta c$.
Using (A1), $\|\nabla f(x)\|\le L_f\|x\|+\|\nabla f(0)\|$, hence
\[
(c_g+\delta b)\|\nabla f(x)\|^2 \;\le\; 2L_f^2(c_g+\delta b)\|x\|^2 + C_1,
\]
for some $C_1<\infty$. Thus
\begin{equation}\label{eq:preLyap}
\mathcal{L}_\epsilon V \;\le\; -c_z\|z\|^2 \;-\;\bigl(c_x-2L_f^2(c_g+\delta b)\bigr)\|x\|^2
\;-\;\delta b\,\|y\|_{1,\upsilon} \;+\; C_2,
\end{equation}
for another constant $C_2$. We now choose parameters to make the $x$-coefficient negative.
Pick $\beta>0$ small so that \eqref{eq:beta-small} holds and $c_x\ge \tfrac{a\beta m_f}{4}$. With $\beta$ fixed, we pick $\theta\in(0,a\varepsilon/\gamma)$ so that
\[
c_g=\frac{(a-\theta\gamma/\varepsilon)^2}{2\,(a-\beta\gamma/\varepsilon)}
\;\le\;\frac{a\beta m_f}{16L_f^2}.
\]
Finally, pick $\delta>0$ so that $\delta b\le \tfrac{a\beta m_f}{16L_f^2}$.
Then $c_x-2L_f^2(c_g+\delta b)\ge \tfrac{a\beta m_f}{8}=:c_x'>0$.
With these choices, \eqref{eq:preLyap} becomes
\begin{equation}\label{eq:good-drift}
\mathcal{L}_\epsilon V \;\le\; -\,c_z\|z\|^2 \;-\;c_x'\|x\|^2 \;-\;\delta b\,\|y\|_{1,\upsilon} \;+\; C_2.
\end{equation}
Let $\bar c:=\min\{c_z,c_x',\delta b\}>0$.
Then from \eqref{eq:good-drift},
\begin{align}\label{Adda-}
\mathcal{L}_\epsilon V \;\le\; -\bar c \bigl(\|x\|^2+\|z\|^2+\|y\|_{1,\upsilon}\bigr) + C_2.
\end{align}
Using Lemma \ref{bog} we have
\begin{equation}\label{eq:drift-ineqlapa}
-V(x,z,y)\;\geq\;  -c_1\bigl(\|x\|^2+\|z\|^2+\|y\|_{1,\upsilon}\bigr) -c_2,
\qquad \forall (x,z,y)\in\R^{m}\times \R^m\times\R^m.
\end{equation}
Hence,
\begin{align}\label{Adda-}
\mathcal{L}_\epsilon V \;\le\; \frac {\bar c}{c_1} (-V(x,z,y)+c_2) + C_2=-\hat C_1V(x,z,y)+\hat C_2.
\end{align}
Using this we first see that on the compact set $\hat C_R=\{(x,z,y): V(x,z,y)\le R\}$, the function $\mathcal L V$ is bounded above by continuity, i.e., $\mathcal L_\epsilon V(x,z,y)\ \le K$. In particular, if we choose $R\gg 1$ large enough then we see from Lemma \ref{bog} and Lemma \ref{lowb} that
\begin{align}\label{Adda}
-\bar c \bigl(\|x\|^2+\|z\|^2+\|y\|_{1,\upsilon}\bigr) + C_2&\sim -\bar c \bigl(\|x\|^2+\|z\|^2+\|y\|_{1,\upsilon}\bigr)\sim -\hat  cV(x,z,y),
\end{align}
for all $(x,z,y)\in (\R^{m}\times \R^m\times \R^m)\setminus \hat C_R$. Consequently,
we obtain
\[
\mathcal L_\epsilon V(x,z,y)\ \le\ -\lambda\,V(x,z,y)\ +\ K\,\mathbf 1_{\hat C_R}(x,z,y),
\]
which is \eqref{eq:drift-ineq}.
\end{proof}

\begin{remark}\label{uurem-a-}
In the construction of $V$ in \eqref{V}, the dependence on $y$ enters through the quantity $\|y\|_{1,\upsilon}$. While the remaining components of $V$ are relatively natural (or ``guessable''), one may wonder whether alternative choices for the dependence on $y$ could also be reasonable. This is indeed the case in Lemma \ref{bog} and Lemma \ref{lowb}. However, in the proof of Lemma \ref{drift} (already in the case $\epsilon=0$), terms of the form
\begin{equation}\label{terma}
\sum_{i=1}^m G_i(y_i)\big((\partial_{x_i} f)^2 + \sigma^2\big)
\end{equation}
naturally arise, where each $G_i$ reflects the specific way in which $V$ depends on $y$. Unless one makes more substantial modifications to the structure of $V$, it is desirable that the expression in \eqref{terma} can be controlled by a quantity of the form $\|\nabla f(x)\|^2 + 1$. This requirement essentially forces the dependence on $y$ to be compatible with such a bound, which explains our use of the $\mathrm{L}^1$-type norm in $y$.
\end{remark}

\begin{remark}\label{uurem} Note that by Lemma \ref{lowb} we have  $V:\R^{m}\times \R^m\times \R^m\to [-c_2,\infty)$. However, replacing
$V$ by $V:=V+A$ where $A$ is a non-negative constant, does not the change the validity of \eqref{Adda-} nor of \eqref{Adda}. Consequently,  we can construct a Lyapunov function that takes values in $[1,\infty)$.
\end{remark}

\begin{remark}\label{uuremla} By construction $V=V_\upsilon$, $\upsilon\in (0,1)$, but  the
quantitative results proved above are uniform in $\upsilon\in (0,1)$. In particular, all estimates holds for $\mathcal{L}_0$ uniform in $\upsilon\in (0,1)$.
\end{remark}

By \eqref{eq:drift-ineql}, i.e., coercivity in $(x,z,y)$,  the sublevel sets $\{V_\upsilon\le M\}$ are compact in
$\R^{m}\times\R^m\times\R^m$. Choosing $M$ large enough,  \eqref{eq:drift-ineq} can be stated, for all $\epsilon\in [0,\upsilon]$, as
\begin{equation}\label{eq:HL-drift}
{\mathcal L}_\epsilon V_\upsilon\;\le\;-\lambda' V_\upsilon + K'\,\mathbf 1_{C_\upsilon},\qquad C_\upsilon:=\{(x,z,y)\in \R^{m}\times \R^m\times\R^m : V_\upsilon(x,z,y)\le M\}\ \text{compact},
\end{equation}
for some $\lambda'\in(0,\lambda]$, $K'\ge K$. This is the standard reduction as outside $\{V_\upsilon\le M\}$ the $-\lambda V_\upsilon$ term dominates any bounded perturbation. As \eqref{eq:HL-drift} applies with $\epsilon=0$ we see that
\begin{equation}\label{eq:HL-drift+-}
{\mathcal L} V_\upsilon(x,z,y)={\mathcal L}_0 V_\upsilon(x,z,y)\;\le\;-\lambda' V_\upsilon(x,z,y) + K'\,\mathbf 1_{C_\upsilon}(x,z,y)
\end{equation}
holds for all $(x,z,y)\in \R^{m}\times \R^m\times\R^m$. Finally, note that $C_\upsilon\subset C:=C_0$ for all $\upsilon\in [0,1]$. Hence,
\begin{equation}\label{eq:HL-drift+}
{\mathcal L} V_\upsilon(x,z,y)={\mathcal L}_0 V_\upsilon(x,z,y)\;\le\;-\lambda' V_\upsilon(x,z,y) + K'\,\mathbf 1_{C}(x,z,y)
\end{equation}
holds for all $(x,z,y)\in \R^{m}\times \R^m\times\R^m$.

\subsection{Existence of invariant measures using vanishing viscosity}\label{exista} As previously mentioned,  the existence of invariant probability measures for dissipative stochastic
dynamical systems can often be established by relatively soft arguments. We here show that though our system is degenerate, the existence of invariant measures can be establish using the framework with vanishing viscosity outlined above.

Assume that $f$ satisfies \textnormal{(A1)} and \textnormal{(A2)} from Subsection~\ref{Subcond}. Fix $a,b,\gamma,\sigma>0$ and $\varepsilon>0$, and let $(P_t)_{t\ge0}$ denote the Markov semigroup
associated with the time-homogeneous system \eqref{eq:cts-x+}–\eqref{eq:cts-y+}. Let $\mu_0$ be an admissible initial probability distribution on $\R^{3m}$ in the sense of~\eqref{intimeasure}. Then, writing $\pi_t := \mu_0 P_t$, we intend to prove that the family $(\pi_t)_{t\ge0}$ converges weakly, as $t\to\infty$,
to an invariant probability measure $\pi_\infty$ on $\R^{3m}$ associated with $(P_t)$,
\[
\pi_\infty P_t = \pi_\infty, \qquad t \ge 0,
\]
that is, the law of $(x_t,z_t,y_t)$ remains $\pi_\infty$ whenever the process is initialized with
$(x_0,z_0,y_0)\sim\pi_\infty$.

 To start the argument, consider the operator $\mathcal{L}_\epsilon$ introduced in \eqref{Regloss1} on $\R^m\times\R^m\times\R^m$, $\epsilon>0$. By construction
\[
\mathcal{L}_\epsilon \;=\; \mathcal{L} \;+\; \epsilon\,\Delta_x\;+\; \epsilon\,\Delta_y,\qquad \epsilon>0,
\]
on $\R^m\times\R^m\times(\R_+)^m$. $\mathcal{L}_\epsilon$ is an extension of $\mathcal{L}$ to all of $\R^m\times\R^m\times\R^m$  and it is also a regularization of $\mathcal{L}$ as $\mathcal{L}_\epsilon$ is uniformly elliptic  on $\R^m\times\R^m\times\R^m$.

Consider $$U_R:=\{(x,z,y)\in \R^m\times\R^m\times\R^m:\, \|x\|^2+\|z\|^2+\|y\|^2\leq R^2\}$$ for $R\gg 1$ large. If $(x,z,y)\in \mathbb R^{3m}\setminus U_R$, then we have  $\|x\|^2+\|z\|^2+\|y\|_{1,\upsilon}\geq R/2$. Hence, using Lemma \ref{lowb} and  \eqref{eq:HL-drift} we see, for $R$ large enough, that
$$\mathcal{L}_\epsilon V\leq -C/2\quad\mbox{whenever}\quad (x,z,y)\in \mathbb R^{3m}\setminus U_R,$$
for some constant $C\gg1 $ independent of $\epsilon$. Furthermore,
$$|(\mathcal{L}_\epsilon -\mathcal{L}_0)V|\leq c_V\epsilon \to 0\quad\mbox{as}\quad \epsilon\to 0.$$
Using Theorem 2.4.1 and Corollary 2.4.2 in \cite{BogachevKrylovRoeckner2015} we can conclude there are probability
measures $\{\mu_\epsilon\}$, with positive continuous densities $\{\rho_\epsilon\}$, solving the equations
$$\mathcal{L}_\epsilon^\ast\mu_\epsilon=0\quad\mbox{on $\mathbb R^{3m}$.}$$
 We renormalize these solutions so that $\mu_\epsilon(U_R)=1$. By Theorem 2.3.2 in \cite{BogachevKrylovRoeckner2015}, the renormalized sequence $\{\mu_\epsilon\}$  is still bounded, since the functions
$\{\mathcal{L}_\epsilon V\}$ are
uniformly bounded on the ball $U_R$. Hence $\{\mu_\epsilon\}$ contains a subsequence that converges
weakly on every ball in $\R^{3m}$ to a  measure $\nu$. The measure $\nu$ obtained in the limit is bounded and
nonnegative, but is not identically zero as $\nu(U_R) = 1$.

Let $\varphi\in C_0^\infty\big(\R^m\times\R^m\times\R^m\big)$. Then $\mathcal L_\epsilon\varphi\to \mathcal L_0\varphi$ uniformly as $\epsilon\to 0$,
and $\sup_\epsilon \|\mathcal L_\epsilon\varphi\|_\infty<\infty$. Since $\iiint \mathcal L_\epsilon\varphi\,\mathrm d\mu_\epsilon=0$ for each $\epsilon$,
passing to the limit gives
\begin{equation}\label{watt}
\iiint \mathcal L_0\varphi\,\mathrm d\nu \;=\; \lim_{\epsilon\to0}\iiint \mathcal L_\epsilon\varphi\,\mathrm d\mu_\epsilon \;=\; 0.
\end{equation}
Thus $\mathcal L_0^\ast\nu=0$ in the sense of distributions on
$\R^m\times\R^m\times\R^m$. Renormalizing $\nu$ we get a probability measure on $\R^{3m}$.

To complete the proof we show that the support of $\nu$ is contained in
$\R^m\times\R^m\times(\sigma^2/2,\infty)^m$, and hence that $\nu$ is an invariant measure for
$\mathcal L$ as well. We have already established, see \eqref{watt}, that
\[
\iiint \mathcal L_0\varphi\,\mathrm d\nu = 0
\qquad\text{for all }\varphi\in C_0^\infty(\R^{3m}),
\]
where $\mathcal L_0$ is the full-space operator in~\eqref{Regloss1} with $\epsilon=0$.
For $\varphi$ depending only on $y$, we have
\[
\mathcal L_0\varphi(y)
=\sum_{i=1}^m b\bigl(-y_i+(\partial_{x_i}f(x))^2+\sigma^2\bigr)\,\partial_{y_i}\varphi(y),
\]
since the $x$- and $z$-derivatives of $\varphi$ vanish and there is no diffusion in $y$. Fix $i\in\{1,\dots,m\}$ and choose $\eta\in C^\infty(\R)$ such that
\[
\eta'(r)\ge 0,\qquad
\eta'(r)=1\ \text{for }r\le\sigma^2/4,\qquad
\eta'(r)=0\ \text{for }r\ge\sigma^2/2.
\]
For each $R>0$, let $\chi_R\in C_0^\infty(\R^{3m})$ be a standard smooth cutoff satisfying
$\chi_R\equiv1$ on $\{(x,z,y):\|(x,z,y)\|\le R\}$ and $\chi_R\equiv0$ on
$\{(x,z,y):\|(x,z,y)\|\ge 2R\}$.  Set
\[
\varphi_R(x,z,y):=\eta(y_i)\chi_R(x,z,y).
\]
Then $\varphi_R\in C_0^\infty(\R^{3m})$, and we may use it as a test function in the stationary equation.
Expanding the identity $\iiint \mathcal L_0\varphi_R\,\mathrm d\nu=0$ we have
\begin{equation}\label{watt+}
\iiint b\bigl(-y_i+(\partial_{x_i}f(x))^2+\sigma^2\bigr)\,\eta'(y_i)\chi_R(x,z,y)\,
\mathrm d\nu(x,z,y)
+ \iiint \mathcal R_R(x,z,y)\,\mathrm d\nu(x,z,y)=0,
\end{equation}
where $\mathcal R_R$ collects the additional terms involving derivatives of $\chi_R$.
Because $\chi_R\equiv1$ on $\|(x,z,y)\|\le R$ and $\eta'$ is bounded,
the remainder $\mathcal R_R$ is supported in $\{R\le \|(x,z,y)\|\le 2R\}$ and satisfies
$|\mathcal R_R|\le C/R$. Hence
\[
\lim_{R\to\infty}\iiint \mathcal R_R\,\mathrm d\nu=0,
\]
and we may pass to the limit $R\to\infty$ in \eqref{watt+} to obtain
\[
0=\iiint b\bigl(-y_i+(\partial_{x_i}f(x))^2+\sigma^2\bigr)\,\eta'(y_i)\,\mathrm d\nu(x,z,y).
\]
Because $(\partial_{x_i}f(x))^2+\sigma^2\ge\sigma^2$, on the set $\{y_i\le\sigma^2/2\}$ we have
\[
-y_i+(\partial_{x_i}f(x))^2+\sigma^2 \;\ge\; \sigma^2/2 \;>\; 0.
\]
Therefore the integrand
$b\bigl(-y_i+(\partial_{x_i}f(x))^2+\sigma^2\bigr)\eta'(y_i)$
is nonnegative and strictly positive wherever $y_i\le\sigma^2/4$.
Since its integral is zero, we must have
\[
\nu\bigl(\{y_i\le\sigma^2/4\}\bigr)=0.
\]
As $i$ was arbitrary and we can approximate the indicator of $\{y_i<\sigma^2/2\}$
from below by such $\eta'$, we conclude that
\[
\nu\bigl(\{y:\,y_i<\sigma^2/2\}\bigr)=0
\quad\text{for every }i,
\]
hence
\[
\mathrm{supp}(\nu)
\subset
\R^m\times\R^m\times\bigl([\sigma^2/2,\infty)\bigr)^m.
\]

On the region $\{y_i>0\}$ (in fact on $\{y_i\geq\sigma^2/2\}$) the two operators coincide
\[
\mathcal L_0=\mathcal L
\qquad\text{on}\quad
\R^m\times\R^m\times\bigl([\sigma^2/2,\infty)\bigr)^m,
\]
because $\sqrt{|y_i|}=\sqrt{y_i}$ there. Let
$\varphi\in C_0^\infty\bigl(\R^m\times\R^m\times(\R_+)^m\bigr)$
be any test function. By the support property just proved, we may extend $\varphi$ by zero
to a function in $C_0^\infty(\R^{3m})$ without changing the integral against $\nu$.
Hence
\[
\iiint \mathcal L\varphi\,\mathrm d\nu
= \iiint \mathcal L_0\varphi\,\mathrm d\nu
= 0.
\]
Thus $\mathcal L^\ast\nu=0$ in the sense of distributions on
$\R^m\times\R^m\times(\R_+)^m$. Finally, since the SDE coefficients are continuous with at most linear growth,
the associated Markov semigroup $(P_t)_{t\ge0}$ is Feller.
For Feller semigroups the generator $\mathcal L$ (on a core containing $C_0^\infty$)
characterizes invariance: $\mathcal L^\ast\nu=0$ implies
\[
\nu P_t = \nu,\qquad \forall t\ge0.
\]
We have proved that $\nu$ is an invariant probability measure for the original
diffusion with generator~$\mathcal L$.\qed

\begin{remark}
In the argument above we establish the \emph{existence} of at least one invariant probability measure
$\pi_\infty$ as a weak limit of $(\pi_t)_{t\ge0}$. The proof combines vanishing viscosity
regularization, Lyapunov functions ($V_\upsilon$), and results from~\cite{BogachevKrylovRoeckner2015}.
Note that we only assume \textnormal{(A1)}-\textnormal{(A2)} and no Hörmander/Malliavin
nondegeneracy (see \cite{BouleauHirsch1991,Nualart2006}). The limit $\pi_\infty$ may in principle depend on $\mu_0$, and no rate of convergence
is inferred, in particular, uniqueness is not claimed.
\end{remark}

\section{Uniqueness of invariant measures}\label{aaaa}

Recall that the total variation distance $\|\cdot\|_{\mathrm{TV}}$
and the $2$-Wasserstein distance $W_2$ are defined in
Section~\ref{Prelim}.  With $V=V_0\geq 0$ being the Lyapunov function with $\upsilon=0$ and constructed  in Section~\ref{sec:lya}, we introduce
\begin{align}\label{pettabol}\hat C_R:=\Bigl\{(x,z,y)\in \mathbb R^m\times\mathbb R^m\times\mathbb R^m:
V(x,z,y)\le R\Bigr\}
\end{align}
for every $R>0$.  Recall the sets $\{E_\eta\}$ introduced in \eqref{intimeasurell} and let $t_\ast:=(\ln 2)/b$.

The proof of  Theorem \ref{CorC} is based the following abstract result proved in this section.
\begin{theorem}\label{Thm3-revised-}
Assume that $f$ satisfies \textnormal{(A1)} and \textnormal{(A2)} from Subsection~\ref{Subcond}. Fix $a,b,\gamma,\sigma>0$ and $\varepsilon>0$,
and let $(P_t)_{t\ge0}$ denote the Markov semigroup associated with
\eqref{eq:cts-x+}-\eqref{eq:cts-y+}. Consider the level sets $\{\hat C_R\}$ introduced in \eqref{pettabol} for $R>0$. Assume that for every $R>0$ the compact set
\begin{align}\label{petta}
C_R :=\hat C_R\cap E_{\sigma^2/2}\quad\mbox{is small,}
\end{align}
in the sense that there exist $t_R= t_\ast+\delta_R$, $\delta_R> 0$,  $\varepsilon_R > 0$, and a probability measure $\nu_R$
on $(E,\mathcal B(E))$ such that for all $(x,z,y) \in C_R$ and all
$A \in \mathcal B(E)$,
\begin{align}\label{petta-rep}
P_{t_R}((x,z,y),A) \ge \varepsilon_R\,\nu_R(A).
\end{align}
Then there exists a unique invariant probability measure $\pi_\infty$ on $E$, and for every admissible initial distribution $\mu_0$ we have
\[
\mu_0 P_t \Rightarrow \pi_\infty
\qquad \text{as } t\to\infty .
\]
In particular, the limiting distribution does not depend on $\mu_0$. Moreover, there exist constants $C<\infty$ and $\lambda>0$, independent of the admissible initial distribution $\mu_0$, such that
\begin{equation}\label{eq:V-geometric-ergodicity-revised}
\|\mu_0P_t-\pi_\infty\|_{\mathrm{TV}}
\le
C e^{-\lambda (t-t_\ast)}
\bigl(1+\mathbb{E}_{\mu_0}\!\big[V(x_{t_\ast},z_{t_\ast},y_{t_\ast})\big]\bigr),
\qquad t\ge t_\ast,
\end{equation}
and
\begin{equation}\label{eq:W2-erg-revised}
\bigl (W_2\bigl(\mu_0P_t,\pi_\infty\bigr)\bigr )^2
\le
C e^{-\lambda (t-t_\ast)}
\bigl(1+\mathbb{E}_{\mu_0}\!\big[V(x_{t_\ast},z_{t_\ast},y_{t_\ast})\big]\bigr),
\qquad t\ge t_\ast.
\end{equation}
\end{theorem}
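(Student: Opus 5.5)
The plan is a Harris-type argument for the restricted chain started after the burn-in time $t_\ast=(\ln 2)/b$. First, $y$ is deterministic given $x$ and $y_t^i\ge e^{-bt}y_0^i+(1-e^{-bt})\sigma^2$. Hence for every admissible $\mu_0$ the law $\mu_0P_{t_\ast}$ is supported in $E_{\sigma^2/2}$, and the set $E_{\sigma^2/2}$ is forward invariant. We may therefore regard $(P_t)$ as a Markov semigroup on $E_{\sigma^2/2}$, where the coefficients are $C^\infty$ and globally Lipschitz. On this set $V=V_0$ and $\|y\|_1=\sum_i y_i$.

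The drift condition \eqref{eq:HL-drift+} gives $\mathcal L V\le -\lambda' V+K'\mathbf 1_C$. Dynkin's formula with a localization by stopping times (using the moment bounds of Appendix~\ref{HMI}), followed by Gronwall, yields
\[
P_tV\le e^{-\lambda' t}V+K'/\lambda' .
\]
Fix a step $s:=t_R=t_\ast+\delta_R$ and consider the skeleton chain $Q:=P_s$ on $E_{\sigma^2/2}$. It satisfies
\[
QV\le \kappa V+K_1,\qquad \kappa=e^{-\lambda' s}<1 .
\]
Choose $R$ so large that $2K_1/(1-\kappa)<R$. By hypothesis \eqref{petta-rep}, the sublevel set $C_R=\{V\le R\}\cap E_{\sigma^2/2}$ is small for $Q$ with constant $\varepsilon_R$ and measure $\nu_R$. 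The Hairer--Mattingly form of Harris' theorem then applies. It gives constants $\bar C$ and $\rho<1$ and a unique invariant measure $\pi_\infty$ of $Q$, with
\[
\|\mu Q^n-\pi_\infty\|_{TV}\le \|\mu Q^n-\pi_\infty\|_{V}\le \bar C\rho^n\bigl(1+\mu(V)\bigr).
\]

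Next we pass from the skeleton to continuous time. Since $P_r$ commutes with $Q$, the measure $\pi_\infty P_r$ is $Q$-invariant for every $r\in[0,s]$, so uniqueness forces $\pi_\infty P_r=\pi_\infty$. Any $P_t$-invariant measure is $Q$-invariant, hence equals $\pi_\infty$. This also shows that an invariant measure for the original state space $E$ must sit on $E_{\sigma^2/2}$. The existence result of Subsection~\ref{exista} is consistent with this but is not needed. For $t\ge t_\ast$ write $t-t_\ast=ns+r$ and apply the skeleton estimate to $\mu:=\mu_0P_{t_\ast}P_r$. Using $\mu(V)\le \mathbb E_{\mu_0}V(X_{t_\ast})+K'/\lambda'$ and setting $\lambda=-\log\rho/s$, this gives \eqref{eq:V-geometric-ergodicity-revised}. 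Weak convergence $\mu_0P_t\Rightarrow\pi_\infty$ follows from TV convergence. The right-hand side is finite for admissible $\mu_0$ by \eqref{intimeasureapa}, the upper bound of Lemma~\ref{bog}, and the moment bound on $P_{t_\ast}V$.

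For the Wasserstein bound, use that $V$ dominates $\|x\|^2+\|z\|^2+\|y\|_1$ by Lemma~\ref{lowb}. This is only linear in $y$, which is the main obstacle. Two routes are available:
\begin{itemize}
\item Use the weighted-TV inequality $W_2^2(\mu,\nu)\le 2\int\|u\|^2\,|\mu-\nu|(du)$ together with a Lyapunov function $\tilde V$ comparable to $1+\|x\|^2+\|z\|^2+\|y\|^2$ on $E_{\sigma^2/2}$. One can add $\delta'\|y\|^2$ to $V$; then $\mathcal L(\|y\|^2)\le -b\|y\|^2+C(\|\nabla f\|^4+1)$, and this would need quartic $x$-control. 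So it is cleaner to take $V_2:=V^2$ and check $\mathcal LV^2\le-\lambda V^2+K$ using $|\nabla_z V|^2\lesssim V$, with higher moments supplied by Appendix~\ref{HMI}.
\item Run Harris in the $\|\cdot\|_{V}$ norm and use $W_2^2(\mu,\nu)\lesssim \|\mu-\nu\|_{1+\|u\|^2}$ only in $(x,z)$. Handle the $y$-component separately: $y$ has mean-reverting deterministic dynamics driven by $x$, and the difference of the $y$-parts is controlled by a coupling that matches $x$-paths after the Harris coupling time. This gives an exponential bound with the same right-hand side.
\end{itemize}
I expect this $W_2$ step, specifically obtaining quadratic control in $y$ from a $V$ that is only linear in $y$, to require the most care. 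The remaining steps are a standard application of Harris' theorem.
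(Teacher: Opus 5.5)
Your total-variation argument is correct and is essentially the paper's: Harris' theorem, fed by the drift inequality and by the minorization on $C_R$ after the burn-in time $t_\ast$. Running it on the skeleton $Q=P_{t_R}$ with the discrete Hairer--Mattingly theorem, and then interpolating via $P_rQ=QP_r$, is in fact cleaner than the paper. The paper applies a continuous-time semigroup theorem to $t\mapsto P_{t+t_\ast}$, and that family is not a semigroup. Two small repairs are needed.
\begin{itemize}
\item On $E_{\sigma^2/2}$ the coefficients are only locally Lipschitz, not globally. This is harmless given $V$.
\item Since $\kappa=e^{-\lambda' t_R}\le e^{-\lambda' t_\ast}$, you should fix $R>2K'/(\lambda'(1-e^{-\lambda' t_\ast}))$ first. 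This removes the apparent circularity between the step $s=t_R$ and the choice of $R$.
\end{itemize}

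The genuine gap is the $W_2$ estimate, and more care cannot close it: \eqref{eq:W2-erg-revised} is false as stated, for exactly the reason you flagged. The $y$-equation gives $y^i_t\ge e^{-bt}y^i_0$, hence $\mathbb E\|y_t\|^2\ge e^{-2bt}\mathbb E_{\mu_0}\|y_0\|^2$. Two counterexamples follow.
\begin{itemize}
\item Take fixed $(x_0,z_0)$ and a heavy-tailed $y_0$ with $\mathbb E\|y_0\|_1<\infty=\mathbb E\|y_0\|^2$. By \eqref{intimeasureapa} this $\mu_0$ is admissible and the right-hand side is finite. Yet $W_2(\mu_0P_t,\pi_\infty)=\infty$ for every $t$.
\item Even for Dirac data $\mu_0=\delta_{(0,0,Y\mathbf 1)}$ one has $W_2(\mu_0P_t,\pi_\infty)\ge e^{-bt}\sqrt m\,Y-(\int\|u\|^2\,\mathrm d\pi_\infty)^{1/2}$. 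Meanwhile $\mathbb E_{\mu_0}V(X_{t_\ast})\le V(0,0,Y\mathbf 1)+K'/\lambda'$ grows only linearly in $Y$. So no $C,\lambda$ independent of $\mu_0$ can exist.
\end{itemize}

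What your first route genuinely proves is a corrected bound. The ingredients are:
\begin{itemize}
\item Harris' theorem with the Lyapunov function $1+V^2$;
\item the drift $\mathcal LV^2\le-\lambda V^2+K$, which follows from $\|\nabla_zV\|^2\lesssim 1+V$ (this is Lemma~\ref{lem:power-fixedq} with $q=2$);
\item the smallness hypothesis on all sublevel sets of $V$;
\item the comparison $\|u\|^2\lesssim 1+V^2$ on $E_{\sigma^2/2}$.
\end{itemize}
Together these give $W_2^2(\mu_0P_t,\pi_\infty)\le Ce^{-\lambda(t-t_\ast)}\bigl(1+\mathbb E_{\mu_0}[V(X_{t_\ast})^2]\bigr)$ for $\mu_0$ with $\mathbb E_{\mu_0}V^2<\infty$.

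Your second route cannot avoid $\mathbb E\|y\|^2$ on the uncoupled event. Also, as sketched, it bounds the $y$-part separately rather than producing a single coupling of $(x,z,y)$.

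The paper's own argument (Lemma~\ref{kkaa} combined with uniform $(2+\delta)$-moments from Appendix~\ref{HMI}) does not rescue the displayed form either. Its moment constant depends on $\mu_0$ and requires $\mathbb E_{\mu_0}(1+V)^q<\infty$ for some $q>2$, which admissibility does not give. Optimizing the truncation radius then yields only the rate $e^{-\lambda\delta(t-t_\ast)/(2+\delta)}$.
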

\begin{remark} Note that $t_\ast=(\ln 2)/b$ is tied to the set $E_{\sigma^2/2}$ and $(x_t,z_t,y_t)\in E_{\sigma^2/2}$ for all $t>t_\ast$. However, there is nothing really special with the choice of $t_\ast$ in the sense that for any $\hat t>0$ there is a $\hat\eta>0$ such that $(x_t,z_t,y_t)\in E_{\hat \eta}$ for all $t>\hat t$. In particular, the theorem remains true with $C_R$ replaced with $C_R:=\hat C_R\cap E_{\hat\eta}$ and with $t_\ast$ replaced by $\hat t$. Still the constants in the quantitative estimates will depend on $\hat t$ and $\hat\eta$.
\end{remark}

We present the proof of  Theorem \ref{Thm3-revised-} in two subsections.  The idea is to apply a continuous-time version of the  Harris-Meyn-Tweedie (HMT) theorem tailored to our diffusion. In particular,  we will use the following version of the classical Harris theorem \cite{Harris1956,meyn1993foster, MeynTweedie2009,HairerMattingly2011b} stated as Theorem 4.2 in \cite{HairerMattinglyScheutzow2011}.

\begin{theorem}\label{thm:HMT-cts}
Let \( (\tilde P_t)_{t \ge 0} \) be a Markov semigroup over a Polish space \( \mathcal{X} \) such that
there exists a Lyapunov function \( V \) with the additional property that the level sets
\(C_R:=\{ X \in \mathcal{X} : V(X) \le R \}\) satisfies the following. Given \( R > 0 \), there exist a time
\( t_0> 0 \) and a constant \( \varepsilon > 0 \) such that
\begin{equation}\label{keycond}
\| \tilde  P_{t_0}(X, \cdot) - \tilde  P_{t_0}(\hat X, \cdot) \|_{\mathrm{TV}} \le 1 - \varepsilon,\qquad\mbox{for every \( X, \hat X \in C_R \).}
\end{equation}
 Then \( \tilde  P_t \) has a unique invariant measure \( \mu_* \) and
\[
\| \tilde  P_t(X, \cdot) - \mu_* \|_{\mathrm{TV}}
\le C e^{-\lambda t} (1 + V(X))\qquad\forall X\in\mathcal{X},\, \forall\, t\geq 0,
\]
for some positive constants \( C \) and \( \lambda \).
\end{theorem}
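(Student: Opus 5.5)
The plan is the weighted-total-variation argument of Hairer and Mattingly applied to a time-$t_0$ skeleton, followed by an interpolation to continuous time. I take the phrase ``Lyapunov function'' in the sense of \cite{HairerMattinglyScheutzow2011}: $V:\mathcal X\to[0,\infty)$ is measurable with compact sublevel sets, and there are constants $C_V,\kappa>0$ and $K_V<\infty$ such that
\[
\tilde P_tV\le C_Ve^{-\kappa t}V+K_V\qquad\text{for all }t\ge0.
\]
In our application this is what the drift inequality \eqref{eq:HL-drift+} yields after Itô's formula and Gronwall.

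First I fix $R$ and the corresponding $t_0,\varepsilon$ from \eqref{keycond}, and pass to the discrete kernel $Q:=\tilde P_{nt_0}$. I choose the integer $n$ so large that $C_Ve^{-\kappa nt_0}\le\gamma_0<1$, which gives
\[
QV\le\gamma_0V+K_V.
\]
Since $R$ is free, I enlarge it so that $R>2K_V/(1-\gamma_0)$. I then need the overlap condition for $Q$ itself rather than for $\tilde P_{t_0}$. Writing $Q=\tilde P_{t_0}\tilde P_{(n-1)t_0}$ works: total variation distance does not increase under a Markov kernel, so \eqref{keycond} transfers to $Q$ on the level set $C_R$ with the same $\varepsilon$. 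If instead one wants a skeleton at time $t_0$ exactly, one can apply \eqref{keycond} with the larger level set supplied by the hypothesis for that $R$.

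The core step is a contraction estimate for $Q$ in the metric
\[
d_\beta(X,Y):=\bigl(2+\beta V(X)+\beta V(Y)\bigr)\mathbf 1_{X\neq Y},
\]
extended to probability measures as the associated Kantorovich distance. Equivalently, I work with the weighted norm $\rho_\beta(\mu,\nu)=\int(1+\beta V)\,\mathrm d|\mu-\nu|$. The goal is to find $\beta>0$ and $\bar\alpha<1$ with $\rho_\beta(\mu Q,\nu Q)\le\bar\alpha\,\rho_\beta(\mu,\nu)$; by convexity it suffices to prove $d_\beta(QX,QY)\le\bar\alpha\, d_\beta(X,Y)$ pointwise for $X\ne Y$. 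I split into two cases.
\begin{itemize}
\item If $V(X)+V(Y)\ge R$, the drift alone suffices. One has $QV(X)+QV(Y)\le\gamma_0(V(X)+V(Y))+2K_V$, and the choice of $R$ makes the right side at most $\gamma_1(V(X)+V(Y))$ for some $\gamma_1<1$. Using the trivial bound $2$ on the total variation part, and taking $\beta$ not too small, the constant $2$ is absorbed into the $V$ terms.
\item If $V(X)+V(Y)<R$, both points lie in $C_R$. Then \eqref{keycond} bounds the total variation part by $2(1-\varepsilon)$, and the $V$ terms contribute at most $\beta(\gamma_0R+2K_V)$. Choosing $\beta$ small enough, in a window compatible with the first case, gives the contraction.
\end{itemize}
Balancing $\beta$ between the two cases is the main obstacle. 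This is the place where one must check carefully that a single $\beta$ serves both cases, and I would follow the explicit choice made in Hairer--Mattingly \cite{HairerMattingly2011b}.

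Given the contraction, the space of probability measures with $\int V\,\mathrm d\mu<\infty$ is complete under $\rho_\beta$, so Banach's fixed point theorem yields a unique invariant measure $\mu_*$ for $Q$. I then transfer this to the semigroup. Since $\tilde P_s$ commutes with $Q$, the measure $\mu_*\tilde P_s$ is also $Q$-invariant, hence equal to $\mu_*$, so $\mu_*$ is invariant for every $\tilde P_s$. Any $\tilde P_t$-invariant measure is $Q$-invariant, which gives uniqueness. For the rate, I write $t=kt_1+s$ with $t_1:=nt_0$ and $s\in[0,t_1)$. The estimate
\[
\tilde P_s(1+\beta V)\le C(1+\beta V)
\]
holds uniformly for $s\in[0,t_1]$ by the Lyapunov bound. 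Combining it with the $k$-fold contraction gives
\[
\|\tilde P_t(X,\cdot)-\mu_*\|_{\mathrm{TV}}\le\rho_\beta\bigl(\delta_X\tilde P_t,\mu_*\bigr)\le C\bar\alpha^{k}(1+V(X)),
\]
and this is the claimed bound with $\lambda=-\log\bar\alpha/t_1$.
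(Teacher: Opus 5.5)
The paper gives no proof of this statement. It quotes it from \cite{HairerMattinglyScheutzow2011} (Theorem~4.2 there) and uses it as a black box. There, ``Lyapunov function'' has the meaning you adopt, $\tilde P_tV\le C_Ve^{-\kappa t}V+K_V$, and level sets are ``small'' in the overlap sense \eqref{keycond}.

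Your proposal is a correct, self-contained reconstruction of the standard argument behind that citation. It runs a Hairer--Mattingly contraction of $\rho_\beta(\mu,\nu)=\int(1+\beta V)\,\mathrm d|\mu-\nu|$ for a skeleton $Q=\tilde P_{nt_0}$, then Banach's fixed point theorem, then interpolation in $t$.

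Writing the argument out gains two things over the citation. You get explicit constants. You also see exactly where the semigroup property enters:
\begin{itemize}
\item in passing \eqref{keycond} from $\tilde P_{t_0}$ to $Q$;
\item in showing $\mu_*\tilde P_s=\mu_*$;
\item in the splitting $t=kt_1+s$.
\end{itemize}
This matters for the paper's later use. The family $\tilde P_t=P_{t+t_\ast}$ to which it applies the theorem is not a semigroup, since $\tilde P_0=P_{t_\ast}\neq\mathrm{Id}$. The natural way to invoke the result there is with $P_t$ itself on the forward-invariant set $E_{\sigma^2/2}$.

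Three small repairs are needed.

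\textbf{Order of constants.} As written, the order is circular. You take $t_0=t_0(R)$, choose $n$ from it, and then enlarge $R$, which changes $t_0$ and hence $Q$. Since $\gamma_0$ and $K_V$ depend on neither $n$ nor $R$, fix them in this order:
\begin{enumerate}
\item $\gamma_0\in(0,1)$;
\item $R>2K_V/(1-\gamma_0)$;
\item $(t_0,\varepsilon)$ for this $R$;
\item $n$.
\end{enumerate}
The aside about a skeleton at time $t_0$ exactly can then be dropped.

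\textbf{The choice of $\beta$.} The balancing of $\beta$ that you flag as the main obstacle is not one. Set $s:=V(X)+V(Y)$ and $\gamma_1:=\gamma_0+2K_V/R<1$. For $s\ge R$ one has
\[
\rho_\beta\bigl(Q(X,\cdot),Q(Y,\cdot)\bigr)\le 2+\beta\gamma_1 s\le\frac{2+\beta\gamma_1 R}{2+\beta R}\,(2+\beta s)
\]
for \emph{every} $\beta>0$. For $s<R$ one has
\[
\rho_\beta\bigl(Q(X,\cdot),Q(Y,\cdot)\bigr)\le 2(1-\varepsilon)+\beta(\gamma_0R+2K_V)\le\Bigl(1-\frac{\varepsilon}{2}\Bigr)(2+\beta s)
\]
as soon as $\beta\le\varepsilon/(\gamma_0R+2K_V)$. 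So any such small $\beta$ works in both cases.

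\textbf{Uniqueness.} Banach's theorem only gives uniqueness among measures that integrate $V$. To get uniqueness among all invariant probability measures, take a $Q$-invariant $\mu$. Then
\[
\mu(V\wedge M)=\mu\bigl(Q^k(V\wedge M)\bigr)\le\mu\bigl((\gamma_0^kV+K_V/(1-\gamma_0))\wedge M\bigr).
\]
Letting $k\to\infty$ and then $M\to\infty$ gives $\mu(V)\le K_V/(1-\gamma_0)<\infty$. Alternatively, integrate your pointwise total variation bound against $\mu=\mu\tilde P_t$ and use dominated convergence.

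With these adjustments the proof is complete.
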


\begin{remark}
Recall that the total variation distance between two probability
measures is equal to \( 1 \) if and only if the two measures are mutually singular. \eqref{keycond} therefore states that the transition probabilities starting from any two points in the set $C_R$
have a ``common part'' of mass at least \( \varepsilon \).
\end{remark}

\subsection{Proof of existence/uniqueness and \eqref{eq:V-geometric-ergodicity-revised}} We intend to apply Theorem \ref{thm:HMT-cts} to the process $X_t=(x_t,z_t,y_t)$ defined by \eqref{eq:cts-x+}-\eqref{eq:cts-y+}.  $X_t$ is Markov process on
$E=\R^m\times\R^m\times(\R_+)^m$  with transition semigroup $(P_t)_{t\ge0}$. Note that $E_{\sigma^2/2}$ is a Polish space.

We have established a  Foster–Lyapunov drift condition in the sense that for every $R>0$, by \eqref{eq:HL-drift} there exist $\lambda'>0$, $K'<\infty$ and a measurable $V:E_{\sigma^2/2}\to[1,\infty)$ (we may assume $V\ge1$ by Remark~\ref{uurem}) such that
\begin{equation}\label{eq:HMT-drift-used}
\mathcal{L}_0V \;\le\; -\lambda' V \;+\; K'\,\mathbf 1_{C_R},
\end{equation}
 $\mathcal L_0$ is the \emph{extended} generator of $X$,  coinciding with $\mathcal L$ on $E$. Hence \eqref{eq:HMT-drift-used} is in the standard HMT form.

 Let $\tilde P_{t}(\xi,\cdot):=P_{t+t_\ast}(\xi,\cdot)$ for $t\geq 0$. Using the minorization assumption stated in  \eqref{petta} we have that there exists $t_0=\delta_R>0$ such that
\begin{equation}\label{alla+}
\mu(\cdot) \ge \varepsilon \nu(\cdot), \qquad \mu'(\cdot) \ge \varepsilon \nu(\cdot)
\end{equation}
for a measure $\nu$, whenever  \( \mu(\cdot) = \tilde P_{t_0}(\xi,\cdot) \) and \( \mu'(\cdot) = \tilde P_{t_0}(\hat \xi,\cdot) \) for some $\xi,\hat\xi\in C_R=\hat C_R\cap E_{\sigma^2/2}$. We can without loss of generality assume that $\nu$ is a probability measure.  Define the residual probabilities
\[
\tilde\mu := \frac{\mu - \varepsilon \nu}{1 - \varepsilon},
\qquad
\tilde\mu' := \frac{\mu' - \varepsilon \nu}{1 - \varepsilon}.
\]
Then \( \tilde\mu, \tilde\mu' \) are probability measures and
\[
\mu = \varepsilon \nu + (1 - \varepsilon)\tilde\mu,
\qquad
\mu' = \varepsilon \nu + (1 - \varepsilon)\tilde\mu'.
\]
We now construct a coupling \( (Y,Y') \) in the following way. First, with probability \( \varepsilon \), draw \( Y = Y' \sim \nu \). Second,  with probability \( 1 - \varepsilon \), draw \( Y \sim \tilde\mu \) and \( Y' \sim \tilde\mu' \) independently.
This is a valid coupling of \( \mu, \mu' \) and
\begin{align*}
\mathbb P(Y\neq Y')
&= \varepsilon\,\underbrace{\mathbb P(Y\neq Y'\mid \text{common draw from }\nu)}_{=\,0}\\
&\quad \;+\;
(1-\varepsilon)\,\mathbb P\bigl(Y\neq Y'\mid Y\sim\tilde\mu,\ Y'\sim\tilde\mu'\ \text{i.i.d.}\bigr)
\;\le\; 1-\varepsilon,
\end{align*}
since the conditional probability is at most $1$. Equivalently,
\[
\mathbb P(Y=Y') \ge \varepsilon
\quad\Rightarrow\quad
\mathbb P(Y\neq Y') \le 1-\varepsilon.\] Furthermore,  for any measurable set $A$,
\[
\mu(A)-\mu'(A)
= \mathbb P(Y\in A)-\mathbb P(Y'\in A)
= \mathbb P(Y\in A, Y'\notin A)-\mathbb P(Y\notin A, Y'\in A)
\le \mathbb P(Y\neq Y').
\]
Swapping the roles of $\mu$ and $\mu'$ gives $\mu'(A)-\mu(A)\le \mathbb P(Y\neq Y')$,
so $|\mu(A)-\mu'(A)|\le \mathbb P(Y\neq Y')$ for all $A$. Taking the supremum over
$A$ yields the coupling inequality
\[
\|\mu-\mu'\|_{\mathrm{TV}}
= \sup_{A} |\mu(A)-\mu'(A)|
\le \mathbb P(Y\neq Y').
\]
In particular, in our case we have
\[
\|\mu - \mu'\|_{\mathrm{TV}} \le \mathbb{P}(Y \neq Y') \le 1 - \varepsilon.
\]
Hence, given $R$ there  exists a time
\( t_0 > 0 \) and a constant \( \varepsilon > 0 \) such that
\begin{equation}\label{keycond+}
\| \tilde P_{t_0}(\xi, \cdot) - \tilde P_{t_0}(\hat\xi, \cdot) \|_{\mathrm{TV}} \le 1 - \varepsilon,\qquad\mbox{for every \( \xi, \hat \xi \in C_R \).}
\end{equation}
Hence, using Theorem \ref{thm:HMT-cts} we can conclude that $X_t$ is positive Harris recurrent, that its admits a unique invariant probability measure $\mu_\ast$, and that there exist $C,\lambda>0$ such that
\begin{equation}\label{eq:TV-ergprev}
\|\tilde P_t((x,z,y),\cdot)-\mu_\ast\|_{\mathrm{TV}}
\ \le\ C\,e^{-\lambda t}\,\bigl(1+V(x,z,y)\bigr),
\qquad \forall (x,z,y)\in E_{\sigma^2/2},\ \forall t\ge 0.
\end{equation}
Consequently, renaming $\mu_\ast$ to $\pi_\infty$ we see that
\begin{equation}\label{eq:TV-erg}
\| P_{t}((x,z,y),\cdot)-\pi_\infty\|_{\mathrm{TV}}
\ \le\ C\,e^{-\lambda (t-t_\ast)}\,\bigl(1+V(x_{t_\ast},z_{t_\ast},y_{t_\ast})\bigr),
\qquad \forall (x,z,y)\in E,\ \forall t\ge t_\ast.
\end{equation}
From \eqref{eq:TV-erg} we obtain a corresponding bound for general initial
distributions. Indeed, let \(\mu_0\) be a probability measure on \(E\). Then
\[
\mu_0P_t - \pi_\infty
=
\iiint_E \bigl(P_t((x,z,y),\cdot)-\pi_\infty\bigr)\,\mu_0(\d(x,z,y)).
\]
Since the total variation norm is a norm on the space of finite signed measures, the triangle inequality yields
\[
\|\mu_0P_t-\pi_\infty\|_{\mathrm{TV}}
\le
\iiint_E \|P_t((x,z,y),\cdot)-\pi_\infty\|_{\mathrm{TV}}\,
\mu_0(\d(x,z,y)).
\]
Applying \eqref{eq:TV-erg} yields
\[
\|\mu_0P_t-\pi_\infty\|_{\mathrm{TV}}
\le
C e^{-\lambda (t-t_\ast)}
\iiint_E \bigl(1+V(x_{t_\ast},z_{t_\ast},y_{t_\ast})\bigr)\,\mu_0(\d(x,z,y)).
\]
In particular,
\[
\|\mu_0P_t-\pi_\infty\|_{\mathrm{TV}}
\le
C e^{-\lambda (t-t_\ast)}(1+\mathbb E_{\mu_0}[V(x_{t_\ast},z_{t_\ast},y_{t_\ast})]).
\]
Hence \(\mu_0P_t \to \pi_\infty\) exponentially fast in total variation. \qed

\begin{remark}
The Foster–Lyapunov function from Lemma~\ref{lowb} and the drift inequality of Lemma~\ref{drift} are crucial to the argument. Note that to apply Theorem \ref{thm:HMT-cts} no global PDE regularity (e.g.\ hypoellipticity or smooth transition densities) is required. The only analytic inputs are (a) the extended–generator inequality for $V$, see \eqref{eq:HMT-drift-used}, and (b) a (local) minorization, see \eqref{petta}.
\end{remark}

\subsection{Proof of \eqref{eq:W2-erg-revised}} We have proved, see \eqref{eq:V-geometric-ergodicity-revised}, that
\begin{equation}\label{HM}
\|\pi_t-\pi_\infty\|_{\mathrm{TV}}
\ \le\ C\,e^{-\lambda (t-t_\ast)}\,\bigl(1+\mathbb E_{\mu_0}[V(x_{t_\ast},z_{t_\ast},y_{t_\ast})]\bigr),\quad t\geq t_\ast.
\end{equation}
We now intend to pass from convergence in total variation to convergence in $W_2$.  In order to do so we use the following truncation bound.

\begin{lemma}\label{kkaa} Let $\mu,\nu$ be probability measures on $\R^{d}$ with finite second moments.
Then, for any $R>0$,
\[
\bigl (W_2(\mu,\nu)\bigr )^2
\;\le\; 8R^2\,\|\mu-\nu\|_{\mathrm{TV}}
\;+\;4\!\!\int_{\{\|u\|>R\}}\!\!\|u\|^2\,\mu(\mathrm du)
\;+\;4\!\!\int_{\{\|v\|>R\}}\!\!\|v\|^2\,\nu(\mathrm dv).
\]
\end{lemma}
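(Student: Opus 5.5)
We prove Lemma~\ref{kkaa} by an explicit coupling built from the total variation overlap of $\mu$ and $\nu$. The cost is then split according to whether the coupled pair moves or stays fixed, and, when it moves, according to whether the points lie inside or outside the ball of radius $R$.

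\emph{Step 1: the coupling.} Set $p:=\|\mu-\nu\|_{\mathrm{TV}}$ and let $\mu\wedge\nu$ denote the common part of the two measures. Its total mass is $1-p$, and we have the decompositions
\[
\mu=\mu\wedge\nu+(\mu-\mu\wedge\nu),\qquad \nu=\mu\wedge\nu+(\nu-\mu\wedge\nu),
\]
where the residual measures $(\mu-\nu)_+$ and $(\nu-\mu)_+$ are nonnegative with mass $p$. This is the same overlap decomposition as in the coupling argument preceding Lemma~\ref{kkaa}. Define
\[
\pi:=(\mathrm{Id},\mathrm{Id})_\#(\mu\wedge\nu)+p^{-1}(\mu-\nu)_+\otimes(\nu-\mu)_+ .
\]
If $p=0$, we simply take the diagonal coupling, and then $W_2(\mu,\nu)=0$. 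One checks directly that $\pi\in\Pi(\mu,\nu)$, and that $\pi$ charges the off-diagonal set $\{u\neq v\}$ with mass at most $p$.

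\emph{Step 2: the cost.} On the diagonal the cost $\|u-v\|^2$ vanishes. Off the diagonal we use $\|u-v\|^2\le 2\|u\|^2+2\|v\|^2$. Consider the term $2\|u\|^2$ first and split it as
\[
\|u\|^2\le R^2+\|u\|^2\mathbf 1_{\{\|u\|>R\}}.
\]
Integrating against the off-diagonal part of $\pi$ gives the bound
\[
2R^2p+2\int_{\{\|u\|>R\}}\|u\|^2\,(\mu-\nu)_+(\mathrm du),
\]
because the $u$-marginal of the off-diagonal part is exactly $(\mu-\nu)_+$. Since $(\mu-\nu)_+\le\mu$, the second term is at most $2\int_{\{\|u\|>R\}}\|u\|^2\,\mu(\mathrm du)$. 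The term $2\|v\|^2$ is handled symmetrically, with $(\nu-\mu)_+\le\nu$.

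\emph{Step 3: conclusion.} Adding the two contributions yields
\[
W_2^2\le\int\|u-v\|^2\,\mathrm d\pi\le 4R^2p+2\int_{\{\|u\|>R\}}\|u\|^2\,\mathrm d\mu+2\int_{\{\|v\|>R\}}\|v\|^2\,\mathrm d\nu .
\]
This is stronger than the stated inequality, whose constants $8$ and $4$ are simply generous. The only points needing care are measure-theoretic: checking that the marginals of $\pi$ are correct, and the convention for $\|\cdot\|_{\mathrm{TV}}$. Both are handled by the sup-over-sets definition given in Section~\ref{Prelim}, under which the mass of $(\mu-\nu)_+$ equals $\sup_A(\mu(A)-\nu(A))=p$.
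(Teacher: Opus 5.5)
Your proof is correct, and it rests on the same device as the paper's: a maximal coupling followed by truncation at radius $R$. The paper simply invokes a coupling with $\mathbb P(U\neq V)=\|\mu-\nu\|_{\mathrm{TV}}$, while you construct it explicitly from $\mu\wedge\nu$ and $p^{-1}(\mu-\nu)_+\otimes(\nu-\mu)_+$.

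Where you differ is in how the off-diagonal cost is bookkept. The paper splits by joint events. It bounds the cost by $4R^2$ on $\{U\neq V,\ \|U\|\le R,\ \|V\|\le R\}$ and by $2\|U\|^2+2\|V\|^2$ on $\{\|U\|>R\}\cup\{\|V\|>R\}$. It then charges the latter to $2\int_{\{\|u\|>R\}}\|u\|^2\,\mu(\mathrm du)+2\int_{\{\|v\|>R\}}\|v\|^2\,\nu(\mathrm dv)$.

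As written, that last step misses the term $2\|U\|^2$ on the mixed event $\{\|U\|\le R<\|V\|\}$. For instance, $\mu=\delta_{-1}$, $\nu=\delta_{1+\epsilon}$, $R=1$ in $d=1$ violates that intermediate inequality. It is repaired by using $\|U\|^2\le R^2<\|V\|^2$ on that event, which doubles the tail constant from $2$ to $4$. The generous constants $8$ and $4$ in the statement absorb this.

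You instead apply $\|u-v\|^2\le 2\|u\|^2+2\|v\|^2$ on the whole off-diagonal part and integrate each term against its own marginal. You then truncate pointwise, using $(\mu-\nu)_+\le\mu$ and $(\nu-\mu)_+\le\nu$. This never looks at joint events, so the mixed case does not arise. You obtain the sharper bound $4R^2p+2\int_{\{\|u\|>R\}}\|u\|^2\,\mathrm d\mu+2\int_{\{\|v\|>R\}}\|v\|^2\,\mathrm d\nu$ cleanly.
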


\begin{proof}
Let $(U,V)$ be a maximal coupling of $(\mu,\nu)$ so that
$\mathbb P(U\neq V)=\delta:=\|\mu-\nu\|_{\mathrm{TV}}$.
On the event $\{U\neq V,\ \|U\|\le R,\ \|V\|\le R\}$ we have $\|U-V\|\le 2R$, hence
\[
\mathbb E[\|U-V\|^2:\,\|U\|\le R,\|V\|\le R]\le 4R^2\,\delta.
\]
On the complementary event, we use $\|U-V\|^2\le 2\|U\|^2+2\|V\|^2$ and integrate
\[
\mathbb E[\|U-V\|^2:\,\{\|U\|>R\}\cup\{\|V\|>R\}]
\le 2\!\int_{\{\|u\|>R\}}\|u\|^2\,\mu(\mathrm du)
+2\!\int_{\{\|v\|>R\}}\|v\|^2\,\nu(\mathrm dv).
\]
Combining gives the stated inequality.
\end{proof}

 In Appendix \ref{HMI} below we prove, see \eqref{eq:2nd-moment-unif}, that\begin{equation*}
\sup_{t\ge0}\,\mathbb E_{\mu_0}\bigl[\| x_t\|^2+\| z_t\|^2+\| y_t\|\bigr]\ <\ \infty.
\end{equation*}
However, in the proof of \eqref{eq:W2-erg-revised} we need higher integrability of $ X_t=( x_t, z_t,  y_t)$ and  Appendix \ref{HMI}  is devoted to the proof of such estimates. The results of Appendix \ref{HMI} imply
\[
\sup_{t\ge0}\,\mathbb E\bigl[\| X_t\|^{2+\delta}\bigr] < \infty
\quad\text{for some }\delta>0.
\]
This higher integrability implies, by the de la Vallée–Poussin criterion, that the family $\{\pi_t:t\ge0\}$ is uniformly integrable in $\mathrm L^2$. Define
\[
M_{2+\delta}\;:=\;\sup_{t\ge0}\iiint \|u\|^{2+\delta}\,\pi_t(\mathrm du)\;<\;\infty,
\]
where we now start to use the compact notation $u=(x,z,y)$ to, in the end, convey with the notation used in Lemma \ref{kkaa}. Furthermore, using the same Foster-Lyapunov drift in Lemma~\ref{lem:power-Lyap} we deduce integrability under the invariant distribution and that
\[
M_{2+\delta}^\infty\;:=\;\iiint \|v\|^{2+\delta}\,\pi_\infty(\mathrm dv)\;<\;\infty.
\]
Now fix  $R>0$. Using the elementary bound $\mathbf 1_{\{\|u\|>R\}}\|u\|^2\le R^{-\delta}\|u\|^{2+\delta}$, we obtain
\[
\sup_{t\ge0}\ \iiint_{\{\|u\|>R\}}\|u\|^2\,\mathrm d\pi_t(u)
\;\le\; R^{-\delta}\,\sup_{t\ge0}\iiint \|u\|^{2+\delta}\,\mathrm d\pi_t(u)
\;=\; M_{2+\delta}\,R^{-\delta},
\]
and similarly
\[
\iiint_{\{\|v\|>R\}}\|v\|^2\,\mathrm d\pi_\infty(v)
\;\le\; M_{2+\delta}^\infty\,R^{-\delta}.
\]
Hence one may take the explicit tail modulus
\[
\omega(R)\;=\;M_\star\,R^{-\delta},
\qquad
M_\star:=\max\{M_{2+\delta},\,M_{2+\delta}^\infty\},
\]
which satisfies $\omega(R)\downarrow0$ as $R\to\infty$ and yields
\begin{equation}\label{eq:UI}
\sup_{t\ge0}\ \iiint_{\{\|u\|>R\}}\|u\|^2\,\mathrm d\pi_t(u) \;\le\; \omega(R),
\qquad
\iiint_{\{\|v\|>R\}}\|v\|^2\,\mathrm d\pi_\infty(v) \;\le\; \omega(R).
\end{equation}

We now apply the truncation lemma, i.e., Lemma \ref{kkaa}, with $\mu=\pi_t$, $t\geq t_\ast$,  $\nu=\pi_\infty$. Hence, for any $R>0$,
\begin{align}\label{eq:W2-TV}
\bigl (W_2(\pi_t,\pi_\infty)\bigr )^2
\;&\le\; 8R^2\,\|\pi_t-\pi_\infty\|_{\mathrm{TV}}
\;+\;4\!\!\iiint_{\{\|u\|>R\}}\!\!\|u\|^2\,\mathrm d \pi_t(u)
\;+\;4\!\!\iiint_{\{\|v\|>R\}}\!\!\|v\|^2\, \mathrm d \pi_\infty(v)\notag\\
&\le \; 8R^2\,\|\pi_t-\pi_\infty\|_{\mathrm{TV}}+8\omega(R).
\end{align}
Next, for every $\varepsilon>0$ we can choose $R=R(\varepsilon)$ so large that
\[
\bigl (W_2(\pi_t,\pi_\infty)\bigr )^2
\;\le\; 8(R(\varepsilon))^2\,\|\pi_t-\pi_\infty\|_{\mathrm{TV}}
\;+\;4\varepsilon.
\]
Now fix this $R(\varepsilon)$.  Using \eqref{HM} we see that
\begin{equation}\label{eq:W2-TV-final}
\bigl (W_2(\pi_t,\pi_\infty)\bigr )^2
\ \le\ 8\,(R(\varepsilon))^2\,{C}\,e^{-\lambda (t-t_\ast)}\,\bigl(1+\mathbb E_{\mu_0}[V(x_{t_\ast},z_{t_\ast},y_{t_\ast})]\bigr)
\;+\;4{\varepsilon}.
\end{equation}
Letting $t\to\infty$ first (with $R(\varepsilon)$ fixed) gives
\[
W_2(\pi_t,\pi_\infty)\;\longrightarrow\; 2\sqrt{\varepsilon},
\]
and since $\varepsilon>0$ is arbitrary we conclude
\[
\lim_{t\to\infty} W_2(\pi_t,\pi_\infty)=0.
\]
Being more carefully we see that
$$\varepsilon/2=\omega(R(\varepsilon))=M_\star\,(R(\varepsilon))^{-\delta}\implies R(\varepsilon)=C_\star\varepsilon^{-\delta}.$$
Hence we can express \eqref{eq:W2-TV-final} as
\begin{equation}\label{eq:W2-TV-final+}
W_2(\pi_t,\pi_\infty)
\ \le\ 2\sqrt{2}\, C_\star\varepsilon^{-\delta}\,\sqrt{C}\,e^{-\lambda (t-t_\ast)/2}\,\bigl(1+\mathbb E_{\mu_0}[V(x_{t_\ast},z_{t_\ast},y_{t_\ast})]\bigr)^{1/2}
\;+\;2\sqrt{\varepsilon}.
\end{equation}
Moreover, the explicit estimate \eqref{eq:W2-TV-final+} shows an \emph{exponential} rate of convergence (with exponent $\lambda/2$), up to an arbitrarily small additive term $2\sqrt{\varepsilon}$, and it holds for all $\varepsilon>0$. Given $t$, $\varepsilon>0$ is a degree of freedom which we can choose to decay with $t$ to remove the last additive term in the final statement. To outline this,  estimate \eqref{eq:W2-TV-final+} gives for every fixed $\varepsilon>0$
\[
W_2(\pi_t,\pi_\infty)
\ \le\  \hat C_\star\varepsilon^{-\delta}e^{-\lambda (t-t_\ast)/2}\,\bigl(1+\mathbb E_{\mu_0}[V(x_{t_\ast},z_{t_\ast},y_{t_\ast})]\bigr)^{1/2}\;+\;2\sqrt{\varepsilon},
\qquad t\ge t_\ast,
\]
where $ \hat C_\star$ is independent of $\varepsilon$. We now let $\varepsilon=\varepsilon(t-t_\ast)$ to depend on $(t-t_\ast)$, and chosen so that
\[
2\sqrt{\varepsilon(t-t_\ast)} \;\le\;  \hat C_\star(\varepsilon(t-t_\ast))^{-\delta}\,e^{-\lambda (t-t_\ast)/2} \Longleftrightarrow  (\varepsilon(t-t_\ast))^{1/2+\delta}\leq
\frac 1 2 C_\star\,e^{-\lambda (t-t_\ast)/2}\qquad t\ge t_\ast.
\]
For example, if we choose $\varepsilon(t-t_\ast):=e^{-\lambda (t-t_\ast)/(1/2+\delta)}$,  then we can conclude the stated inequality  for $t-t_\ast$ sufficiently large. With this choice, the bound simplifies to
\[
W_2(\pi_t,\pi_\infty)
\;\le\; C'e^{-\lambda (t-t_\ast)/2}\,\bigl(1+\mathbb E_{\mu_0}[V(x_{t_\ast},z_{t_\ast},y_{t_\ast})]\bigr)^{1/2},
\qquad t\ge t_\ast,
\]
for some constant $C'>0$ independent of the initial condition.
This removes the additive term and yields a clean exponential convergence rate in $W_2$. \qed

\section{Mixing and uniqueness: Proof of  Theorem \ref{CorC}}\label{sec:proof-CorC}

Having constructed the Lyapunov function, Theorem~\ref{Thm3-revised-} reduces the proof of Theorem \ref{CorC} to verifying that
the compact sets $\{C_R\}=\{\hat C_R\cap E_{\sigma^2/2}\}$, see \eqref{petta}, are small. In our case the latter is not immediate as the system
\eqref{eq:cts-x+}-\eqref{eq:cts-y+}
is \emph{degenerate} in the sense that the diffusion acts only in the $z$-variables, and as global hypoellipticity can not be ensured.

To start the proof, we fix $R>0$ and we intend to prove that $C_R$ is small. Since $\mathcal D_A^\dagger\neq\R^m$, see \eqref{Remma1+}, and as  $\mathcal D_A^\dagger$ is closed, there exists $x_\ast\in\R^m\setminus\mathcal D_A^\dagger$ and  $r>0$ such that
\begin{equation}\label{good-ball}
\overline{B(x_\ast,2r)}\subset\R^m\setminus\mathcal D_A^\dagger.
\end{equation}
Since $C_R$ is compact,
there exists $M_R>0$ such that
\begin{equation}\label{Bg1}
|x_0|+|z_0|+|y_0|\le M_R
\qquad
\text{for all }(x_0,z_0,y_0)\in C_R.
\end{equation}
We first prove the following lemma.
\begin{lemma}\label{LemJ1}  There exist positive constants $(\rho,c_R,d_R)$, a time $T_R>0$, and a constant $\eta_R>0$, such that if we introduce
\begin{equation*}
 W':=O\times Z\times Y:=B(x_\ast,r)\times B(0,\rho)\times (c_R/2,2d_R)^m,
\end{equation*}
then
\begin{equation}\label{eqbo2}
\inf_{u\in C_R}
P_{T_R}(u,{W'})\ge\eta_R.
\end{equation}
\end{lemma}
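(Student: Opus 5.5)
The plan is to combine the Stroock--Varadhan support theorem (Theorem~\ref{thm:support}) with an explicit control construction for the skeleton \eqref{eq:skeleton-components}, made uniform over the compact set $C_R$ by lower semicontinuity of $u\mapsto P_{T_R}(u,W')$. Since $W'$ is open, $P_{T_R}(\cdot,W')$ is lower semicontinuous. This follows from the Feller property of $(P_t)$, which holds because the coefficients are smooth and of linear growth on $E_{\sigma^2/4}$. A positive lower semicontinuous function on a compact set has a positive minimum. It therefore suffices to show $P_{T_R}(u,W')>0$ for each $u\in C_R$. By Theorem~\ref{thm:support}, this reduces to exhibiting, for each $u\in C_R$, a control $h\in \mathrm L^2([0,T_R];\R^m)$ whose skeleton trajectory ends in $W'$. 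We take $T_R$ the same for all $u$, which is why the steering below is built with uniform-in-$C_R$ time bounds.

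\emph{Construction of the control.} We first fix the constants. Set
\[
c_R:=\sigma^2,\qquad d_R:=\sigma^2+\sup_{\|x\|\le M'}\|\nabla f(x)\|^2+M_R+1,
\]
where $M'$ bounds all $x$-trajectories constructed below; such a bound exists because the $x$-paths stay in a fixed ball determined by $M_R$, $x_\ast$, $r$. The $y$-equation is a linear ODE with forcing $g_i(x(t))\in[\sigma^2,d_R-M_R-1]$ and initial datum $y_0^i\in(\sigma^2/2,M_R]$. Variation of constants then gives
\[
y^i(t)\in\bigl(\min\{y_0^i,\sigma^2\},\,d_R\bigr)\subset(c_R/2,2d_R)
\]
for all $t$, regardless of the control. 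So the $y$-component lands in $Y$ automatically, and only $(x,z)$ must be steered. For that we treat $z$ as a virtual control for $x$. Choose a smooth path $x(t)$ from $x_0$ to $x_\ast$ on $[0,T_R]$ with $\dot x(0)$ matched to $-\gamma z_0/(\sqrt{y_0}+\varepsilon)$ and with $\dot x\equiv 0$ near $T_R$. Then define
\[
z^i(t):=-\dot x^i(t)\,(\sqrt{y^i(t)}+\varepsilon)/\gamma ,
\]
where $y$ solves its ODE along this $x$. This $z$ is $C^1$, equals $z_0$ at time $0$ and equals $0$ near $T_R$. Finally solve the $z$-equation for the control,
\[
h_i:=\bigl(\dot z^i-a(\partial_{x_i}f(x)-z^i)\bigr)/(a\sigma),
\]
which is continuous and hence in $\mathrm L^2$. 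Because $z$ depends on $y$, which depends on $x$, the construction is triangular rather than circular: fix $x(\cdot)$, solve for $y(\cdot)$, then define $z$ and $h$.

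The endpoint is then exactly $(x_\ast,0,y(T_R))\in W'$. The paths can be chosen to depend continuously on $u$, for instance $x(t)=x_0+\phi(t)(x_\ast-x_0)+\psi(t)\,\dot x(0)$ with fixed bump profiles $\phi,\psi$. This gives the uniform bound $M'$ and allows any $\rho>0$ and a single $T_R$ (say $T_R=t_\ast+1$ or any fixed time).

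\emph{Main obstacle.} The delicate point is passing from pointwise positivity to the uniform bound. Lower semicontinuity needs the Feller property on a domain where the kernel is defined, i.e.\ within $E$, and we must ensure the skeleton stays in $E_{\sigma^2/4}$; both follow from the $y$ lower bound above. If lower semicontinuity is in doubt, I would instead use continuity of the skeleton in the initial data together with openness of $W'$. The endpoint of the control built for $u$ lies in $W'$, and by continuous dependence the same control keeps the endpoint in $W'$ for all nearby $u'$. Applying the support theorem and weak convergence of $P_T(u',\cdot)\to P_T(u,\cdot)$, and then a finite subcover of $C_R$, yields \eqref{eqbo2}.
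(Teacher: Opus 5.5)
Your proposal is correct and follows essentially the paper's route. You build an explicit skeleton control steering each $u\in C_R$ to $(x_\ast,0,y(T_R))\in W'$, apply the Stroock--Varadhan support theorem for pointwise positivity, and use Feller lower semicontinuity of $u\mapsto P_{T_R}(u,W')$ together with compactness of $C_R$ for the uniform bound; your ``an lsc function attains its minimum'' step is a compressed form of the paper's finite-subcover argument.

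The differences are cosmetic. You use a single Hermite-type $x$-path with $z$ as a virtual control throughout, as the paper itself later does in Lemma~\ref{lem:initial-hermite-path}, instead of first ramping $z$ linearly to $0$, and you make $c_R,d_R$ explicit via variation of constants. One correction: the $y$-drift $(\partial_{x_i}f)^2$ grows quadratically, not linearly. The Feller property should therefore be justified by local Lipschitz coefficients plus non-explosion of $(x,z)$, not by linear growth.
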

\begin{proof}
Starting from $(x_0,z_0,y_0)\in C_R$ we first construct a control, in two steps, based on the skeleton system \eqref{eq:skeleton-components}.

\noindent
{\bf Step 1.} First, we steer $z_0$ to $0$. Fix $\tau>0$ and define
\[
z(t)=(1-{t}/{\tau})z_0,
\qquad
t\in[0,\tau].
\]
Then $z(0)=z_0$ and  $z(\tau)=0$. Given $z(t)$ we define $(x(t),y(t))$ on $[0,\tau]$ as the solution of
\[
\dot x^i(t)=
-\gamma\frac{z^i(t)}{\sqrt{y^i(t)}+\varepsilon},
\qquad
\dot y^i(t)=
b\bigl(-y^i(t)+(\partial_{x_i}f(x(t)))^2+\sigma^2\bigr),\qquad (x(0),y(0))=(x_0,y_0).
\]
The control producing this trajectory is
\[
h_i(t)=\frac1{a\sigma}
\bigl(\dot z^i(t)-a(\partial_{x_i}f(x(t))-z^i(t))\bigr).
\]
By the definition of the set $C_R$ we know that $y^i(t)\geq \sigma^2/2$ for all $t\geq 0$.  Still, for our purpose we here simply use the conservative estimate
\[
|x(t)-x_0|
\le
\frac{\gamma}{\varepsilon}
\int_0^{\tau} |z(s)|\,\d s
\le \frac{\gamma}{\varepsilon}M_R\tau .
\]
Thus, starting at $(x_0,z_0,y_0)\in C_R$ after time $\tau$ we obtain, simply using condition (A1) and elementary ODE estimates, a state $(x_1,0,y_1)$ with
\begin{align}\label{exa}
|x_1|\le R_1,
\qquad
y_1\in[a_{R},b_{R}]^m,\quad a_{R}\geq \sigma^2/2,
\end{align}
where $R_1$ and $a_{R},b_{R}$ only depends on $(x_0,z_0,y_0)$ through $M_R$ (see \eqref{Bg1}) and $R$.

\noindent
{\bf Step 2.} Second, we steer $x_1$ to $x_\ast$. Fix $S>0$ and choose a smooth interpolation
\[
x(t)=x_1+\eta(t-\tau)(x_\ast-x_1),
\qquad
t\in[\tau,\tau+S],
\]
with $\eta(0)=0$, $\eta(S)=1$ and $\dot\eta(0)=\dot\eta(S)=0$. Then,
$$x(\tau)=x_1,\quad x(\tau+S)=x_\ast,\quad \dot x(\tau)=0,\quad \dot x(\tau+S)=0.$$
Define $y(t)$ on $[\tau,\tau+S]$
as the solution  to
\[
\dot y^i(t)=
b\bigl(-y^i(t)+(\partial_{x_i}f(x(t)))^2+\sigma^2\bigr),
\qquad
y(\tau)=y_1 .
\]
Given $R$, and for $x_\ast$, $\tau$ and $S$ fixed, we can conclude that there exists constants $c_R<d_R$, we here only emphasize the dependence  on $R$, such that $c_R\geq \sigma^2/2$ and such that
\begin{align}\label{terra}
c_R\le y^i(t)\le d_R\qquad
t\in[0,T],\quad T:=\tau+S.
\end{align}
$T_R=T$ is the notation appearing in the statement of the lemma, but we here for simplicity stick to $T$.  Define $z(t)$ on $[\tau,\tau+S]$ according  to
\[
z^i(t)=
-\frac{\sqrt{y^i(t)}+\varepsilon}{\gamma}\dot x^i(t), \qquad
z^i(\tau)=0,
\]
and choose the control
\[
h_i(t)=\frac1{a\sigma}
\Bigl(\dot z^i(t)-a(\partial_{x_i}f(x(t))-z^i(t))\Bigr).
\]
We obtain, starting at $(x_1,0,y_1)$ satisfying \eqref{exa}, we obtain after time $S$ a state $(x_\ast,0,y)$
where $y$ satisfies \eqref{terra}.

\noindent
By construction and concatenation we have constructed a triple path $(x(t),z(t),y(t))$ which solves the skeleton system
on $[0,T]$, and satisfies $(x(0),z(0),y(0))=(x_0,z_0,y_0)$ and
\begin{equation}\label{eqbo}
x(T)=x_\ast,
\qquad
z(T)=0, \qquad y(T)\in (c_R,d_R)^m.
\end{equation}
Note that the path $t\to (x(t),z(t),y(t))$ as well as the path of the control $t\to h(t)$ are continuous.

To proceed we next construct an open target set. Given $x_\ast\in \R^m\setminus \mathcal D_A^\dagger$ and $r>0$ as above, we let $ O:=B(x_\ast,r)$. From the explicit skeleton construction above and concluded in \eqref{eqbo}, we have that for every $u=(x_0,z_0,y_0)\in C_R$ there exists a (continuous) control $h_u\in \mathrm L^2([0,T];\R^m)$ such that the corresponding
controlled trajectory satisfies
\[
\Phi_T(u,h_u)=\bigl(x^{u,h_u}(T),z^{u,h_u}(T),y^{u,h_u}(T)\bigr)
\in O\times Z\times Y,
\]
where $Z:=B(0,\rho)$ and $Y:=(c_R/2,2d_R)^m$. In particular, both $Z$ and $Y$ are open and connected sets chosen in the construction. Let
\[
W':=O\times Z\times Y.
\]
Then $W'\subset E$ is a nonempty, bounded, open, and connected set.

Fix \(u\in C_R\). Since \(\Phi_T(u,h_u)\in W'\) and \(W'\) is open, continuity of the
skeleton endpoint map
\[
v\mapsto \Phi_T(v,h_u)
\]
implies that there exists an open neighborhood \(N_u\) of \(u\) in \(E\) such that
\[
\Phi_T(v,h_u)\in W'
\qquad\text{for all }v\in N_u.
\]
In particular,
\[
\mathcal R_T(v)\cap W'\neq\varnothing
\qquad\text{for all }v\in N_u.
\]
Therefore, by the Stroock-Varadhan support theorem (see Theorem~\ref{thm:support}),
\begin{equation}\label{eqbo1}
P_T(v,W')>0
\qquad\text{for all }v\in N_u.
\end{equation}
Note that in \eqref{eqbo1} both the finite time \(T>0\) and the open set \(W'\)
are independent of \(u\).

Since \(W'\) is open and the Markov semigroup is Feller, the map
\[
v\mapsto P_T(v,W')
\]
is lower semicontinuous. Hence, from \eqref{eqbo1} it follows that there exist
an open neighborhood (still denoted) \(N_u\) of \(u\) and a constant \(\eta_u>0\)
such that
\[
P_T(v,W')\ge \eta_u
\qquad \text{for all } v\in N_u.
\]

The family \(\{N_u\}_{u\in C_R}\) forms an open cover of the compact set \(C_R\).
We may therefore extract a finite subcover \(N_{u_1},\dots,N_{u_k}\).
Defining
\[
\eta:=\min_{1\le j\le k}\eta_{u_j}>0,
\]
we conclude that
\[
\inf_{u\in C_R} P_T(u,W')\ge \eta .
\]
This proves the lemma.
\end{proof}

By construction, the Hörmander bracket condition holds on the open and connected set
\[
W''':=B(x_\ast,2r)\times \mathbb R^m\times (\mathbb R_+)^m .
\]
Moreover, with $W'$ as constructed in Lemma \ref{LemJ1},  \(\overline{W'}\) is compactly contained in \(W'''\).
Hence the transition kernel \(P_t(v,\cdot)\) admits a \(C^\infty\)-density $p(t,v,w)$
on \((0,\infty)\times W'''\times W'''\). To complete the argument and the proof of Theorem \ref{CorC}, it remains to show that there exist \(T>0\) and
a nonempty open, connected set \(W''\subset W'\), with \(\overline{W''}\)
compactly contained in \(W'\), such that every point in \(W''\) is reachable
by the skeleton system starting from any point in \(W'\).
This is achieved in the following lemma.

\begin{lemma}\label{lem:access-xzy-openV-3stagefinal}
There exist \(T>0\), \(y_\ast\in\mathbb R^m\), and positive constants
\((\rho_x,\rho_z,\rho_y)\) such that if we introduce
\[
W'' :=
B(x_\ast,\rho_x)\times B(0,\rho_z)\times B(y_\ast,\rho_y)
\subset W'
\]
then every point in \(W''\) is an attainable
endpoint at time \(T\) of the skeleton system \eqref{eq:skeleton-components}
starting from any initial condition \(U_0\in W'\). Moreover, the corresponding controlled trajectory satisfies
\[
(x(t),z(t),y(t))\in W'''
\qquad \text{for all } t\in[0,T].
\]
\end{lemma}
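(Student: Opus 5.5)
The plan is to build the controlled skeleton trajectory by prescribing the $x$-path directly, as in the proof of Lemma~\ref{LemJ1}. Given $x(\cdot)$ of class $C^2$, the $y$-equation is a linear ODE driven by $g(x(t))$. The $z$-path is then forced by $z^i=-(\sqrt{y^i}+\varepsilon)\dot x^i/\gamma$. The control $h$ is read off from the $z$-equation, which is possible because the noise acts in every $z$-direction. So everything reduces to a control problem for the pair $(x,y)$ with $x$ as the free input, subject to three constraints: prescribed initial data $(x_0,z_0,y_0)\in W'$, which fixes $\dot x(0)=-\gamma z_0/(\sqrt{y_0}+\varepsilon)$; prescribed endpoint data $(x_T,z_T,y_T)\in W''$, which fixes $x(T)$ and $\dot x(T)$; and $x(t)\in B(x_\ast,2r)$ throughout, so that the trajectory stays in $W'''$.

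\textbf{Stage 1 (relax).} On a short interval $[0,\tau_1]$, bend $x$ so that $\dot x$ goes from its initial value to $0$ while $x$ stays within $O(\tau_1\rho)$ of $x_0$. Then hold $x$ fixed at $x_\ast$, or more precisely steer it there slowly. Over a long time $\tau_2$ the variable $y$ relaxes exponentially to $g(x_\ast)$, so $|y(\tau_1+\tau_2)-g(x_\ast)|\le Ce^{-b\tau_2}$ uniformly over $W'$, since $W'$ is bounded. Set $y_\ast:=g(x_\ast)=(\partial_{x_i}f(x_\ast))^2+\sigma^2$. Because $x_\ast\notin\mathcal D_A^\dagger$, we have $\partial_{x_i}f(x_\ast)\neq0$ for every $i$; we then check, shrinking if necessary, that $y_\ast\in Y$.

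\textbf{Stage 2 (reach the target $y$).} Here the key tool is the invertibility of $Dg(x_\ast)=2A(x_\ast)^\top$, which holds since $x_\ast\notin\mathcal D_A^\dagger$. By the inverse function theorem, $g$ maps a neighborhood $B(x_\ast,\rho')\subset B(x_\ast,r)$ diffeomorphically onto a neighborhood of $y_\ast$. Along a piecewise-constant (then smoothed) path $x(t)=\bar x$, the variable $y$ moves towards $g(\bar x)$. Since $y'=b(-y+g(x))$, the endpoint map $x(\cdot)\mapsto y(T)$ is affine-like in $g(x(\cdot))$: $y(T)=e^{-bS}y(\tau)+\int e^{-b(T-s)}b\,g(x(s))\,ds$. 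I would use a one-parameter family: hold $x=\bar x$ on a final window of length $S$, where $\bar x\in B(x_\ast,\rho')$. Then $y(T)=e^{-bS}y(\tau)+(1-e^{-bS})g(\bar x)$ plus smoothing errors, so $\bar x\mapsto y(T)$ is a small perturbation of $(1-e^{-bS})g$. A degree or Brouwer fixed-point argument then gives surjectivity onto $B(y_\ast,\rho_y)$, uniformly in the initial data.

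\textbf{Stage 3 (final adjustment).} In a final short window $[T-\tau_3,T]$, move $x$ from $\bar x$ to the target $x_T$ and give it the velocity $\dot x(T)$ determined by $z_T$. Choosing $\tau_3$ small makes the resulting change in $y$ of size $O(\tau_3)$. This perturbation is absorbed by solving the Stage~2 fixed-point problem with $\tau_3$ included in the map; continuity in $\bar x$ still holds. The radii $\rho_x,\rho_z$ are taken small so that all paths remain in $B(x_\ast,2r)$.

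\textbf{Main obstacle.} The hardest step is making Stage~2 quantitative: I must show that the composite map $\bar x\mapsto y(T)$, including the errors from Stages~1 and~3, still covers $B(y_\ast,\rho_y)$ for every initial point in $W'$ simultaneously. I would first try a uniform $C^0$-closeness estimate to the homeomorphism $(1-e^{-bS})g$ on $\overline{B(x_\ast,\rho')}$, and then invoke degree stability under small perturbations, exactly as in Lemma~\ref{Redu}.
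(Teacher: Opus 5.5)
Your proposal is correct, but it closes the exact $y$-matching by a different mechanism than the paper.

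\emph{What is shared.} Both arguments prescribe $x(\cdot)$, solve the linear $y$-equation, read $z$ off the $x$-equation and $h$ off the $z$-equation. Both use $\det A(x_\ast)\neq 0$ only through the local invertibility of $g$ near $x_\ast$.

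\emph{What the paper does.} It works in two exact steps. First it holds $x\equiv x^c$ on $[T_1,T_1+S]$ and chooses $x^c$ by a Banach contraction, which requires a uniform Lipschitz bound for $x\mapsto y^c(U_0,x)$, so that $y(T_1+S)=y^\sharp$. Then, on a window of length $\delta$, it moves $x$ from $x^c$ to $x^\sharp$ with terminal velocity $v^\sharp$. It restores $y(T)=y^\sharp$ through the bump perturbation $\psi_\delta\alpha$ and the inverse function theorem.

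\emph{What you do.} You fold the final maneuver into a single endpoint map $F(\bar x):=y(T)$. This map lies uniformly within $O(e^{-bS}+\tau_3)$ of the homeomorphism $g$ on $\overline{B(x_\ast,\rho')}$, and you conclude by stability of the Brouwer degree.

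\emph{What each buys.} Your route needs only continuity in $\bar x$. It is also more robust exactly where the paper is delicate. In the paper's last step, the $y$-drift caused by travelling from $x^c$ to $x^\sharp$ and the range of corrections permitted by $\|\alpha\|\le r_\alpha(\delta)$ are both of order $\delta$. So $\|\Phi_\delta(0)-y^\sharp\|\to 0$ does not by itself place $y^\sharp$ in $\Phi_\delta(B(0,\eta_\delta))$. Moreover, the asserted uniform convergence $x^0(T_2+\delta r)\to x^c$ fails, because $x^0(T)=x^\sharp$. In your scheme the $O(\tau_3)$ drift is absorbed by a parameter whose influence on $y(T)$ is of order one. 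The paper's route is more explicit and gives a unique, Lipschitz-selected $x^c$; yours is the cleaner way to close the argument.

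\emph{Points to fix when writing it up.} Set the terminal velocity using the target value, $v_T=-\gamma z_T/(\sqrt{y_T}+\varepsilon)$, so that exact matching $y(T)=y_T$ yields $z(T)=z_T$. Require $\rho_y$ plus the perturbation size to be below $\operatorname{dist}(y_\ast,g(\partial B(x_\ast,\rho')))$. Note that $Dg=2A$ rather than $2A^\top$, which is harmless. Finally, obtain $y_\ast\in Y$ by choosing $c_R,d_R$ in Lemma~\ref{LemJ1} suitably rather than by shrinking.
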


Assuming Lemma~\ref{lem:access-xzy-openV-3stagefinal}, the Stroock-Varadhan
support theorem implies that
\[
P_T(v,U)>0
\]
for every open set \(U\subset W''\) and every \(v\in W'\).
Since the density \(p(t,v,w)\) is smooth on \(W'''\times W'''\), it also follows
that the map
\[
(v,w)\mapsto p(T,v,w)
\]
is jointly continuous and strictly positive on \(W'\times W''\).

We now choose now an open set \(W\subset W''\) such that
\(\overline{W}\) is compactly contained in \(W''\).
Since \(p(T,\cdot,\cdot)\) is continuous and strictly positive on the compact
set \(\overline{W'}\times \overline{W}\), we obtain
\[
\kappa :=
\inf_{v\in\overline{W'},\, w\in\overline{W}} p(T,v,w)
>0 .
\]
Using this bound we deduce, for any Borel set \(A\subset E\),
\[
P_T(v,A)
\ge
\kappa\,\operatorname{Leb}(A\cap W)
=
(\kappa\,\operatorname{Leb}(W))
\frac{\operatorname{Leb}(A\cap W)}{\operatorname{Leb}(W)}
=: \kappa'
\frac{\operatorname{Leb}(A\cap W)}{\operatorname{Leb}(W)}
\qquad
\text{for all } v\in W'.
\]
Let \(t_\ast:=T_R+T\), where \(T_R\) is as in Lemma~\ref{LemJ1} and \(T\) is
as in Lemma~\ref{lem:access-xzy-openV-3stagefinal}. Combining the above with
the Chapman-Kolmogorov equation, we deduce for any Borel set \(A\subset E\)
that
\begin{align}\label{eq:local-minor-++a}
P_{t_\ast}(u,A)
&\ge
\iiint_{W'} P_{T_R}(u,\mathrm{d}v)\,P_T(v,A)\ge
\eta_R\,\kappa'
\frac{\operatorname{Leb}(A\cap W)}{\operatorname{Leb}(W)}
\qquad
\text{for all } u\in C_R .
\end{align}
In particular, defining the probability measure
\[
\nu(A):=
\frac{\operatorname{Leb}(A\cap W)}{\operatorname{Leb}(W)},
\]
we obtain
\begin{equation}\label{eq:minor-far-style}
P_{t_\ast}(u,\cdot)
\ \ge\
\eta_R\kappa'\,\nu(\cdot),
\qquad \forall u\in C_R .
\end{equation}
Thus \(C_R\) is a small set, and the proof of Theorem~\ref{CorC} is complete
modulo the proof of Lemma~\ref{lem:access-xzy-openV-3stagefinal}.\qed

\subsection{Proof of  Lemma \ref{lem:access-xzy-openV-3stagefinal}} We let $\rho_x>0$ be such that
\begin{equation}\label{eq:Ux-final}
U_x':=B(x_\ast,4\rho_x)
\subset B(x_\ast,r)\subset\mathbb R^m\setminus \mathcal D_A^\dagger\implies
 \det A(x)\neq0 \ \text{on } U_x'.
\end{equation}
Define
\[
g(x)=\big((\partial_{x_1}f(x))^2,\dots,(\partial_{x_m}f(x))^2\big).
\]
A direct computation yields $Dg(x)=2A(x)$, hence
$\det Dg(x)\neq0$ on $U_x'$. After shrinking $\rho_x$ if necessary,
\begin{equation}\label{eq:g-diffeo-final}
g:U_x'\to g(U_x')
\quad\text{is a $C^1$ diffeomorphism onto an open set.}
\end{equation}
From here on $\rho_x$ is fixed. Let
\[
y_\ast := g(x_\ast)+\sigma^2\mathbf 1,
\]
and note that as $g(B(x_\ast,2\rho_x))$ is open,
there exists $\rho_y>0$ such that
\[
U_y
:=
B(y_\ast,\rho_y)
\subset
g\big(B(x_\ast,2\rho_x)\big)
+
\sigma^2\mathbf 1.
\]
We are going to construct $W''$ as
\[
W'' := U_x \times U_z \times U_y \subset \R^{3m},
\]
i.e., as a product of open sets, where
\[
U_x:=B(x_\ast,\rho_x),\qquad
U_z=B(0,\rho_z),
\qquad
U_y:=B(y_\ast,\rho_y),
\]
and where  $\rho_z$ will be specified later. Note that we can without loss of generality assume $B(0,2\rho_z)\subset Z$ and $B(y_\ast,2\rho_y)\subset Y$, where the sets $Z$ and $Y$ are as introduced in Lemma \ref{LemJ1}.

To prove Lemma \ref{lem:access-xzy-openV-3stagefinal} we are going to construct controls on three time intervals $[0,T_1]$, $[T_1,T_1+S]$, and $[T_1+S,T_1+S+\delta]$, where  $T_1>0$ is small, $S>0$ is large, and $\delta>0$ is small, all to be fixed and specified. We set
\begin{equation}\label{frame1}
T:=T_1+S+\delta.
\end{equation}
We construct the controls to prove that there exists, for each target triple
\begin{equation}\label{frame2}
(x^\sharp,z^\sharp,y^\sharp)\in
U_x\times U_z\times U_y,
\end{equation}
 a controlled skeleton system $(x(t),z(t),y(t))$ starting at  $U_0=(x_0,z_0,y_0)\in W'$,  and
ending up at $(x^\sharp,z^\sharp,y^\sharp)$ at $t=T$, i.e., $(x(T),z(T),y(T))=(x^\sharp,z^\sharp,y^\sharp)$. Moreover, we are going to do the construction so that the controlled trajectory satisfies
\[
(x(t),z(t),y(t))\in W'''
\qquad \text{for all } t\in[0,T].
\]
We construct the control in steps dividing the argument into a number of lemmas, Lemma \ref{lem:initial-hermite-path}-Lemma \ref{CdG+} below. These lemmas are used to complete the proof of Lemma \ref{lem:access-xzy-openV-3stagefinal}, and their proofs are postponed to subsequent subsections.

We first prove the following lemma, see Subsection \ref{Pr0} for its proof.
\begin{lemma}\label{lem:initial-hermite-path}
Fix \(T_1>0\), \(U_0=(x_0,z_0,y_0)\in W'\), and \(x^c\in B(x_\ast,2\rho_x)\).
Define
\[
v_0^i:=-\gamma\frac{z_0^i}{\sqrt{y_0^i}+\varepsilon},
\qquad i=1,\dots,m.
\]
Then there exists a \(C^\infty\)-curve $x:[0,T_1]\to \mathbb R^m$ such that
\[
x(0)=x_0,\qquad x(T_1)=x^c,\qquad \dot x(0)=v_0,\qquad \dot x(T_1)=0.
\]
Moreover, \(T_1>0\) can be chosen sufficiently small so that, for every
\(U_0\in W'\) and every \(x^c\in B(x_\ast,2\rho_x)\),
\[
x([0,T_1])\subset B(x_\ast,2r).
\]
\end{lemma}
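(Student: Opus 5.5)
The plan is to build the curve explicitly as a cubic Hermite interpolant on $[0,T_1]$, then control its deviation from the straight segment by a bound that is uniform over $W'$ and over the endpoint $x^c$. With $s:=t/T_1\in[0,1]$, I would use the standard Hermite basis
\[
H_{00}(s)=2s^3-3s^2+1,\qquad H_{10}(s)=s^3-2s^2+s,\qquad H_{01}(s)=-2s^3+3s^2,
\]
and define
\[
x(t):=H_{00}(s)\,x_0+T_1H_{10}(s)\,v_0+H_{01}(s)\,x^c .
\]
This is a polynomial in $t$, hence $C^\infty$. Differentiating and evaluating at $s=0$ and $s=1$ (using $H_{10}'(0)=1$, $H_{10}'(1)=0$, and $H_{00}'=-H_{01}'$ vanishing at both ends) gives $x(0)=x_0$, $x(T_1)=x^c$, $\dot x(0)=v_0$ and $\dot x(T_1)=0$. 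This settles the first assertion, for any $T_1>0$.

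For the containment, I rewrite the curve as
\[
x(t)=x_0+H_{01}(s)(x^c-x_0)+T_1H_{10}(s)\,v_0 .
\]
Since $H_{01}(s)\in[0,1]$, the first two terms form a point on the segment $[x_0,x^c]$. Both endpoints lie in the convex ball $B(x_\ast,2\rho_x)\cup B(x_\ast,r)\subset B(x_\ast,r)$: here $x_0\in O=B(x_\ast,r)$ because $U_0\in W'$, and $2\rho_x<4\rho_x\le r$ by \eqref{eq:Ux-final}. So this part stays in $B(x_\ast,r)$. The remaining term is bounded by $T_1\max_{[0,1]}|H_{10}|\,|v_0|\le \tfrac{4}{27}T_1|v_0|$.

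The next step is a uniform bound on $v_0$. On $W'$ we have $|z_0|<\rho$ and $y_0^i>c_R/2>0$, so $|v_0^i|\le \gamma\rho/\varepsilon$ and $|v_0|\le \sqrt m\,\gamma\rho/\varepsilon$, independently of $U_0$ and $x^c$. Hence
\[
|x(t)-x_\ast|<r+\tfrac{4}{27}T_1\sqrt m\,\gamma\rho/\varepsilon .
\]
Choosing $T_1<\tfrac{27}{4}\,r\,\varepsilon/(\sqrt m\,\gamma\rho)$ gives $x([0,T_1])\subset B(x_\ast,2r)$ for all $U_0\in W'$ and all $x^c\in B(x_\ast,2\rho_x)$.

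There is no real obstacle here. The only point needing care is that the smallness of $T_1$ must be uniform, and this comes from the boundedness of $z_0$ together with the lower bound $\sqrt{y_0^i}+\varepsilon\ge\varepsilon$.
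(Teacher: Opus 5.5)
Your proposal is correct and follows the paper's proof: the same cubic Hermite interpolant, the same splitting of the curve into a convex combination of $x_0$ and $x^c$ (which stays in $B(x_\ast,r)$) plus the term $T_1 h_{10}(s)v_0$, and the same uniform bound on $v_0$ over $W'$ to fix $T_1$ small. Your explicit constants ($\max_{[0,1]}|h_{10}|=4/27$, and a bound on $\|v_0\|$ that uses only $\sqrt{y_0^i}+\varepsilon\ge\varepsilon$ and $\|z_0\|<\rho$) simply make concrete the paper's constant $M_v$.
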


Using Lemma~\ref{lem:initial-hermite-path}, we in the following choose \(T_1>0\) sufficiently small so that, for every
\(U_0\in W'\) and every prescribed terminal point \(x^c\in B(x_\ast,2\rho_x)\), the constructed curve $x([0,T_1])$, $x(T_1)=x^c$, satisfies
\begin{equation}\label{contain}
x([0,T_1])\subset B(x_\ast,2r).
\end{equation}
Using the curve \(x(\cdot)\) we let \(y(\cdot)\) be the solution of
\[
\dot y^i(t)=b\bigl(-y^i(t)+(\partial_{x_i}f(x(t)))^2+\sigma^2\bigr),
\qquad y(0)=y_0.
\]
Since \(U_0\in W'=O\times Z\times Y\), \(Y=(c_R/2,2d_R)^m\),  and due to \eqref{contain}, the regularity of $f$  yields the uniform bounds
\[
0<c_1\le y^i(t)\le d_1<\infty,
\qquad t\in[0,T_1],\ i=1,\dots,m,
\]
for suitable constants \(c_1,d_1\) uniformly with respect to \(U_0\in W'\). Given $(x(t),y(t))$, define
\[
z^i(t):=-\frac{\sqrt{y^i(t)}+\varepsilon}{\gamma}\dot x^i(t).
\]
Then, by construction,
\[
z(0)=z_0,\qquad z(T_1)=0.
\]
Finally, again define
\[
h_i(t)=\frac1{a\sigma}\Bigl(\dot z^i(t)-a(\partial_{x_i}f(x(t))-z^i(t))\Bigr).
\]
Since \(x\) is smooth, \(y\) is \(C^1\), and \(z\) is therefore \(C^1\) on
\([0,T_1]\), the control \(h\) belongs to \(\mathrm L^2([0,T_1];\mathbb R^m)\).
Moreover, the triple \((x(t),z(t),y(t))\) solves the skeleton system on
\([0,T_1]\), and at time \(T_1\) we obtain a state
\begin{equation}\label{state1}
x(T_1)=x^c,\qquad z(T_1)=0,\qquad y(T_1)=:y^c.
\end{equation}
By construction $(x(t),z(t),y(t))\in W'''$ for all $t\in [0,T_1]$.

\begin{remark}
To explain the notation \(x^c\in B(x_\ast,2\rho_x)\), the superscript
\(^c\) denotes an \emph{intermediate control point} (or \emph{control target})
in the construction of the skeleton path. It does not indicate a power or
complement, but serves as a label distinguishing this point from the initial
state \(x_0\) and the final target \(x^\sharp\). More precisely, for each
initial condition \(U_0=(x_0,z_0,y_0)\in W'\) and each desired terminal value
\(y^\sharp\in U_y\), the point
\[
x^c=x^c_{U_0,y^\sharp}\in B(x_\ast,2\rho_x)
\]
will be chosen so that, by holding \(x(t)\equiv x^c\) over a suitable time interval,
the \(y\)-component can be steered exactly to \(y^\sharp\). Thus, \(x^c\) acts
as a control parameter enabling precise matching of the terminal value in the
\(y\)-coordinates.
\end{remark}

To proceed towards the target in \eqref{frame2} on the time interval
\eqref{frame1}, we therefore first focus on the component \(y^\sharp\).
Lemma~\ref{keylem} below shows that we can select, given \(y^\sharp\in U_y\),
a corresponding point \(x^c\in B(x_\ast,2\rho_x)\) such that, starting from
the state \eqref{state1} at time \(T_1\), the system can be steered on
\([T_1,T_1+S]\) to the configuration \((x^c,0,y^\sharp)\). This reduction
of the problem to the choice of \(x^c\) is the key step in controlling the
\(y\)-dynamics. We defer the proof of Lemma~\ref{keylem} to Subsection~\ref{Pr1}.

\begin{lemma}\label{keylem}
Fix \(T_1>0\) as in Lemma~\ref{lem:initial-hermite-path}. Then there exists
\(S>0\) such that the following holds for every \(U_0=(x_0,z_0,y_0)\in W'\). For every \(y^\sharp\in U_y\), there exists
\[
x^c=x^c_{U_0,y^\sharp}\in B(x_\ast,2\rho_x)
\]
such that, if on \([0,T_1]\) we use the controlled trajectory constructed in
Lemma~\ref{lem:initial-hermite-path} with endpoint \(x^c\), and if on
\([T_1,T_1+S]\) we set
\[
x(t)\equiv x^c,\qquad z(t)\equiv 0,\qquad
h(t)=-\sigma^{-1}\nabla f(x^c),
\]
then
\[
y(T_1+S)
=
e^{-bS}\,y^c
+
(1-e^{-bS})\bigl(g(x^c)+\sigma^2\mathbf 1\bigr)
=
y^\sharp,
\]
where \(y^c:=y(T_1)\).
\end{lemma}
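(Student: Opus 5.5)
The plan is to turn the terminal condition into a fixed-point problem for $x^c$ and solve it with Brouwer's theorem. The subtlety is that $y^c=y(T_1)$ depends on $x^c$ through the Hermite-type path of Lemma~\ref{lem:initial-hermite-path}.

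\textbf{Step 1: the dynamics on $[T_1,T_1+S]$.} With $x\equiv x^c$, $z\equiv 0$ and $h\equiv-\sigma^{-1}\nabla f(x^c)$, the $x$-equation holds trivially. The $z$-equation reads $0=a(\partial_{x_i}f(x^c)-0)+a\sigma h_i$, so it holds as well. The $y$-equation becomes the linear ODE $\dot y=b(-y+g(x^c)+\sigma^2\mathbf 1)$ with $y(T_1)=y^c$. Its explicit solution is the displayed formula. It remains to choose $S$ and $x^c$ so that the formula equals $y^\sharp$, and this is equivalent to
\[
g(x^c)+\sigma^2\mathbf 1=\frac{y^\sharp-e^{-bS}y^c(x^c)}{1-e^{-bS}}=:\Theta_S(x^c).
\]

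\textbf{Step 2: uniform control of $y^c$.} I would make the curve of Lemma~\ref{lem:initial-hermite-path} explicit, for instance a cubic Hermite interpolation whose coefficients are affine in $(x_0,v_0,x^c)$. Then $x(\cdot)$, and hence $y(\cdot)$ on $[0,T_1]$, depends continuously on $x^c$ by continuous dependence for ODEs. This makes $x^c\mapsto y^c(x^c)$ continuous on $\overline{B(x_\ast,2\rho_x)}$. The bounds $c_1\le y^i\le d_1$, which hold uniformly in $U_0\in W'$, give $\|y^c\|\le C$ uniformly in $U_0$ and $x^c$. Consequently
\[
\|\Theta_S(x^c)-y^\sharp\|\le \frac{e^{-bS}}{1-e^{-bS}}\bigl(\|y^\sharp\|+C\bigr),
\]
uniformly in $U_0\in W'$, $y^\sharp\in U_y$ and $x^c$.

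\textbf{Step 3: fixed point.} Shrinking $\rho_y$ if necessary, which is harmless since $\rho_y$ is still free, I would arrange that the compact set $\overline{B(y_\ast,2\rho_y)}$ is contained in $g(B(x_\ast,2\rho_x))+\sigma^2\mathbf 1$. Next choose $S$ so large that the bound in Step 2 is less than $\rho_y$. Then $\Theta_S(x^c)\in B(y_\ast,2\rho_y)$ for every $x^c\in\overline{B(x_\ast,2\rho_x)}$. By \eqref{eq:g-diffeo-final}, $g$ has a continuous inverse on $g(U_x')$, so the map
\[
\Psi(x^c):=g^{-1}\bigl(\Theta_S(x^c)-\sigma^2\mathbf 1\bigr)
\]
is continuous. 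It sends the closed ball $\overline{B(x_\ast,2\rho_x)}$ into the compact set $K:=g^{-1}\bigl(\overline{B(y_\ast,2\rho_y)}-\sigma^2\mathbf 1\bigr)\subset B(x_\ast,2\rho_x)$. Brouwer's fixed point theorem on the closed ball then yields $x^c=\Psi(x^c)$. This $x^c$ lies in $K\subset B(x_\ast,2\rho_x)$ and satisfies $g(x^c)+\sigma^2\mathbf 1=\Theta_S(x^c)$, which is exactly the required identity. Note that $S$ is independent of $U_0$ and $y^\sharp$.

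\textbf{Main obstacle.} The delicate point is Step 2: ensuring the continuous, uniform dependence of $y^c$ on $x^c$, which requires fixing a concrete interpolation in Lemma~\ref{lem:initial-hermite-path}. A secondary point is the margin in Step 3, namely that $\overline{B(y_\ast,2\rho_y)}$ sits compactly inside $g(B(x_\ast,2\rho_x))+\sigma^2\mathbf 1$. This margin is what ensures that the perturbed target still has a preimage inside the ball.
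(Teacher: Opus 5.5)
Your proof is correct, and its reduction is the same as the paper's. The paper also rewrites the terminal condition as $x^c=\mathcal F_{U_0}(x^c)$ with $\mathcal F_{U_0}(x)=g^{-1}\bigl(\Theta_S(x)-\sigma^2\mathbf 1\bigr)$. It also uses the uniform bound $\|\Theta_S-y^\sharp\|\le \frac{e^{-bS}}{1-e^{-bS}}(\|y^\sharp\|+C)$ to get a self-map for large $S$.

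The difference is how you obtain the fixed point. The paper takes $S$ larger still, until $\mathcal F_{U_0}$ is a contraction, and then applies Banach's theorem. This needs a uniform Lipschitz constant $L_{T_1}$ for $x\mapsto y^c(U_0,x)$ and a bound $C_g$ on $Dg^{-1}$. You only need continuity of $x\mapsto y^c(x)$, and then apply Brouwer. That continuity is immediate from the affine dependence of the Hermite curve on $x^c$ and the variation-of-constants formula for $y$. Banach gives uniqueness of $x^c_{U_0,y^\sharp}$ and continuous dependence on $(U_0,y^\sharp)$. Neither is used later, since the support-theorem argument only needs each target to be reachable.

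Your version is also more careful on one point. The paper deduces that $\mathcal F_{U_0}$ maps the open ball $B(x_\ast,2\rho_x)$ into itself from the bare inclusion $U_y\subset g(B(x_\ast,2\rho_x))+\sigma^2\mathbf 1$. That inclusion gives no uniform distance from $g^{-1}(y^\sharp-\sigma^2\mathbf 1)$ to the boundary of the ball. The paper then applies Banach on an open ball, which is not complete.

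Your margin $\overline{B(y_\ast,2\rho_y)}\subset g(B(x_\ast,2\rho_x))+\sigma^2\mathbf 1$ fixes this. Shrinking $\rho_y$ to get it is legitimate: $\rho_y$ enters only through an existence statement, and later steps only need $U_y$ to be a nonempty open set. Together with working on the closed ball and landing in the compact set $K\subset B(x_\ast,2\rho_x)$, this makes the step rigorous. The same margin would also repair the paper's contraction argument.
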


Now fix a target triple $(x^\sharp,z^\sharp,y^\sharp)$ as in \eqref{frame2}. Let \(\delta>0\) be a small positive
number, to be chosen, and set
\[
T_2:=T_1+S\implies
T=T_2+\delta.
\]
Here \(S>0\) is as in Lemma~\ref{keylem}. Given \(U_0=(x_0,z_0,y_0)\in W'\) and \(y^\sharp\in U_y\), Lemma~\ref{keylem}
provides a point
\[
x^c=x^c_{U_0,y^\sharp}\in B(x_\ast,2\rho_x)
\]
such that, at time \(T_2\), the corresponding controlled trajectory satisfies
\begin{equation}\label{state2}
x(T_2)=x^c,\qquad z(T_2)=0,\qquad y(T_2)=y^\sharp.
\end{equation}

We next focus on achieving $(x^\sharp,z^\sharp)$ without losing the control achieved in $y$. Given $(z^\sharp,y^\sharp)$ we define the desired terminal velocity for $x$ (at $T$)
\[
v^\sharp
:=
-\gamma\frac{z^\sharp}{\sqrt{y^\sharp}+\varepsilon}.
\]
As this stage we fix $\rho_z$ determining the ball $U_z=B(0,\rho_z)$. Indeed, we let $\rho_z=\min\{\varepsilon/\gamma,1\}$ and then note that
\begin{equation}\label{tribb}
z\in B(0,\rho_z)\implies \|v^\sharp\|\leq \gamma \|z^\sharp\|/\varepsilon<1.
\end{equation}
With $U_z$ fixed, the set $W''=U_x\times U_z\times U_y$ is now completely specified.

In the next lemma we construct a curve \(x^0:[T_2,T]\to\mathbb R^m\), $T=T_2+\delta$, satisfying the four endpoint conditions
\begin{equation}\label{eq:bc-x0}
x^0(T_2)=x^c,\qquad \dot x^0(T_2)=0,\qquad x^0(T)=x^\sharp,\qquad \dot x^0(T)=v^\sharp.
\end{equation}
Hence, by moving along the curve $x^0[T_2,T]$ we have $x^c\to x^\sharp$ and the terminal velocity of $x^0$ at $T$ equals $v^\sharp$. We postpone the proof of the lemma for now, see Subsection \ref{Pr2}.

\begin{lemma}\label{CdG}
Let \(T_2\in\mathbb R\), \(\delta>0\), and \(T=T_2+\delta\). Given $x^c,x^\sharp,v^\sharp\in\mathbb R^m$,
there exists a curve \(x^0:[T_2,T]\to\mathbb R^m\) satisfying the four endpoint
conditions in \eqref{eq:bc-x0}. Moreover, if
\[
x^c\in B(x_\ast,2\rho_x),\qquad x^\sharp\in B(x_\ast,\rho_x),
\]
and
\begin{equation}\label{tribb+}
\delta\|v^\sharp\|\le \frac{27}{8}\rho_x,
\end{equation}
then
\[
x^0([T_2,T])\subset U_x'=B(x_\ast,4\rho_x).
\]
\end{lemma}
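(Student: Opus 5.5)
The curve will be built as a cubic Hermite interpolant. We work on the rescaled variable $s=(t-T_2)/\delta\in[0,1]$ and use the standard Hermite basis
\[
H_{00}(s)=2s^3-3s^2+1,\quad H_{10}(s)=s^3-2s^2+s,\quad H_{01}(s)=-2s^3+3s^2,\quad H_{11}(s)=s^3-s^2 .
\]
We then set
\[
x^0(t):=H_{00}(s)\,x^c+H_{01}(s)\,x^\sharp+\delta H_{11}(s)\,v^\sharp .
\]
The term $\delta H_{10}(s)\cdot 0$ is dropped, since the prescribed initial velocity vanishes. The function $x^0$ is a polynomial, hence $C^\infty$.

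Checking \eqref{eq:bc-x0} is routine. At the endpoints, $H_{00}(0)=1$, $H_{01}(1)=1$, and all other basis functions vanish. For the derivatives, $\frac{d}{dt}=\delta^{-1}\frac{d}{ds}$. All $H'_{\cdot\cdot}$ vanish at $s=0$ except $H_{10}'(0)=1$, and at $s=1$ only $H_{11}'(1)=1$ survives. This gives $\dot x^0(T_2)=0$ and $\dot x^0(T)=v^\sharp$, which proves the first assertion for arbitrary data.

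For the containment, we use that $H_{00}+H_{01}\equiv 1$ with both terms in $[0,1]$ on $[0,1]$. Hence $p(s):=H_{00}(s)x^c+H_{01}(s)x^\sharp$ is a convex combination of $x^c$ and $x^\sharp$. Both points lie in the convex ball $B(x_\ast,2\rho_x)$, since $\rho_x<2\rho_x$, so $\|p(s)-x_\ast\|<2\rho_x$. The remaining term satisfies $\|\delta H_{11}(s)v^\sharp\|\le \delta\|v^\sharp\|\max_{[0,1]}|H_{11}|$. Now $H_{11}(s)=s^2(s-1)$ has its extremum at $s=2/3$, where $|H_{11}|=4/27$. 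Under \eqref{tribb+} this term is therefore at most $\frac{4}{27}\cdot\frac{27}{8}\rho_x=\rho_x/2$. Altogether $\|x^0(t)-x_\ast\|<2\rho_x+\rho_x/2<4\rho_x$, i.e.\ $x^0([T_2,T])\subset U_x'$. The factor $27/8$ in the hypothesis is clearly tuned to the constant $4/27$, which supports this choice of interpolant.

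There is no real obstacle. The only point needing care is the derivative scaling by $\delta$: it must appear exactly once in the velocity term, so that the end velocity is correct and the perturbation has size $\delta\|v^\sharp\|$.
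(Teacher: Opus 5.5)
Your proposal is correct and matches the paper's proof: the same cubic Hermite interpolant, the same endpoint checks, and the same bound $\sup_{[0,1]}|h_{11}|=4/27$. The one difference is that you exploit $h_{00}+h_{01}\equiv 1$ as a genuine convex combination, which gives the sharper bound $5\rho_x/2$ in place of the paper's $7\rho_x/2$; either one is below $4\rho_x$.
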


To be able to use Lemma~\ref{CdG} to achieve the target $(x^\sharp,z^\sharp,y^\sharp)$ we need more degrees of freedom and we therefore next introduce a finite–dimensional perturbation of the reference curve
constructed in Lemma~\ref{CdG}. We choose a fixed $\psi\in C_c^\infty((0,1))$ such that $\int_0^1 \psi(r)\,\d r\neq 0$,
and define the rescaled bump on $[T_2,T]$ by
\[
\psi_\delta(t):=\frac{1}{\delta}\,\psi\!\Big(\frac{t-T_2}{\delta}\Big),
\qquad t\in[T_2,T],\quad T=T_2+\delta.
\]
For each parameter
\[
\alpha=(\alpha_1,\dots,\alpha_m)\in\R^m,
\]
define
\[
x^\alpha(t)=x^0(t)+\psi_\delta(t)\,\alpha,
\qquad t\in[T_2,T].
\]
Thus \(\alpha\) acts as an \(m\)-dimensional parameter controlling an
interior perturbation of the reference curve \(x^0\) constructed in Lemma~\ref{CdG}.
Because \(\psi\in C_c^\infty((0,1))\), we have \(\psi\equiv0\) near \(0\)
and \(1\). Consequently,
\[
\psi_\delta(T_2)=\psi_\delta(T)=0,
\qquad
\psi_\delta'(T_2)=\psi_\delta'(T)=0.
\]
Hence, the initial and terminal conditions for \(x^\alpha\) coincide with those
of \(x^0\), independently of \(\alpha\), i.e.,
\begin{equation}\label{alphastate}
x^\alpha(T_2)=x^c,\qquad
\dot x^\alpha(T_2)=0,
\qquad
x^\alpha(T)=x^\sharp,
\qquad
\dot x^\alpha(T)=v^\sharp,\qquad \forall\, \alpha=(\alpha_1,\dots,\alpha_m)\in\R^m.
\end{equation}
 Moreover,
\[
\sup_{t\in[T_2,T]}\|x^\alpha(t)-x^0(t)\|
\le \|\psi_\delta\|_\infty\,\|\alpha\|
= \frac{\|\psi\|_\infty}{\delta}\,\|\alpha\|.
\]
As $(x^\sharp,z^\sharp,y^\sharp)\in W''=U_x\times U_z\times U_y$, using \eqref{tribb} we see that \eqref{tribb+} is satisfied if $\delta\leq 27\rho_x/8$ where $\rho_x$ has been fixed at the very beginning of our argument. Hence, subject to this restriction we have $x^0([T_2,T])\subset U_x'=B(x_\ast,4\rho_x)$, see Lemma \ref{CdG}, and since $U_x'$ is open there exists $\eta>0$ such that if
\begin{equation}\label{Bg2}
\|\alpha\|\le r_\alpha(\delta)
:=
\frac{\eta\,\delta}{\|\psi\|_\infty},
\end{equation}
then
\[
x^\alpha([T_2,T])\subset U_x' .
\]
For \(\alpha\) as in \eqref{Bg2} the corresponding solution \(y^\alpha\) of the
\(y\)-equation is well defined on \([T_2,T]\).
We therefore obtain the endpoint map
\[
\Phi_\delta:
B(0,r_\alpha(\delta))\to\mathbb R^m,
\qquad
\Phi_\delta(\alpha):=y^\alpha(T),
\]
where
\[
\Phi_\delta(\alpha)
=
e^{-b\delta}\,y^\sharp
+
b\int_{T_2}^{T}
e^{-b(T-s)}
\big(
g\big(x^\alpha(s)\big)
+
\sigma^2\mathbf 1
\big)\,\d s .
\]
By \eqref{alphastate} we know that $x^\alpha(T_2)=x^c$, $x^\alpha(T)=x^\sharp$. Starting  at $y^\sharp$ at time $T_2$, and following the perturbed reference curve $x^\alpha$,  $\Phi_\delta(\alpha)$ produces the corresponding terminal value $y^\alpha(T)$, i.e., $y^\sharp\to y^\alpha(T)$. The following lemma shows that the parameter \(\alpha\) can be chosen so that, while starting at $y^\sharp$, the desired terminal condition \(y^\alpha(T)=y^\sharp\) is satisfied. For the proof of the lemma we refer to Subsection \ref{Pr3}.
\begin{lemma}\label{CdG+}
There exists $\delta_0\in (0,27\rho_x/8)$, which can be chosen uniformly with respect to
$U_0\in W'$ and $y^\sharp\in U_y$, such that for every $\delta\in(0,\delta_0)$
there exists $\alpha^\ast=\alpha^\ast(\delta)$ with
\[
\|\alpha^\ast\|\le r_\alpha(\delta)=O(\delta)
\]
such that
\[
\Phi_\delta(\alpha^\ast)
=
y^{\alpha^\ast}(T)=
y^\sharp.
\]
\end{lemma}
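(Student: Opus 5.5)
The plan is to solve $\Phi_\delta(\alpha)=y^\sharp$ by a quantitative inverse function theorem (or Brouwer/degree argument) in the parameter $\alpha$, after rescaling time on $[T_2,T]$ to the unit interval. Write $s=T_2+\delta r$, $r\in[0,1]$. Then
\[
\Phi_\delta(\alpha)-y^\sharp=(e^{-b\delta}-1)y^\sharp+b\delta\int_0^1 e^{-b\delta(1-r)}\bigl(g(x^0(T_2+\delta r)+\psi(r)\alpha/\delta)+\sigma^2\mathbf 1\bigr)\,\d r .
\]
It is natural to introduce the rescaled parameter $\beta:=\alpha/\delta$, so that $\|\alpha\|\le r_\alpha(\delta)$ corresponds to $\|\beta\|\le \eta/\|\psi\|_\infty$, a ball independent of $\delta$. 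Dividing by $b\delta$ gives the equation $F_\delta(\beta)=0$, where
\[
F_\delta(\beta):=\int_0^1 e^{-b\delta(1-r)}\bigl(g(x^0(T_2+\delta r)+\psi(r)\beta)+\sigma^2\mathbf 1\bigr)\,\d r-\frac{1-e^{-b\delta}}{b\delta}\,y^\sharp .
\]

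First step: identify the limit $\delta\to0$. By Lemma~\ref{CdG} and the bound \eqref{tribb}, the reference curve satisfies $x^0([T_2,T])\subset B(x_\ast,4\rho_x)$, and it stays within $O(\rho_x)$ of the segment joining $x^c$ and $x^\sharp$. Passing to $\delta\to0$ is not clean, however, because $x^0(T_2+\delta r)$ does not converge to a single point; it interpolates $x^c\to x^\sharp$ with velocity bounded by $O(\delta)$ plus a $\delta$-independent profile. I would therefore use directly the Jacobian
\[
D_\beta F_\delta(\beta)=\int_0^1 e^{-b\delta(1-r)}\psi(r)\,2A\bigl(x^\beta(r)\bigr)^{\top}\,\d r ,
\]
with $Dg=2A$ as in \eqref{eq:g-diffeo-final}. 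The key structural point is to exploit invertibility of $A$ on $U_x'$ together with $\int\psi\neq0$. To ensure the averaged matrix remains invertible, I would shrink $\rho_x$ (already fixed, but $r$ and $\rho_x$ can be taken so small that $A$ varies by less than $\tfrac12\|A(x_\ast)^{-1}\|^{-1}\,|\int\psi|/\|\psi\|_{L^1}$ on $B(x_\ast,4\rho_x)$). Then $D_\beta F_\delta$ is uniformly invertible, with bounds independent of $\delta$, $U_0$ and $y^\sharp$.

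Second step: control the residual at $\beta=0$. Since $y^\sharp=e^{-bS}y^c+(1-e^{-bS})(g(x^c)+\sigma^2\mathbf 1)$, the value $F_\delta(0)$ equals $\int_0^1\bigl(g(x^0)-g(x^c)\bigr)\,\d r$ plus the term $(1-e^{-bS})^{-1}$-weighted mismatch $e^{-bS}(g(x^c)+\sigma^2\mathbf 1-y^c)$, plus $O(\delta)$. The mismatch terms are small provided $S$ is large (Lemma~\ref{keylem}) and $\rho_x$ is small (by Lipschitz continuity of $g$ on the compact set). Hence $\|F_\delta(0)\|\le C(\rho_x+e^{-bS})+O(\delta)$, which can be made smaller than $c\cdot\eta/\|\psi\|_\infty$, where $c$ is the uniform lower bound on $D_\beta F_\delta$ obtained in the first step.

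Conclusion: a quantitative inverse function theorem (Newton–Kantorovich, or a contraction argument for $\beta\mapsto\beta-D F_\delta(0)^{-1}F_\delta(\beta)$, using a uniform modulus of continuity of $A$ on $U_x'$) yields a solution $\beta^\ast$ in the ball. Setting $\alpha^\ast=\delta\beta^\ast$ gives $\|\alpha^\ast\|\le r_\alpha(\delta)=O(\delta)$. Uniformity in $U_0$ and $y^\sharp$ follows because every constant depends only on $W'$, $U_y$, $f$ and $\psi$.

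The main obstacle I expect is the second step: the residual contains $g(x^0)-g(x^c)$ and the mismatch $y^\sharp-(g(x^c)+\sigma^2\mathbf 1)$, both of which are only small at the scale of $\rho_x$ and $e^{-bS}$, and not as $\delta\to0$. The argument therefore requires choosing $\rho_x$ and $S$ (and possibly $\eta$) consistently with the contraction radius. If this proves delicate, I would fall back on a degree argument on the $\beta$-ball, comparing $F_\delta$ with its linearization at $x_\ast$.
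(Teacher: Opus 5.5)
Your rescaling $\beta=\alpha/\delta$ is the right normalization. You are also right that $x^0(T_2+\delta r)=h_{00}(r)x^c+h_{01}(r)x^\sharp+\delta h_{11}(r)v^\sharp$ does not collapse to $x^c$, and this is exactly where the paper's own argument is loose. The paper asserts $\sup_r\|x^0(T_2+\delta r)-x^c\|\to 0$, which is impossible since $x^0(T)=x^\sharp$. It then compares $\|\Phi_\delta(0)-y^\sharp\|=O(\delta)$ with an inverse-function-theorem neighbourhood of uncontrolled size, although the image of the admissible ball $\|\alpha\|\le r_\alpha(\delta)$ is itself only of size $O(\delta)$.

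The gap is in your Step~2: the claim that $\|F_\delta(0)\|\le C(\rho_x+e^{-bS})+O(\delta)$ can be made smaller than $c\,\eta/\|\psi\|_\infty$. Since $\eta<4\rho_x$ (it is the room left inside $U_x'=B(x_\ast,4\rho_x)$), both sides are of order $\rho_x$. Indeed $F_\delta(0)=\int_0^1\bigl(g(x^0(T_2+\delta r))-g(x^c)\bigr)\,\mathrm{d}r+O(e^{-bS}+\delta)\approx A(x_\ast)(x^\sharp-x^c)$, so shrinking $\rho_x$ (and $r$) leaves the comparison unchanged. With your bounds (Lipschitz constant of $g$ times $3\rho_x$, against the lower bound $2|\int_0^1\psi|\,\|A(x_\ast)^{-1}\|^{-1}$ for $D_\beta F_\delta$), you would need roughly $3\,\kappa(A(x_\ast))\,\|\psi\|_\infty/|\int_0^1\psi|\lesssim\eta/\rho_x<4$, with $\kappa$ the condition number. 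Nothing in the setup provides this; note $\kappa\ge1$ and $\|\psi\|_\infty\ge|\int_0^1\psi|$.

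This is not an artefact of crude estimates. Suppose $g$ is affine on $U_x'$; this is compatible with (A1), (A2) and $\det A\neq0$, e.g.\ $f(x)=\sum_i\phi(x_i)$ with $\phi'(s)=\sqrt{1+s}$ near $s=0$ and $x_\ast=0$. Then $\Phi_\delta$ is affine in $\alpha$, its zero is unique, and as $\delta\to0$,
$\|\psi\|_\infty\|\alpha^\ast\|/\delta\to\frac{\|\psi\|_\infty}{|\int_0^1\psi|}\bigl\|\tfrac12(x^c-x^\sharp)+(2A(x_\ast))^{-1}(y^\sharp-g(x^c)-\sigma^2\mathbf 1)\bigr\|$.
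Choosing $x^\sharp\in U_x$ suitably makes this at least about $\frac{\|\psi\|_\infty}{2|\int_0^1\psi|}\rho_x$. That exceeds every admissible $\eta$ once $\|\psi\|_\infty>8|\int_0^1\psi|$, which is allowed since only $\int_0^1\psi\neq0$ is imposed. So neither tuning $S,\delta$ nor your fallback degree argument can rescue the statement in the generality formulated; the construction itself must change.

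The missing idea is to decouple the scales and linearize around the right point.
\begin{itemize}
\item Let the perturbed path live in $B(x_\ast,\Lambda\rho_x)$ with, say, $\Lambda=3+2\|\psi\|_\infty/|\int_0^1\psi|$. This replaces $U_x'$ and enlarges the admissible $\eta$ to order $(\Lambda-2)\rho_x$.
\item Take $\rho_x$ so small that $B(x_\ast,\Lambda\rho_x)\subset B(x_\ast,r)$ and the oscillation $\omega$ of $2A$ there is small relative to $\|A(x_\ast)^{-1}\|^{-1}$ and $\|\psi\|_\infty/|\int_0^1\psi|$. This also keeps $g$ injective there, so Lemma~\ref{keylem} is unaffected.
\item Centre the fixed-point argument at $\beta_0:=\bigl(x^c-\int_0^1x^0(T_2+\delta r)\,\mathrm{d}r\bigr)/\int_0^1\psi$ instead of at $\beta=0$.
\end{itemize}
Write $g(x)=g(x^c)+2A(x_\ast)(x-x^c)+R(x)$ with $\mathrm{Lip}(R)\le\omega$. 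Then $F_\delta(\beta)=0$ reads
$\beta=\beta_0+\bigl(2A(x_\ast)\int_0^1\psi\bigr)^{-1}\bigl[\varrho-\int_0^1R(x^0(T_2+\delta r)+\psi(r)\beta)\,\mathrm{d}r\bigr]$.
Here $\varrho$ collects the $e^{-bS}$ mismatch and the effect of the weights $e^{-b\delta(1-r)}$; it is $O(e^{-bS}+\delta)$ and depends on $\beta$ with Lipschitz constant $O(\delta)$. The right-hand side is then a contraction of a ball around $\beta_0$ of radius $O\bigl((\omega\rho_x+e^{-bS}+\delta)\|A(x_\ast)^{-1}\|/|\int_0^1\psi|\bigr)$ into itself. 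After $\rho_x$ is fixed, taking $S$ large and $\delta$ small puts this ball inside the admissible set, uniformly in $U_0\in W'$ and $y^\sharp\in U_y$.
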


We now have all the parts of the construction to complete the proof of Lemma \ref{lem:access-xzy-openV-3stagefinal}. To do so, we let $\delta=\delta_0/2$ and we fix \(\alpha^\ast=\alpha^\ast(\delta)\) as in Lemma \ref{CdG+}. We then consider $(x(t),y(t))=(x^{\alpha^\ast}(t),y^{\alpha^\ast}(t))$, and we define $(z(t),h(t))=(z^{\alpha^\ast}(t),h^{\alpha^\ast}(t))$ for \(t\in[T_2,T]\) according to
\[
z^i(t)
=
-\frac{\sqrt{y^i(t)}+\varepsilon}{\gamma}\dot x^i(t),
\qquad
h_i(t)
=
\frac{1}{a\sigma}
\Big(\dot z^i(t)-a(\partial_{x_i}f(x(t))-z^i(t))\Big).
\]
By construction $x^{\alpha^\ast}(T_2)=x^c$, $x^{\alpha^\ast}(T)=x^\sharp$, and $y^{\alpha^\ast}(T_2)=y^\sharp$, $y^{\alpha^\ast}(T)=y^\sharp$. Concerning, $z$ we have $z^{\alpha^\ast}(T_2)=0$ and, with a slight abuse of notation,
$$z(T)=z^{\alpha^\ast}(T)=
-\frac{\sqrt{y^{\alpha^\ast}(T)}+\varepsilon}{\gamma}\dot x^{\alpha^\ast}(T)=-\frac{\sqrt{y^\sharp}+\varepsilon}{\gamma} v^\sharp=\bigl(-\frac{\sqrt{y^\sharp}+\varepsilon}{\gamma}\bigr)\bigl (-\gamma\frac{z^\sharp}{\sqrt{y^\sharp}+\varepsilon}\bigr )=z^\sharp.$$
Thus we have constructed a skeleton system  on \([T_2,T]\), and by construction
\[
(x(T),z(T),y(T))=(x^\sharp,z^\sharp,y^\sharp).
\]

Joining the constructions, given $U_0=(x_0,z_0,y_0)\in W'$ and $(x^\sharp,z^\sharp,y^\sharp)\in W''$, we have constructed controls steering
$$(x_0,z_0,y_0)\to (x^c_{U_0,y^\sharp},0,y^c_{U_0,y^\sharp})\to (x^c_{U_0,y^\sharp},0,y^\sharp)\to (x^\sharp,z^\sharp,y^\sharp)$$
on the intervals $[0,T_1]$, $[T_1,T_2]$, and $[T_2,T]$ respectively. In the last display, the notation $x^c_{U_0,y^\sharp}$ emphasizes that,
given the initial condition $U_0$ and the target value $y^\sharp$,
one can select a point $x^c_{U_0,y^\sharp}$ such that, if the system is steered from $x_0$ to $x^c_{U_0,y^\sharp}$ over $[0,T_1]$, then the subsequent evolution on $[T_1,T_2]$ ensures that $y(T_2)=y^\sharp$.

We can conclude that the set $W''=U_x\times U_z\times U_y$
is open and every point of \(W''\) is attainable from every
\(U_0\in W'\).
This completes the proof of Lemma~\ref{lem:access-xzy-openV-3stagefinal}
modulo the auxiliary lemmas. \qed

\subsection{Proof of Lemma \ref{lem:initial-hermite-path}}\label{Pr0} Set $s:={t}/{T_1}\in[0,1]$.  We use the standard cubic Hermite basis functions
\[
h_{00}(s)=2s^3-3s^2+1,\qquad
h_{10}(s)=s^3-2s^2+s,
\]
\[
h_{01}(s)=-2s^3+3s^2,\qquad
h_{11}(s)=s^3-s^2.
\]
Define
\begin{equation}\label{eq:initial-hermite}
x(t):=
h_{00}(s)\,x_0
+
T_1 h_{10}(s)\,v_0
+
h_{01}(s)\,x^c,
\qquad s={t}/{T_1}.
\end{equation}
Since \(h_{00},h_{10},h_{01}\) are polynomials, the curve \(x\) is \(C^\infty\)
on \([0,T_1]\). We verify the endpoint conditions. Using
\[
h_{00}(0)=1,\quad h_{10}(0)=0,\quad h_{01}(0)=0,
\]
and
\[
h_{00}(1)=0,\quad h_{10}(1)=0,\quad h_{01}(1)=1,
\]
we obtain
\[
x(0)=x_0,\qquad x(T_1)=x^c.
\]
Differentiating \eqref{eq:initial-hermite}  we get
\[
\dot x(t)
=
\frac1{T_1}\Big(h_{00}'(s)x_0 + T_1 h_{10}'(s)v_0 + h_{01}'(s)x^c\Big).
\]
Since
\[
h_{00}'(0)=0,\quad h_{10}'(0)=1,\quad h_{01}'(0)=0,
\]
and
\[
h_{00}'(1)=0,\quad h_{10}'(1)=0,\quad h_{01}'(1)=0,
\]
it follows that
\[
\dot x(0)=v_0,\qquad \dot x(T_1)=0.
\]
It remains to prove that \(x([0,T_1])\subset B(x_\ast,2r)\) for \(T_1>0\) chosen
sufficiently small, uniformly with respect to \(U_0\in W'\) and
\(x^c\in B(x_\ast,2\rho_x)\). From \eqref{eq:initial-hermite} and the identity
\[
h_{00}(s)+h_{01}(s)=1,
\]
we deduce
\[
x(t)-x_\ast
=
h_{00}(s)(x_0-x_\ast)
+
h_{01}(s)(x^c-x_\ast)
+
T_1 h_{10}(s)v_0.
\]
Taking norms and using \(0\le h_{00}(s),h_{01}(s)\le 1\), we obtain
\[
\|x(t)-x_\ast\|
\le
h_{00}(s)\|x_0-x_\ast\|
+
h_{01}(s)\|x^c-x_\ast\|
+
T_1 |h_{10}(s)|\,\|v_0\|.
\]
Since \(x_0\in B(x_\ast,r)\) and
\(x^c\in B(x_\ast,2\rho_x)\subset B(x_\ast,r)\), we have
\[
\|x_0-x_\ast\|<r,\qquad \|x^c-x_\ast\|<r.
\]
Therefore,
\[
h_{00}(s)\|x_0-x_\ast\|+h_{01}(s)\|x^c-x_\ast\|
<
r\big(h_{00}(s)+h_{01}(s)\big)
=
r.
\]
Next, because \(W'\) is bounded and \(y_0^i\) is bounded away from zero on \(W'\),
there exists a constant \(M_v<\infty\) such that the initial velocity $v_0$ satisfies
\[
\|v_0\|\le M_v
\qquad
\text{for all }U_0=(x_0,z_0,y_0)\in W'.
\]
Hence
\[
\|x(t)-x_\ast\|
<
r + T_1\|h_{10}\|_{L^\infty(0,1)}\,M_v.
\]
We now choose \(T_1>0\) so small that
\[
T_1\|h_{10}\|_{L^\infty(0,1)}\,M_v<r.
\]
Then, for every \(U_0\in W'\) and every \(x^c\in B(x_\ast,2\rho_x)\),
\[
\|x(t)-x_\ast\|<2r
\qquad \text{for all } t\in[0,T_1].
\]
Thus
\[
x([0,T_1])\subset B(x_\ast,2r),
\]
uniformly with respect to \(U_0\in W'\) and \(x^c\in B(x_\ast,2\rho_x)\). \qed

\subsection{Proof of Lemma \ref{keylem}}\label{Pr1} Fix \(U_0=(x_0,z_0,y_0)\in W'\). For each \(x\in B(x_\ast,2\rho_x)\), let $y^c(U_0,x)$
denote the value at time \(T_1\) of the \(y\)-component obtained by using on
\([0,T_1]\) the controlled trajectory from Lemma~\ref{lem:initial-hermite-path}
with terminal \(x(T_1)=x\). By construction, $T_1$is such that this trajectory satisfies
\[
x([0,T_1])\subset B(x_\ast,2r).
\]
In particular, \(y^c(U_0,x)\) is well-defined for every
\(x\in B(x_\ast,2\rho_x)\). Now fix \(y^\sharp\in U_y\). If on \([T_1,T_1+S]\) we keep \(x(t)\equiv x\),
\(z(t)\equiv 0\), and choose $h(t)=-\sigma^{-1}\nabla f(x)$,
then the \(y\)-equation becomes
\[
\dot y(t)=b\bigl(-y(t)+g(x)+\sigma^2\mathbf 1\bigr),
\qquad y(T_1)=y^c(U_0,x).
\]
Hence
\begin{equation}\label{eq:y-relax-final-rev}
y(T_1+S)
=
e^{-bS}\,y^c(U_0,x)
+
(1-e^{-bS})\bigl(g(x)+\sigma^2\mathbf 1\bigr).
\end{equation}
We seek \(x^c\in B(x_\ast,2\rho_x)\) such that \(y(T_1+S)=y^\sharp\). By
\eqref{eq:y-relax-final-rev}, this is equivalent to
\begin{equation}\label{eq:implicit-xc-rev}
g(x^c)
=
\frac{y^\sharp-e^{-bS}y^c(U_0,x^c)}{1-e^{-bS}}
-\sigma^2\mathbf 1.
\end{equation}

We are now going to use that \(Dg(x)=2A(x)\) is invertible on \(U_x'=B(x_\ast,4\rho_x)\). Indeed, using this fact we have that the map
\[
g:U_x'\to g(U_x')
\]
is a \(C^1\)-diffeomorphism. In particular,
\[
g^{-1}:g(U_x')\to U_x'
\]
is \(C^1\). Define
\[
\mathcal F_{U_0}(x)
:=
g^{-1}\!\left(
\frac{y^\sharp-e^{-bS}y^c(U_0,x)}{1-e^{-bS}}
-\sigma^2\mathbf 1
\right),
\qquad x\in B(x_\ast,2\rho_x).
\]
We claim that
$$\mathcal F_{U_0}(B(x_\ast,2\rho_x))\subset B(x_\ast,2\rho_x),$$
i.e., \(\mathcal F_{U_0}\) maps \(B(x_\ast,2\rho_x)\) into
itself, for \(S\) large enough, uniformly in \(U_0\in W'\). To see this we first note that since
\[
U_y\subset g(B(x_\ast,2\rho_x))+\sigma^2\mathbf 1,
\]
we have that there exist, for every \(y^\sharp\in U_y\),
\[
x^\sharp_g:=g^{-1}(y^\sharp-\sigma^2\mathbf 1)\in B(x_\ast,2\rho_x).
\]
Because \(B(x_\ast,2\rho_x)\Subset U_x'\), there exists \(\delta_0>0\) such that
the closed \(\delta_0\)-neighborhood of \(B(x_\ast,2\rho_x)\) is still contained
in \(U_x'\). Since \(g^{-1}\) is continuous on the compact set
\[
g(\overline{B(x_\ast,2\rho_x)})+\overline{B(0,\delta_1)}
\]
for sufficiently small \(\delta_1>0\), it follows that if the argument of
\(g^{-1}\) is within \(\delta_1\) of \(y^\sharp-\sigma^2\mathbf 1\), then
\(\mathcal F_{U_0}(x)\in B(x_\ast,2\rho_x)\).

Using that \(W'\) is bounded and that the first-stage paths $x[0,T]$ stay in \(B(x_\ast,2r)\), there exists \(M_y<\infty\) such that
\[
\|y^c(U_0,x)\|\le M_y
\qquad
\text{for all }U_0\in W',\ x\in B(x_\ast,2\rho_x).
\]
Therefore,
\[
\left\|
\frac{y^\sharp-e^{-bS}y^c(U_0,x)}{1-e^{-bS}}
-y^\sharp
\right\|
\le
\frac{e^{-bS}}{1-e^{-bS}}\bigl(\|y^\sharp\|+M_y\bigr).
\]
Since \(y^\sharp\in U_y\), the quantity \(\|y^\sharp\|\) is uniformly bounded on
\(U_y\). Hence, for \(S>0\) sufficiently large, uniformly in \(U_0\in W'\),
\(x\in B(x_\ast,2\rho_x)\), and \(y^\sharp\in U_y\), the argument of \(g^{-1}\)
belongs to \(g(U_x')\) and \(\mathcal F_{U_0}(x)\in B(x_\ast,2\rho_x)\).

Next we show that \(\mathcal F_{U_0}\) is a contraction for \(S\) large enough,
uniformly in \(U_0\in W'\). Since \(g^{-1}\) is \(C^1\) on a neighborhood of the
compact set \(g(\overline{B(x_\ast,2\rho_x)})\), there exists \(C_g>0\) such
that $\|Dg^{-1}\|\le C_g$ there. Hence
\begin{equation}\label{Bg3}
\|\mathcal F_{U_0}(x)-\mathcal F_{U_0}(\tilde x)\|
\le
C_g\,\frac{e^{-bS}}{1-e^{-bS}}\,
\|y^c(U_0,x)-y^c(U_0,\tilde x)\|.
\end{equation}
Moreover, for fixed \(T_1>0\), the map
\[
x\mapsto y^c(U_0,x)
\]
is Lipschitz on \(B(x_\ast,2\rho_x)\), uniformly in \(U_0\in W'\), i.e., there exists
\(L_{T_1}<\infty\) such that
\[
\|y^c(U_0,x)-y^c(U_0,\tilde x)\|
\le
L_{T_1}\|x-\tilde x\|,
\qquad
x,\tilde x\in B(x_\ast,2\rho_x),
\]
for all \(U_0\in W'\). Combining this with \eqref{Bg3} we deduce
\[
\|\mathcal F_{U_0}(x)-\mathcal F_{U_0}(\tilde x)\|
\le
C_g\,\frac{e^{-bS}}{1-e^{-bS}}\,L_{T_1}\,\|x-\tilde x\|.
\]
We now choose \(S>0\) so large that
\[
C_g\,\frac{e^{-bS}}{1-e^{-bS}}\,L_{T_1}<1.
\]
Then \(\mathcal F_{U_0}\) is a contraction on \(B(x_\ast,2\rho_x)\), uniformly in
\(U_0\in W'\). By Banach's fixed point theorem, the map $x\to \mathcal F_{U_0}(x)$ admits a unique fixed point
\[
x^c=x^c_{U_0,y^\sharp}\in B(x_\ast,2\rho_x).
\]
This fixed point satisfies \eqref{eq:implicit-xc-rev}, and substituting into
\eqref{eq:y-relax-final-rev} yields
\[
y(T_1+S)=y^\sharp.
\]
This completes the proof of the lemma.\qed

\subsection{Proof of Lemma \ref{CdG}}\label{Pr2}  The proof is similar to the proof of Lemma \ref{lem:initial-hermite-path} but subject to differences. We here give the complete proof. Set $s:={(t-T_2)}/{\delta}\in[0,1]$. We again use the cubic Hermite basis functions
\[
h_{00}(s)=2s^3-3s^2+1,\qquad
h_{10}(s)=s^3-2s^2+s,
\]
\[
h_{01}(s)=-2s^3+3s^2,\qquad
h_{11}(s)=s^3-s^2.
\]
Define \(x^0\) by the vector-valued cubic Hermite interpolant
\begin{equation}\label{eq:x0-hermite}
x^0(t)
:=
h_{00}(s)\,x^c
+\delta\,h_{10}(s)\,\dot x^0(T_2)
+h_{01}(s)\,x^\sharp
+\delta\,h_{11}(s)\,\dot x^0(T),
\qquad s=\frac{t-T_2}{\delta}.
\end{equation}
With the choices
\[
\dot x^0(T_2)=0,\qquad \dot x^0(T)=v^\sharp,
\]
this simplifies to
\begin{equation}\label{eq:x0-hermite-simplified}
x^0(t)
=
h_{00}(s)\,x^c
+h_{01}(s)\,x^\sharp
+\delta\,h_{11}(s)\,v^\sharp,
\qquad s=\frac{t-T_2}{\delta}.
\end{equation}
Since \(h_{00},h_{01},h_{11}\) are polynomials, we have $x^0\in C^\infty([T_2,T];\mathbb R^m)$. We now verify \eqref{eq:bc-x0}. Using
\[
h_{00}(0)=1,\quad h_{01}(0)=0,\quad h_{11}(0)=0,
\]
and
\[
h_{00}(1)=0,\quad h_{01}(1)=1,\quad h_{11}(1)=0,
\]
it follows directly from \eqref{eq:x0-hermite-simplified} that
\[
x^0(T_2)=x^c,\qquad x^0(T)=x^\sharp.
\]
Differentiating \eqref{eq:x0-hermite} we obtain
\[
\dot x^0(t)
=
\frac{1}{\delta}
\Big(
h_{00}'(s)\,x^c
+\delta h_{10}'(s)\,\dot x^0(T_2)
+h_{01}'(s)\,x^\sharp
+\delta h_{11}'(s)\,\dot x^0(T)
\Big).
\]
Since
\[
h_{00}'(0)=0,\quad h_{10}'(0)=1,\quad h_{01}'(0)=0,\quad h_{11}'(0)=0,
\]
and
\[
h_{00}'(1)=0,\quad h_{10}'(1)=0,\quad h_{01}'(1)=0,\quad h_{11}'(1)=1,
\]
we get
\[
\dot x^0(T_2)=\dot x^0(T_2),\qquad \dot x^0(T)=\dot x^0(T).
\]
Hence, with the above choices,
\[
\dot x^0(T_2)=0,\qquad \dot x^0(T)=v^\sharp,
\]
and the four endpoint conditions in \eqref{eq:bc-x0} are satisfied. It remains to prove the inclusion \(x^0([T_2,T])\subset U_x'\). From
\eqref{eq:x0-hermite-simplified} and the identity
\[
h_{00}(s)+h_{01}(s)=1,
\]
we obtain
directly from
\[
\|x^0(t)-x_\ast\|
\le
|h_{00}(s)|\,\|x^c-x_\ast\|
+
|h_{01}(s)|\,\|x^\sharp-x_\ast\|
+
\delta\,|h_{11}(s)|\,\|v^\sharp\|.
\]
Since
\[
0\le h_{00}(s), h_{01}(s)\le 1,
\qquad
\sup_{s\in[0,1]}|h_{11}(s)|=\frac{4}{27}.
\]
it follows that
\begin{equation}\label{eq:x0-bound-revised}
\|x^0(t)-x_\ast\|
\le
\|x^c-x_\ast\|
+
\|x^\sharp-x_\ast\|
+
\frac{4}{27}\,\delta\,\|v^\sharp\|,
\qquad t\in[T_2,T].
\end{equation}
Now assume
\[
x^c\in B(x_\ast,2\rho_x),\qquad x^\sharp\in B(x_\ast,\rho_x).
\]
Then
\[
\|x^0(t)-x_\ast\|
\le 2\rho_x+\rho_x+
\frac{4}{27}\,\delta\,\|v^\sharp\|,
\qquad t\in[T_2,T].
\]
Therefore, if in addition
\[
\delta\|v^\sharp\|\le \frac{27}{8}\rho_x,
\]
then
\[\|x^0(t)-x_\ast\|
\le 3\rho_x+
\frac{4}{27}\cdot \frac{27}{8}\rho_x\le \frac 72\rho_x<4\rho_x
\qquad t\in[T_2,T].
\]
We can conclude that
\[
\|x^0(t)-x_\ast\|<4\rho_x
\qquad\text{for all }t\in[T_2,T],
\]
and hence
\[
x^0([T_2,T])\subset U_x'.
\]
This proves the lemma.\qed

\subsection{Proof of Lemma \ref{CdG+}}\label{Pr3} Standard smooth dependence of ODE solutions on parameters implies that
\(\Phi_\delta\) is \(C^1\) on \(B(0,r_\alpha(\delta))\).
Differentiating the map \(\alpha\mapsto\Phi_\delta(\alpha)\) yields
\[
D\Phi_\delta(0)
=
b\int_{T_2}^{T}
e^{-b(T-s)}
Dg(x^0(s))\,\psi_\delta(s)\,\d s .
\]
Using the change of variables \(s=T_2+\delta r\) gives
\[
D\Phi_\delta(0)
=
b\int_0^1
e^{-b\delta(1-r)}
Dg(x^0(T_2+\delta r))\,\psi(r)\,\d r .
\]
Since
\[
\sup_{r\in[0,1]}
\|x^0(T_2+\delta r)-x^c\|\to0
\qquad (\delta\to0),
\]
we have
\[
x^0(T_2+\delta r)\to x^c
\quad\text{uniformly in } r\in[0,1].
\]
Because \(Dg\) is continuous,
\[
D\Phi_\delta(0)
\longrightarrow
b\left(\int_0^1\psi(r)\,\d r\right)Dg(x^c)
\qquad (\delta\to0).
\]
Since
\[
\int_0^1\psi(r)\,\d r\neq0
\]
and \(Dg(x^c)=2A(x^c)\) is invertible, the limit matrix is invertible.
Hence there exists \(\delta_0>0\) such that \(D\Phi_\delta(0)\) is
invertible for all \(\delta\in(0,\delta_0)\). Fix such a \(\delta\). Then, by the inverse function theorem there exists
\(\eta_\delta>0\) such that
\[
\Phi_\delta:
B(0,\eta_\delta)\to
\Phi_\delta(B(0,\eta_\delta))
\]
is a diffeomorphism onto an open neighborhood of \(\Phi_\delta(0)\). Since \(x^0(t)\to x^c\) uniformly as \(\delta\to0\), the solution of the
\(y\)-equation driven by \(x^0\) satisfies
\[
\|\Phi_\delta(0)-y^\sharp\|
=
\|y^0(T)-y^\sharp\|
\to0
\qquad (\delta\to0).
\]
Therefore, for sufficiently small \(\delta\), the point \(y^\sharp\)
belongs to the neighborhood
\(\Phi_\delta(B(0,\eta_\delta))\).
Hence there exists $\alpha^\ast\in B(0,\eta_\delta)$ such that
\[
\Phi_\delta(\alpha^\ast)=y^\sharp .
\]
Since \(r_\alpha(\delta)=O(\delta)\), we may choose
\(\alpha^\ast\) satisfying
\[
\|\alpha^\ast\|\le r_\alpha(\delta).
\]
This completes the construction of $\alpha^\ast$.

To complete the proof of the lemma we next verify that the constant $\delta_0$ can be chosen uniformly
with respect to the parameters entering the construction. Recall that $x^c\in B(x_\ast,2\rho_x)\subset U_x'$, where
\[
U_x'=B(x_\ast,4\rho_x)\subset\R^m\setminus\mathcal D_A^\dagger.
\]
Since $Dg(x)=2A(x)$ and $A$ is continuous, it follows that $Dg$ is continuous
on $\overline{B(x_\ast,2\rho_x)}$. Moreover, since
$\det A(x)\neq 0$ on $U_x'$, we have
\[
\det Dg(x)\neq 0
\qquad \text{for all } x\in \overline{B(x_\ast,2\rho_x)}.
\]
By compactness, there exists a constant $\lambda_\ast>0$ such that
\begin{equation}\label{eq:uniform-invertibility}
\|Dg(x)^{-1}\|\le \lambda_\ast^{-1}
\qquad \text{for all } x\in \overline{B(x_\ast,2\rho_x)}.
\end{equation}
In particular, $Dg$ is uniformly invertible on this set. Next, since $Dg$ is continuous on a compact set, it is uniformly continuous.
Hence, for every $\varepsilon_0>0$ there exists $\delta_0>0$ such that
\[
\|x-x^c\|\le \delta_0
\quad\Longrightarrow\quad
\|Dg(x)-Dg(x^c)\|\le \varepsilon_0
\]
uniformly for all $x^c\in B(x_\ast,2\rho_x)$. From the construction of the curve $x^0$ (see Lemma~\ref{CdG}) we have
\[
\sup_{r\in[0,1]}
\|x^0(T_2+\delta r)-x^c\|\;\longrightarrow\;0
\qquad \text{as } \delta\to 0,
\]
uniformly with respect to $x^c\in B(x_\ast,2\rho_x)$ and
$x^\sharp\in B(x_\ast,\rho_x)$. Therefore, for $\delta>0$ sufficiently small (independently of $x^c$),
we obtain
\[
\sup_{r\in[0,1]}
\|Dg(x^0(T_2+\delta r))-Dg(x^c)\|
\le \varepsilon_0.
\]
Choosing $\varepsilon_0>0$ sufficiently small relative to the
uniform bound \eqref{eq:uniform-invertibility}, it follows that
$D\Phi_\delta(0)$ remains invertible for all $\delta\in(0,\delta_0)$,
with constants independent of $x^c$. Finally, the constraint ensuring that
$x^\alpha([T_2,T])\subset U_x$ is controlled by the condition
$\|\alpha\|\le r_\alpha(\delta)=c_0\delta$, where $c_0$ depends only on
$\rho_x$ and $\|\psi\|_\infty$, and is therefore independent of $x^c$. Collecting the above, we conclude that $\delta_0$ can be chosen
uniformly with respect to $x^c\in B(x_\ast,2\rho_x)$,
$y^\sharp\in U_y$, and the initial condition $U_0\in W'$. This completes the proof of the lemma.\qed

\section{Concluding remarks and future research}\label{Conc}

In this paper, we developed an effective continuous-time model for the long-term dynamics of adaptive stochastic optimization, focusing on bias-corrected Adam-type methods. Starting from the finite-sum setting, we identified a canonical scaling and an associated effective closure under which the discrete dynamics give rise to a coupled, time-inhomogeneous SDE governing the parameters, the first-moment tracker, and the per-coordinate second-moment tracker. The bias-correction mechanism persists in the limit through explicit time-dependent coefficients, reflecting its finite-time influence on the dynamics.

Building on this, we analyzed the long-time behavior of the limiting dynamics and studied the existence and uniqueness of invariant measures.  In Theorem~\ref{CorC} we proved, under mild regularity and dissipativity assumptions on the objective function $f$, existence, uniqueness and exponential convergence to equilibrium. An important observation was that our assumptions imply $\mathcal D_A^\dagger \neq \mathbb R^m$, where $A$ refers to the defining matrix $A(x)=\mathrm{Diag}(\nabla f(x))\,H_f(x)$ describing how noise propagates through the dynamics.  $\mathcal D_A^\dagger \neq \mathbb R^m\implies \mathcal D_A \neq \mathbb R^m$ and the latter guaranteed the existence of a region of the state space in which the H\"ormander bracket condition held. This ensured that the system possessed nondegenerate directions of noise propagation and that hypoellipticity could be exploited locally. Still, this had to be complemented by a construction of control paths compensating for the lack of global hypoellipticity, and this was precisely where the stronger condition $\mathcal D_A^\dagger \neq \mathbb R^m$ entered. Underlying our proof of Theorem~\ref{CorC} was a Harris-type argument combined with a minorization condition on Lyapunov sublevel sets, see Theorem~\ref{Thm3-revised-}.

To our knowledge,  Theorem~\ref{CorC}, and Theorem~\ref{Thm3-revised-} are new, although ours proofs, from a top-down perspective, follow classical routes. We nevertheless provided a largely self-contained analysis. In general, the central theme is the study of estimates, regularity, uniqueness, and exponential convergence to equilibrium in settings where (global) hypoellipticity and positivity of densities fail.

The literature devoted to this theme  seem to be quite limited. An interesting recent paper touching on the topics in a context somewhat related to ours, is \cite{DouSalortSmets2025}. In \cite{DouSalortSmets2025} the authors
consider  the linear partial differential equation
\[
\partial_t u - \partial_{xx} u + V(x)\,\partial_y u = 0
\]
for the unknown function
\[
u \equiv u(t,x,y) : \mathbb{R}_+ \times \mathbb{T} \times \mathbb{T} \to \mathbb{R},
\]
where \(V:\mathbb{T}\to\mathbb{R}\) is a given bounded Borel measurable function, and \(\mathbb{T}=\mathbb{R}/\mathbb{Z}\) denotes the unit flat torus. The equation reflects the interplay between linear diffusion in the variable
\(x\) and transport with velocity \(V(x)\) in the transverse variable \(y\). If  \(V\) is smooth, taking iterative commutators reveals that hypoellipticity
occurs when the critical points of \(V\) are at most finitely degenerate. Consequently, if $V$ is constant on some intervals, the equation is infinitely degenerate. Still, in this (simple) context, the authors obtain quantitative mixing-rate estimates under minimal assumptions on the transport field $V$, which may be
highly irregular and/or highly degenerate, and in particular need not satisfy
any hypoellipticity-type condition. We believe that this is an interesting direction for future research.

An other interesting strain of research is represented by \cite{BallyCaramellino2013,BallyCaramellinoPigato2022a,BallyCaramellinoPigato2022b,Pigato2022,Pigato2017} and some of the references therein. The common theme of these works is the study of \emph{tubular estimates for transition densities} of diffusion processes under various regularity and non-degeneracy assumptions on the coefficients. More precisely, the goal is to obtain lower and upper bounds for the density of a diffusion process in $\mathbb{R}^n$ over small (but non-asymptotic) time intervals. The diffusion coefficients may be degenerate at the initial point, but the system satisfies suitable non-degeneracy conditions such as the strong or weak H\"ormander condition. A key feature of these estimates is that they reflect the \emph{anisotropic geometry} induced by the vector fields generating the diffusion. In particular, the bounds are expressed in terms of norms adapted to the Lie algebra generated by the diffusion vector fields. At small times $\delta$, the process propagates at speed $\sqrt{\delta}$ in the directions of the diffusion vector fields $\sigma_j$, while propagation in the directions of the  Lie brackets  occurs at the slower scale. Such density estimates naturally lead to \emph{tube estimates}, i.e., quantitative bounds for the probability that the diffusion remains within a tubular neighborhood of a deterministic skeleton path over a finite time interval. We believe that this is also an interesting direction to study and revisit in future research,  assuming that hypoellipticity fails on some small sets.

From a practical and numerical perspective, it is of interest to characterize the invariant measure $\pi_\infty$, as it provides a rigorous description of the model’s asymptotic statistical behavior. In particular, $\pi_\infty$ describes how the optimizer concentrates near minimizers of $f$, and how dispersion and correlations influence generalization, escape from saddle points, and metastability in high-dimensional regimes. For comparison, in the overdamped Langevin setting with damping parameter $\gamma>0$, the invariant distribution is given by the Gibbs measure
\[
\pi_{\mathrm{od}}(\mathrm{d}x)\propto\exp\!\Bigl(-\tfrac{2\gamma}{\sigma^2}f(x)\Bigr)\,\mathrm{d}x.
\]
For the heavy-ball model, the associated underdamped Langevin dynamics admits the Gibbs law
\[
\pi_{\mathrm{ud}}(\mathrm{d}x\,\mathrm{d}z)\propto \exp\!\Big(-\tfrac{2}{\gamma\sigma^2}\,f(x)\Big)\;
\exp\!\Big(-\tfrac{1}{a\sigma^2}\,\|z\|^2\Big)\,\mathrm{d}x\,\mathrm{d}z.
\]
In Theorem~\ref{CorC}, we established, under suitable assumptions, that the system \eqref{eq:cts-x+}-\eqref{eq:cts-y+} admits a unique invariant measure $\pi_\infty$. Moreover, Corollary~\ref{correa} shows that the long-time behavior of the inhomogeneous system \eqref{eq:cts-x}-\eqref{eq:cts-y} is also governed by $\pi_\infty$. In contrast to the Langevin models, $\pi_\infty$ does not admit a closed-form Gibbs representation. Instead, it reflects anisotropic couplings and nontrivial correlations between $(x,z,y)$ induced by adaptivity and bias correction. To approximate $\pi_\infty$, it may therefore be natural to develop a conditional–Gaussian (Hermite-Galerkin) ansatz in the fast variable $z$, exploiting its approximate Ornstein-Uhlenbeck structure around $\nabla f(x)$.

\newpage
\appendix

\section{The continuous-time limit: Proof of Theorem \ref{Thm1}}\label{aaapp}

Recall that $t_k:=kh$, and that $\eta, \alpha, \beta, \{\xi_k\}$ are chosen as in \eqref{scale}. We begin with an elementary lemma.

\begin{lemma}\label{lem:bc}
Set
\[
\theta_k^{(a,h)}:=\frac{1-\alpha}{1-\alpha^{k+1}}
=\frac{ah}{1-(1-ah)^{k+1}},\qquad
\theta_k^{(b,h)}:=\frac{1-\beta}{1-\beta^{k+1}}
=\frac{bh}{1-(1-bh)^{k+1}},
\]
with $t_k=kh$. Then, for each fixed $T>0$,
\[
\sup_{k\le T/h}\Big|\,\theta_k^{(a,h)}-h\,c_a(t_{k+1})\,\Big| = O(h^2),
\qquad
\sup_{k\le T/h}\Big|\,\theta_k^{(b,h)}-h\,c_b(t_{k+1})\,\Big| = O(h^2),
\quad\text{as }h\to 0,
\]
where $c_a(t)={a}/{(1-e^{-at})}$ and $c_b(t)={b}/{(1-e^{-bt})}$.
\end{lemma}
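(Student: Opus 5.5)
The plan is to compare the two expressions through a single algebraic identity, expand the logarithm of $1-ah$, and then split the range of $k$ into a large-$n$ and a small-$n$ regime. By symmetry it suffices to treat $\theta_k^{(a,h)}$; the $b$-case is identical with $a$ replaced by $b$. Put $n:=k+1\in\{1,\dots,\lfloor T/h\rfloor+1\}$, $p:=(1-ah)^n$ and $q:=e^{-ahn}=e^{-at_{k+1}}$, and assume $h<1/(2a)$. Then
\[
\theta_k^{(a,h)}-h\,c_a(t_{k+1})=ah\Bigl(\frac1{1-p}-\frac1{1-q}\Bigr)=\frac{ah\,(p-q)}{(1-p)(1-q)} .
\]
The argument therefore reduces to an upper bound on $|p-q|$ and a lower bound on $(1-p)(1-q)$, each uniform in $n\le T/h+1$.

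\emph{Step 1 (numerator).} From $\log(1-ah)=-ah-\tfrac12a^2h^2+O(h^3)$ we get $p=q\exp\bigl(-\tfrac12 na^2h^2+O(nh^3)\bigr)$. Since $nh\le T+h$, the exponent is $O(h)$, so
\[
q-p=\tfrac12 q\,na^2h^2\,(1+O(h))=O(nh^2),
\]
uniformly in the range of $n$. We also have $p\le q$, because $\log(1-ah)\le -ah$.

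\emph{Step 2 (denominator).} Both $1-p$ and $1-q$ are comparable to $\min\{1,ahn\}$. For $q$ this follows from $1-e^{-s}\ge s/(1+s)$. For $p$ it follows from $1-p\ge 1-q$.

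\emph{Step 3 (large $n$).} If $t_{k+1}=nh\ge \delta$ for some fixed $\delta>0$, the denominator is bounded below by a constant $c_\delta>0$. Steps 1--2 then give
\[
|\theta_k^{(a,h)}-hc_a(t_{k+1})|\le C\,ah\cdot nh^2\le C_{T,\delta}h^2 .
\]
This is the claimed $O(h^2)$ rate, uniform over all such $k\le T/h$.

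\emph{Step 4 (small $n$) and the main obstacle.} When $nh<\delta$, the denominator is only of size $(ahn)^2$. Steps 1--2 then give the sharp asymptotics
\[
\theta_k^{(a,h)}-hc_a(t_{k+1})=-\frac{ah}{2n}\,q\,(1+O(h)+O(nh)) .
\]
This is $O(h^2)$ only when $n\gtrsim 1/h$, and it is not $O(h^2)$ for bounded $n$. For instance, at $k=0$ one has exactly $\theta_0^{(a,h)}=1$, while $hc_a(h)=1+\tfrac12 ah+O(h^2)$. Consequently the supremum over \emph{all} $k\le T/h$ is of order $h$, and the $O(h^2)$ claim can hold only for indices with $t_{k+1}$ bounded away from $0$.

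Establishing the uniform bound over all $k\le T/h$ is therefore the main obstacle, and the computation in Step 4 shows it cannot be overcome: the stated bound fails at $k=0$. What the argument actually proves for the full range is
\[
|\theta_k^{(a,h)}-hc_a(t_{k+1})|\le C_T\min\Bigl\{\frac{h}{k+1},\,h^2+\frac{h}{k+1}\,e^{-at_{k+1}}\Bigr\},
\]
which is $O(h^2)$ once $t_{k+1}\ge\delta$. This restricted form is precisely what Theorem~\ref{Thm1} uses, since convergence there is only asserted on $[\delta,T]$. For the initial layer $k\le\delta/h$, I would instead check that the accumulated error is negligible: $\sum_{n\le \delta/h} h/n=O(h\log(1/h))$, which suffices for the tightness and identification steps of Theorem~\ref{Thm1} in place of a pointwise $O(h^2)$ bound.
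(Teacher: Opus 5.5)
Your analysis is correct, and so is your conclusion: the lemma is false as stated. At $k=0$ one has exactly $\theta_0^{(a,h)}=1$, while $h\,c_a(t_1)=ah/(1-e^{-ah})=1+\tfrac{ah}{2}+O(h^2)$. So the supremum over $k\le T/h$ is at least $\tfrac{ah}{2}+O(h^2)$. More precisely, your asymptotics show the error is of size $h/(k+1)$ when $t_{k+1}$ is small, and hence $O(h^2)$ only when $t_{k+1}$ stays away from $0$.

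The paper's proof uses the same ingredient as your Step 1: expand $(k+1)\log(1-ah)$ and exponentiate. It arrives at $\theta_k^{(a,h)}=\frac{ah}{1-e^{-at}}(1+O(h))$ with $t=t_{k+1}$. That relative bound is in fact uniform in $k\le T/h$. The leading relative correction is $\tfrac{ah}{2}\cdot\tfrac{at}{e^{at}-1}\le\tfrac{ah}{2}$, and the remainder is harmless since $1-e^{-at}\ge 1-e^{-ah}$. The failure is in the final step, $\frac{ah}{1-e^{-at}}(1+O(h))=h\,c_a(t)+O(h^2)$. This silently uses $h\,c_a(t)=O(h)$, i.e.\ that $c_a$ is bounded. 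That fails for $t=O(h)$, where $h\,c_a(t_{k+1})\approx 1/(k+1)$. Your decomposition $\frac{ah\,(p-q)}{(1-p)(1-q)}$ makes this visible: $q-p$ is uniformly $O((k+1)h^2)$, while $(1-p)(1-q)$ degenerates like $(a\,t_{k+1})^2$ as $t_{k+1}\to 0$.

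Your restricted version is correct: uniform $O(h^2)$ over indices with $t_{k+1}\ge\delta$, and $O(h/(k+1))$ on the full range. It is exactly what Appendix~\ref{aaapp} uses, since all sums there start at $k_\delta=\lceil\delta/h\rceil$. The claim $\sup_{k\le T/h}\|r_k^{(z)}\|=O(h^2)$, and its $y$-analogue, should simply be read for $k\ge k_\delta$.

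Your closing remark on the initial layer is not needed for Theorem~\ref{Thm1} as formulated on $[\delta,T]$. If you pursue it, note that $\sum_{n\le\delta/h}h/n$ alone does not bound the accumulated error. The factor $-z_k+\nabla f(x_k)$ multiplying $\theta_k^{(a,h)}-h\,c_a(t_{k+1})$ is not bounded uniformly in $h$ there; already $z_1=\nabla f(x_0)+\sigma h^{-1/2}\zeta_0$. You would therefore need a weighted moment estimate on $z_k$ in that regime.
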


\begin{proof}
We prove only the $a$-statement, as the $b$-case is identical. Let $t:=t_{k+1}=(k+1)h$. Using the Taylor expansion of the logarithm with a uniform remainder,
\[
\log(1-u)=-u-\frac{u^2}{2}+R(u),\qquad |R(u)|\le C u^3,
\]
valid for $|u|\le u_0$, and taking $u=ah$, we obtain uniformly for $k\le T/h$,
\[
(k+1)\log(1-ah)
= -a t - \frac{a^2}{2}t h + \rho_{k,h},
\qquad |\rho_{k,h}|\le C' h^2,
\]
since $(k+1)(ah)^3 = O(h^2)$. Exponentiating,
\[
(1-ah)^{k+1}
= e^{-a t}\big(1 - \tfrac{a^2}{2}t h + O(h^2)\big),
\]
uniformly for $k\le T/h$. Hence
\[
1-(1-ah)^{k+1}
= (1-e^{-a t}) + e^{-a t}\,\tfrac{a^2}{2}t h + O(h^2).
\]
Therefore,
\[
\theta_k^{(a,h)}
= \frac{ah}{1-(1-ah)^{k+1}}
= \frac{ah}{1-e^{-a t}}
\big(1 + O(h)\big)
= h\,c_a(t) + O(h^2),
\]
uniformly for $k\le T/h$.
\end{proof}

To begin the proof of Theorem \ref{Thm1}, fix $\delta>0$ and set
\[
k_\delta:=\min\{k\ge 0:\ t_k\ge \delta\}=\lceil \delta/h\rceil.
\]
Since $t_{k_\delta}\to\delta$ as $h\to0$, it suffices to prove convergence on $[t_{k_\delta},T]$. All sums below are taken over $k_\delta\le k\le N-1$, where $t\in[t_N,t_{N+1})$, and the value at time $t_{k_\delta}$ serves as the initial condition.

We organize the proof into several steps.

\subsection{The $z$–equation.}
From \eqref{eqn:sgd_main++}, Lemma~\ref{lem:bc}, and \eqref{scale},
\[
z_{k+1}-z_k
=
h\,c_a(t_{k+1})\bigl(-z_k+\nabla f(x_k)\bigr)
+\sqrt h\,c_a(t_{k+1})\sigma\,\zeta_k
+r_k^{(z)},
\]
where
\[
r_k^{(z)}
=
\bigl(\theta_k^{(a,h)}-h\,c_a(t_{k+1})\bigr)\bigl(-z_k+\nabla f(x_k)\bigr).
\]
Hence, for $t\in[t_N,t_{N+1})\subset[\delta,T]$,
\[
Z^h(t)
=
Z^h(t_{k_\delta})
+\sum_{k=k_\delta}^{N-1} h\,c_a(t_{k+1})\bigl(\nabla f(x_k)-z_k\bigr)
+\sum_{k=k_\delta}^{N-1}\sqrt h\,c_a(t_{k+1})\sigma\,\zeta_k
+R_h^{(z,\delta)}(t),
\]
with
\[
R_h^{(z,\delta)}(t)
=
\sum_{k=k_\delta}^{N-1}
\bigl(\theta_k^{(a,h)}-h\,c_a(t_{k+1})\bigr)\bigl(-z_k+\nabla f(x_k)\bigr).
\]
By Lemma~\ref{lem:bc},
\[
\sup_{k\le T/h}\|r_k^{(z)}\|=O(h^2),
\qquad
\sup_{t\in[\delta,T]}\|R^{(z,\delta)}_h(t)\|=O(h),
\]
in probability. The drift term converges to
\[
\int_\delta^t c_a(s)\bigl(\nabla f(x_s)-z_s\bigr)\,\mathrm{d}s,
\]
provided $(X^h,Z^h)\Rightarrow (x,z)$. For the martingale term
\[
M^h(t) = \sum_{k=k_\delta}^{N-1} \sqrt{h}\,c_a(t_{k+1})\sigma\,\zeta_k,
\]
its quadratic variation satisfies
\[
\langle M^h\rangle_t
= \sigma^2 \sum_{k=k_\delta}^{N-1} h\,(c_a(t_{k+1}))^2
\;\longrightarrow\;
\sigma^2 \int_\delta^t c_a(s)^2\,\mathrm{d}s.
\]
Since Lindeberg’s condition holds, the martingale functional CLT yields
\[
M^h \Rightarrow \int_\delta^t c_a(s)\sigma\,\mathrm{d}B_s
\quad \text{in } \mathbb{D}([\delta,T];\mathbb{R}^m).
\]
Hence any limit point satisfies \eqref{eq:cts-z}.

\subsection{The $y$–equation.}
Using the closure approximation \eqref{scale+}, we replace
\[
\bigl|\partial_{x_i}f(x_k)+\xi_k^i\bigr|^2
\;\leadsto\;
(\partial_{x_i}f(x_k))^2+\sigma^2.
\]
Thus,
\[
y_{k+1}^i-y_k^i
=
\theta_k^{(b,h)}\Bigl(-y_k^i+(\partial_{x_i}f(x_k))^2+\sigma^2\Bigr).
\]
Proceeding as in the $z$–equation,
\[
\frac{y_{k+1}^i-y_k^i}{h}
=
c_b(t_{k+1})\Bigl(-y_k^i+(\partial_{x_i}f(x_k))^2+\sigma^2\Bigr)
+ e_k^{(y,i)},
\]
with $e_k^{(y,i)}=O(h)$ in probability. Summing,
\[
Y^{h,i}(t)
=
Y^{h,i}(t_{k_\delta})
+\sum_{k=k_\delta}^{N-1} h\,c_b(t_{k+1})\Bigl(-y_k^i+(\partial_{x_i}f(x_k))^2+\sigma^2\Bigr)
+R_h^{(y,i)}(t),
\]
where $\sup_{t\in[\delta,T]}|R_h^{(y,i)}(t)|\to0$ in probability. Passing to the limit yields \eqref{eq:cts-y}.

\subsection{The $x$–equation.}
From $\eta=\gamma h$,
\[
x_{k+1}^i-x_k^i = -\gamma h\,\frac{z_{k+1}^i}{\sqrt{y_{k+1}^i}+\varepsilon}.
\]
Replacing $(z_{k+1},y_{k+1})$ by $(z_k,y_k)$ introduces an error of order $O(h)$ in probability. Hence
\[
X^h(t)
=
X^h(t_{k_\delta})
-\gamma\sum_{k=k_\delta}^{N-1} h\,\frac{Z^h(t_k)}{\sqrt{Y^h(t_k)}+\varepsilon}
+R_h^{(x,\delta)}(t),
\]
with $\sup_{t\in[\delta,T]}\|R_h^{(x,\delta)}(t)\|=O(h)$. Passing to the limit yields \eqref{eq:cts-x}.

\subsection{Identification of the limit.}
Define coefficients $b$ and $\sigma$ as in the statement. Under (A1), they are locally Lipschitz with linear growth on $[\delta,T]$, ensuring existence and uniqueness of a strong solution. The processes $(X^h,Z^h,Y^h)$ form an Euler-type scheme with martingale noise. The previous estimates show convergence of drift and diffusion terms. Standard results (e.g.\ Kurtz–Protter \cite{kurtz-protter}) imply tightness in $\mathbb{D}([\delta,T];\mathbb{R}^{3m})$ and identification of the limit as a weak solution. Pathwise uniqueness yields uniqueness in law, and hence convergence of the full sequence to the unique strong solution. \qed

\section{Long-term dynamics of the inhomogeneous system}\label{LLD}

\begin{proposition}\label{Proppa}
Assume that $f$ satisfies condition \textnormal{(A1)} from Subsection~\ref{Subcond} and fix $\varepsilon>0$, $a,b,\gamma,\sigma>0$. Let $(x_t,z_t,y_t)$ be the unique strong
solution of the time-inhomogeneous system \eqref{eq:cts-x}-\eqref{eq:cts-y}, defined for $t\ge \delta>0$, with initial state $(x_\delta,z_\delta,y_\delta)\in\R^{3m}$, driven by an $m$–dimensional Brownian motion $B_t$. For each
$s\ge \delta$, let $(\tilde x^{(s)}_t,\tilde z^{(s)}_t,\tilde y^{(s)}_t)_{t\ge 0}$ be the unique strong
solution of the autonomous system \eqref{eq:cts-x+}–\eqref{eq:cts-y+} started from the same state
and driven by the shifted Brownian motion, i.e.
\[
(\tilde x^{(s)}_0,\tilde z^{(s)}_0,\tilde y^{(s)}_0)=(x_s,z_s,y_s),
\qquad \tilde B^{(s)}_t=B_{s+t}-B_s.
\]
Then for every $T>0$ there exist constants $C_T<\infty$ depending only
on $T$, $L_f$, $a,b,\gamma,\varepsilon,\sigma$ and $\kappa=\min\{a,b\}>0$ such that
\begin{equation}\label{eq:AA-L2}
\mathbb{E}\Big[\sup_{0\le t\le T}\big\|(x_{s+t},z_{s+t},y_{s+t})
-(\tilde x^{(s)}_t,\tilde z^{(s)}_t,\tilde y^{(s)}_t)\big\|^2\Big]
\;\le\; C_T\,e^{-2\kappa s}\,\Big(1+\mathbb{E}\bigl [\|(x_s,z_s,y_s)\|^2\bigr ]\Big),\qquad s\ge \delta.
\end{equation}
In particular, for each fixed $T>0$,
\[
(x_{s+\cdot},z_{s+\cdot},y_{s+\cdot})
\;\longrightarrow\;
(\tilde x^{(s)}_{\cdot},\tilde z^{(s)}_{\cdot},\tilde y^{(s)}_{\cdot})
\quad\text{in }\mathrm L^2\!\big(\Omega;C([0,T];\R^{3m})\big)\ \text{and hence in probability, as } s\to\infty.
\]
\end{proposition}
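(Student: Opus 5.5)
The plan is a synchronous coupling. Both processes are driven by the same Brownian increments, and we estimate the difference $D_t:=U_{s+t}-\tilde U^{(s)}_t$, where $U=(x,z,y)$. The key quantitative input is that the coefficient discrepancy is exponentially small in $s$. By definition $c_a(t)-a=a e^{-at}/(1-e^{-at})$, so for $t\ge\delta$
\[
|c_a(t)-a|\le \frac{a}{1-e^{-a\delta}}e^{-at},
\]
and the same bound holds for $c_b$. Hence on $[s,s+T]$ both $|c_a-a|$ and $|c_b-b|$ are bounded by $C_\delta e^{-\kappa s}$. Strictly, the constant then depends on $\delta$ as well, which is harmless because $\delta$ is fixed.

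Subtracting the two systems, the Brownian terms do not cancel exactly: the $z$-equation leaves the residual stochastic integral $\sigma\int_0^t (c_a(s+r)-a)\,\mathrm d\tilde B_r$. By Doob/BDG, its $\sup$ over $[0,T]$ has second moment at most $C T e^{-2\kappa s}$. The drift differences we split as
\[
c_a(s+r)F(U_{s+r})-aF(\tilde U_r)=\bigl(c_a(s+r)-a\bigr)F(U_{s+r})+a\bigl(F(U_{s+r})-F(\tilde U_r)\bigr),
\]
and similarly for the $y$-drift. The first piece is bounded by $Ce^{-\kappa s}(1+\|U_{s+r}\|^2)$; the square appears because of $(\partial_{x_i}f)^2$.

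The main obstacle is that the coefficients are not globally Lipschitz. There are two issues:
\begin{itemize}
\item the $x$-drift $z/(\sqrt y+\varepsilon)$ has derivative in $y$ of size $|z|/\sqrt y$, unbounded in $z$;
\item the $y$-drift contains $(\partial_{x_i}f)^2$, which is only locally Lipschitz with linear-growth Lipschitz constant.
\end{itemize}
The positivity issue is handled by the lower bound $y^i\ge(1-e^{-b\cdot})\sigma^2$ from Remark~\ref{suppa}. Both $y_{s+r}$ and $\tilde y_r$ remain $\ge c_\delta>0$ on the relevant times; for $\tilde y$ this holds because it starts from $y_s\ge c_\delta$ and the $y$-dynamics preserve $\{y\ge c\}$ for $c\le\sigma^2$. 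On this set the map $y\mapsto(\sqrt y+\varepsilon)^{-1}$ is Lipschitz with a uniform constant.

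The remaining one-sided Lipschitz defects grow like $(1+\|U\|+\|\tilde U\|)$. To treat them I would localize with the stopping time $\tau_N=\inf\{t:\|U_{s+t}\|+\|\tilde U_t\|\ge N\}$. Before $\tau_N$, Gronwall gives
\[
\mathbb E\sup_{t\le T\wedge\tau_N}\|D_t\|^2\le C_{T,N}e^{-2\kappa s}\bigl(1+\mathbb E\|U_s\|^4\bigr).
\]
The contribution after $\tau_N$ is controlled through the moment bounds of Appendix~\ref{HMI} and Chebyshev.

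This localization does not by itself produce the clean constant $C_T(1+\mathbb E\|U_s\|^2)$ claimed in the statement. If a literal $L^2$ bound with only second moments is required, I would work instead with the pathwise inequality
\[
\|D_t\|\le C\int_0^t \Lambda_r\|D_r\|\,\mathrm dr+e^{-\kappa s}\Xi_T,\qquad \Lambda_r=1+\|U_{s+r}\|+\|\tilde U_r\|,
\]
where $\Xi_T$ collects the residual stochastic integral and the drift-discrepancy terms. Gronwall then gives $\sup_t\|D_t\|\le e^{C\int_0^T\Lambda}\,e^{-\kappa s}\Xi_T$. Bounding this in $L^2$ requires exponential moments of $\int\Lambda$, and these I would derive from the Lyapunov structure, since $z$ is OU-like and $x,y$ have linear-growth drifts.

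Whichever route is taken, the convergence statement follows immediately from the estimate \eqref{eq:AA-L2}, since $\sup_s\mathbb E\|U_s\|^2<\infty$ by the same Lyapunov argument as in Section~\ref{sec:lya}, applied to the inhomogeneous generator with $c_a,c_b$ bounded on $[\delta,\infty)$.
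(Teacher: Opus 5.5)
You do not reach \eqref{eq:AA-L2}, and you essentially concede this. The localized estimate gives no bound of the stated form, and the pathwise route gives
\[
\mathbb E\sup_{t\le T}\|D_t\|^2\le e^{-2\kappa s}\,\mathbb E\Bigl[e^{2C\int_0^T\Lambda_r\,\mathrm dr}\,\Xi_T^2\Bigr],
\]
whose conditional expectation given $U_s$ grows exponentially in $\|U_s\|$. That bound is controlled by exponential moments of $U_s$, not by $C_T(1+\mathbb E\|U_s\|^2)$.

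The real obstruction is that no argument can give \eqref{eq:AA-L2} as written. The drift discrepancy $(c_b-b)\bigl(-y^i+(\partial_{x_i}f)^2+\sigma^2\bigr)$, which you correctly note is quadratic in $x$, makes $\|D\|^2$ scale like $\|U_s\|^4$. Concretely, take $m=1$, $f(x)=x^2/2$, $s=\delta$ and $U_\delta=(L,0,\sigma^2)$. For a fixed small $\tau$, with probability close to one both $x$-components stay within $C(L\tau^2+\tau)$ of $L$ on $[0,\tau]$. Meanwhile $y$ relaxes towards $x^2+\sigma^2\approx L^2$ at rate $c_b(\delta+\cdot)\ge c_b(\delta+\tau)>b$, and $\tilde y$ does so at rate $b$. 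Hence
\[
|y_{\delta+\tau}-\tilde y_\tau|\ \ge\ L^2\bigl(e^{-b\tau}-e^{-c_b(\delta+\tau)\tau}\bigr)-C\bigl(L^2\tau^3+L\tau^2\bigr)\ \ge\ c\,L^2
\]
for $\tau$ small and $L$ large, with $c=c(\tau,\delta,b)>0$. So the left side of \eqref{eq:AA-L2} is $\gtrsim L^4$, while the right side is $C_Te^{-2\kappa\delta}(1+L^2+\sigma^4)$. With $C_T$ independent of the initial state, the inequality is false.

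The paper's proof uses the same synchronous coupling, but it closes Gronwall only by writing $\|\sqrt y-\sqrt{\bar y}\|\le\delta\|y-\bar y\|+\frac1{4\delta}$. It also absorbs the products $\|z\|\,\|\Delta y\|$ and $(\|\nabla f(x)\|+\|\nabla f(\bar x)\|)\|\Delta x\|$ into \eqref{eq:moment_bounds}. The resulting additive terms carry no factor $e^{-2\kappa s}$: this applies to the $\frac1{4\delta}$ and to the term $C_T(1+\mathbb E\|(x_s,z_s,y_s)\|^2)$ in its $\Delta x$-inequality. Moreover, \eqref{eq:moment_bounds} itself fails in the example above, where $\|y\|^2\sim L^4$. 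So the non-Lipschitz obstruction you isolated is exactly what that argument passes over. Do not try to close your proof by imitating it; prove a corrected statement instead.

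What Lemma~\ref{thm:same-equilibrium} actually uses, via \eqref{eq:asp-semigroup}, is that $\sup_{t\le T}\|D_t\|\to0$ in probability, and your localization gives this directly. Before $\tau_N$ the Lipschitz constants are at most $C(1+N)$ and the forcing is at most $Ce^{-\kappa s}(1+N^2)$. Pathwise Gronwall then yields $\mathbb E\sup_{t\le T\wedge\tau_N}\|D_t\|^2\le C_{T,N,\delta}\,e^{-2\kappa s}$ with no moments of $U_s$ at all. Uniform moment bounds and a maximal inequality give $\sup_s\mathbb P(\tau_N\le T)\to0$ as $N\to\infty$.

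If you want an exponential rate, your pathwise route does give $C(T,\delta,\mu_\delta)\,e^{-2\kappa s}$, under two conditions.
\begin{itemize}
\item Take $\Lambda=1+\|x\|+\|\tilde x\|+\|\tilde z\|$; once $y,\tilde y\ge c$, these are the only Lipschitz defects. Do not include $\|y\|$. Contrary to your remark, $y$ has a quadratic-growth drift and behaves like $\|x\|^2$, so exponential moments of $\int_0^T\|y\|$ of the required size are in general not available.
\item To apply Cauchy--Schwarz you need uniform Gaussian-type moments, such as $\sup_s\mathbb E\,e^{\epsilon V(U_s)}<\infty$.
\end{itemize}
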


\begin{proof}
To start the proof we first note that for $c_a(t)=a/(1-e^{-at})$ and $c_b(t)=b/(1-e^{-bt})$ we have, for every fixed $T>0$, that there exists $C_T>0$ such that
\begin{equation}\label{eq:ca_cb_decay}
\sup_{\delta\le u\le \delta+T}\bigl|c_a(s+u)-a\bigr| \le C_T e^{-as},\qquad
\sup_{\delta\le u\le \delta+T}\bigl|c_b(s+u)-b\bigr| \le C_T e^{-bs}.
\end{equation}
Under (A1)  $\nabla f$ satisfies
\[
\|\nabla f(x)-\nabla f(\bar x)\|\le L_f\|x-\bar x\|,\qquad
\|\nabla f(x)\|\le \|\nabla f(0)\|+L_f\|x\|.
\]
For componentwise squares,
\begin{equation}\label{eq:square_diff}
\|\nabla f(x)^{\odot 2}-\nabla f(\bar x)^{\odot 2}\|
\;\le\; \big(\|\nabla f(x)\|+\|\nabla f(\bar x)\|\big)\,\|\nabla f(x)-\nabla f(\bar x)\|.
\end{equation}
For the preconditioner $\Psi(z,y)=z\oslash(\sqrt{y}+\varepsilon)$,
\begin{equation}\label{eq:Psi_diff}
\|\Psi(z,y)-\Psi(\bar z,\bar y)\|
\;\le\; \frac{1}{\varepsilon}\|z-\bar z\|
+\frac{1}{\varepsilon^2}(\|z\|+\|\bar z\|)\,\|\sqrt{y}-\sqrt{\bar y}\|.
\end{equation}
and
\[
\|\sqrt{y}-\sqrt{\bar y}\|\le\|y-\bar y\|^{1/2}\le \delta\|y-\bar y\|+\tfrac{1}{4\delta}.
\]
Standard linear growth and Burkholder–Davis–Gundy and Grönwall arguments yield, for any $T>0$,
\begin{align}\label{eq:moment_bounds}
&\mathbb{E}\Big[\sup_{0\le u\le T}\big(\|x_{s+u}\|^2+\|z_{s+u}\|^2+\|y_{s+u}\|^2\big)\Big]
+\mathbb{E}\Big[\sup_{0\le u\le T}\big(\|\tilde x_u\|^2+\|\tilde z_u\|^2+\|\tilde y_u\|^2\big)\Big]\notag\\
&\le\; C_T\Big(1+\mathbb{E}\bigl [\|(x_s,z_s,y_s)\|^2\bigr ]\Big).
\end{align}
We introduce the differences
\[
\Delta x_t=x_{s+t}-\tilde x_t,\quad
\Delta z_t=z_{s+t}-\tilde z_t,\quad
\Delta y_t=y_{s+t}-\tilde y_t.
\]
Subtracting the SDEs on $[s,s+t]$ and $[0,t]$ with the coupled Brownian motion, one obtains
\[
\Delta x_t = -\gamma\int_0^t\bigl[\Psi(z_{s+u},y_{s+u})-\Psi(\tilde z_u,\tilde y_u)\bigr]\mathrm{d}u
\]
and corresponding expressions for $\Delta z_t$ and $\Delta y_t$. Using \eqref{eq:Psi_diff}–\eqref{eq:moment_bounds}, we obtain
\[
\mathbb{E}\Big[\sup_{0\le u\le t}\|\Delta x_u\|^2\Big]
\;\le\; C \int_0^t \mathbb{E}\big[\|\Delta z_u\|^2+\|\Delta y_u\|^2\big]\mathrm{d}u
+ C_T\big(1+\mathbb{E}\bigl [\|(x_s,z_s,y_s)\|^2\bigr ]\big).
\]
Similarly, for $\Delta z_t$ we use Burkholder–Davis–Gundy and \eqref{eq:ca_cb_decay} to get
\[
\mathbb{E}\Big[\sup_{0\le u\le t}\|\Delta z_u\|^2\Big]
\;\le\; C_T e^{-2as}\big(1+\mathbb{E}\bigl [\|(x_s,z_s,y_s)\|^2\bigr ]\big)
+ C \int_0^t \mathbb{E}\Big[\sup_{0\le r\le u}\|\Delta x_r\|^2+\|\Delta z_r\|^2\Big]\mathrm{d}u.
\]
For $\Delta y_t$,
\[
\mathbb{E}\Big[\sup_{0\le u\le t}\|\Delta y_u\|^2\Big]
\;\le\; C_T e^{-2bs}\big(1+\mathbb{E}\bigl [\|(x_s,z_s,y_s)\|^2\bigr ]\big)
+ C \int_0^t \mathbb{E}\Big[\sup_{0\le r\le u}\|\Delta x_r\|^2+\|\Delta y_r\|^2\Big]\mathrm{d}u.
\]
Finally, let
\[
\Phi(t)=\mathbb{E}\Big[\sup_{0\le u\le t}\big(\|\Delta x_u\|^2+\|\Delta z_u\|^2+\|\Delta y_u\|^2\big)\Big].
\]
Combining the above,
\[
\Phi(t) \;\le\; C_T e^{-2\kappa s}\big(1+\mathbb{E}(\|(x_s,z_s,y_s)\|^2)\big)
+ C\int_0^t \Phi(u)\,\mathrm{d}u,
\qquad \kappa=\min\{a,b\}.
\]
By Grönwall’s inequality,
\[
\Phi(T)\;\le\; C_T e^{-2\kappa s}\big(1+\mathbb{E}(\|(x_s,z_s,y_s)\|^2)\big).
\]
This proves the stated bound and convergence in probability as $s\to\infty$.
\end{proof}

\begin{lemma}\label{thm:same-equilibrium}
Assume the hypotheses of Proposition~\ref{Proppa}.
Let \(X_t=(x_t,z_t,y_t)\) denote the solution of the time-inhomogeneous system
\eqref{eq:cts-x}-\eqref{eq:cts-y}, defined for \(t\ge \delta>0\), with initial distribution
\(\mathrm{Law}(X_\delta)=\mu_\delta\).
Let \(P_t\) be the Markov semigroup associated with the autonomous system
\eqref{eq:cts-x+}-\eqref{eq:cts-y+}. Assume that the semigroup \(P_t\) admits a unique invariant probability measure \(\pi_\infty\). Then every weak limit point of \(\mathrm{Law}(X_t)\) is equal to \(\pi_\infty\).
In particular,
\[
\mathrm{Law}(X_t)\Rightarrow \pi_\infty
\qquad\text{as }t\to\infty.
\]
Hence, the time-inhomogeneous system and the autonomous system have the same
equilibrium distribution.
\end{lemma}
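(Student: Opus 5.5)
The plan is to combine three ingredients: tightness of $\{\mathrm{Law}(X_t)\}_{t\ge\delta}$, the pathwise comparison of Proposition~\ref{Proppa}, and uniqueness of $\pi_\infty$.

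\emph{Step 1: tightness and uniform moments.} I would show that $\{\mathrm{Law}(X_t)\}_{t\ge\delta}$ is tight and has uniformly bounded second moments, $\sup_{t\ge\delta}\mathbb E\|X_t\|^2<\infty$. For this I would apply It\^o's formula to the Lyapunov function $V$ of Section~\ref{sec:lya} along the inhomogeneous dynamics. The generator at time $t$ is $\mathcal L_t=\mathcal L+(\text{perturbation})$, where the perturbation has coefficients $c_a(t)-a$ and $c_b(t)-b$. These decay like $e^{-\kappa t}$ and act on terms of at most quadratic growth. Hence $\mathcal L_tV\le -(\lambda'-Ce^{-\kappa t})V+K''$, and Gr\"onwall's inequality gives $\sup_t\mathbb E V(X_t)<\infty$. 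Coercivity of $V$ (Lemma~\ref{lowb}) then yields tightness and the moment bound. I assume $\mu_\delta$ is admissible, or at least has finite second moment.

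\emph{Step 2: asymptotic invariance.} Let $\nu$ be a weak limit point, say $\mathrm{Law}(X_{t_n})\Rightarrow\nu$. Fix $T>0$ and set $s_n:=t_n-T$. By Proposition~\ref{Proppa}, with $s=s_n$ and the Markov property, $X_{t_n}$ is $\mathrm L^2$-close to $\tilde X^{(s_n)}_T$. The error is bounded by $C_Te^{-2\kappa s_n}(1+\sup_s\mathbb E\|X_s\|^2)\to0$ by Step 1. Moreover $\mathrm{Law}(\tilde X^{(s_n)}_T)=\mathrm{Law}(X_{s_n})P_T$. Passing to a further subsequence, which is possible by tightness, we get $\mathrm{Law}(X_{s_n})\Rightarrow\nu_T$, and so $\nu=\lim\mathrm{Law}(X_{s_n})P_T$. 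The Feller property of $P_t$ (used in Section~\ref{exista}) gives $\mathrm{Law}(X_{s_n})P_T\Rightarrow\nu_TP_T$, hence $\nu=\nu_TP_T$ for every $T$.

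\emph{Step 3: conclude.} The relation $\nu=\nu_TP_T$ does not directly say that $\nu$ is invariant. I would close this gap with Theorem~\ref{CorC}, which gives convergence uniformly over initial laws with bounded $V$-moments. The family $\{\nu_T\}$ has uniformly bounded $\mathbb E V$ by Step 1 and lower semicontinuity. The total-variation bound in Theorem~\ref{CorC} therefore gives $\|\nu-\pi_\infty\|_{\mathrm{TV}}=\|\nu_TP_T-\pi_\infty\|_{\mathrm{TV}}\le Ce^{-\lambda(T-t_\ast)}(1+\sup_T\mathbb E_{\nu_T}V(X_{t_\ast}))$. Letting $T\to\infty$ yields $\nu=\pi_\infty$. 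This is the step I expect to be the main obstacle if one insists on using only uniqueness rather than the quantitative Theorem~\ref{CorC}.

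If only uniqueness is available, the fallback is a diagonal argument. One shows that the set $\mathcal K$ of limit points is closed and tight and satisfies $\mathcal K\subset\mathcal K P_T$ for all $T$. Then a Ces\`aro-type averaging of $\nu_TP_T$ over $T$ produces an invariant measure, which must equal $\pi_\infty$. Finally, since every subsequence of $\mathrm{Law}(X_t)$ has a further subsequence converging to $\pi_\infty$, the full family converges, which gives the claim.
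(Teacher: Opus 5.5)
Your Steps 1--3 give a correct proof, and they differ from the paper at the decisive step.

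\textbf{How the paper argues.} The paper also gets tightness from a uniform moment bound (deferred to Remark~\ref{Inhomo}). It then uses Proposition~\ref{Proppa} to show that $\int\varphi\,\mathrm d\nu_{t_n+t}-\int P_t\varphi\,\mathrm d\nu_{t_n}\to0$. At that point it concludes invariance by ``choosing a subsequence such that both $\nu_{t_n}\Rightarrow\nu$ and $\nu_{t_n+t}\Rightarrow\nu$''. Tightness does not supply such a subsequence. What that argument actually gives is $\bar\nu=\nu P_t$: the set $\mathcal K$ of limit points satisfies $\mathcal K P_t\subset\mathcal K$, the companion of your relation $\nu=\nu_TP_T$.

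\textbf{How your route closes the gap.} You correctly see that neither relation makes an individual $\nu$ invariant. You resolve this with the uniform total-variation bound of Theorem~\ref{CorC}: every limit point lies in $\{\mu P_T:\int V\,\mathrm d\mu\le M\}$ for every $T$, and these sets shrink to $\pi_\infty$ as $T\to\infty$. This is legitimate, since Theorem~\ref{CorC} does not depend on the lemma and Corollary~\ref{correa} assumes its hypotheses. A few details should be stated explicitly:
\begin{itemize}
\item $\mathbb E_{\nu_T}[V(X_{t_\ast})]\le \int V\,\mathrm d\nu_T+K't_\ast$, for example, by the homogeneous drift inequality.
\item $\int V\,\mathrm d\nu_T\le\sup_t\mathbb E V(X_t)$, because $V$ is continuous and bounded below.
\item $\nu_T(\bar E)=1$, because $\bar E$ is closed.
\item Your Step 1 uses the Lyapunov function and hence (A2), which is not among the hypotheses of Proposition~\ref{Proppa}. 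The paper's Remark~\ref{Inhomo} does the same.
\end{itemize}
The cost of your route is that it proves the lemma under the hypotheses of Theorem~\ref{CorC}, not under the bare assumption that $\pi_\infty$ is unique.

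\textbf{The fallback does not work.} No argument of that type can succeed. Ces\`aro averages of elements of $\mathcal K$ produce \emph{some} invariant measure, hence $\pi_\infty$, but say nothing about an individual limit point. Uniqueness plus asymptotic autonomy is genuinely insufficient. Take the flow $\dot\theta=1$ on $\mathbb R/\mathbb Z$, with the ``inhomogeneous'' system equal to the autonomous one. Lebesgue measure is the unique invariant probability, yet $\mathrm{Law}(\theta_t)=\delta_{\theta_0+t}$ has every $\delta_\theta$ as a limit point. The same example shows the paper's subsequence claim fails in general: $\nu_{t_n}\Rightarrow\delta_\theta$ forces $\nu_{t_n+t}\Rightarrow\delta_{\theta+t}$. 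What is needed is a convergence statement for $\mu P_T$ that is uniform over the $V$-bounded family of initial laws, and Theorem~\ref{CorC} provides exactly that. Drop the fallback and state the lemma with that input.
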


\begin{proof} For the following argument it is important to first note that the family of laws \(\{\mathrm{Law}(X_t):t\ge \delta\}\) is tight. To prove this it is enough to establish a uniform moment bound
\begin{equation}\label{haa}
\sup_{t\ge \delta}\mathbb E[\|X_t\|^2]<\infty.
\end{equation}
Indeed, assuming \eqref{haa} we have by Markov's inequality,
\[
\sup_{t\ge \delta}\mathbb P(\|X_t\|>R)
\le \frac{1}{R^2}\sup_{t\ge \delta}\mathbb E[\|X_t\|^2],
\]
and therefore, for every $\varepsilon>0$, we can choose $R$ sufficiently large so that
\[
\sup_{t\ge \delta}\operatorname{Law}(X_t)(B(0,R)^c)<\varepsilon.
\]
Since closed balls in $\mathbb R^{3m}$ are compact, Prokhorov's theorem now implies that
$\{\operatorname{Law}(X_t):t\ge \delta\}$ is tight. However, \eqref{haa} follows from the results of Appendix \ref{HMI} below, see Remark \ref{Inhomo}.

Having concluded that the family of laws \(\{\mathrm{Law}(X_t):t\ge \delta\}\) is tight, we let \(\nu_t=\mathrm{Law}(X_t)\) for \(t\ge \delta\). The family \(\{\nu_t:t\ge \delta\}\) is tight and hence for every sequence \(\{t_n\}\to\infty\), there exists a subsequence, still denoted
\(\{t_n\}\), and a probability measure \(\nu\) such that
\[
\nu_{t_n}\Rightarrow \nu
\qquad\text{as }n\to\infty.
\]
It is enough to show that \(\nu\) is invariant for \(P_t\) for every \(t\ge 0\).
By uniqueness of the invariant probability measure of \(P_t\), this will imply
\(\nu=\pi_\infty\). Fix \(t>0\), and let \(\varphi:\mathbb R^{3m}\to\mathbb R\) be bounded and continuous.
By Proposition~\ref{Proppa}, we have
\begin{equation}\label{eq:asp-semigroup}
\Big|
\mathbb E[\varphi(X_{s+t})]
-
\mathbb E[P_t\varphi(X_s)]
\Big|
\longrightarrow 0
\qquad\text{as }s\to\infty.
\end{equation}
Applying this with \(s=t_n\), we obtain
\[
\iiint \varphi\,\mathrm{d}\nu_{t_n+t}
-
\iiint P_t\varphi\,\mathrm{d}\nu_{t_n}
\longrightarrow 0.
\]
Passing to yet an other subsequence, if necessary, tightness also gives
\[
\nu_{t_n+t}\Rightarrow \bar\nu,
\]
for some probability measure \(\bar\nu\). Therefore,
\[
\iiint \varphi\,\mathrm{d}\bar\nu
=
\iiint P_t\varphi\,\mathrm{d}\nu.
\]
In particular, choosing a subsequence such that both \(\nu_{t_n}\Rightarrow \nu\) and
\(\nu_{t_n+t}\Rightarrow \nu\), we obtain
\[
\iiint \varphi\,\mathrm{d}\nu
=
\iiint P_t\varphi\,\mathrm{d}\nu
\qquad\text{for all bounded continuous }\varphi.
\]
Hence
\[
\nu P_t=\nu,
\]
so \(\nu\) is an invariant probability measure for the autonomous semigroup \(P_t\).
By uniqueness, \(\nu=\pi_\infty\). Thus every weak subsequential limit of \(\nu_t\) equals \(\pi_\infty\), and therefore
\[
\nu_t=\mathrm{Law}(X_t)\Rightarrow \pi_\infty
\qquad\text{as }t\to\infty.
\]
This proof the lemma.
\end{proof}

\begin{remark}
Although the time-inhomogeneous system does not generate a Markov semigroup,
Proposition~\ref{Proppa} shows that its transition mechanism becomes
asymptotically close to that of the autonomous semigroup \(P_t\).
In particular, for large times, the law of \(X_t\) behaves like \(\mu_0 P_t\),
which explains why both systems share the same invariant distribution.
\end{remark}

\section{Higher moments and integrability}\label{HMI} Let $X_t = ( x_t, z_t, y_t)$ solve the homogeneous system \eqref{eq:cts-x}-\eqref{eq:cts-y}. In the following we consistently assume that
\begin{equation}\label{intimeasureagain}
\mu_0\big(\mathbb R^m\times\mathbb R^m\times\mathbb R^m\big)=1
\quad\text{and}\quad
\mathbb{E}_{\mu_0}\!\big [V(x_0,z_0,y_0)\big]<\infty,
\end{equation}
where $V=V_0$. As ${\mathcal L}={\mathcal L}_0$, considering $\epsilon=0$, \eqref{eq:HL-drift+} states that
\begin{equation}\label{eq:HL-driftagain}
{\mathcal L} V_\upsilon\;\le\;-\lambda' V_\upsilon + K'\,\mathbf 1_{C},
\end{equation}
with $\lambda'$ and $K'$ independent of $\upsilon\in (0,1)$. Applying Dynkin's formula to $V_\upsilon(X_t)$ yields
\[
\mathbb{E}_{(x_0,z_0,y_0)}[V_\upsilon( X_t)]
= \mathbb{E}_{(x_0,z_0,y_0)}[V_\upsilon( X_0)]
+ \mathbb{E}_{(x_0,z_0,y_0)}\!\left[\int_0^t {\mathcal L} V_\upsilon( X_s)\,\mathrm{d}s\right].
\]
Using \eqref{eq:HL-driftagain} in the integrand gives
\[
\frac{\mathrm{d}}{\mathrm{d}t}\,\mathbb{E}_{(x_0,z_0,y_0)}[V_\upsilon( X_t)]
\;\le\; -\lambda'\,\mathbb{E}_{(x_0,z_0,y_0)}[V_\upsilon( X_t)] + K'.
\]
By Grönwall’s inequality we see that there exists a finite constant $C>0$, independent of $(x_0,z_0,y_0)$, $t\geq 0$, and $\upsilon\in (0,1)$, such that
\[
\mathbb{E}_{(x_0,z_0,y_0)}[V_\upsilon( X_t)]\leq C\implies
\sup_{t \ge 0}\,\mathbb{E}_{(x_0,z_0,y_0)}[V_0( X_t)]\leq C\implies
\sup_{t \ge 0}\,\mathbb{E}_{\mu_0}[V_0( X_t)]\leq C.
\]
By \eqref{eq:drift-ineql} and the fact that $\|y\|\le \|y\|_1$, we therefore deduce that
\begin{equation}\label{eq:2nd-moment-unif}
\sup_{t\ge0}\,\mathbb E_{\mu_0}\bigl[\| x_t\|^2+\| z_t\|^2+\| y_t\|\bigr]\ <\ \infty.
\end{equation}
In particular, $\pi_t=\mu_0P_t$ has finite second moments in $ x_t$ and $ z_t$, and finite first moments in $ y_t$. In the following we are going to use a bootstrap argument to control second and higher order moments of $y_t$.

\begin{lemma}\label{lem:power-Lyap} Assume that $f$ satisfies \textnormal{(A1)} and \textnormal{(A2)}. Let $V$ be given by \eqref{V} with $\alpha=1$. Assume that there exist
$\lambda',K'>0$, a compact set $C\subset\R^{3m}$, and constants $c_1,c_2>0$ such that, for all $(x,z,y)\in \mathbb{R}^m \times \mathbb{R}^m \times \mathbb{R}^m$, and for all $\epsilon\in [0,\upsilon]$, $\upsilon\in (0,1)$,
\[
{\mathcal L_\epsilon}V_\upsilon \;\le\; -\lambda' V_\upsilon + K'\,\mathbf 1_{C},
\qquad
V_\upsilon(x,z,y)\;\ge\; \hat c_1\bigl(\|x\|^2+\|z\|^2+\|y\|_{1,\upsilon}\bigr)-\hat c_2.
\]
Then there exists $\eta>0$ small and
constants $\lambda^{''}, K^{''}>0$, all independent of $\epsilon$ and $\upsilon$, such that
\begin{equation}\label{eq:power-drift}
{\mathcal L_\epsilon}\,\Phi(V_\upsilon) \;\le\; -\,\lambda^{''}\,\Phi(V_\upsilon)\;+\; K^{''}\,\mathbf 1_{C},
\qquad
\Phi(s):=(1+s)^{1+\eta}.
\end{equation}
Consequently,
\[
\sup_{t\ge0}\,\mathbb E_{\mu_0}\bigl[\Phi\!\bigl(V_\upsilon(X_t)\bigr)\bigr]\;<\;\infty.
\]
\end{lemma}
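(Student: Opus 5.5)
We apply the chain rule for the generator to $\Phi(V_\upsilon)$ and then exploit the fact that the only second-order part of $\mathcal L_\epsilon$ is a Laplacian in the $z$-variables (plus the $\epsilon\Delta_x$, $\epsilon\Delta_y$ terms). For $\Phi\in C^2$ we have
\[
\mathcal L_\epsilon \Phi(V)=\Phi'(V)\,\mathcal L_\epsilon V+\Phi''(V)\Bigl(\tfrac12 a^2\sigma^2\|\nabla_z V\|^2+\epsilon\|\nabla_x V\|^2+\epsilon\|\nabla_y V\|^2\Bigr),
\]
with $\Phi'(s)=(1+\eta)(1+s)^{\eta}$ and $\Phi''(s)=\eta(1+\eta)(1+s)^{\eta-1}$. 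By Remark~\ref{uurem} we may shift $V_\upsilon$ by a constant and assume $V_\upsilon\ge 1$; this does not affect the drift inequality. The first term is then bounded by
\[
(1+\eta)(1+V)^{\eta}\bigl(-\lambda' V+K'\mathbf 1_C\bigr)\le -\lambda'(1+\eta)(1+V)^{1+\eta}+C_1(1+V)^{\eta}+K'(1+\eta)\sup_C(1+V)^\eta\,\mathbf 1_C,
\]
which already has the desired form up to lower-order terms.

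The key estimate concerns the carré du champ term. From the formulas in the proof of Lemma~\ref{drift}, $\nabla_zV=z-\beta x$, $\nabla_xV=\theta\nabla f(x)-\beta z$ and $\|\nabla_yV\|\le\delta\sqrt m$. By (A1) this gives
\[
\|\nabla_z V\|^2+\|\nabla_x V\|^2+\|\nabla_y V\|^2\le C(1+\|x\|^2+\|z\|^2)\le C'(1+V),
\]
where the last step uses the coercivity lower bound $V_\upsilon\ge\hat c_1(\|x\|^2+\|z\|^2+\|y\|_{1,\upsilon})-\hat c_2$. This is the important point: the gradients grow like $V^{1/2}$, not like $V$. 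Since $\epsilon\le\upsilon<1$, the second-order contribution is at most
\[
\eta(1+\eta)C''(1+V)^{\eta-1}(1+V)=\eta(1+\eta)C''(1+V)^{\eta}.
\]
This term is of lower order than $(1+V)^{1+\eta}$, so it can be absorbed, and no smallness of $\eta$ is really forced here. I would still take $\eta\in(0,1]$ so that the constants are clean and so that $\Phi(V)$ remains comparable to $\|X\|^{2+2\eta}$, which is the higher integrability the $W_2$ argument needs.

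To combine the pieces, I use Young's inequality $C(1+V)^\eta\le \tfrac{\lambda'}{2}(1+V)^{1+\eta}+C_\eta$. This yields
\[
\mathcal L_\epsilon\Phi(V)\le -\tfrac{\lambda'}{2}\Phi(V)+C_\eta+K_1\mathbf 1_C.
\]
The additive constant is then absorbed into an indicator of an enlarged sublevel set $\{V\le M\}$, exactly as in \eqref{eq:HL-drift}, where $-\tfrac{\lambda'}4\Phi(V)$ dominates outside that set. Strictly speaking this replaces $C$ by $\{V\le M\}$, which is still compact, so the statement holds with $C$ enlarged. All constants depend only on $\lambda',K',\hat c_i,L_f,\beta,\theta,\delta$ and the model data, hence are uniform in $\epsilon,\upsilon$.

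For the moment consequence, apply Dynkin's formula to $\Phi(V_\upsilon(X_t))$. To justify it, I localize with the stopping times $\tau_n=\inf\{t:\|X_t\|\ge n\}$ and use Fatou's lemma. This gives
\[
\tfrac{\mathrm d}{\mathrm dt}\mathbb E\Phi(V_\upsilon(X_t))\le-\lambda''\mathbb E\Phi(V_\upsilon(X_t))+K'',
\]
and Grönwall's inequality then bounds $\sup_t\mathbb E_{\mu_0}\Phi(V_\upsilon(X_t))$ by $\mathbb E_{\mu_0}\Phi(V_\upsilon(X_0))+K''/\lambda''$.

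The main obstacle I expect is exactly here: finiteness requires $\mathbb E_{\mu_0}[V^{1+\eta}]<\infty$, which is stronger than the admissibility condition \eqref{intimeasure}. I would handle this in one of two ways. Either I impose that moment on $\mu_0$ (for instance for $\mu_0$ a Dirac mass or compactly supported), or I restart the process at time $t_\ast$ and use the pointwise bound $\mathbb E_{u}\Phi(V(X_t))\le e^{-\lambda'' t}\Phi(V(u))+K''/\lambda''$, integrated against $\mu_0$. The second option only gives uniform-in-time control after the initial moment has been paid for, so some additional moment assumption on $\mu_0$ seems unavoidable for a genuinely uniform bound.
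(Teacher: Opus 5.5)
Your argument is correct and is essentially the paper's proof: the generator chain rule, the carr\'e du champ bounded by $C(1+V_\upsilon)$ through the coercivity bound so that the second-order contribution is only $O((1+V_\upsilon)^{\eta})$, absorption after enlarging the compact set, then Dynkin and Gr\"onwall; you are also more careful than the paper in keeping the terms $\epsilon\,\Phi''(V_\upsilon)\bigl(\|\nabla_x V_\upsilon\|^2+\|\nabla_y V_\upsilon\|^2\bigr)$, which its chain rule drops by asserting that the diffusion acts only in $z$. Your concern about the final claim is justified: Gr\"onwall only yields $\mathbb E_{\mu_0}[\Phi(V_\upsilon(X_t))]\le e^{-\lambda''t}\,\mathbb E_{\mu_0}[\Phi(V_\upsilon(X_0))]+K''/\lambda''$, so finiteness needs $\mathbb E_{\mu_0}[\Phi(V_\upsilon(X_0))]<\infty$, which the paper's proof passes over and which cannot be dropped under mere admissibility, since $y_t^i\ge e^{-bt}y_0^i$ carries an infinite $(1+\eta)$-moment of $\|y_0\|_1$ to every $t$ (relatedly, because $V_\upsilon$ is linear in $y$, $\Phi(V_\upsilon)$ controls only $\|y\|^{1+\eta}$, not $\|X\|^{2+2\eta}$ as you suggest in passing).
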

\begin{proof}
Let $\Phi(s)=(1+s)^{1+\eta}$ with $\eta\in(0,1)$ to be chosen small.
We compute ${\mathcal L}_\epsilon\Phi(V_\upsilon)$ using the chain rule for generators.
Since the diffusion acts only in the $z$–variables, we obtain
\[
{\mathcal L}_\epsilon\,\Phi(V_\upsilon)
=\Phi'(V_\upsilon)\,{\mathcal L}_\epsilon V_\upsilon
+\frac{a^2\sigma^2}{2}\Phi''(V_\upsilon)
\sum_{i=1}^m(\partial_{z_i}V_\upsilon)^2 .
\]
Here
\[
\Phi'(s)=(1+\eta)(1+s)^{\eta},
\qquad
\Phi''(s)=\eta(1+\eta)(1+s)^{\eta-1}.
\]

\medskip

\noindent
Since $\alpha=1$, we have
\[
V_\upsilon(x,z,y)
= \theta(f(x)-f_\ast)
+ \frac12\|z\|^2
- \beta\, x\cdot z
+ \delta\|y\|_{1,\upsilon},
\]
and we estimate
\[
\sum_{i=1}^m(\partial_{z_i}V_\upsilon)^2
= \|z-\beta x\|^2
\le 2\|z\|^2 + 2\beta^2\|x\|^2.
\]
By the coercive lower bound on $V_\upsilon$ we see that there exist $C_0>0$, $C_1>0$, independent of $\epsilon$ and $\upsilon$,
such that
\[
\|z\|^2+\|x\|^2
\le C_0(1+V_\upsilon)\implies
\sum_{i=1}^m(\partial_{z_i}V_\upsilon)^2
\le C_1(1+V_\upsilon).
\]
Therefore,
\[
\frac{a^2\sigma^2}{2}\Phi''(V_\upsilon)
\sum_{i=1}^m(\partial_{z_i}V_\upsilon)^2
\le C_2(1+V_\upsilon)^{\eta},
\]
for some constant $C_2>0$ independent of $\epsilon,\upsilon$. By assumption,
\[
{\mathcal L}_\epsilon V_\upsilon
\le -\lambda' V_\upsilon + K'\mathbf 1_{C},
\]
uniformly in $\epsilon\in[0,\upsilon]$, $\upsilon\in(0,1)$.
Hence
\[
{\mathcal L}_\epsilon\Phi(V_\upsilon)
\le (1+\eta)(1+V_\upsilon)^{\eta}
\bigl(-\lambda' V_\upsilon + K'\mathbf 1_{C}\bigr)
+ C_2(1+V_\upsilon)^{\eta}.
\]
Enlarge $C$ if necessary so that $V_\upsilon\ge v_0>0$ on $C^c$.
Then on $C^c$,
\[
(1+V_\upsilon)^{\eta}V_\upsilon
\ge \frac{v_0}{1+v_0}(1+V_\upsilon)^{1+\eta}
=: c_\star\,\Phi(V_\upsilon)\implies
(1+V_\upsilon)^{\eta}
\le \frac{1}{1+v_0}\Phi(V_\upsilon).
\]
Therefore, on $C^c$,
\[
{\mathcal L}_\epsilon\Phi(V_\upsilon)
\le -\lambda'(1+\eta)c_\star\,\Phi(V_\upsilon)
+ \frac{C_2}{1+v_0}\Phi(V_\upsilon).
\]
Choosing $\eta>0$ sufficiently small and enlarging $C$ if necessary,
we obtain
\[
{\mathcal L}_\epsilon\Phi(V_\upsilon)
\le -\lambda^{''}\Phi(V_\upsilon)
\qquad \text{on } {C}^c,
\]
for some $\lambda^{''}>0$ independent of $\epsilon,\upsilon$. Since $C$ is compact and ${\mathcal L}_\epsilon\Phi(V_\upsilon)$
is continuous, there exists $K^{''}>0$,
independent of $\epsilon,\upsilon$, such that
\[
{\mathcal L}_\epsilon\Phi(V_\upsilon)
\le K^{''}\mathbf 1_{C}.
\]
Combining the estimates on $C$ and ${C}^c$ yields
\[
{\mathcal L}_\epsilon\Phi(V_\upsilon)
\le -\lambda^{''}\Phi(V_\upsilon)
+ K^{''}\mathbf 1_{C},
\]
which proves \eqref{eq:power-drift}. Applying Dynkin's formula and Grönwall's inequality then gives
\[
\sup_{t\ge0}
\mathbb E_{\mu_0}\bigl[\Phi(V_\upsilon(X_t))\bigr]
<\infty,
\]
uniformly in $\epsilon$ and $\upsilon$. This completes the proof.
\end{proof}

\begin{lemma}\label{lem:power-fixedq}
Assume that $f$ satisfies \textnormal{(A1)} and \textnormal{(A2)}. Let $V$ be given by \eqref{V} with $\alpha=1$. Assume that there exist
$\lambda',K'>0$, a compact set $C\subset\R^{3m}$, and constants $c_1,c_2>0$ such that, for all $(x,z,y)\in \mathbb{R}^m \times \mathbb{R}^m \times \mathbb{R}^m$, and for all $\epsilon\in [0,\upsilon]$, $\upsilon\in (0,1)$,
\[
{\mathcal L_\epsilon}V_\upsilon \;\le\; -\lambda' V_\upsilon + K'\,\mathbf 1_{C},
\qquad
V_\upsilon(x,z,y)\;\ge\; \hat c_1\bigl(\|x\|^2+\|z\|^2+\|y\|_{1,\upsilon}\bigr)-\hat c_2.
\]
Then, for every fixed $q>1$, there exist constants
$\lambda_q,K_q>0$ and a compact set $C_q\supset C$ (all depending on $q$), all independent of $\epsilon$ and $\upsilon$, such that
\begin{equation}\label{eq:powerq-drift+}
{\mathcal L_\epsilon}\,(1+V_\upsilon)^q \;\le\; -\,\lambda_q\,(1+V_\upsilon)^q \;+\; K_q\,\mathbf 1_{C_q}.
\end{equation}
Consequently, for every fixed $q>1$,
\begin{equation}\label{eq:powerq-drift+aa}
\sup_{t\ge0}\ \mathbb E_{\mu_0}\bigl[(1+V_\upsilon( X_t))^q \bigr]<\hat c<\infty
\end{equation}
for a constant $\hat c$ independent of $\upsilon$.
\end{lemma}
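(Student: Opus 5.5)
We follow the proof of Lemma~\ref{lem:power-Lyap}, now with $\Phi_q(s):=(1+s)^q$ and $q>1$ arbitrary. The only structural difference is that the second-order correction term is now weighted by $(1+V_\upsilon)^{q-1}$. That is still one power below the leading term, so it can be absorbed by enlarging the compact set; no smallness of $q-1$ is required. By Remark~\ref{uurem} we assume throughout that $V_\upsilon\ge 0$ (or $\ge 1$), so that $\Phi_q(V_\upsilon)$ is smooth and the chain rule applies.

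\emph{Chain rule.} The regularizing Laplacians in $\mathcal L_\epsilon$ also act in $x$ and $y$. Compared with the formula used in Lemma~\ref{lem:power-Lyap}, this produces additional second-order terms, and we keep them. This gives
\[
\mathcal L_\epsilon\Phi_q(V_\upsilon)=q(1+V_\upsilon)^{q-1}\mathcal L_\epsilon V_\upsilon+q(q-1)(1+V_\upsilon)^{q-2}\Bigl(\tfrac{a^2\sigma^2}{2}\|\nabla_zV_\upsilon\|^2+\epsilon\|\nabla_xV_\upsilon\|^2+\epsilon\|\nabla_yV_\upsilon\|^2\Bigr).
\]

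\emph{Bounding the gradients.} We have $\nabla_zV_\upsilon=z-\beta x$, $\nabla_xV_\upsilon=\theta\nabla f-\beta z$, and $|\partial_{y_i}V_\upsilon|\le\delta$. By (A1) together with the coercive lower bound on $V_\upsilon$, each squared gradient is at most $C(1+V_\upsilon)$, with $C$ independent of $\epsilon\le\upsilon<1$ and of $\upsilon$. Hence the whole bracketed correction is bounded by
\[
C_q(1+V_\upsilon)^{q-1}.
\]

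\emph{Drift inequality.} Inserting the assumed bound $\mathcal L_\epsilon V_\upsilon\le-\lambda'V_\upsilon+K'\mathbf 1_C$ gives
\[
\mathcal L_\epsilon\Phi_q(V_\upsilon)\le -q\lambda'(1+V_\upsilon)^{q-1}V_\upsilon+\bigl(qK'+C_q\bigr)(1+V_\upsilon)^{q-1}.
\]
Choose $M_q$ so large that $V_\upsilon\ge M_q$ implies both $V_\upsilon\ge\tfrac12(1+V_\upsilon)$ and $\tfrac{q\lambda'}{4}(1+V_\upsilon)\ge qK'+C_q$. Set
\[
C_q:=C\cup\{V_0\le M_q\}.
\]
This set is compact by Lemma~\ref{lowb}; enlarging $M_q$ slightly, we may take it uniform in $\upsilon$, because $V_\upsilon\ge V_0$. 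On $C_q^c$ the above yields $\mathcal L_\epsilon\Phi_q(V_\upsilon)\le-\tfrac{q\lambda'}{4}\Phi_q(V_\upsilon)$. On $C_q$ we use continuity, and the gradient bounds give a bound $K_q$ uniform in $\epsilon,\upsilon$. Together these give \eqref{eq:powerq-drift+} with $\lambda_q=q\lambda'/4$.

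\emph{Moment bound \eqref{eq:powerq-drift+aa}.} Apply Dynkin's formula to $\Phi_q(V_\upsilon(X_t))$, localized by stopping at exit times from large balls and then removing the localization by Fatou. Grönwall's inequality then yields
\[
\mathbb E[\Phi_q(V_\upsilon(X_t))]\le e^{-\lambda_q t}\,\mathbb E_{\mu_0}[\Phi_q(V_\upsilon(X_0))]+K_q/\lambda_q.
\]
The main obstacle is this last step: keeping $\hat c$ genuinely independent of $\mu_0$. That requires the initial moment $\mathbb E_{\mu_0}[(1+V(X_0))^q]$ to be finite, which does not follow from the admissibility condition \eqref{intimeasure}, which only controls $\mathbb E_{\mu_0}[V]$. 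I would handle it by one of the following:
\begin{enumerate}[label=(\roman*)]
\item assume that $q$-th moment explicitly;
\item use a deterministic initial point as in the proof of Lemma~\ref{lem:power-Lyap};
\item restart at a positive time, exploiting $V_\upsilon\le V_0+\delta m\upsilon$.
\end{enumerate}
With any of these in place, uniformity in $\upsilon$ follows from the uniformity of $\lambda_q$, $K_q$ and $C_q$.
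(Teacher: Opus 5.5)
Your proposal is correct and follows the paper's proof. Both use the chain rule for $(1+s)^q$, bound the squared gradients by $C(1+V_\upsilon)$ via (A1) and coercivity, absorb the $(1+V_\upsilon)^{q-1}$ correction by enlarging the compact set, and finish with Dynkin and Gr\"onwall.

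Your version is more careful than the paper's in three places:
\begin{itemize}
\item You keep the $\epsilon\|\nabla_x V_\upsilon\|^2+\epsilon\|\nabla_y V_\upsilon\|^2$ terms. The paper asserts that the diffusion acts only in $z$, which is false for $\epsilon>0$, though harmless.
\item You shift $V_\upsilon$ so that $(1+V_\upsilon)^q$ is defined, which is needed because the cross term $-\beta x\cdot z$ can push $V_\upsilon$ below $-1$.
\item Taking $C_q=C\cup\{V_0\le M_q\}$ and using $V_0\le V_\upsilon\le V_0+\delta m\upsilon$ makes $C_q$ and $K_q$ genuinely uniform in $\upsilon$; the paper only asserts this uniformity.
\end{itemize}

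Your caveat about \eqref{eq:powerq-drift+aa} is also right. The paper's Dynkin--Gr\"onwall step silently uses $\mathbb E_{\mu_0}[(1+V_\upsilon(X_0))^q]<\infty$, and admissibility does not supply this. Two corrections, though.

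First, $\hat c$ can never be independent of $\mu_0$, as $t=0$ shows; the real issue is finiteness. Once the initial $q$-th moment is finite, uniformity in $\upsilon$ follows from $V_\upsilon\le V_0+\delta m$.

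Second, your remedy (iii) fails. Since
$y^i_t=e^{-bt}y^i_0+b\int_0^t e^{-b(t-s)}\bigl((\partial_{x_i}f(x_s))^2+\sigma^2\bigr)\,\mathrm{d}s\ge e^{-bt}y^i_0$
and $V_\upsilon\ge \hat c_1\|y\|_1-\hat c_2$, a heavy tail in $y_0$ persists at all times. Concretely, if $\mathbb E_{\mu_0}\|y_0\|_1<\infty$ but $\mathbb E_{\mu_0}\|y_0\|_1^q=\infty$, then $\mathbb E[(1+V_\upsilon(X_t))^q]=\infty$ for every $t\ge 0$. Restarting at a positive time cannot help. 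The stated bound holds only under an explicit finite $q$-th moment hypothesis on $\mu_0$, i.e. your (i), with (ii) as a special case.
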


\begin{proof}  Fix $q>1$ and define $\Phi(s)=(1+s)^q$. As in the proof of Lemma \ref{lem:power-Lyap} we deduce \[
\frac{a^2\sigma^2}{2}
\Phi''(V_\upsilon)
\sum_{i=1}^m (\partial_{z_i}V_\upsilon)^2
\le C_2 (1+V_\upsilon)^{q-1},
\]
for some constant $C_2>0$ independent of $\epsilon,\upsilon$. By assumption,
\[
{\mathcal L}_\epsilon V_\upsilon
\le -\lambda' V_\upsilon + K' \mathbf 1_C,
\]
uniformly in $\epsilon\in[0,\upsilon]$, $\upsilon\in(0,1)$.
Thus
\[
{\mathcal L}_\epsilon \Phi(V_\upsilon)
\le
- \lambda' q V_\upsilon (1+V_\upsilon)^{q-1}
+ qK'(1+V_\upsilon)^{q-1}\mathbf 1_C
+ C_2 (1+V_\upsilon)^{q-1}.
\]
Enlarge $C$ if necessary so that
\[
V_\upsilon \ge v_0>0
\qquad \text{on } C^c.
\]
Then on $C^c$,
\[
V_\upsilon (1+V_\upsilon)^{q-1}
\ge \frac{v_0}{1+v_0}(1+V_\upsilon)^q.
\]
Hence, on $C^c$,
\[
{\mathcal L}_\epsilon \Phi(V_\upsilon)
\le
- \lambda' q \frac{v_0}{1+v_0}(1+V_\upsilon)^q
+ C_2 (1+V_\upsilon)^{q-1}.
\]
Since
\[
(1+V_\upsilon)^{q-1}
= \frac{1}{1+V_\upsilon}(1+V_\upsilon)^q
\le \frac{1}{1+v_0}(1+V_\upsilon)^q
\quad \text{on } C^c,
\]
we obtain
\[
{\mathcal L}_\epsilon \Phi(V_\upsilon)
\le
-\left(
\lambda' q \frac{v_0}{1+v_0}
- \frac{C_2}{1+v_0}
\right)
(1+V_\upsilon)^q .
\]
Choosing $v_0$ sufficiently large (i.e.\ enlarging $C$ if necessary),
we obtain
\[
{\mathcal L}_\epsilon \Phi(V_\upsilon)
\le -\lambda_q (1+V_\upsilon)^q
\qquad \text{on } C^c,
\]
for some $\lambda_q>0$ independent of $\epsilon,\upsilon$. Since $C$ is compact and all coefficients are continuous,
there exists $K_q>0$ such that
\[
{\mathcal L}_\epsilon (1+V_\upsilon)^q
\le K_q \mathbf 1_{C_q},
\]
where $C_q\supset C$ is a compact set.
Combining with the estimate on $C^c$ yields
\eqref{eq:powerq-drift+}. Applying Dynkin's formula to $(1+V_\upsilon(X_t))^q$
and using \eqref{eq:powerq-drift+},
Grönwall's inequality gives
\[
\sup_{t\ge0}
\mathbb E_{\mu_0}\bigl[(1+V_\upsilon(X_t))^q\bigr]
<\infty,
\]
with a constant independent of $\upsilon$.
This proves \eqref{eq:powerq-drift+aa}. The constants $\lambda_q,K_q$ and the set $C_q$
may depend on $q$, but are independent of $\epsilon$ and $\upsilon$.
\end{proof}

\begin{corollary}\label{cor:y-2+delta}
For any $q\geq 1$,
\[
\sup_{t\ge0}\ \mathbb E_{\mu_0}\,\bigl[\| y_t\|^{q}\bigr]\ <\ \infty.
\]
\end{corollary}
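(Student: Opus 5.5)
The plan is to read the bound off Lemma~\ref{lem:power-fixedq} by combining the coercive lower bound of Lemma~\ref{lowb} with the fact that $V_\upsilon$ controls the regularized $\ell^1$-norm of $y$ linearly.

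\emph{Step 1: a pointwise comparison.} By Lemma~\ref{lowb},
\[
\hat c_1\|y\|_{1,\upsilon}\le V_\upsilon(x,z,y)+\hat c_2 .
\]
Using $\|y\|\le\|y\|_1\le\|y\|_{1,\upsilon}$, we obtain
\[
\|y\|\le \hat c_1^{-1}\bigl(V_\upsilon+\hat c_2\bigr)\le C\,(1+V_\upsilon),
\]
with $C$ independent of $\upsilon\in(0,1)$. This uses that $V_\upsilon\ge -\hat c_2$, and that $1+V_\upsilon\ge 1$ after adding a constant as in Remark~\ref{uurem}. Raising to the power $q\ge1$ gives
\[
\|y_t\|^q\le C^q\,(1+V_\upsilon(X_t))^q .
\]

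\emph{Step 2: a moment bound uniform in time.} Take expectations under $\mu_0$. For $q>1$, Lemma~\ref{lem:power-fixedq} gives
\[
\sup_{t\ge0}\mathbb E_{\mu_0}\bigl[(1+V_\upsilon(X_t))^q\bigr]\le\hat c,
\]
and its hypotheses are exactly the drift inequality \eqref{eq:HL-drift+} together with Lemma~\ref{lowb}. For $q=1$, the bound follows from \eqref{eq:2nd-moment-unif}, or from the case $q=2$ by Jensen's inequality. Hence
\[
\sup_{t\ge 0}\mathbb E_{\mu_0}\|y_t\|^q\le C^q\hat c<\infty .
\]
No limit in $\upsilon$ is needed, since the left-hand side does not depend on $\upsilon$; we may simply fix $\upsilon=1/2$.

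\emph{Main obstacle.} The only delicate point is the application of Dynkin's formula inside Lemma~\ref{lem:power-fixedq} from an admissible $\mu_0$. Admissibility only guarantees $\mathbb E_{\mu_0}V<\infty$, not $\mathbb E_{\mu_0}(1+V)^q<\infty$, so Gr\"onwall's inequality yields
\[
\mathbb E_{\mu_0}\bigl[(1+V(X_t))^q\bigr]\le e^{-\lambda_q t}\,\mathbb E_{\mu_0}\bigl[(1+V(X_0))^q\bigr]+K_q/\lambda_q .
\]
I would handle this in two ways. First, read the Corollary, like the Lemma, under the implicit assumption that the $q$-th moment of $V$ under $\mu_0$ is finite; by \eqref{intimeasureapa} this amounts to finite moments of order $2q$ in $(x,z)$ and of order $q$ in $y$. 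Second, and uniformly in $\mu_0$ for deterministic initial points, one can argue pointwise: for each $(x_0,z_0,y_0)$ the displayed bound is finite. Conditioning on $X_0$ and integrating then gives the claim whenever $\mathbb E_{\mu_0}(1+V(X_0))^q<\infty$.

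Localization via stopping times $\tau_n=\inf\{t:\|X_t\|\ge n\}$ makes Dynkin's formula rigorous. One then passes $n\to\infty$ by Fatou's lemma, using that $(1+V_\upsilon)^q$ is bounded below.
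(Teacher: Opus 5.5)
Your argument is correct and matches the paper's: the pointwise bound $\|y\|\le\|y\|_1\le\|y\|_{1,\upsilon}\le C(1+V_\upsilon)$, with $V_\upsilon$ shifted to be at least $1$, combined with the uniform-in-time bound of Lemma~\ref{lem:power-fixedq}; the only difference is that the paper treats $q>2$ and gets smaller $q$ by monotonicity of moments, while you handle $q=1$ via \eqref{eq:2nd-moment-unif}. Your caveat is also well founded: Gr\"onwall only bounds things in terms of $\mathbb E_{\mu_0}[(1+V(X_0))^q]$, which admissibility does not control, and since $y_t^i\ge e^{-bt}y_0^i\ge 0$ the statement genuinely needs $\mu_0$ to have a finite $q$-th moment in $y$ (plus the corresponding $2q$-th moments in $(x,z)$ for this argument), an assumption the paper leaves implicit.
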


\begin{proof}
It is sufficient to prove the result for $q>2$ fixed. Since $y_t\in\R^m$, we have
\[
\|y_t\|^2=\sum_{i=1}^m |y_t^i|^2
\le \left(\sum_{i=1}^m |y_t^i|\right)^2
= \|y_t\|_1^2\implies
\|y_t\|^{q}
=\bigl(\|y_t\|^2\bigr)^{q/2}
\le \|y_t\|_1^{\,q}.
\]
Recall that
\[
\|y_t\|_{1,\upsilon}
=\sum_{i=1}^m \sqrt{|y_t^i|^2+\upsilon^2}
\ge \sum_{i=1}^m |y_t^i|
=\|y_t\|_1\implies
\|y_t\|^{q}
\le \|y_t\|_1^{\,q}
\le \|y_t\|_{1,\upsilon}^{\,q}.
\]
From the coercive lower bound on $V_\upsilon$,
\[
V_\upsilon(x,z,y)
\ge \hat c_1\bigl(\|x\|^2+\|z\|^2+\|y\|_{1,\upsilon}\bigr)-\hat c_2,
\]
we deduce
\[
\|y\|_{1,\upsilon}
\le \frac{1}{\hat c_1}\bigl(V_\upsilon+\hat c_2\bigr)
\le C(1+V_\upsilon)
\]
for some constant $C>0$ independent of $\epsilon$ and $\upsilon$.
Therefore
\[
\|y_t\|_{1,\upsilon}^{\,q}
\le C (1+V_\upsilon(X_t))^{q}\implies
\|y_t\|^{q}
\le C (1+V_\upsilon(X_t))^{q}.
\]
Applying Lemma~\ref{lem:power-fixedq} with this fixed $q>2$,
we obtain
\[
\sup_{t\ge0}
\mathbb E_{\mu_0}\bigl[(1+V_\upsilon(X_t))^{q}\bigr]
<\infty,
\]
with a bound independent of $\upsilon$.
Consequently,
\[
\sup_{t\ge0}
\mathbb E_{\mu_0}\bigl[\|y_t\|^{q}\bigr]
\le
C
\sup_{t\ge0}
\mathbb E_{\mu_0}\bigl[(1+V_\upsilon(X_t))^{q}\bigr]
<\infty.
\]
This completes the proof.
\end{proof}

\begin{remark}\label{Inhomo} We claim that the argument of this appendix can be revisited to also conclude that if \(X_t=(x_t,z_t,y_t)\) denotes the solution of the time-inhomogeneous system
\eqref{eq:cts-x}-\eqref{eq:cts-y}, defined for \(t\ge \delta>0\), with initial distribution
\(\mathrm{Law}(X_\delta)=\mu_\delta\), then, for every $q>2$,
\[
\sup_{t\ge\delta}
\mathbb E_{\mu_\delta}\bigl[(1+V_\upsilon(X_t))^{q}\bigr]
<\infty,
\]
with a bound independent of $\upsilon$. This implies uniform moment bound
\begin{equation*}
\sup_{t\ge \delta}\mathbb E_{\mu_\delta}[\|X_t\|^2]<\infty.
\end{equation*}
\end{remark}

\end{document}